\definecolor{lred}{RGB}{226, 106, 106}
\definecolor{lblue}{RGB}{52, 152, 219}
\definecolor{lyellow}{RGB}{232, 197, 91}
\definecolor{lgrey}{RGB}{199, 195, 189}	
\definecolor{lgreen}{RGB}{154, 188, 135}	
\definecolor{lpurple}{RGB}{176, 156, 255}	
\definecolor{lorange}{RGB}{255, 170, 0}
\newtheorem{theorem}{Theorem}[section]
\newtheorem{lemma}[theorem]{Lemma}
\newtheorem{proposition}[theorem]{Proposition}
\theoremstyle{definition}
\newtheorem{definition}[theorem]{Definition}
\newtheorem{remark}[theorem]{Remark}
\newtheorem{example}[theorem]{Example}
\newtheorem{construction}[theorem]{Construction}
\newcommand{\R}{{\ensuremath{\mathbb{R}}}}
\newcommand{\N}{{\ensuremath{\mathbb{N}}}}
\newcommand{\Z}{{\ensuremath{\mathbb{Z}}}}
\newcommand{\Q}{{\ensuremath{\mathbb{Q}}}}
\newcommand{\F}{{\ensuremath{\mathbb{F}}}}
\newcommand{\mfs}{\mathfrak{s}}
\newcommand{\mft}{\mathfrak{t}}
\newcommand{\mfu}{\mathfrak{u}}
\newcommand{\lk}{\operatorname{lk}}
\newcommand{\cptwo}{\mathbb{C} P^2}
\newcommand{\cptwobar}{\overline{\cptwo}}
\newcommand{\adam}{\alpha}
\newcommand{\adamhat}{\widehat{\alpha}}
\newcommand{\spincs}{\mathfrak{s}}
\newcommand{\spinct}{\mathfrak{t}}
\newcommand{\spincu}{\mathfrak{u}}
\def\co{\colon\thinspace}
\def\minus{\smallsetminus}
\def\CFK{\mathit{CFK}}
\def\CFKm{\mathit{CFK}^-}
\def\HF{\mathit{HF}}
\def\HFh{\widehat{\mathit{HF}}}
\def\HFp{\mathit{HF}^+}
\def\HFi{\mathit{HF}^\infty}
\def\HFm{\mathit{HF}^-}
\def\HFmc{\mathbf{HF}^-}
\def\HFmr{\mathit{HF_{red}^-}}
\def\HFred{\mathit{HF_{red}}}
\def\CFD{\widehat{\mathit{CFD}}}
\def\CFA{\widehat{\mathit{CFA}}}
\def\CFDD{\widehat{\mathit{CFDD}}}
\def\HFKh{\widehat{\mathit{HFK}}}
\def\X{\mathcal{X}}
\def\E{\mathcal{E}}
\def\W{\mathcal{W}}
\def\RR{\mathcal{R}}
\def\S{\mathcal{S}}
\def\AA{\mathcal{A}}
\def\conn{\mathbin{\#}}
\newcommand{\abs}[1] {\left\lvert #1 \right\rvert}
\newcommand{\gen}[1] {\left\langle #1 \right\rangle}
\newcommand{\mat}[1] {\begin{bmatrix} #1 \end{bmatrix}}
  \DeclareMathOperator{\nbd}{nbd}
\DeclareMathOperator{\PD}{PD}  \DeclareMathOperator{\im}{im}
\DeclareMathOperator{\gr}{gr}  \DeclareMathOperator{\Span}{Span}
\DeclareMathOperator{\Spin}{Spin} \DeclareMathOperator{\pt}{pt}
\author[Adam Simon Levine]{Adam Simon Levine}
\thanks{ASL was supported in part by NSF grants DMS-1806437 and DMS-2203860.}
\address{Department of Mathematics, Duke University, Durham, NC 27705}
\email{alevine@math.duke.edu}
\author[Tye Lidman]{Tye Lidman}
\thanks{TL was supported in part by NSF grants DMS-1709702, DMS-2105469, and a Sloan Fellowship.}
\address{Department of Mathematics, North Carolina State University, Raleigh, NC 27607}
\email{tlid@math.ncsu.edu}
\author[Lisa Piccirillo]{Lisa Piccirillo}
\thanks{LP\ was supported in part by a Sloan Fellowship, a Clay Fellowship, and the Simons collaboration ``New structures in low-dimensional topology.'' LP\ thanks the NCCR SwissMAP of the Swiss National Science Foundation for their hospitality during a portion of this project.}
\address{Department of Mathematics, Massachusetts Institute of Technology, Cambridge, MA 02139}
\email{piccirli@mit.edu}
\title{New constructions and invariants of closed exotic 4-manifolds}
\numberwithin{equation}{section}
\begin{document}
\begin{abstract}
In this article, we give new means of constructing and distinguishing closed exotic four-manifolds.  Using Heegaard Floer homology, we define new closed four-manifold invariants that are distinct from the Seiberg--Witten and Bauer--Furuta invariants and can remain distinct in covers. Our constructions include exotic definite manifolds with fundamental group $\Z/2$,  infinite families of exotic manifolds that are related by knot surgeries on Alexander polynomial 1 knots,  and exotic manifolds that contain square-zero spheres.
\end{abstract}

\maketitle
\vspace{-5pt}
\section{Introduction}\label{sec:intro}

\vspace{-5pt}\noindent The first exotic smooth structures on closed, oriented 4-manifolds were demonstrated by Donaldson in 1987 \cite{Donaldson}, who used  $SU(2)$ gauge theory to show that there is a Dolgachev surface which is an exotic copy of $\cptwo\conn 9\cptwobar$.  In 1989,  Kotschick \cite{Kotschick} produced simply connected exotica with smaller $b_2$ by showing  that the Barlow surface \cite{Barlow} is an exotic $\cptwo\conn 8\cptwobar$.  In the mid-nineties,  techniques for building infinitely many smooth structures on these manifolds were developed by Fintushel, Friedman, Stern and Szab\'o in \cite{FintushelStern,Friedman,Szabo}.
The development of exotic smooth structures on simply connected  4-manifolds with even smaller $b_2$ was however stalled for many years due to a lack of examples. Then, in 2005, J. Park \cite{Park} used Fintushel--Stern's rational blowdown techniques \cite{FSblowdown} to construct an exotic smooth structure on $\cptwo\conn 7\cptwobar$.  This jump-started the industry; using (generalizations of) Park's techniques exotic smooth structures were found on $\cptwo\conn 6\cptwobar$ \cite{StipsiczSzabo},  $\cptwo\conn 5\cptwobar$ \cite{ParkStipsiczSzabo}, $\cptwo\conn 3\cptwobar$ \cite{AP3},  and finally in 2010 $\cptwo\conn 2\cptwobar$ \cite{AP2}.  There is a largely parallel history for producing small simply connected exotic manifolds with $b_2^+=3$, see \cite{AP3} for details.  All of these theorems rely on gauge theory to provide the diffeomorphism obstruction.  The development of exotic smooth structures on still smaller simply connected closed 4-manifolds has again been stalled for the ensuing years due to a lack of examples.

The aim of the present paper is to introduce a new perspective on closed, oriented 4-manifolds.  We will build up our smooth 4-manifolds explicitly out of 2-handles (in contrast to the prior literature, which built examples by cutting down elliptic surfaces).  Our diffeomorphism obstructions will come from Heegaard Floer homology, and will be explicitly computed handle by handle (in contrast to most computations of gauge theoretic invariants, which are via cut-and-paste formulae).  We produce, among other things, an exotic smooth structure on $\cptwo\conn 9\cptwobar$ (which we expect is not diffeomorphic to Donaldson's original).  Our hope is that the techniques we develop here can simplify and eventually advance  the search for smaller closed exotica.

Computing closed 4-manifold invariants by breaking 4-manifolds into simpler pieces and computing the associated invariants was in fact one of the original major goals of the development of the Heegaard Floer package (and Seiberg--Witten Floer homologies).  However, to present, most applications and computations of Floer homology have been in other settings (e.g. Dehn surgery, knot concordance, contact topology).  Our approach gives new evidence that these tools are indeed well-suited for problems in closed 4-manifold topology, as originally intended.

We already demonstrate several new exotic phenomena as a consequence of our straightforward technology.  Our exotic  $\cptwo\conn 9\cptwobar$  has a free involution; taking the quotient yields the first exotic definite closed oriented 4-manifold (Theorem~\ref{thm:main}).  We also develop a new elementary invariant for 4-manifolds with $b_1>0$ (Definition~\ref{def:adam-intro}), which allows us to distinguish exotic 4-manifolds produced by Fintushel--Stern knot surgery using an Alexander polynomial 1 knot (Theorem~\ref{thm:knot-surgery-intro}). We also demonstrate that this new invariant does not neccesarily vanish on 4-manifolds with embedded homologically essential spheres with self intersection zero, in contrast to the Bauer--Furuta, Seiberg--Witten,  and Donaldson invariants (Theorem \ref{thm:zerospheresintro}).

\subsection{Constructions of closed exotica}

\begin{figure}[htbp]{\scriptsize
\begin{overpic}[tics=20]{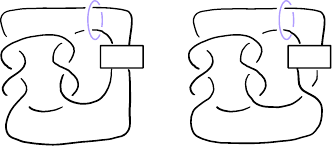}
 \put(147, 40){$n$}
  \put(57, 40){$n$}
    \put(50, 70){$\color{lpurple}\gamma$}
        \put(141, 70){$\color{lpurple}\gamma$}
  \end{overpic}}
  \caption{Pretzel knots $Q_n$ (left) and $P_n$ (right). }
  \label{fig:PQ}
\end{figure}

Part of the appeal of our approach is that our exotic 4-manifolds can be described explicitly in elementary terms, using certain $2$-handle cobordisms between surgeries on pretzel knots.
Specifically, let $P_n = P(3,-3,2n-1)$ and $Q_n = P(3,-3,2n)$, each of which is a ribbon knot. In particular, $P_0$ is the stevedore's knot and $Q_0$ is the square knot. (See Figure \ref{fig:PQ}.) We show in Proposition \ref{prop:PQdual} that for any $n$, there are homeomorphisms
\[
\phi_n \co  S^3_1(P_n) \to S^3_{-1}(Q_{n-4})  \quad \text{and} \quad \psi_n \co  S^3_1(Q_n) \to S^3_{-1}(P_{n-3}).
\]

\begin{figure}[htbp]{\scriptsize
\vspace{8pt}
\begin{overpic}[tics=20]{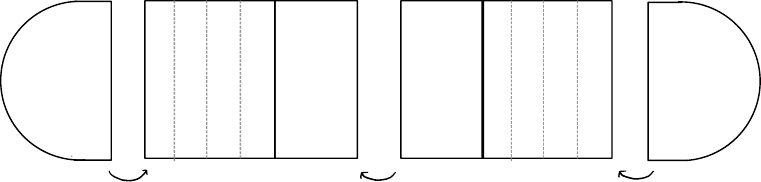}
    \put(29, 92){$S^3_{\text{-} 1}(Q_{\text{-} 4})$}
        \put(66, 92){$S^3_{\text{-} 1}(Q_{\text{-} 4})$}
        \put(120, 92){$S^3_{\text{-} 1}(Q_{0})$}
                \put(155, 92){$S^3_{0}(Q_{0})$}
                                \put(187, 92){$S^3_{0}(Q_{0})$}
    \put(227, 92){$S^3_{1}(Q_{0})$}
        \put(278, 92){$S^3_{1}(Q_{4})$}
        \put(310, 92){$S^3_{1}(Q_{4})$}
                \put(20, 45){$C$}
                        \put(330, 45){$C$}
\put(60, -8){$\sigma$}
\put(305, -8){$\sigma$}
\put(180, -8){$\zeta$}
     \put(76, 55){Four -$1$-framed}
       \put(76, 47){2-handles}
              \put(76, 39){along $\gamma$}
     \put(235, 55){Four -$1$-framed}
       \put(235, 47){2-handles}
              \put(235, 39){along $\gamma$}
     \put(135, 55){-$1$-framed}
       \put(135, 47){2-handle}
              \put(135, 39){along $\mu$}
     \put(195, 55){-$1$-framed}
       \put(195, 47){2-handle}
              \put(195, 39){along $\mu$}
  \end{overpic}}
  \caption{The exotic manifold $\E$ from Construction \ref{constr:simplyconn-intro}. The left half of this figure is $V$. The right hand half is also $V$, but it has been turned upside down.
  }\label{fig:simplyconnintro}
  \end{figure}

\begin{construction} \label{constr:simplyconn-intro}
Let $C$ be the contractible manifold obtained by $-1$ surgery along a ribbon disk for $Q_{-4}$, with boundary $S^3_{-1}(Q_{-4})$, which can be identified with $S^3_1(P_0)$. Via work of Dai, Hedden, and Mallick \cite{DHMCorks}, there is an involution $\sigma$ on $S^3_{-1}(Q_{-4})$ that does not extend over $C$. Now, let $V'$ be the manifold obtained from $C$ by attaching four $-1$-framed $2$-handles along the meridian $\gamma$ of the ribbon band of $Q_{-4}$ and one $-1$-framed $2$-handle along a meridian $\mu$ of $Q_{-4}$; see Figure \ref{fig:simplyconnintro}. The boundary of $V'$ is then identified with $S^3_0(Q_0)$. Let $V$ be obtained from $V'$ by cutting out $C$ and regluing by $\sigma$. Finally, let $\E$ be the manifold obtained by gluing together two copies of $V$ using an orientation-reversing, free involution $\zeta$ of $S^3_0(Q_0)$; such an involution exists because $Q_0$ is an amphichiral knot. This manifold is oriented and simply connected, and it admits a free involution that exchanges the two halves; the quotient $\RR$ is then an oriented manifold with $\pi_1(\RR) = \Z/2$.
\end{construction}

\begin{theorem}\label{thm:main}
$ \ $
\begin{enumerate}
\item The manifold $\E$ is homeomorphic but not diffeomorphic to $\cptwo \conn 9 \cptwobar$.

\item The manifold $\RR$ is homeomorphic but not diffeomorphic to $Z_0 \conn 4 \cptwobar$, where $Z_0$ is a certain rational homology 4-sphere with $\pi_1(Z_0) \cong \Z/2$.
\end{enumerate}
\end{theorem}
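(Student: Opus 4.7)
I would first compute the algebraic topology of $\E$ directly from the handle diagram in Figure~\ref{fig:simplyconnintro}. Since $C$ is contractible and $\sigma$ is an involution of an integer homology $3$-sphere, $H_*(V) \cong H_*(V')$ is free abelian of rank five in degree two, with $\pi_1(V)=1$ (the $-1$-framed handle on the meridian $\mu$ already kills $\pi_1$). Mayer--Vietoris applied to $\E = V\cup_\zeta V$ along $S^3_0(Q_0)$ yields $b_2(\E)=10$ and $\pi_1(\E)=1$. Reading framings and linking numbers off the diagram, I would identify the intersection form as $\langle 1\rangle \oplus 9\langle -1\rangle$ and exhibit a class of odd self-intersection to see $\E$ is non-spin; Freedman's theorem then gives $\E\cong_{\mathrm{top}}\cptwo\conn 9\cptwobar$. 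For $\RR$, the free involution forces $\chi(\RR)=\chi(\E)/2 = 6$ and $\pi_1(\RR)=\Z/2$, hence $b_2(\RR)=4$; the intersection form is the $\zeta_*$-invariant part of that of $\E$, which I would compute to be $4\langle -1\rangle$. The topological classification of closed orientable $4$-manifolds with finite cyclic $\pi_1$ then identifies $\RR$ with $Z_0\conn 4\cptwobar$ for the appropriate rational homology sphere $Z_0$ with matching $w_2$-type and Kirby--Siebenmann invariant.

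\textbf{Smooth non-diffeomorphism of $\E$.} I would apply the paper's new Heegaard Floer closed-manifold invariant by decomposing $\E$ along its two natural cuts at $S^3_{-1}(Q_{-4})$ and $S^3_0(Q_0)$ into $C$, the $\gamma$- and $\mu$-handle cobordisms, their $\sigma$-twisted counterparts on the other half, and the gluing by $\zeta$. This expresses the invariant of $\E$ as a pairing of relative $\HFm$ invariants. The crucial input is the Dai--Hedden--Mallick theorem: $\sigma$ does not extend over the cork $C$, so on the distinguished element produced by the $C$-cobordism, $\sigma_*$ acts by something other than the identity. Propagating this discrepancy through the known surgery-cobordism maps for the $-1$-framed $\gamma$- and $\mu$-handles and pairing across $S^3_0(Q_0)$ via $\zeta$, I would show that the closed-manifold invariant of $\E$ is nonzero, while for $\cptwo\conn 9\cptwobar$ it vanishes by the general blow-up or connect-sum behavior with $\cptwo$ (the key point being that standard $b_2^+ = 1$ manifolds, for which Seiberg--Witten is fragile, nonetheless have vanishing new invariant). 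The main obstacle is making the handle-by-handle Heegaard Floer computation concrete enough to witness this nonvanishing; this is precisely the sort of computation the paper highlights as the new technical advance.

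\textbf{Smooth non-diffeomorphism of $\RR$.} Here I would use the paper's advertised feature that the new invariant remains sensitive under free covers. The double cover of $\RR$ is $\E$ by construction, and the invariant of $\E$ is nonzero by the previous step. If $\RR\cong_{\mathrm{diff}}Z_0\conn 4\cptwobar$, then the invariant of $Z_0\conn 4\cptwobar$ would have to agree with that of $\RR$; but the topological double cover of $Z_0\conn 4\cptwobar$ is $\widetilde{Z_0}\conn 8\cptwobar$, where $\widetilde{Z_0}$ is a simply connected $4$-manifold with $b_2 = 2$, and on any such standard connect sum containing $\cptwobar$ summands the invariant vanishes by the same connect-sum mechanism. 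Transferring this vanishing along the covering $\E\to\RR$ would then force the invariant of $\E$ to vanish as well, contradicting the previous paragraph. The main obstacle in this part is precisely verifying the cover-compatibility of the new invariant, i.e., that the relative HF package behaves correctly under free covering maps, which is the essential novel ingredient here.
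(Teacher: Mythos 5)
Your overall architecture matches the paper's: the homeomorphism statements, a nonvanishing relative invariant for $V$ driven by the Dai--Hedden--Mallick computation of $\sigma_*$, a pairing across $S^3_0(Q_0)$, and a passage to double covers for $\RR$. The genuine gap is in the comparison step for part (1). The closed invariant in play here is the $b^+=1$ Ozsv\'ath--Szab\'o invariant $\Phi_{X,\mft,L}$, which depends on a choice of square-zero line $L$; the vanishing theorems you invoke (``blow-up or connect-sum behavior with $\cptwo$'') are statements about the ordinary invariant for $b_2^+\ge 2$ and do not apply to this line-decorated invariant, so the asserted blanket vanishing on $\cptwo \conn 9\cptwobar$ (and on $\widetilde{Z_0}\conn 8\cptwobar$ in part (2)) is unproved as stated. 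It happens to be true, but the reason is geometric rather than formal, and it is exactly where the paper does something different: nonvanishing of $\Phi_{\E,\spinct,L}$ with $L=\Span(\Xi)$ is first converted, via the vanishing lemma for cuts along $S^2\times S^1$ or $T^3$ (Lemma \ref{lemma:nosphere}), into the statement that $\Xi$ has no embedded representative of genus less than $2$; then, assuming a diffeomorphism to $\cptwo\conn 9\cptwobar$, Wall's realization theorem supplies a diffeomorphism realizing an isometry sending $\Xi$ to $e_0-e_9$, which is spherical---contradiction. Without this genus-bound-plus-Wall step (equivalently, without knowing every primitive square-zero class of the standard manifold is represented by a sphere or torus), your argument does not close. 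Similarly, in the nonvanishing half, note that the pairing across $S^3_0(Q_0)$ is only forced because $Q_0$ is fibered of genus $2$, so $\HFred(S^3_0(Q_0),\spincs_{\pm1})\cong\F$, and the actual computation of $\Psi_{V,\spinct}\ne 0$ proceeds by comparing with $V'$, whose relative invariant vanishes because $V'\cong X_0(Q_0)\conn 4\cptwobar$ contains a $0$-framed essential sphere; your sketch is consistent with this but these are the points where the ``propagation'' has real content.

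For part (2), your worry about ``cover-compatibility of the invariant'' is a red herring and can be dropped: a diffeomorphism $\RR\cong Z_0\conn 4\cptwobar$ lifts to a diffeomorphism of universal double covers $\E\cong S^2\times S^2\conn 8\cptwobar\cong\cptwo\conn 9\cptwobar$, so part (2) reduces immediately to part (1) with no statement about Floer theory in covers. Also, the paper obtains both homeomorphism statements more cheaply than your route through Freedman and the Hambleton--Kreck-type classification for $\pi_1=\Z/2$: it shows $\E'$ and $\RR'$ are honestly diffeomorphic to $\cptwo\conn 9\cptwobar$ and $Z_0\conn 4\cptwobar$ (the latter via the fact that the quotient $Z_K$ of $X_0(K)$ by the amphichiral involution is independent of $K$), and then $\E\cong_{\mathrm{top}}\E'$, $\RR\cong_{\mathrm{top}}\RR'$ because the cork regluing by $\sigma$ extends topologically over the contractible piece. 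Your classification route could be made to work for $\RR$, but it requires verifying the $w_2$-type and Kirby--Siebenmann data for both sides and still needs an identification of $Z_0$, so it is heavier than what the construction already provides.
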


To our knowledge, $\RR$ is the first example of an exotic, closed, orientable 4-manifold with definite intersection form.

\begin{remark}
We will see in Section \ref{sec:simply-connected} that if we instead take the union $V' \cup V'$, the resulting manifold is diffeomorphic to $\cptwo \conn 9 \cptwobar$, and the quotient of the corresponding involution is diffeomorphic to $Z_0 \conn 4 \cptwobar$.
\end{remark}

Next, we consider exotic 4-manifolds with positive first Betti number.

\begin{construction} \label{constr:pi1Z-intro}
\begin{figure}[htbp]
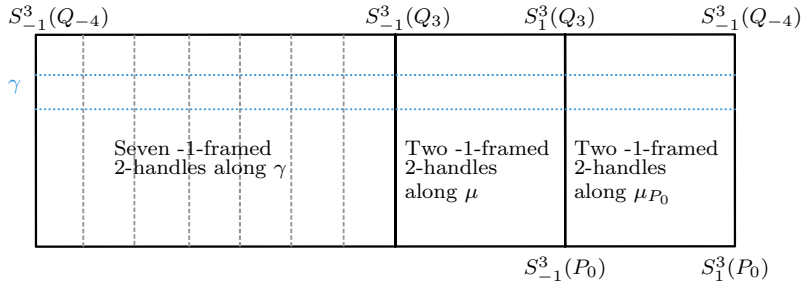
{\scriptsize
\begin{overpic}[tics=20]{Pi1Zintro}
    \put(-10, 85){$S^3_{-1}(Q_{-4})$}
        \put(125, 85){$S^3_{-1}(Q_{3})$}
         \put(185, 85){$S^3_{1}(Q_{3})$}
                \put(185, -10){$S^3_{-1}(P_{0})$}
                                \put(252, -10){$S^3_{1}(P_{0})$}
                                  \put(252, 85){$S^3_{-1}(Q_{-4})$}
        \put(-10, 60){\color{lblue}$\gamma$}
     \put(30, 35){Seven -$1$-framed}
       \put(30, 28){2-handles along $\gamma$}
     \put(140, 35){Two -$1$-framed}
       \put(140, 28){2-handles}
              \put(140, 19){along $\mu$}
                   \put(204, 35){Two -$1$-framed}
       \put(204, 28){2-handles}
              \put(204, 19){along $\mu_{P_0}$}
  \end{overpic}}
  \caption{The cobordism $\W$ from Construction \ref{constr:pi1Z-intro}.}\label{fig:pi1Zintro}
  \end{figure}

Let $\W$ denote the cobordism from $S^3_{-1}(Q_{-4})$ to itself that is built from a composition of eleven 2-handle attachments as follows (see Figure \ref{fig:pi1Zintro}).
We first attach seven $-1$-framed $2$-handles along parallel copies of the curve $\gamma$  and two $-1$-framed $2$-handles along copies of the meridian, which produces a cobordism from $S^3_{-1}(Q_{-4})$ to $S^3_{1}(Q_3)$. We identify the latter with $S^3_{-1}(P_0)$ using the homeomorphism $\psi_3$. Attaching two more $2$-handles along meridians of $P_0$ gives a cobordism to $S^3_1(P_0)$, which in turn is identified with $S^3_{-1}(Q_{-4})$ via $\phi_0$.

The cobordism $\W$ is simply connected and has $b_2^+(\W)=2$ and $b_2^-(\W)=9$. Let $\X$ be the closed manifold obtained by gluing the ends of $\W$ by the identity map of $S^3_{-1}(Q_{-4})$; this manifold has $\pi_1(\X) \cong \Z$. More generally, for each $p \ge 1$, let $\X_p$ be the manifold obtained by stacking $p$ copies of $\W$ and gluing the ends.
\end{construction}

\begin{theorem} \label{thm:pi1Z-intro}
For each $p \ge 1$, the manifold $\X_p$ is homeomorphic, but not diffeomorphic, to $S^1 \times S^3 \conn 2p \cptwo \conn 9p \cptwobar$.
\end{theorem}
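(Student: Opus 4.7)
The plan has two parts: establish the homeomorphism via topological classification, then obstruct diffeomorphism via the invariant $\adam$ from Definition~\ref{def:adam-intro}.

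For the homeomorphism, I would verify that $\X_p$ has $\pi_1\cong\Z$, odd intersection form of signature $-7p$ (necessarily isomorphic to $2p\langle 1\rangle\oplus 9p\langle -1\rangle$ by the classification of odd indefinite unimodular forms), and trivial Kirby--Siebenmann invariant. The fundamental group follows from van Kampen: since $S^3_{-1}(Q_{-4})$ is a rational homology sphere and each layer $\W$ is simply connected, gluing $p$ copies end-to-end and closing up kills nothing and produces a single $\Z$ generator from the identification region. The intersection form equals that of the stack $\W_p$, which has $b_2^\pm=(2p,9p)$ and is odd (readable directly from the Kirby diagram, where $-1$-framed 2-handles prevent an even form). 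Kirby--Siebenmann vanishes since $\X_p$ is smooth. The standard model $S^1\times S^3\conn 2p\cptwo\conn 9p\cptwobar$ has identical invariants, so the topological classification of closed 4-manifolds with $\pi_1\cong\Z$ (Freedman, Hambleton--Kreck) completes this step.

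For the smooth obstruction, I would apply $\adam$ to both sides. On the standard model, $\adam$ should vanish by combining an expected connected-sum/blowup formula (killing the $\cptwo$ and $\cptwobar$ contributions) with a direct calculation on $S^1\times S^3$ that reduces to the identity cobordism of $S^3$ and is trivially zero.

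To show $\adam(\X_p)\ne 0$, I would exploit the explicit handle structure of $\X_p$ through $\W$. The Heegaard Floer homology of $S^3_{-1}(Q_{-4})=S^3_1(P_0)$ is computable since it arises from integer surgery on a small ribbon pretzel knot, and the eleven 2-handles of $\W$ have meridional attaching circles ($\gamma$ and various $\mu$'s) whose cobordism maps on $\HFm$ can be extracted via the surgery exact triangle. Composing these maps and then stacking $p$ times yields a concrete element that $\adam$ reads. The essential structural point is that after traversing the loop of identifications $\psi_3$ and $\phi_0$ from Proposition~\ref{prop:PQdual}, the single-layer cobordism map is \emph{not} the identity on $\HFm$ of $S^3_{-1}(Q_{-4})$; this discrepancy should be exactly what $\adam$ detects, since the identifications switch between the $P$ and $Q$ families.

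The main obstacle I anticipate is the single-layer Floer calculation: identifying the $\Spinc$ structures on $\W$ that contribute nontrivially and tracking the associated cobordism maps across the identifications $\psi_3$ and $\phi_0$, which mix the two families of pretzel knots in a nontrivial way. Once the single-layer picture is pinned down, verifying $\adam(\X_p)\ne 0$ for arbitrary $p\ge 1$ should follow from compositionality of cobordism maps under stacking, together with care that grading shifts and potential cancellations in $\HFm$ do not sabotage the output for some particular $p$.
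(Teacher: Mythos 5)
Your homeomorphism argument is essentially the paper's (Proposition~\ref{prop:pi1Z}): $\pi_1 \cong \Z$, odd indefinite form, and the $\pi_1=\Z$ classification. One caveat: the classification requires the \emph{equivariant} intersection form over $\Z[\Z]$, not just the integral one; the paper handles this by observing that since the splitting hypersurface is a homology sphere, $H_2(\X_p;\Z[\Z]) \cong H_2(\W_p)\otimes\Z[t,t^{-1}]$ with the form concentrated in the $t^0$ part, so the equivariant form is $2p\gen{1}\oplus 9p\gen{-1}$ over $\Z[\Z]$. Your sketch only addresses the ordinary form.

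The smooth-obstruction half has a genuine conceptual gap. You write that the key point is that the single-layer return map on $\HFm(S^3_{-1}(Q_{-4}))$ is ``not the identity,'' and that ``this discrepancy should be exactly what $\adam$ detects.'' That is not how $\adam$ works, and no computation of the discrepancy would prove exoticness: $\adam(X,\eta)$ is a \emph{minimum of $\dim\HFred$ over all hypersurfaces representing $\eta$}, so to show $\adam(\X_p)\ne 0$ you must bound $\dim\HFred(Z)$ from below for \emph{every} $Z$ in the class, including hypersurfaces that intersect $S^3_{-1}(Q_{-4})$. The missing ingredient is exactly Proposition~\ref{prop:adam-iso}: if the self-cobordism $\W$ induces an \emph{isomorphism} on $\HFred$ (not the identity --- any isomorphism suffices, and the non-identity nature is irrelevant), then any competing hypersurface $Z$ lifts to the infinite cyclic cover, separates some $\W^n$, and forces $(F_\W)^n$ to factor through $\HFred(Z)$, giving $\dim\HFred(Z)\ge\dim\HFred(S^3_{-1}(Q_{-4}))>0$. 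Without this lifting-and-factoring argument there is no way to ``read off'' $\adam$ from a cobordism map. Relatedly, your plan to pin down the $\Spinc$ decomposition and track the maps through $\psi_3$ and $\phi_0$ is unnecessary for this theorem: the isomorphism on $\HFred$ for each elementary piece follows for free from the surgery exact triangle (Lemma~\ref{lemma:HFred-exact} and Propositions~\ref{prop:twist-iso}, \ref{prop:tye-iso}), because the third terms $\HFred(S^3)$ and $\HFred(S^1\times S^2)$ vanish; the only input needed is $\HFred(S^3_{-1}(Q_{-4}))\ne 0$, which holds for $-1$-surgery on any nontrivial slice knot. Finally, the vanishing $\adam(S^1\times S^3\conn 2p\cptwo\conn 9p\cptwobar)=0$ needs no blowup or connected-sum formula: the generator of $H_3$ is represented by an $S^3$ contained in the $S^1\times S^3$ summand, and $\HFred(S^3)=0$.
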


We now describe several additional exotic phenomena that arise out of Construction \ref{constr:pi1Z-intro}.

First, the self cobordism $\W$ contains an embedded annulus with boundary the copies of $\gamma$ in either copy of $S^3_{-1}(Q_{-4})$, indicated by the horizontal dashed lines in Figure \ref{fig:pi1Zintro}. The ends can be glued together such that this annulus closes up to an embedded, square-zero torus $T_p$ in $\X_p$, which is contained inside a fishtail neighborhood. For any knot $K \subset S^3$, we define $\X_p^K$ to be the result of a particular Fintushel--Stern knot surgery \cite{FintushelStern} on $T_p$ (see Proposition \ref{prop:fishtail} for the definition).
This operation does not change the homeomorphism type of these manifolds.

\begin{theorem}\label{thm:knot-surgery-intro}
For any $p\in\N$ and any non-trivial knot $K \subset S^3$, $\X_p^K$ is an exotic $S^1 \times S^3 \conn 2p \cptwo \conn 9p \cptwobar$ and is not diffeomorphic to $\X_p$. Moreover, if $K$ and $K'$ are knots with $\dim \HFKh(K) \ne \dim \HFKh(K')$, then $\X_p^K$ and $\X_p^{K'}$ are not diffeomorphic.
\end{theorem}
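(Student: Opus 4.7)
The plan is to proceed in two stages. First I would verify that $\X_p^K$ is homeomorphic to $S^1 \times S^3 \conn 2p \cptwo \conn 9p \cptwobar$, and then distinguish smooth structures using the invariant $\adam$ from Definition~\ref{def:adam-intro}.

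For the homeomorphism, I would appeal to the fact that $T_p$ is a square-zero torus contained in a fishtail neighborhood $F \subset \X_p$. Fintushel--Stern surgery on such a torus is a cut-and-paste that preserves the intersection form, $w_2$, and the Kirby--Siebenmann invariant. The one subtlety in the non-simply-connected setting is $\pi_1$: but because $T_p$ sits in a fishtail, a meridian of $T_p$ bounds a (cuspidal) disk in $F\setminus \nu T_p$, which normally generates $\pi_1(S^3\setminus\nu K)$; so a Van Kampen argument yields $\pi_1(\X_p^K) \cong \pi_1(\X_p) \cong \Z$. Combining with Theorem~\ref{thm:pi1Z-intro} and Freedman's classification of topological $4$-manifolds with fundamental group $\Z$ and indefinite intersection form then produces the homeomorphism.

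For the smooth distinction, the plan is to compute $\adam(\X_p^K)$ and show that the answer is determined by $\dim\HFKh(K)$. Since $b_1(\X_p^K) \geq 1$, the invariant $\adam$ of Definition~\ref{def:adam-intro} is defined. The expected structural input is a Fintushel--Stern-type surgery formula for $\adam$ under knot surgery on a cuspidal torus, expressing $\adam(\X_p^K)$ as the Heegaard Floer data of the cobordism $\W$ (iterated $p$ times) paired against the knot invariant of $K$. The key feature that separates $\adam$ from the Seiberg--Witten, Bauer--Furuta, and mixed Heegaard Floer invariants is that it is built at the level of $\HFh$/$\HFKh$ rather than from chain-level mixed maps, so the surgery contribution is controlled by $\dim\HFKh(K)$ rather than by $\Delta_K(t)$. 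Concretely, I would expect $\adam(\X_p^K)$ to factor as $(\dim\HFKh(K))\cdot c_p$ for some nonzero constant $c_p$ depending only on $\W$ and $p$, computed handle-by-handle from the pretzel-surgery description of $\W$. Once such a formula is in hand, the nontrivial cases $\dim\HFKh(K)\geq 3$ distinguish $\X_p^K$ from $\X_p$ (which corresponds to $\dim\HFKh(U)=1$), and the distinct values of $\dim\HFKh$ distinguish the various $\X_p^K$ from each other.

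The main obstacle is proving the surgery formula, together with verifying nonvanishing of the baseline quantity $c_p$. Mixed invariants are blind to knot surgery by an Alexander-polynomial-one knot, so the entire point of $\adam$ is to see past that vanishing; making this precise requires a chain-level analysis of the $\HFh$ cobordism maps contributed by the $2$-handle attachments in $\W$, a careful accounting of $\Spinc$ structures stacked across the $p$ copies of $\W$, and a demonstration that the ``fishtail $\times$ knot'' piece replaces a piece of the Heegaard Floer computation with a copy tensored by $\HFKh(K)$. The bookkeeping aspects (spin$^c$ identification, nonvanishing of $c_p$) and the identification of the Fintushel--Stern contribution with $\HFKh(K)$ are where most of the work is concentrated.
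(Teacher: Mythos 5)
Your overall strategy (topological classification plus an $\adam$-type invariant governed by $\dim\HFKh(K)$) points in the right direction, but the heart of the proof is missing: you defer everything to a hoped-for ``Fintushel--Stern-type surgery formula for $\adam$,'' and the paper proves no such formula. The actual mechanism is different. First, knot surgery along the fishtail torus is not treated as a black-box operation on $\X_p$: by Proposition~\ref{prop:fishtail} and Construction~\ref{constr:pi1Z}, performing knot surgery with $K$ along $T$ literally replaces every building block $B^-_{Q_n}$, $T_{Q_3}$, $T_{P_0}$ by the corresponding block for the satellite knots $Q_n^K$, $P_n^K$. Since these satellites are still fusion number~$1$ ribbon (hence slice) knots, the exact-triangle arguments of Propositions~\ref{prop:twist-iso} and~\ref{prop:tye-iso} apply verbatim, so $\W^K_p$ induces an isomorphism on $\HFred(S^3_{-1}(Q_{-4}^K))$ with no chain-level analysis and no spin$^c$ bookkeeping (that bookkeeping is only needed later, for the Ozsv\'ath--Szab\'o trace argument distinguishing $\X$ from $\X^\sigma$). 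Second, Proposition~\ref{prop:adamhat-iso} then pins the invariant to a single cross-section: $\adamhat(\X_p^K)=\dim\HFh(S^3_{-1}(Q_{-4}^K))$. Note this is independent of $p$, which already shows your guessed form $(\dim\HFKh(K))\cdot c_p$ is off; the correct formula, \eqref{eq:adam(Xpk)}, is $\adamhat(\X_p^K)=1+4\dim\HFKh(K)$, affine rather than multiplicative.

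Third, the identification of that hypersurface invariant with $\dim\HFKh(K)$ is not a formal ``tensor with $\HFKh(K)$'' principle but an actual computation (Proposition~\ref{prop:HF-Qn}, proved in Section~\ref{sec:bordered}): one computes the bordered/immersed-curve invariant of the complement of the band meridian $\gamma$ in $S^3_1(P_{-1})$ --- via a Kirby-calculus identification with a knot complement in $S^1\times S^2$ and a $\CFDD$ tensor computation --- finding one solid-torus-like component and two figure-eight components, and then pairs with $\HFh$ of the knot exterior to get $1+4\dim\HFKh(K)$. Without steps of this kind your proposal is a plan, not a proof: the ``surgery formula'' you flag as the main obstacle is exactly what is never proved in general, and is circumvented by the satellite reinterpretation plus the bordered computation. (On the topological side, the paper also proceeds more directly than your fishtail/Van Kampen route: $\W^K$ is simply connected with intersection form $2\gen{1}\oplus 9\gen{-1}$ for every $K$, so Remark~\ref{rem:pi1Z} and Proposition~\ref{prop:pi1Z} give the homeomorphism to $S^1\times S^3 \conn 2p\cptwo\conn 9p\cptwobar$ at once; your route would additionally have to control the equivariant intersection form over $\Z[\Z]$, not just $\pi_1$, $w_2$, and the ordinary form.)
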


\begin{remark}
In particular, by performing knot surgery on $T$ using Whitehead doubles \cite{HeddenWhitehead}, we find an infinite family of exotic 4-manifolds related by Fintushel--Stern knot surgery using knots with Alexander polynomial 1.
Fintushel and Stern showed in some settings that the knot surgery operation changes the Seiberg--Witten invariants by multiplication by the Alexander polynomial of $K$.  (See also Mark \cite{MarkSurfaces} for a similar result for the Ozsv\'ath--Szab\'o 4-manifold invariant.)  
 It was previously unknown whether knot surgery with Alexander polynomial 1 knots can produce exotica.
\end{remark}

Second, we may produce additional exotica using a ``circle sum'' operation. For any $3$-manifold $M$, let $\X_p(M)$ denote the manifold obtained from $\X_p(M)$ by deleting a neighborhood of a circle generating $\pi_1(\X_p)$, and gluing in $(M - B^3) \times S^1$ (see Construction \ref{constr:circlesum} for the gluing map).

\begin{theorem}\label{thm:zerospheresintro}
Let $M$ be a 3-manifold with $b_1 = 0$,  or $S^2 \times S^1$, or $T^3$.  Then $\X_p(M)$ is an exotic $(M \times S^1) \conn 2p \cptwo \conn 9p \cptwobar$.  In particular, if $M = S^2 \times S^1$, then both $\X_p(M)$ and $(M \times S^1) \conn 2p \cptwo \conn 9p \cptwobar$ contain embedded homologically essential spheres with self intersection zero; thus, the Bauer--Furuta, Seiberg--Witten, and Donaldson invariants of all of these 4-manifolds vanish, as do those of all finite covers.
\end{theorem}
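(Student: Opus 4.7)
For the topological identification I would proceed by ``carrying the circle sum across'' a topological homeomorphism. By Theorem~\ref{thm:pi1Z-intro}, there is a topological identification $\X_p \approx (S^1\times S^3)\conn 2p\cptwo\conn 9p\cptwobar$, and under it the circle used in Construction~\ref{constr:circlesum} is topologically isotopic to $S^1\times\{\mathrm{pt}\}$ inside the $S^1\times S^3$ summand, with its standard product framing. Since $(S^1\times S^3)\setminus (S^1\times B^3)\cong S^1\times B^3$, excising this tubular neighborhood and regluing $(M\setminus B^3)\times S^1$ reconstitutes an $M\times S^1$ summand, yielding $\X_p(M)\approx_{\mathrm{top}} (M\times S^1)\conn 2p\cptwo\conn 9p\cptwobar$. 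The only check needed is that the framing specified in Construction~\ref{constr:circlesum} agrees topologically with the product framing, which follows from triviality of the relevant normal bundle together with topological disk-embedding and classification results for the fundamental groups appearing ($\Z$, $\Z^2$, $\Z^4$, or $\pi_1(M)\times\Z$ with $M$ a $\Q$-homology sphere).

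For the smooth distinction I would appeal to the invariant $\alpha$ of Definition~\ref{def:adam-intro}, which is defined whenever $b_1>0$; in each of the three listed cases $b_1(\X_p(M))\geq 1$, so $\alpha$ applies. The core step is a computation showing that $\alpha$ transforms in a controlled way under the circle sum: the relevant part of $\alpha(\X_p(M))$, evaluated against the $H_1$-class dual to the ``stacking'' circle of $\X_p$, should decompose via a K\"unneth-type formula in terms of $\alpha(\X_p)$ and the Heegaard Floer data of $M$, with the same formula applying to $(M\times S^1)\conn 2p\cptwo\conn 9p\cptwobar$. Since Theorem~\ref{thm:pi1Z-intro} already shows that $\alpha$ separates $\X_p$ from its standard model, this would immediately yield the desired distinction. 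The main obstacle I anticipate is verifying that the new $H_1$-classes introduced when $b_1(M)>0$ (three new generators for $T^3$, one for $S^2\times S^1$) do not collapse the distinguishing contribution from $\X_p$. This should follow from the fact that these new classes sit ``transversely'' to the stacking circle and so act trivially on the relevant summand of the Heegaard Floer complex, but spelling this out carefully is where the real technical work lives.

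Finally, for the zero-sphere claim with $M=S^2\times S^1$, observe that $M\times S^1=S^2\times T^2$ contains $S^2\times\{\mathrm{pt}\}$ as a homologically essential sphere with self-intersection zero, and a parallel translate of this sphere lies inside $(M\setminus B^3)\times S^1$ disjoint from the attaching region of Construction~\ref{constr:circlesum}. Hence both $\X_p(M)$ and $(M\times S^1)\conn 2p\cptwo\conn 9p\cptwobar$ contain an embedded homologically essential sphere of self-intersection zero. Since $b_2^+=2p\geq 2$, the standard vanishing results (Witten, Fintushel--Stern, and Morgan--Szab\'o--Taubes for Seiberg--Witten; Bauer for Bauer--Furuta; Kotschick--Morgan--Taubes for Donaldson) imply all three invariants vanish on both manifolds. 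The same conclusion holds for every finite cover, since the preimage of such a sphere in a finite cover is a disjoint union of essential self-intersection-zero spheres, which is enough to run the same vanishing arguments on the cover.
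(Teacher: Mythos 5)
Your topological identification follows the paper's route (homotopy implies isotopy puts $\Gamma$ on $S^1\times\{\pt\}$, then carry the circle sum across), but your framing step is off as stated: the framing on $\Gamma$ need not agree with the product framing, and the paper does not prove agreement — it corrects any discrepancy by composing with the self-homeomorphism of $S^1\times S^3 \conn 2p\cptwo\conn 9p\cptwobar$ that Dehn twists the $S^3$ factor as one traverses the $S^1$ direction. That is a small repair. The genuine gap is in the smooth distinction. You propose that $\adam$ should satisfy a K\"unneth-type transformation rule under circle sum, applied identically to $\X_p(M)$ and to the standard model, and you explicitly defer ``the real technical work.'' But $\adam$ is a minimum over all non-separating hypersurfaces in a class; no such formula is established or obviously true, and ``Theorem~\ref{thm:pi1Z-intro} separates $\X_p$ from its model, hence the circle sums are separated'' is not an argument. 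In particular, a hypothetical diffeomorphism need not preserve your chosen $H_3$-class, so you must control the invariant of the standard model in \emph{every} primitive class, not just compare one class on each side.

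The paper's proof supplies exactly the two missing ingredients. First, an upper bound uniform over classes: for the listed $M$, every primitive element of $H_3((M\times S^1)\conn 2p\cptwo\conn 9p\cptwobar)$ is represented by an embedded copy of $M$ (immediate when $b_1(M)=0$; for $S^2\times T^2$ and $T^4$ every primitive class of $H_3$ is represented by $S^2\times S^1$, respectively $T^3$, and this persists after blowups). This is what defuses your worry about the extra $H_1$-classes when $b_1(M)>0$. Second, a lower bound for $\X_p(M)$ in the class $\eta$ of $S^3_{-1}(Q_{-4}^K)\conn M$: the paper describes $\X_p(M)$ as the closure of the pathwise connected sum of $\W_p^K$ with $M\times I$, a self-cobordism of $M\conn S^3_{-1}(Q_{-4}^K)$, and uses the behavior of cobordism maps under connected sum (\cite[Proposition 4.4]{OSzAbsolute}) to see that this cobordism map and all its powers are injective on the submodule $\HFred(S^3_{-1}(Q_{-4}^K))\otimes_{\F[U]}\HFm(M)$; re-running the infinite-cyclic-cover argument of Proposition~\ref{prop:adam-iso} then gives $\adam(\X_p(M),\eta)\ge \dim_\F\bigl(\HFred(S^3_{-1}(Q_{-4}^K))\otimes_{\F[U]}\HFm(M)\bigr) > \dim_\F \HFred(M)$, which contradicts the upper bound if the manifolds were diffeomorphic. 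Without these two steps your outline does not yield the distinction. Your final paragraph — the square-zero essential sphere $S^2\times\{\pt\}$ in both manifolds and the resulting vanishing of Seiberg--Witten, Bauer--Furuta, and Donaldson invariants, also in finite covers — is correct and standard, and is essentially all the paper says about that part.
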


Finally, by inserting copies of the cobordism $\W$ from Construction \ref{constr:pi1Z-intro} into the manifold $V$ from Construction \ref{constr:simplyconn-intro}, we prove:
\begin{theorem}\label{thm:unlabeledthmintro}
For any $p \ge 0$, there exists an exotic $\conn (2p+1) \cptwo \conn 9(p+1) \cptwobar$.
\end{theorem}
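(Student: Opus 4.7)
The plan is to construct $\E_p$ by inserting $p$ stacked copies of the cobordism $\W$ from Construction~\ref{constr:pi1Z-intro} into one half of the gluing $\E = V \cup_\zeta V$, and to show that the resulting closed 4-manifold is homeomorphic to $(2p+1)\cptwo \conn 9(p+1)\cptwobar$ but remains exotic by reusing the Heegaard Floer obstruction developed for Theorem~\ref{thm:main}. Concretely, since both ends of $\W$ are identified with $S^3_{-1}(Q_{-4}) = \partial C$, I form $V'_p$ by stacking $p$ copies of $\W$ on top of $C$ and then attaching the same five $-1$-framed 2-handles used in $V'$; the boundary is again $S^3_0(Q_0)$. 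Cutting $C$ out of $V'_p$ and regluing by the Dai--Hedden--Mallick involution $\sigma$ produces $V_p$, and I set $\E_p := V \cup_\zeta V_p$, which recovers $\E$ when $p=0$.

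Identifying the homeomorphism type is largely formal. Simple connectivity of $\E_p$ follows from Seifert--van Kampen: $C$ is contractible, $\W$ is simply connected, each additional $-1$-framed 2-handle preserves simple connectivity, and gluing two simply connected pieces over $S^3_0(Q_0)$ (whose $\pi_1 \cong \Z$ maps trivially into both sides) yields a simply connected total space. Since $\W$ has $b_2^+ = 2$ and $b_2^- = 9$ and $S^3_{-1}(Q_{-4})$ is a rational homology sphere, a Mayer--Vietoris computation (together with Novikov additivity of the signature) gives $b_2^+(\E_p) = 2p+1$ and $b_2^-(\E_p) = 9(p+1)$; the intersection form is odd because it contains the odd form of $\E$ as an orthogonal summand. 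Freedman's classification of closed, simply connected topological 4-manifolds then identifies $\E_p$ with $(2p+1)\cptwo \conn 9(p+1)\cptwobar$.

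The main obstacle is the smooth obstruction. I would reuse the Heegaard Floer invariant that distinguished $\E$ from $\cptwo \conn 9 \cptwobar$ in Theorem~\ref{thm:main}: the obstruction arises from the failure of $\sigma$ to extend smoothly across $C$, detected via relative invariants of the two halves of $\E$ before and after the $\sigma$-reglue. Because the inserted copies of $\W$ sit entirely on the far side of $C$ from the $\sigma$-cut, their effect on the closed-manifold invariant of $\E_p$ factors through the cobordism map $\W_\ast$ on $\HFm(S^3_{-1}(Q_{-4}))$. The hardest step is verifying that $p$-fold iteration of $\W_\ast$ preserves the distinguishing class---equivalently, that $\W_\ast$ acts non-trivially on the summand carrying the $\E$-obstruction. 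This should be extractable from the explicit analysis of $\W$ already required for Theorem~\ref{thm:pi1Z-intro} (where $\W$ controls the invariants of $\X_p$ in the $\pi_1 = \Z$ setting); once that is in place, the non-vanishing obstruction for $p=0$ propagates through all $p \geq 0$, completing the proof.
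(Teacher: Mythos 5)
Your construction is the same as the paper's: the paper's proof of this theorem also forms $\E_n$ by inserting $n$ copies of the cobordism $\W$ from Construction~\ref{constr:pi1Z} between $C$ and the $2$-handle region in one copy of $V$, identifies the homeomorphism type exactly as you do, and then detects exoticity with the closed invariant built from relative invariants of the two halves paired along $S^3_0(Q_0)$. So the overall strategy is right.

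The gap is in how you propose to carry out the key Floer-theoretic step. The analysis of $\W$ used for Theorem~\ref{thm:pi1Z-intro} only shows that the \emph{total} map $F_{\W}$ (summed over spin$^c$ structures) is an isomorphism on $\HFred(S^3_{-1}(Q_{-4}))$ (see Remark~\ref{rmk:easy-adam-argument}); that is not enough here. To get a nonzero closed invariant $\Phi_{\E_p,\spinct,L}$ one must exhibit a \emph{single} spin$^c$ structure on the modified half whose relative invariant is nonzero and restricts to $S^3_0(Q_0)$ in one of the extremal classes, so that Lemma~\ref{lemma:genus2} forces the pairing to be nonzero. This requires the spin$^c$-refined bookkeeping of Lemmas~\ref{lemma:twist-spinc} and~\ref{lemma:tye-spinc} (the same analysis used in Propositions~\ref{prop:Q0-exotic-filling} and~\ref{prop:pi1Z-trace}), applied through the inserted copies of $\W$, not merely the isomorphism of $F_\W$ on $\HFred$. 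Moreover, your sketch never says how the $\sigma$-twist is actually detected: in Proposition~\ref{prop:Q0-exotic-filling} the "distinguishing class" $v_1+v_3$ only emerges after one shows the \emph{untwisted} half has vanishing relative invariant, which the paper gets from the essential $0$-framed sphere of Remark~\ref{rem:V-standard}; an analogous vanishing statement for the untwisted half with the $\W$'s inserted is needed and is not addressed. Finally, the endgame should not be "the $p=0$ obstruction propagates": the $p=0$ case of Theorem~\ref{thm:simplyconn} uses a minimal-genus bound together with Wall's realization theorem because $b^+=1$, whereas for $p\ge 1$ one has $b^+_2(\E_p)\ge 3$, and the paper instead concludes via Remark~\ref{rem:OSz-invt} that the ordinary Ozsv\'ath--Szab\'o invariant of $\E_p$ is nonzero while that of $(2p+1)\cptwo\conn 9(p+1)\cptwobar$ vanishes by the connected-sum vanishing theorem. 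Spelling out these three points would close the argument.
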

These phemonena are not new, but we use this as a means of demonstrating the versatility of our approach.

\subsection{New invariants for smooth 4-manifolds}

The proofs of Theorems \ref{thm:main} and \ref{thm:unlabeledthmintro} rely on a computation of the Ozsv\'ath--Szab\'o closed 4-manifold invariant \cite{OSz4Manifold, OSzSymplectic}, which is essentially the Heegaard Floer analogue of the Seiberg--Witten invariant. However, the remaining theorems rely on a new and simpler invariant of closed 4-manifolds with $b_1 > 0$, which we now describe.  The construction is flexible, so we define it in a broad setting.   To keep the introduction simple, assume that $b_1 = 1$ and let $F$ denote a Floer homology theory which assigns a finite-dimensional vector space to each closed, oriented 3-manifold, e.g. $\HFh$, $\HFred$, $I^\sharp$, etc.

\begin{definition}\label{def:adam-intro}
Fix a finite-dimensional Floer homology theory $F$.  Let $X$ be a closed, oriented 4-manifold with $b_1(X) = 1$.  Define $\adam(X)$ to be the minimum dimension of $F(Y)$,  where $Y$ is a connected, non-separating hypersurface in $X$.
\end{definition}

For manifolds with $b_1(X)>1$, this invariant depends on a choice of primitive class in $H^1(X)$.  In general, it appears difficult to compute $\adam(X)$ since it requires taking a minimum over all of the non-separating hypersurfaces. However, we show in Proposition~\ref{prop:adam-iso} that for $F = \HFred$, if the cobordism $W \co Y \to Y$ gotten by cutting $X$ along $Y$ induces an isomorphism on Floer homology, then $\adam(X) = \dim \HFred(Y)$. The proof of Theorem \ref{thm:pi1Z-intro} is then surprisingly elementary: The surgery exact triangle implies that each 2-handle attachment in Construction~\ref{constr:pi1Z-intro} induces an isomorphism on $\HFred$, and thus the entire cobordism $\W$ does as well. We thus deduce that
\[
\adam(\X_p) \neq 0,
\]
while $\adam(S^1 \times S^3 \conn 2p \cptwo \conn 9p \cptwobar) = 0$, and hence $\X_p$ is exotic. This argument does not require any specific Heegaard Floer homology computations other than for $S^3, S^2 \times S^1$, and the fact that $-1$-surgery on a nontrivial slice knot has non-vanishing Floer homology. (See Remark \ref{rmk:easy-adam-argument}.) Moreover, this strategy would work equally well using instanton or monopole Floer homology in place of Heegaard Floer homology.  (We point out that our $\adam$ invariant is very closely related to Gadgil's direct limit invariant applied to the infinite cyclic cover \cite{Gadgil}.)

For Theorem \ref{thm:knot-surgery-intro}, we use an $\adam$ invariant defined in terms of $\HFh$, which we compute precisely for $\X^K_p$ in terms of $\dim \HFKh(K)$ (see \eqref{eq:adam(Xpk)}). This relies on an extensive computation using bordered Heegaard Floer homology, given in Section \ref{sec:bordered}.

\begin{remark}
Ciprian Manolescu pointed out to us that this is a rare instance in which $\HFh$ can detect exotic smooth structures on closed 4-manifolds.  However, our computation for $\X_p^K$ currently relies implicitly on the existence of the $\HFred$ theory (see Proposition~\ref{prop:adamhat-iso}), although this could plausibly be avoided.
\end{remark}

\begin{remark} \label{rem:reglue}
We can also produce exotic pairs with the same $\adam$-invariant but different Ozsv\'ath--Szab\'o invariants. For example, we can cut $\X_p$ along $S^3_{-1}(Q_{-4})$ and reglue using the involution $\sigma$ that was also used in Construction \ref{constr:simplyconn-intro}. The resulting manifold is diffeomorphic to neither  $\X_p$ nor $S^1 \times S^3 \conn 2p \cptwo \conn 9p \cptwobar$.  (See Proposition \ref{prop:pi1Z-trace}.)
\end{remark}

\begin{remark}
The idea to capture 4-manifold invariants by the exact triangle was used originally by Fintushel and Stern in \cite{FSK3} to compute Donaldson invariants of the $K3$ surface, which was then redone by Ozsv\'ath and Szab\'o in the context of Heegaard Floer homology \cite{OSzSymplectic}.
\end{remark}

\subsection{Additional results}
Finally, we collect a few other results that appear throughout the paper that may be of independent interest.

First, the homeomorphisms $\phi_n, \psi_n$ used above are special cases of a more general observation about knots with homeomorphic Dehn surgeries, shown below in Section~\ref{subsec:fusion}:
\begin{theorem} \label{thm:fusion-duals-intro}
Let $K$ be a fusion number 1 ribbon knot.  Then there exists another fusion number 1 ribbon knot $J$ such that $S^3_1(K) = S^3_{-1}(J)$.
\end{theorem}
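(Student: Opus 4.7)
The plan is to use Kirby calculus. Write $K$ as the band sum of a two-component unlink $U_1 \cup U_2 \subset S^3$ via a single band $B$, and let $\gamma \subset S^3 \smallsetminus K$ denote a meridian of $B$. The idea is to build a 4-manifold whose boundary is $S^3_1(K)$ and give it two handle descriptions: one as $B^4 \cup_{K,+1} h^2$, and another after a sequence of slides, as $B^4 \cup_{J,-1} h^2$ for a new knot $J$ which inherits its own fusion-number-1 ribbon structure.

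First I would consider the trace 4-manifold $X = B^4 \cup_{K,+1} h^2$, whose boundary is $S^3_1(K)$. The ribbon disk $D \subset B^4$ of $K$ together with the core of the 2-handle glue along $K$ to a smoothly embedded sphere $S \subset X$. Since $D$ carries the Seifert framing of $K$ and the 2-handle has framing $+1$, one has $[S]^2 = +1$, so $S$ is an exceptional sphere and $X \cong X_0 \conn \cptwo$, where $X_0$ is a smooth homotopy 4-ball with $\partial X_0 = S^3_1(K)$. In particular, $S^3_1(K)$ now bounds a smooth, simply connected 4-manifold independent of the $+1$ handle attachment.

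Next I would re-present $X$ by introducing a canceling 2-/3-handle pair: attach a $0$-framed 2-handle along $\gamma$, together with a 3-handle that cancels it. Then perform a handle slide of the $+1$-framed 2-handle over this new $0$-framed 2-handle using a band parallel to $B$. Because $\gamma$ meets the disk bounded in $S^3$ by $K$'s ribbon band in a pair of algebraically canceling points, the framing change during the slide is controlled; a judicious choice of slides exchanges the exceptional $+1$-sphere $S$ for a new $-1$-sphere formed from the auxiliary 2-handle and part of the ribbon disk, at the cost of replacing the attaching curve $K$ with a new knot $J$ with framing $-1$. The 3-handle then cancels with a residual 2-handle of the ribbon decomposition, leaving a single $-1$-framed 2-handle along $J$. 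Retracing the slides exhibits $J$ as the band sum of a new two-component unlink via a single band dual to $B$, so $J$ is itself a fusion-number-1 ribbon knot, and $S^3_1(K) = \partial X = \partial(B^4 \cup_{J,-1} h^2) = S^3_{-1}(J)$.

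The main obstacle is the detailed Kirby-calculus bookkeeping, specifically verifying the framing change from $+1$ to $-1$ and that the resulting attaching curve $J$ genuinely admits a fusion-number-1 ribbon structure. The framing shift reflects trading the $+1$-sphere $S$ for a $-1$-sphere built from the auxiliary handle on $\gamma$, while the ribbon structure of $J$ follows from the symmetric roles played by the band $B$ and its meridian $\gamma$ in the final diagram. The specialization of this construction to the pretzel families $P_n$ and $Q_n$ (Proposition \ref{prop:PQdual}) provides a concrete template: the band meridian $\gamma$ in Figure \ref{fig:PQ} is precisely the curve over which the slide is performed, and inspecting the resulting parameter shifts should suggest the general framing computation.
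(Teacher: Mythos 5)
Your step 2 is where the argument breaks down, and the obstruction is homological. Handle slides and the creation or cancellation of genuinely canceling pairs preserve the oriented diffeomorphism type, so if your moves were legitimate they would show $B^4 \cup_{K,+1} h^2 \cong B^4 \cup_{J,-1} h^2$; but the $+1$-trace of $K$ has intersection form $\langle 1 \rangle$ while the $-1$-trace of $J$ has intersection form $\langle -1 \rangle$, so no sequence of such moves can relate them (in the paper's examples, $X_1(P_n)$ and $X_{-1}(Q_{n-4})$ have opposite signatures; only their boundaries agree). For the same reason you cannot ``trade'' the $+1$-sphere $S$ for a $-1$-sphere inside one and the same manifold. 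The specific move you propose is also not a legal one: a $0$-framed $2$-handle along $\gamma$ together with a $3$-handle is \emph{not} a canceling pair, since $\gamma$ does not bound a disk in $S^3_{1}(K)$ --- as noted in the remark following Definition \ref{def:fusion1}, $\gamma$ has genus $1$ in $S^3_{\pm1}(K)$, and $0$-surgery on it yields $S^1 \times S^2$. Attaching that $2$-handle therefore changes the boundary to $S^1 \times S^2$, and the $3$-handle changes it to $S^3$, so after these ``moves'' you are no longer presenting a manifold bounded by $S^3_1(K)$ and the concluding equality $\partial X = \partial\bigl(B^4 \cup_{J,-1} h^2\bigr)$ does not follow. (There is also no ``residual 2-handle of the ribbon decomposition'' available to cancel the $3$-handle: your decomposition of $X$ has only the single handle along $K$.)

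The correct continuation is essentially your step 1 pushed further: the manifold to manipulate is not the trace but the contractible piece $X_0$ obtained by blowing down $S$, which is the paper's $C^+_K$, i.e.\ the ribbon disk exterior with a $-1$-framed $2$-handle attached along the meridian $\mu_K$ (Definition \ref{def:contract}). Since $C^+_K$ has trivial second homology, there is no obstruction to its admitting a second description of the dual form $C^-_J$ (a ribbon disk exterior of another fusion number 1 knot with a $+1$-framed meridional $2$-handle), and this is exactly what the paper does: Definition \ref{def:fusion-duals} produces the knot $J = K^*$ by an explicit local modification of the band diagram (Figure \ref{fig:refcalc}), and Proposition \ref{prop:fusion-duals} gives a diffeomorphism $C^+_K \cong C^-_{K^*}$ by concrete Kirby moves --- $2$-handle slides, canceling and reintroducing $1$-$2$ pairs, and twisting a $1$-handle (Figure \ref{fig:extends}) --- which moreover carries $\gamma_K$ to $\gamma_{K^*}$. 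Restricting this diffeomorphism to the boundary yields $S^3_1(K) \cong S^3_{-1}(K^*)$, which is the statement of the theorem.
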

Note that the Cosmetic Surgery Conjecture predicts that $K$ and $J$ cannot be isotopic, unless $K$ and $J$ are unknotted.  In fact, using the invariant $\adam(X)$, a counterexample to this conjecture would produce an interesting exotic 4-manifold via the following:
\begin{proposition}\label{prop:cosmetic-exotic}
If there exists a non-trivial knot $K$ such that $S^3_1(K) = S^3_{-1}(K)$, then there exists an exotic $S^1 \times S^3 \conn S^2 \times S^2$.
\end{proposition}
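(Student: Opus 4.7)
Given a nontrivial knot $K$ with $\phi \co S^3_1(K) \to S^3_{-1}(K)$, the plan is to build a closed oriented 4-manifold $X$ that is homeomorphic to $S^1\times S^3 \conn S^2\times S^2$ but not diffeomorphic to it, by computing $\adam(X)\neq 0$ via Proposition~\ref{prop:adam-iso}.

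\emph{Construction of $X$.} Let $Y=S^3_{-1}(K)$ and form the surgery-exact-triangle cobordism $W_0\co Y\to S^3_1(K)$ by attaching two $(-1)$-framed $2$-handles along successive meridians of $K$ (one in $Y$, one in $S^3_0(K)$), exactly the sort of cobordism used throughout Construction~\ref{constr:pi1Z-intro}. Using $\phi$, reinterpret $W_0$ as a self-cobordism $W\co Y\to Y$, and close $W$ up by identifying its two boundary copies of $Y$ via the identity to obtain the closed oriented $X$. Following the multi-handle template of Construction~\ref{constr:pi1Z-intro}, insert if necessary a pair of additional $0$-framed $2$-handles along a Hopf link placed in a local ball so that the resulting intersection form is the even hyperbolic form $H$ rather than the naive diagonal $(-1)\oplus(-1)$.

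\emph{Topological type.} A meridian of $K$ normally generates $\pi_1(Y)$, so the $2$-handle attachments kill $\pi_1$ and give $\pi_1(W)=1$; closing up via the identity then yields $\pi_1(X)=\Z$ by van Kampen. Since $Y$ is an integer homology sphere, Mayer--Vietoris gives $H_2(X)=H_2(W)=\Z^2$, and the intersection form is read off from the linking matrix of the attaching circles, arranged to be hyperbolic. Because $X$ is spin and smooth, the Kirby--Siebenmann invariant vanishes, and Freedman's classification identifies $X$ with $S^1\times S^3 \conn S^2\times S^2$.

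\emph{Distinguishing $X$ via $\adam$.} Apply Definition~\ref{def:adam-intro} with $F=\HFred$. The standard $S^1\times S^3 \conn S^2\times S^2$ has a non-separating $S^3$, so its $\adam$ equals $\dim\HFred(S^3)=0$. For $X$, the non-separating hypersurface $Y$ cuts back to $W$; each meridional $2$-handle attachment is exactly the kind of cobordism whose surgery exact triangle has third vertex $\HFred(S^3)=0$, so it induces an isomorphism on $\HFred$, and hence so does the composition $W$. By Proposition~\ref{prop:adam-iso},
\[
\adam(X)\;=\;\dim\HFred\bigl(S^3_{-1}(K)\bigr).
\]
Any nontrivial cosmetic counterexample $K$ must have $g(K)\geq 2$ (by the known constraints ruling out small-genus counterexamples to the cosmetic surgery conjecture), which forces $S^3_{-1}(K)$ not to be an L-space, so the right-hand side is positive. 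Therefore $\adam(X)>0$, and $X$ is exotic.

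\emph{Main obstacles.} The delicate steps are (i) arranging the 2-handle data so that the intersection form of $X$ is in fact the even hyperbolic form, rather than the odd diagonal form produced by the naive pair of meridional handles, and (ii) knowing that $\HFred(S^3_{-1}(K))$ is nonzero for every nontrivial $K$ admitting an orientation-preserving $\pm 1$ cosmetic; both points are tractable but require combining the surgery-triangle construction with the structural constraints on any hypothetical counterexample to cosmetic surgery.
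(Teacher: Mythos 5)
Your construction is in essence the paper's: the cobordism you describe is exactly $T_K \co S^3_{-1}(K) \to S^3_1(K)$ from Definition \ref{def:Tye-cob}, closed up using the hypothesized identification, with exoticness detected by $\adam$ via Proposition \ref{prop:adam-iso}. But two steps as written do not hold up. First, the intersection-form discussion is off. By Remark \ref{rem:TJ}, $H_2(T_K) \cong \Z^2$ is generated by a capped-off Seifert surface $\Xi$ and a sphere $\Theta$ built from an annulus between the two meridians plus the handle cores, with $\Xi^2 = 0$, $\Theta^2 = -2$, $\Xi \cdot \Theta = 1$; this form is already even, unimodular and indefinite, hence hyperbolic, so no correction is needed. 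Your proposed fix would in fact be fatal: inserting a $0$-framed Hopf pair in a local ball connect-sums with $S^2 \times S^2$, raising $b_2$ to $4$ (and leaving any odd part odd), so the closed manifold would no longer be homeomorphic to $S^1 \times S^3 \conn S^2 \times S^2$; no local handle insertion can turn an odd rank-two form into the even hyperbolic form while keeping the target homeomorphism type. (Also, for $\pi_1 = \Z$ the homeomorphism classification is by the \emph{equivariant} intersection form, as in the proof of Proposition \ref{prop:pi1Z}, not just Freedman's simply connected theorem plus Kirby--Siebenmann; the splitting $H_2(X;\Z[\Z]) \cong H_2(T_K) \otimes \Z[t,t^{-1}]$ coming from $Y$ being a homology sphere is what makes this work.)

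Second, and more seriously, the assertion that each meridional $2$-handle attachment ``induces an isomorphism on $\HFred$ because the third vertex is $\HFred(S^3) = 0$'' is not automatic. The surgery exact triangle holds for $\HFmc$, and exactness of the restricted sequence on $\HFred$ (Lemma \ref{lemma:HFred-exact}, hence Proposition \ref{prop:tye-iso}) requires the $d$-invariant hypotheses $d(S^3_{-1}(K)) = d(S^3_1(K)) = 0$; for an arbitrary nontrivial knot these can fail, and then the $U$-tower parts of the triangle can absorb or emit $\HFred$ classes, so $F_{T_K}$ need not be an isomorphism on $\HFred$. The paper closes this gap using the cosmetic hypothesis itself: by \cite[Theorem 1.12]{OSzAbsolute}, $d(S^3_{-1}(K)) \ge 0$ and $d(S^3_1(K)) \le 0$, and since these manifolds are orientation-preservingly homeomorphic, both $d$-invariants vanish, so Proposition \ref{prop:tye-iso} applies. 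Finally, your nonvanishing argument via $g(K) \ge 2$ (quoting constraints on cosmetic surgery counterexamples, plus the bound that a positive L-space slope is at least $2g-1$) can be made to work, but it is heavier than necessary: if both $S^3_1(K)$ and $S^3_{-1}(K)$ were L-spaces, then $K$ and its mirror would both be L-space knots, forcing $K$ to be unknotted; hence $\HFred(S^3_{\pm 1}(K)) \ne 0$ for every nontrivial $K$, with no appeal to the cosmetic surgery literature.
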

Similarly, we outline a potential strategy for constructing an exotic $S^2 \times S^2$ or $\cptwo \conn \cptwobar$ in Remark \ref{rem:S2xS2}.

Second, as a consequence of the techniques in this paper, we obtain a new example of a smooth 4-manifold that is homotopy equivalent to $S^2$ but does not contain a piecewise-linear spine. (Recall that a PL spine is a PL submanifold which carries the homotopy type.) See \cite{LevineLidman, HaydenPiccirillo} for the first known constructions.

\begin{theorem}\label{thm:spineless}
There exists a smooth manifold that is homeomorphic to the $0$-trace of the $P(3,-3,-8)$ pretzel knot and admits no PL spine.
\end{theorem}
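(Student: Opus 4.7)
The plan is to produce an exotic smooth structure $X$ on the $0$-trace $X_0(Q_{-4})$ of $Q_{-4} = P(3,-3,-8)$ and to show that $X$ admits no smoothly embedded $S^2$ representing the generator of $H_2(X)\cong\Z$; since PL and smooth spines coincide in dimension four, this yields the desired spineless manifold.

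To build $X$, I would perform a cork twist on the cork $C$ of Construction \ref{constr:simplyconn-intro} and then attach suitable handles to realize the topology of $X_0(Q_{-4})$. Concretely, recall that $C = Y \cup h^2_{\mu,-1}$, where $Y = B^4 \setminus \nu(\mathcal{D})$ is the ribbon-disk complement and $\mu$ is a meridian of $\mathcal{D}$; I would identify a handle attachment taking $C$ to a smooth $4$-manifold diffeomorphic to $X_0(Q_{-4})$ (an appropriate $2$-handle along $\mu$, plus any cancelling handle needed to trivialize the intersection form). Applying the same handle attachment to the $\sigma$-twisted cork $C_\sigma$ instead produces $X$. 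Since cork twisting preserves topological type ($\sigma$ extends over $C$ topologically by Freedman), $X$ is homeomorphic to $X_0(Q_{-4})$.

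To obstruct a smooth sphere spine, suppose for contradiction that $S \subset X$ is a smooth $S^2$ with $[S]^2 = 0$ generating $H_2(X)$. Following a trace-embedding argument in the spirit of Hayden--Piccirillo, surgering $X$ along $S$ produces a smooth slice-disk exterior $Y'$ with $\partial Y' = S^3_0(Q_{-4})$ realizing a smooth slice disk for some knot $K \subset S^3$ with $S^3_0(K) = S^3_0(Q_{-4})$ in a smooth homotopy $4$-ball. Reattaching a $-1$-framed $2$-handle along the meridian of $K$ then produces a smooth contractible $4$-manifold $C'$ with $\partial C' = S^3_{-1}(K)$; by the cork-twist construction of $X$, $C'$ must coincide with $C_\sigma$ up to diffeomorphism. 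But Dai--Hedden--Mallick show that $C_\sigma \not\cong C$ smoothly, while a Heegaard Floer comparison of $Y'$ with the standard ribbon-disk exterior of $Q_{-4}$ (after identifying $K$ with $Q_{-4}$) forces $C' \cong C$ smoothly; contradiction.

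The main obstacle is the Heegaard Floer step in the obstruction. One must rigorously execute the trace-embedding reduction in our setting, pinning down $K = Q_{-4}$ and the smooth structure on the homotopy $4$-ball, and then leverage the DHM obstruction to rule out the identification $C' \cong C_\sigma$. I expect this to require either a $0$-surgery knot-detection argument for $Q_{-4}$ or a relative Ozsv\'ath--Szab\'o comparison analogous to the bordered computations of Section \ref{sec:bordered}.
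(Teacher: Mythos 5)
Your construction of the candidate manifold is essentially the paper's (attach the meridional $2$-handle to the cork $C$ along $\sigma(\mu)$, equivalently cork-twist the standard $0$-trace; Freedman gives the homeomorphism), but the obstruction half has a genuine gap. The reduction ``no smoothly embedded sphere generating $H_2$ $\Rightarrow$ no PL spine, since PL and smooth spines coincide in dimension four'' is false: while PL $=$ DIFF for $4$-manifolds, a PL \emph{embedding} of a sphere need not be locally flat --- it can have cone singularities --- so a PL spine is not necessarily a smooth submanifold. What a PL spine does give is an embedding of the $0$-trace $X_0(K)$ of \emph{some} knot $K$ (the link of the cone point), carrying the generator of $H_2$, with complement a homology cobordism from $S^3_0(K)$ to the boundary $S^3_0(Q_{-4})$. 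So you must obstruct such embeddings for every knot $K$, not just locally flat smooth spheres (the case $K$ unknotted). This is exactly what the paper's Lemma \ref{lemma:spineless-map} does: since $Q_{-4}$ is slice, $\partial N$ is homology cobordant to $S^2\times S^1$, which forces $d_{\pm}(S^3_0(K))=\pm\tfrac12$, hence $V_s(K)=V_s(\overline K)=0$ for all $s\ge 0$, hence by Lemma \ref{lemma:0trace-relative} all relative invariants $\Psi_{X_0(K),\spinct_s}$ with $s\neq 0$ vanish, and by the composition law so does $\Psi_{N,\spinct}$ for every non-torsion $\spinct$.

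The contradiction you propose to derive in the smooth-sphere case is also not sound as written. The statement ``Dai--Hedden--Mallick show that $C_\sigma\not\cong C$ smoothly'' misreads the cork theorem: the twisted cork is abstractly diffeomorphic to $C$ (a cork twist changes the ambient manifold, not the cork itself); what \cite{DHMCorks} prove is that the boundary involution $\sigma$ does not extend to a diffeomorphism of $C$ (or of any homology ball bounding $Y_0$). Consequently there is no justification for ``$C'$ must coincide with $C_\sigma$,'' nor a well-posed dichotomy $C'\cong C$ versus $C'\cong C_\sigma$ to contradict, and your outline never produces an actual invariant of the exotic trace that is nonzero. The paper supplies this missing positive input via Proposition \ref{prop:Q0-exotic-filling}: the relative Ozsv\'ath--Szab\'o invariant $\Psi_{V,\spinct}$ is nonzero for a non-torsion spin$^c$ structure, and since $N\subset V$ carries the singular part of the intersection form, the composition law forces $\Psi_{N,\spinct|_N}\neq 0$; combined with Lemma \ref{lemma:spineless-map} this rules out \emph{all} PL spines at once. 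To repair your argument you would need both (i) the trace-embedding formulation for arbitrary $K$, and (ii) a computation showing some non-torsion relative invariant of $X$ is nonzero --- at which point you have reproduced the paper's proof rather than an alternative cork-based one.
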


Like the examples from \cite{HaydenPiccirillo}, the above example contains a topologically locally flat spine since $P(3,-3,-8)$ is smoothly slice and hence the 0-trace contains a smoothly embedded sphere carrying the homotopy type.

Our final application is to exotic surfaces.  As mentioned above, the simply-connected 4-manifold $\E$ comes with a free $\Z/2$-symmetry.  In fact, this manifold has various other $\Z/2$-symmetries with non-trivial fixed point sets.

\begin{theorem}\label{thm:surfaces}
The manifold $\E$ from Construction \ref{constr:simplyconn-intro} arises as the branched double cover of a surface in $S^4$ that is topologically but not smoothly isotopic to the standard $\# {10} \mathbb{R}P^2$ in $S^4$ with normal Euler number $16$.
\end{theorem}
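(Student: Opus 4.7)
The plan is to exhibit $\E$ as a branched double cover of $S^4$ by constructing an explicit involution $\tau\co \E \to \E$ with $2$-dimensional fixed locus $F$, and then to identify $F$ as topologically but not smoothly standard. First I would perform Construction \ref{constr:simplyconn-intro} equivariantly. The pretzel $Q_n = P(3,-3,2n)$ admits a natural strong inversion coming from a $\pi$-rotation of $S^3$ about an axis through the three twist regions; this involution preserves the standard ribbon disk for $Q_{-4}$, hence extends across the contractible manifold $C$. The attaching curves $\gamma$ and $\mu$ can be chosen invariantly and the $-1$-framed $2$-handles attached compatibly. One then checks that the DHM involution $\sigma$, which in \cite{DHMCorks} comes from a symmetry of the ribbon structure of $Q_{-4}$, commutes with this rotation, so cutting out $C$ and regluing by $\sigma$ preserves the symmetry. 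Similarly, the amphichiral involution $\zeta$ on $S^3_0(Q_0)$ can be arranged to intertwine the two copies of the pretzel symmetry on the boundary. The result is a global involution $\tau$ on $\E$ with non-empty $2$-dimensional fixed set $F$.

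Next I would identify the quotient $Z := \E/\tau$ and the surface $F$. Since $\E$ is simply connected and $\tau$ has fixed points, $Z$ is simply connected. The $G$-signature formula for double branched covers yields $\sigma(\E) = -\tfrac12 e(F)$, so $\sigma(\E) = -8$ forces $e(F) = 16$. The Riemann--Hurwitz identity $\chi(\E) = 2\chi(Z) - \chi(F)$, combined with $\chi(\E) = 12$ and the count of equivariant handles giving $\chi(Z) = 2$, forces $\chi(F) = -8$, so $F \cong \#_{10}\mathbb{R}P^2$. Simple-connectivity together with the computed Euler characteristic and Poincar\'e duality give $H_*(Z) \cong H_*(S^4)$, whence Freedman's classification implies that $Z$ is homeomorphic to $S^4$.

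Finally I would address topological and smooth isotopy. By classification results for non-orientable surfaces in $S^4$ with simply connected complement (for example, in work of Hambleton--Kreck or Conway--Powell), any two such surfaces of the same non-orientable genus and normal Euler number are topologically isotopic; both $F$ and the standard $\#_{10}\mathbb{R}P^2$ satisfy these hypotheses. For smooth non-isotopy, the branched double cover over the standard embedding is the standard smooth $\cptwo \conn 9\cptwobar$, obtained iteratively by identifying each $\mathbb{R}P^2$ summand with the fixed set of complex conjugation on a $\cptwo$ or $\cptwobar$ factor. A smooth isotopy of $F$ to the standard surface would then give a diffeomorphism $\E \cong \cptwo \conn 9\cptwobar$, contradicting Theorem~\ref{thm:main}(1). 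The chief obstacle is the first stage: verifying that the handle attachments, the DHM involution, and the amphichiral gluing are all simultaneously equivariant under a single ambient $\Z/2$-symmetry. Once this equivariance is established, the remaining arguments are essentially formal.
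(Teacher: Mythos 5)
Your overall strategy---run Construction \ref{constr:simplyconn-intro} equivariantly with respect to a rotational involution of the pretzel, pass to the quotient, identify the branch surface by Euler characteristic and the $G$-signature formula, then get topological isotopy from classification results and smooth non-isotopy from the double branched covers and Theorem \ref{thm:main}---is the paper's strategy, and the symmetry you describe is essentially the paper's $\rho$ (Remark \ref{rem:rho}), which extends over $C_0=C^+_{P_0}$ and commutes with $\sigma$. However, two steps have genuine gaps. First, you identify the quotient $Z=\E/\tau$ with $S^4$ only up to homeomorphism, via Freedman. That is not enough for the statement being proved: the theorem asserts a smooth surface in the standard $S^4$, and the claim that $F$ is not \emph{smoothly} isotopic to the standard $\#10\mathbb{R}P^2$ (obstructed by comparing double covers) only makes sense once the ambient manifold is known to be \emph{diffeomorphic} to $S^4$; Freedman leaves open that $Z$ is an exotic $4$-sphere. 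The paper avoids this by reading the quotient off the equivariant handle structure: by the Montesinos trick the quotient of $C_0$ by $\rho$ is an honest $B^4$ branched over a ribbon disk $D$, each equivariant $2$-handle descends to a band attached to the branch surface without adding handles to the ambient quotient, so $\E/\rho$ is a union of two $4$-balls and hence diffeomorphic to $S^4$; the same description gives $\chi(F)=1-10+1=-8$, non-orientability (the bands join knots, not links), and then $e(F)=16$ from the signature of the cover.

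Second, the topological-isotopy step is wrong as stated: no closed non-orientable surface in $S^4$ has simply connected complement, since Alexander duality forces $H_1(S^4\setminus F)\cong\Z/2$. The classification results you want (\cite{Kreck}, \cite{ConwayOrsonPowell}) apply to embeddings whose complement has fundamental group exactly $\Z/2$, and that hypothesis must be \emph{verified} for $F$; it is not automatic, and it is where the paper does real work: it computes $\pi_1(B^4\setminus\Gamma)\cong\Z/2$ for the subsurface $\Gamma$ coming from $C_0\cup_{\sigma}B^-_{Q_{-4}}$ via the band presentation (rising water principle), notes that the remaining bands can only further kill the group, and rules out the trivial group by the duality observation above. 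Without such a computation your $F$ could a priori have larger complement fundamental group and the cited classification would not apply. The final step---a smooth isotopy to the standard surface would yield $\E\cong\cptwo\conn 9\cptwobar$, contradicting Theorem \ref{thm:main}---is correct and agrees with the paper.
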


The existence of such surfaces was first proven by Kreck \cite{Kreck}; we do not know whether ours agree with those of \cite{Kreck} or \cite{FKV}. The topological isotopy in Theorem \ref{thm:surfaces} can be shown using the results of \cite{Kreck} or \cite{ConwayOrsonPowell}. We note that there are already exotic embeddings of $\#6\R P^2$ in the literature \cite{Finashin}, which similarly come from taking a (more subtle) quotient of the \cite{ParkStipsiczSzabo} exotic $\cptwo\#5\cptwobar s$. The obviously symmetric nature of our constructions make it likely that if these techniques produced smaller closed 4-manifolds, they would produce smaller exotic surfaces as well.  We also comment that there are many other involutions one can use to take quotients of our pairs of simply connected exotic manifolds; some of these give oriented surfaces in small smooth manifolds (e.g. a $T^2$ nullhomologously embedded in $\# 4\cptwobar$), but because these surfaces have complicated fundamental group,  we do not know how to compare them topologically.

\subsection*{Organization}

In Section \ref{sec:background}, we provide a brief background on the Heegaard Floer techniques that will be used in the paper. In Section \ref{sec:building-blocks}, we introduce elementary cobordisms associated to certain ribbon knots, including the $P_n$ and $Q_n$ families above. We discuss the $\adam$ invariant in detail in Section \ref{sec:new-invts} and then use it in Section \ref{sec:construct} to prove Theorems \ref{thm:pi1Z-intro}, \ref{thm:knot-surgery-intro}, and \ref{thm:zerospheresintro} (modulo the bordered computations). In Section \ref{sec:simply-connected}, we then return to the simply-connected examples and prove Theorems \ref{thm:main}, \ref{thm:unlabeledthmintro}, and \ref{thm:spineless}. We then study exotic surfaces in Section \ref{sec:surfaces}, proving Theorem \ref{thm:surfaces}, and produce explicit handlebody diagrams for the exotic manifold $\E$ in Section \ref{sec:handles}. Finally, in Section \ref{sec:bordered}, we provide the bordered computations for Theorem \ref{thm:knot-surgery-intro}.

\subsection*{Acknowledgements}
We thank Keegan Boyle, Anthony Conway, Daniel Galvin, Jonathan Hanselman, Jen Hom, Robert Lipshitz, Ciprian Manolescu, Tom Mrowka,  Mark Powell, and Danny Ruberman for numerous helpful discussions.

\section{Heegaard Floer background} \label{sec:background}

We begin with a very brief recap of the necessary details on Heegaard Floer homology. All Heegaard Floer homology groups will be taken with coefficients in $\F = \Z/2\Z$. For a recent survey, see \cite{HomHFnotes}.

For a closed, oriented $3$-manifold $Y$ with a spin$^c$ structure $\mfs$, let $\HFm(Y,\mfs)$ denote the minus version of Heegaard Floer homology, which is a finitely generated module over $\F[U]$. Let $\HFred(Y,\mfs)$ denote the $U$-torsion submodule of $\HFm(Y,\mfs)$ (i.e., the set of all elements that are annihilated by some power of $U$). Equivalently, $\HFred(Y,\mfs)$ is equal to $\im \partial$ in the long exact sequence
\begin{equation} \label{eq: flavors-exact}
\dots \to \HFi(Y,\mfs) \to \HFp(Y,\mfs) \xrightarrow{\partial} \HFm(Y,\mfs) \to \dots.
\end{equation}
We define $\HFred(Y)$ to be the direct sum of $\HFred(Y,\spincs)$ over all spin$^c$ structures on $Y$.  Note that $\HFred(Y,\mfs) = 0$ for all but finitely many spin$^c$ structures on $Y$.
Let $\HFmc(Y,\mfs)$ denote the $U$-completion of $\HFm(Y,\mfs)$:
\[
\HFmc(Y,\mfs) = \HFm(Y,\mfs) \otimes_{\F[U]} \F[[U]].
\]
The $U$-torsion submodule of $\HFmc(Y,\mfs)$ is equal to $\HFred(Y,\mfs)$. (For more information about $\HFmc$, see \cite{ManolescuOzsvathLink}.)

If $\mfs$ is a torsion spin$^c$ structure, then $\HFm(Y,\mfs) / \HFmr(Y,\mfs)$ is a free $\F[U]$-module of some rank $k$ depending only on $Y$. (In particular, $k$ equals $1$ if $b_1(Y)=0$ and $2$ if $b_1(Y)=1$, which are the only cases that will be relevant here.) Thus, there is a (non-canonical) direct sum decomposition
\[
\HFm(Y,\mfs) \cong \HFred(Y,\mfs) \oplus \F[U]^k.
\]
Moreover, $\HFm(Y,\mfs)$ has an absolute $\Q$-grading that lifts a relative $\Z$-grading, with $\gr(U)=-2$. We obtain $\HFmc(Y,\mfs)$ by replacing each $\F[U]$ summand with $\F[[U]]$.

On the other hand, when $\mfs$ is a non-torsion spin$^c$ structure, there is a canonical projection $\Pi \co \HFm(Y,\spincs) \to \HFred(Y,\spincs)$, given by multiplication by $(1-U^{nd/2})$, where $n \gg 0$ and $d$ is the divisibility of $c_1(\spincs)$ (see \cite[Section 2.3]{OSzSymplectic}). Moreover, in this case, we have $\HFmc(Y,\spincs) = \HFred(Y,\spincs)$, so no projection is necessary.

When $Y$ is an integer homology sphere, so that there is a unique spin$^c$ structure (which we suppress from the notation), we have
\[
\HFm(Y) \cong \HFred(Y) \oplus \F[U]_{(d(Y))},
\]
where $d(Y)$ is an even integer, and the subscript indicates the grading of $1 \in \F[U]$. (Here we follow the grading convention from \cite[Remark 2.3]{HomHFnotes}.)  In particular, we have $\HFm(S^3) \cong \F[U]_{(0)}$. Likewise, if $M$ is a 3-manifold with $H_1(M) \cong \Z$, and $\spincs_0$ is the unique torsion spin$^c$ structure, we have
\[
\HFm(M,\spincs_0) \cong \HFred(M,\spincs_0) \oplus \F[U]_{(d_+(M))} \oplus \F[U]_{(d_-(M))},
\]
where $d_\pm(M)$ are rational numbers satisfying $ d_\pm(M) \equiv \pm1/2 \pmod 2$. In particular, for $M = S^2 \times S^1$, $d_{\pm} = \pm 1/2$ and $\HFred(M,\spincs) = 0$ for every spin$^c$ structure $\spincs$.  Also, for any knot $K \subset S^3$, we have $d_+(S^3_0(K)) = d(S^3_1(K)) + \frac12 $ and $d_-(S^3_0(K)) = d(S^3_{-1}(K))  -\frac12$. All of $d, d_+, d_-$ are homology cobordism invariants.

Given a spin$^c$ cobordism $(W,\mft)\co  (Y_1, \mfs_1) \to (Y_2, \mfs_2)$, there is an induced map $F^-_{W,\mft}\co \HFm(Y_1, \mfs_1) \to \HFm(Y_2,\mfs_2)$. Note that the sum of these maps over all spin$^c$ structures on $W$
is not necessarily defined (in contrast with the situation for $\HFp$ --- see \cite[Theorem 3.3]{OSz4Manifold}). However,
since $F^-_{W,\mft}$ takes $U$-torsion elements to $U$-torsion elements, it restricts to a map $F_{W,\mft} \co \HFred(Y_1,\mfs_1) \to \HFred(Y_2,\mfs_2)$. (Throughout, our convention is that the maps on $\HFred$ are written without the $-$ superscript.) It follows from \cite[Theorem 3.3]{OSz4Manifold} that there are only finitely many $\mft$ for which $F_{W,\mft}\ne0$, and thus, the total map
\[
F_W = \sum_{\mft} F_{W,\mft} \co \HFred(Y_1) \to \HF(Y_2)
\]
is well-defined. The total maps satisfy a simple composition law: if $W_1 \co Y_1 \to Y_2$ and $W_2 \co Y_2 \to Y_3$ are cobordisms, then $F_{W_1 \cup W_2} = F_{W_2} \circ F_{W_1}$.

On the other hand, if we consider one spin$^c$ structure on $W$ at a time, the composition laws become a bit more complicated:
$F^-_{W_2, \mft_2} \circ F^-_{W_1, \mft_1}$ is equal to the sum of all cobordism maps $F^-_{W_1 \cup W_2, \mfu}$ where $\mfu|_{W_1} = \mft_1$ and $\mfu|_{W_2} = \mft_2$. This sum is indexed by a set which is an affine copy of the image of the coboundary map from $H^1(Y_2)$ to $H^2(W_1 \cup W_2)$ in the Mayer--Vietoris sequence.  In particular, if $Y_2$ is a rational homology sphere, then there is a unique $\mfu$ restricting to $\mft_1$ and $\mft_2$ on $W_1$ and $W_2$ respectively, and we simply have $F^-_{W_2, \mft_2} \circ F^-_{W_1, \mft_1} = F^-_{W_1 \cup W_2, \mfu}$.

Likewise,  we can define $\mathbf F^-_{W,\spinct}$ to be the map on $\HFmc$ induced by $F^-_{W,\spinct}$. If
we define  $\HFmc(Y)$ to be the direct product sum of $\HFmc(Y,\spincs)$ over all spin$^c$ structures,  the total map $
\mathbf F^-_W \co  \HFmc(Y_1) \to \HFmc(Y_2)$ then makes sense,  and the composition law behaves in the same manner.

There is also a canonical involution $\iota_Y \co \HFm(Y,\mfs) \to \HFm(Y,\bar\mfs)$, called the spin$^c$ conjugation isomorphism, which extends to $\HFmc$ and restricts to $\HFred$. The involution commutes with cobordism maps in the following sense: given a spin$^c$ cobordism $(W,\mft) \co (Y_1, \mfs_1) \to (Y_2, \mfs_2)$, then
\begin{equation}\label{eq:conjugation-cobordism}
\iota_{Y_2} \circ F^-_{W,\mft} = F^-_{W,\bar \mft} \circ \iota_{Y_1}.
\end{equation}

Our main computational tool is the exact triangle for surgeries, which holds for the completed version $\HFmc$ (but not for uncompleted $\HFm$ because it requires a sum over spin$^c$ structures). In particular, for a knot $K$ in a homology sphere $Y$, there is an exact triangle of $\F[[U]]$-modules:
\begin{equation} \label{eq:HFm-exact}
\dots \to \HFmc(Y) \xrightarrow{f} \HFmc(Y_{-1}(K)) \xrightarrow{g} \HFmc(Y_0(K)) \xrightarrow{h} \HFmc(Y) \to \dots
\end{equation}
where the maps are those induced by the corresponding 2-handle cobordisms. As noted above, these maps each induce maps on the corresponding $\HFred$ submodules, which we also denote by $f$, $g$, and $h$. We will make frequent use the following lemma:

\begin{lemma} \label{lemma:HFred-exact} Suppose $Y$ is a homology sphere and $K \subset Y$ is a knot for which $d(Y_{-1}(K)) = d(Y)$ and $d_\pm (Y_0(K)) = d(Y) \pm \frac12$. Then the sequence
\begin{equation} \label{eq:HFred-exact}
\dots \to \HFred(Y) \xrightarrow{f} \HFred(Y_{-1}(J)) \xrightarrow{g} \HFred(Y_0(J)) \xrightarrow{h} \HFred(Y) \to \ldots
\end{equation}
is exact.
\end{lemma}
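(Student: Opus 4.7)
The plan is to reduce the problem to exactness of the induced sequence on the tower quotients $T_\bullet := \HFmc_\bullet/\HFred_\bullet$, and then verify the latter directly. The cobordism maps preserve $U$-torsion (hence $\HFred$), so they descend to $\F[[U]]$-linear maps on $T_\bullet$. The $d$-invariant hypotheses give
\[
T_Y \cong \F[[U]]_{(d)}, \quad T_{Y_{-1}(K)} \cong \F[[U]]_{(d)}, \quad T_{Y_0(K)} \cong \F[[U]]_{(d+1/2)} \oplus \F[[U]]_{(d-1/2)},
\]
where $d = d(Y)$ and non-torsion spin$^c$-structures on $Y_0(K)$ contribute nothing to $T_\bullet$. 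From the short exact sequence of 3-periodic complexes $0 \to \HFred_\bullet \to \HFmc_\bullet \to T_\bullet \to 0$, the long exact sequence of cohomologies---combined with the given exactness of $\HFmc_\bullet$---identifies the failure-of-exactness of $\HFred_\bullet$ with that of $T_\bullet$. Thus it suffices to prove $T_\bullet$ is exact.

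The crucial step will be to show $\bar f = 0$. The cobordism $W_f\co Y \to Y_{-1}(K)$ inducing $f$ is a $-1$-framed $2$-handle attachment, so it has intersection form $(-1)$ and admits no spin structure; consequently conjugation $\mft \mapsto \bar\mft$ acts freely on $\Spinc(W_f)$. Equation \eqref{eq:conjugation-cobordism} gives $F^-_{W_f,\bar\mft} = \iota_{Y_{-1}(K)} \circ F^-_{W_f,\mft} \circ \iota_Y$; since the only $\F[[U]]$-linear, grading-preserving involution of $\F[[U]]_{(d)}$ in characteristic $2$ is the identity, $\iota$ acts trivially on the infinite tower of every homology sphere. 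Hence the induced tower-level maps from conjugate $\mft, \bar\mft$ coincide, their contributions cancel pairwise in $\F = \Z/2$, and summing over all conjugate pairs yields $\bar f = 0$.

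With $\bar f = 0$, exactness of $T_\bullet$ reduces to showing $\bar g$ is injective and $\bar h$ is surjective with $\ker \bar h = \im \bar g$. Using the cobordism-map grading-shift formula $(c_1(\mft)^2 - 2\chi - 3\sigma)/4$ together with the $d$-invariant equalities, a grading-tracking argument pins down $\bar g(1) = (0, 1) \in \F[[U]]_{(d+1/2)} \oplus \F[[U]]_{(d-1/2)}$ (only the $\F[[U]]_{(d-1/2)}$-summand can receive the top-grading part of $\bar g(1)$ after the shift), while $\bar h((1,0)) = 1 \in T_Y$ and $\bar h((0,1)) = 0$ (since the shifted grading $d-1$ is not attained in $T_Y$). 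These identifications immediately verify $\ker \bar g = 0$, $\im \bar g = \{0\} \oplus \F[[U]] = \ker \bar h$, and $\im \bar h = T_Y = \ker \bar f$, completing exactness of $T_\bullet$. The main obstacle is precisely this last step: one must verify that the top-of-tower coefficients in $\bar g(1)$ and $\bar h((1,0))$ are genuine \emph{units} in $\F[[U]]$ rather than being divisible by higher powers of $U$. This is exactly where the $d$-invariant \emph{equalities} in the hypotheses (as opposed to mere inequalities) become essential, since they leave no room for the target towers of $Y_0(K)$ to extend upward beyond the predicted gradings.
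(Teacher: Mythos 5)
Your reduction to the tower quotients $T_\bullet=\HFmc/\HFred$ and the transfer of exactness back to $\HFred$ via the long exact sequence of the short exact sequence of $3$-periodic complexes is exactly the paper's strategy (its ``simple diagram chasing''), and your argument that $\bar f=0$ --- conjugation acts freely on the spin$^c$ structures of the $(-1)$-framed handle cobordism, $\iota$ acts trivially on the tower of a homology sphere, so conjugate contributions cancel over $\F$ --- is correct, and a reasonable substitute for the paper's appeal to the induced maps on $\HFi$.

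The gap is in your treatment of $\bar g$ and $\bar h$. Grading bookkeeping can only show that each relevant coefficient is either a unit or zero; it cannot decide which, and your stated reason for non-vanishing --- that the $d$-invariant equalities ``leave no room for the target towers of $Y_0(K)$ to extend upward'' --- is not an argument: the hypotheses fix the gradings in which the towers sit, but they are perfectly consistent with $\bar g$ or $\bar h$ being zero on the towers. In addition, your parenthetical grading claims presuppose that $\bar g$ and $\bar h$ are homogeneous of degree exactly $-\frac12$, whereas each total map is an infinite sum over spin$^c$ structures with varying shifts; determining which summands act nontrivially on the tower quotients, and that their leading coefficients are units, is precisely the content the paper imports by ``considering the corresponding maps on $\HFi$'' \cite[Proposition 4.11]{OSzAbsolute}: for these $2$-handle cobordisms the maps on $\HFi$ in the torsion spin$^c$ structures are standard, which yields the three bulleted properties of $\bar f,\bar g,\bar h$ and hence short exactness of the tower sequence. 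Part of this can be recovered formally --- for instance $\bar f=0$ together with $\ker g=\im f$ forces $\bar g$ to be injective, since $\bar g(\bar b)=0$ implies $U^N b\in\im f\subseteq\HFred(Y_{-1}(K))$ for some $N$ --- but surjectivity of $\bar h$ and the identification $\ker\bar h=\im\bar g$ still require control of the grading shifts that actually contribute, so the $\HFi$ analysis (or an equivalent adjunction/conjugation computation) cannot be skipped, and as written your proposal does not supply it.
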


\begin{proof}
From \eqref{eq:HFm-exact}, let $\bar f, \bar g, \bar h$ denote the maps on the quotients $\HFmc/\HFred$. By considering the corresponding maps on $\HFi$ (see, eg., \cite[Proposition 4.11]{OSzAbsolute}), we deduce the following:
\begin{itemize}
  \item  The map $\bar g$ is homogeneous of degree $-\frac12$ and takes $\F[[U]]_{(d(Y_{-1}))}$ injectively into $\F[[U]]_{d_-(Y_0)}$.
  \item  The map $\bar h$ is homogeneous of degree $-\frac12$, vanishes on $\F[[U]]_{(d_-(Y_0(K)))}$, and takes $\F[[U]]_{(d_+(Y_0))}$ injectively into $\F[[U]]_{d(Y)}$.
  \item We have $\bar f =0$.
\end{itemize}
Thus, under the hypotheses of the lemma, the sequence
\[
0 \to \F[[U]]_{(d(Y))} \xrightarrow{\bar g} \F[[U]]_{(d_+(Y_0))} \oplus \F[[U]]_{(d_-(Y_0))} \xrightarrow{\bar h} \F[[U]]_{(d(Y_1))} \to 0
\]
is short exact.  By some simple diagram chasing, we can then show that \eqref{eq:HFred-exact} is exact.
\end{proof}

In particular, note that when $Y = S^3$ and $K$ is a slice knot, the hypotheses of Lemma \ref{lemma:HFred-exact} are automatically satisfied.

As a special case, we have the following lemma.

\begin{lemma} \label{lemma:genus1-exact}
Suppose $Y$ is a homology 3-sphere and $K \subset Y$ is a knot with the following properties:
\begin{enumerate}
\item $d(Y) = d(Y_{-1}(K)) =0$;
\item $Y_0(K) \cong S^1 \times S^2$; and
\item $g(K)=1$.
\end{enumerate}
Let $W\co Y \to Y'=Y_{-1}(K)$ be the cobordism given by attaching a $-1$-framed $2$-handle along $K$. Then $F_W \co \HFred(Y) \to \HFred(Y')$ is an isomorphism. Furthermore, for each odd integer $m$, let $\mft_m$ be the spin$^c$ structure on $W$ characterized by $\gen{c_1(\mft_m), [\hat \Sigma]} = m$, where $\hat \Sigma$ is a capped-off Seifert surface for $K$. Then the map
\[
F_{W,\mft_1} + F_{W, \mft_{-1}} \co \HFred(Y) \to \HFred(Y')
\]
is grading-preserving and $\iota$-equivariant. However, the individual terms
$F_{W,\mft_1}$ and $F_{W,\mft_{-1}}$ are not isomorphisms.
\end{lemma}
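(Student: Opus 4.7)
The plan is to verify the four assertions of the lemma in turn: the isomorphism of $F_W$, the reduction of the total map to $\mft_{\pm 1}$, the grading-preservation and $\iota$-equivariance of the sum, and finally the individual non-isomorphism claim, which I expect to be the main obstacle.

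For the isomorphism, I apply Lemma~\ref{lemma:HFred-exact} with $J = K$: its hypotheses hold because $d(Y) = d(Y_{-1}(K)) = 0$ by assumption and because $Y_0(K) \cong S^1 \times S^2$ gives $d_\pm(Y_0(K)) = \pm \tfrac{1}{2} = d(Y) \pm \tfrac{1}{2}$. The resulting exact sequence on $\HFred$ together with $\HFred(S^1\times S^2) = 0$ forces $F_W$ to be an isomorphism.

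The capped-off Seifert surface $\hat\Sigma \subset W$ satisfies $g(\hat\Sigma) = 1$ and $[\hat\Sigma]^2 = -1$ (the latter from the $-1$-framing). The Ozsv\'ath--Szab\'o adjunction bound on 2-handle cobordism maps kills $F_{W, \mft_m}$ for $|m| \geq 3$, and since $m$ must be odd, only $\mft_{\pm 1}$ contribute, so $F_W = F_{W, \mft_1} + F_{W, \mft_{-1}}$. Each summand has grading shift
\[
\frac{c_1(\mft_{\pm 1})^2 - 2\chi(W) - 3\sigma(W)}{4} = \frac{-1 - 2 + 3}{4} = 0
\]
(using $c_1(\mft_{\pm 1})^2 = -1$, $\chi(W) = 1$, $\sigma(W) = -1$), so the sum preserves the absolute $\Q$-grading. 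Since spin$^c$ conjugation sends $\mft_1 \leftrightarrow \mft_{-1}$, applying \eqref{eq:conjugation-cobordism} gives $\iota_{Y'} \circ F_{W, \mft_{\pm 1}} = F_{W, \mft_{\mp 1}} \circ \iota_Y$; summing these two identities yields $\iota$-equivariance of $F_{W, \mft_1} + F_{W, \mft_{-1}}$.

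The subtle step, which I expect to be the main obstacle, is showing that $F_{W, \mft_{\pm 1}}$ individually fail to be isomorphisms. The conjugation identity $F_{W, \mft_{-1}} = \iota_{Y'} F_{W, \mft_1} \iota_Y$ shows one is an isomorphism iff the other is, so assume for contradiction that both are. I would then lift the analysis to $\HFmc$: the proof of Lemma~\ref{lemma:HFred-exact} shows that $\bar f = 0$, so the combined map $F^-_W$ sends the $U$-tower of $\HFmc(Y)$ into $\HFred(Y')$; hence the tower-to-tower components of $F^-_{W, \mft_1}$ and $F^-_{W, \mft_{-1}}$ must agree and cancel modulo $\HFred$. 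Combining this constraint with $\iota$-equivariance to track the image of the tower generator, and using the $U$-action structure of $\HFred$, should produce a rank or module-theoretic obstruction incompatible with both individual maps being surjective on $\HFred$.
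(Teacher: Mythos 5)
Your treatment of the first three assertions is fine and matches the paper: the isomorphism of $F_W$ on $\HFred$ follows from Lemma~\ref{lemma:HFred-exact} together with $\HFred(S^1\times S^2)=0$, the grading shift of $F^-_{W,\mft_m}$ is $(1-m^2)/4$, which vanishes for $m=\pm1$, and $\iota$-equivariance of the sum follows from \eqref{eq:conjugation-cobordism} since conjugation exchanges $\mft_1$ and $\mft_{-1}$. However, your claim that adjunction ``kills'' $F_{W,\mft_m}$ for $|m|\ge 3$ is not correct: since $[\hat\Sigma]^2=-1<0$ there is no vanishing theorem available, and the adjunction relation of \cite[Theorem 3.1]{OSzSymplectic} instead gives $F^-_{W,\mft_{\pm(2i+1)}}=U^{i(i+1)/2}F^-_{W,\mft_{\pm1}}$ (after stabilizing $\hat\Sigma$). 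These $U$-power multiples need not vanish on $\HFred(Y)$, whose elements are $U$-torsion but not necessarily annihilated by $U$ itself, so the identity $F_W=F_{W,\mft_1}+F_{W,\mft_{-1}}$ is unjustified; what is true, and what the paper proves, is $\mathbf F^-_W=P(U)\bigl(F^-_{W,\mft_1}+F^-_{W,\mft_{-1}}\bigr)$ with $P(U)=\sum_{i\ge0}U^{i(i+1)/2}$ a unit of $\F[[U]]$. (Since $P(U)$ is a unit, your observation that the tower components of the two summands must agree does survive, and the grading and equivariance claims are unaffected.)

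The genuine gap is the last assertion, that $F_{W,\mft_1}$ and $F_{W,\mft_{-1}}$ are individually not isomorphisms, for which you offer only the hope of ``a rank or module-theoretic obstruction.'' No such formal obstruction exists: writing $f_{\pm1}=F^-_{W,\mft_{\pm1}}$, all of the constraints you list --- $f_{-1}=\iota_{Y'}f_1\iota_Y$, equal tower-to-tower components, grading preservation, even the sum being an isomorphism --- are compatible with both restrictions to $\HFred$ being isomorphisms. For instance, in the paper's actual application $\HFred(Y)\cong\HFred(Y')\cong\F^2$ sits in a single grading with trivial $U$-action and $\iota$ swapping a basis; taking $f_1=\mat{1&1\\0&1}$ and $f_{-1}=\iota f_1\iota=\mat{1&0\\1&1}$ satisfies every one of these constraints, yet both maps and their sum are isomorphisms. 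The paper's argument uses an input of a different nature, which is exactly where the hypothesis $g(K)=1$ enters: there are exact triangles (from the proof of \cite[Corollary 4.5]{OzSzHFK} and \cite[Lemma 6.7]{NiFibred})
\[
\HFKh(Y,K,\pm1)\to\HFm(Y)\xrightarrow{f_{\pm1}}\HFm(Y')\to\HFKh(Y,K,\pm1),
\]
and $\HFKh(Y,K,\pm1)\ne0$ because $K$ has genus one, so each $f_{\pm1}$ has nontrivial kernel on $\HFm(Y)$; since the induced map $\bar f_{\pm1}$ on $\HFm/\HFred$ is a nonzero grading-preserving map between copies of $\F[U]_{(0)}$, hence injective, that kernel must lie in $\HFred(Y)$. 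Without this knot Floer input (or an equivalent), the final claim remains unproven in your proposal.
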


\begin{proof}
As above, the assumption on $d$ invariants shows that the restriction of $F_W^-$ to $\HFred$ fits into an exact triangle
\[
\HFred(S^1 \times S^2) \to \HFred(Y) \xrightarrow{F_W} \HFred(Y') \to \HFred(S^1 \times S^2).
\]
Since $\HFred(S^1 \times S^2)=0$, it follows that $F_W$ is an isomorphism.

For ease of notation, denote the map
\[
F^-_{W, \mft_m} \co \HFm(Y) \to \HFm(Y')
\]
by $f_m$. By abuse of notation, we use the same symbol for the map on $\HFmc$, so that $\mathbf F^-_W = \sum_{m \text{ odd}} f_m$. The grading shift of $f_m$ is $\frac{1-m^2}{4}$; in particular, $f_1$ and $f_{-1}$ are grading-preserving. Because $\mft_m$ and $\mft_{-m}$ are conjugate spin$^c$ structures, we see that $f_m + f_{-m}$ is $\iota$-equivariant for any $m$, i.e.,
\[
(f_m + f_{-m}) \iota_Y  = \iota_{Y'} (f_m + f_{-m}).
\]

Now, we claim that $\mathbf F^-_W$ is completely determined by $f_{-1}$ and $f_1$, in the following manner:
\begin{equation} \label{eq: powerseries}
\mathbf F^-_W = P(U) (f_1 + f_{-1}), \text{ where } P(U) = \sum_{i=0}^\infty U^{i(i+1)/2}.
\end{equation}
To see this, for any integer $i \ge 1$, we may stabilize $\hat\Sigma$ $i-1$ times to give a surface $\hat\Sigma^{(i)}$ in the same homology class with genus $i$. We have
\[
\gen{c_1(\mft_{2i+1}), -[\hat \Sigma^{(i)}]} = -2i-1 = -2g(\hat\Sigma^{(i)}) + [\hat\Sigma^{(i)}]^2.
\]
Hence, by the adjunction formula \cite[Theorem 3.1]{OSzSymplectic},
\[
F^-_{W, \mft_{2i+1}} = U^i F^-_{W, \mft_{2i+1} + \PD[\hat \Sigma] }  =  U^i F^-_{W, \mft_{2i-1}}.
\]
By induction, we see that
\[
f_{2i+1}  = U^{i(i+1)/2} f_1.
\]
Likewise, we obtain
\[
f_{-2i-1} = U^{i(i+1)/2} f_{-1}.
\]
Taking the infinite sum, \eqref{eq: powerseries} then follows.

Since $F^-_W$ is injective on $\HFred(Y)$, \eqref{eq: powerseries} shows that $f_1+f_{-1}$ is also injective on $\HFred(Y)$. For surjectivity, let $\xi \in \HFred(Y')$. Then there is some unique $\eta \in \HFred(Y)$, not necessarily homogeneous, for which
\[
\xi = F^-_W (\eta) = P(U)(f_1 + f_{-1})(\eta) = (f_1 + f_{-1})(P(U) \eta).
\]
Since $\eta \in \HFred(Y)$, it is killed by sufficiently large powers of $U$. Thus, the sum $P(U) \eta$ makes sense as an element of $\HFm(Y)$, which is necessarily in the submodule $\HFred(Y)$. Thus, we deduce that $(f_1+f_{-1})|_{\HFred(Y)}$ is surjective, and hence an isomorphism.

On the other hand, we claim that $f_1$ and $f_{-1}$ are not individually isomorphisms on $\HFred$. Namely, because $g(K)=1$, there are exact triangles coming from \cite[Proof of Corollary 4.5]{OzSzHFK} and \cite[Lemma 6.7]{NiFibred}
\[
\HFKh(Y,K, \pm 1) \to \HFm(Y) \xrightarrow {f_{\pm1} } \HFm(Y') \to \HFKh(Y,K, \pm1),
\]
and since $\HFKh(Y,K, \pm 1) \ne 0$, $f_{\pm1}$ has nontrivial kernel. Furthermore, by considering the behavior on $\HFi$, we see that the induced map
\[
\bar f_{\pm 1} \co \HFm(Y)/\HFred(Y) \to \HFm(Y')/\HFred(Y')
\]
is a grading-preserving, nonzero map between copies of $\F[U]_{(0)}$, hence an isomorphism. Thus, the restriction of $f_{\pm1}$ to $\HFred$ must have nontrivial kernel, as required.
\end{proof}

\section{Topological building blocks} \label{sec:building-blocks}

In this section, we will introduce the basic topological building blocks for our constructions. We work in somewhat more generality than is needed for the specific examples discussed in the introduction, with an eye toward future applications of the same elementary pieces.

\subsection{Fusion number 1 ribbon knots}\label{subsec:fusion}

The 3-manifolds that we consider throughout this paper will be built out of surgeries on fusion number 1 ribbon knots in $S^3$.

\begin{figure}[htbp]{\scriptsize
\begin{overpic}[tics=20]{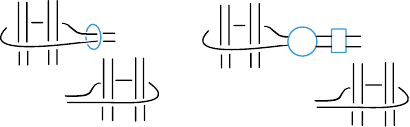}
\put(142,38){\color{lblue}$K$}
\put(161,38){\color{lblue}$n$}
\put(48,49){\color{lblue}$\gamma$}
  \end{overpic}}
  \caption{Left: schematic for a fusion number 1 ribbon knot $J$, along with the curve $\gamma$. Right: the satellite knot $J_n^K$. }
  \label{fig:ribbonpattern}
\end{figure}

\begin{definition} \label{def:fusion1}
A nontrivial knot $J \subset S^3$ is called \emph{fusion number 1} if it is obtained by a band sum on a two-component unlink in $S^3$. Such a knot is necessarily ribbon. Let $\gamma_J$ denote the meridian of the band. See Figure \ref{fig:ribbonpattern}. By removing a neighborhood of $\gamma_J$, $J$ can be thought of as a satellite operator.  For any companion knot $K$ and twisting parameter $n$ we define the fusion number 1 satellite knot $J_n^K$, by tying $K$ into the band and inserting $n$ full twists.
\end{definition}

\begin{remark}
We will frequently consider $\gamma_J$ as a knot in the surgered manifold $S^3_{\pm1}(J)$. We record a few useful facts about this knot.
\begin{itemize}
\item
Since $\lk(J,\gamma_J)=0$, the Seifert framing for $J$, considered as a knot in $S^3_{\pm1}(J)$, agrees with the Seifert framing in $S^3$ (i.e. the blackboard framing in Figure \ref{fig:ribbonpattern}).

\item
Performing 0-surgery on $\gamma_J$ in $S^3_{\pm 1}(J)$ returns $S^1\times S^2$. We may see this by performing handleslides of $J$ over $\gamma_J$ to eliminate all of the clasps, transforming $J$ into a $\pm1$ framed unknot.

\item
Observe that $\gamma_J$ bounds a genus-1 surface in the complement of $J$, and hence in $S^3_{\pm1}(J)$. Moreover, we claim that $\gamma_J$ is not the unknot in $S^3_{\pm1}(J)$. If so, then the $0$-surgery on $S^3_{\pm1}(J)$ would be homeomorphic to $S^3_{\pm1}(J) \conn S^1 \times S^2$, and hence $S^3_{\pm1}(J) \cong S^3$. By the knot complement problem \cite{GordonLuecke}, this would imply that $J$ is unknotted, which contradicts our assumption. Thus, $g(\gamma_J) = 1$.
\end{itemize}

\end{remark}

\begin{definition}\label{def:contract}
For a slice knot $J$ with slice disk $D$, define $C^\pm_D$ to be the contractible manifold formed from the exterior of $D$ by attaching a $\mp 1$ framed 2-handle along the meridian of $J$, with $\partial C^\pm_D = S^3_{\pm1}(J)$. If $J$ is fusion number 1 and $D_J$ is the ribbon disk obtained from a given fusion number 1 presentation, we write $C^\pm_J$ for $C^\pm_{D_J}$. We will sometimes omit the $\pm$ superscript when clear from context.
\end{definition}

Note that $C_J^\pm$ has a handle diagram with two 1-handles and two 2-handles, as shown in the leftmost frame of Figure \ref{fig:cobs}.

\begin{figure}[htbp]{\scriptsize
\begin{overpic}[tics=20]{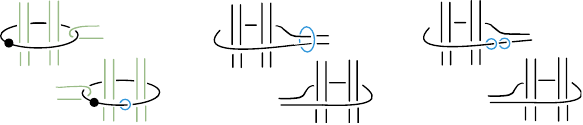}
\put(227,17){$\langle -1 \rangle$}
\put(131,17){$\langle \pm 1 \rangle$}
\put(37,4){\color{lgreen}$0$}
\put(59,-5){\color{lblue}$\mp 1$}
\put(150,48){\color{lblue}$-1$}
\put(237,47){\color{lblue}$-1$}
\put(237,55){\color{lblue}$-1$}
  \end{overpic}}
  \caption{From left to right, the building blocks $C^\pm_J$, $B_J^\pm$, and $T_J$, all exhibited for a fusion number 1 ribbon knot $J$.}
  \label{fig:cobs}
\end{figure}

The key principle, stated above as Theorem \ref{thm:fusion-duals-intro}, is that $+1$ surgery on any fusion number 1 ribbon knot is homeomorphic to $-1$ surgery on another fusion number 1 ribbon knot. We make this precise as follows.

\begin{figure}[htbp]{\scriptsize
\begin{overpic}[tics=20]{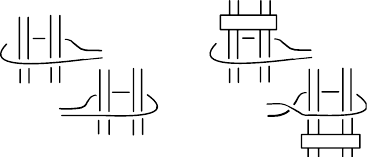}
 \put(114, 63){$-2$}
    \put(154, 5){$-2$}
     \put(-6, 45){$1$}
               \put(180, 20){$-1$}
  \end{overpic}}
  \caption{Pairs of fusion number 1 ribbon knots with homeomorphic $\pm 1$ surgeries.  }
  \label{fig:refcalc}
\end{figure}

\begin{definition} \label{def:fusion-duals}
Given a fusion number 1 ribbon knot $J$, as shown in the left side of Figure \ref{fig:refcalc}, let $J^*$ be the knot shown on the right side of Figure \ref{fig:refcalc}.
\end{definition}

\begin{proposition} \label{prop:fusion-duals}
For $J$ and $J^*$ as in Definition \ref{def:fusion-duals}, there is a diffeomorphism $\phi_J \co S^3_1(J) \to S^3_{-1}(J^*)$ that takes $\gamma_J$ to $\gamma_{J^*}$ (preserving the blackboard framing). Moreover, $\phi_J$ extends to a diffeomorphism from $C^+_J$ to $C^-_{J^*}$, which we also denote $\phi_J$.
\end{proposition}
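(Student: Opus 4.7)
The plan is to prove the proposition by explicit Kirby calculus on the handle decomposition of $C_J^+$. I would first write down the standard handle decomposition of $C_J^+$ implicit in the leftmost frame of Figure \ref{fig:cobs}: starting from the fusion number 1 presentation of $J$ as a band sum of a 2-component unlink, the complement of the ribbon disk $D_J$ in $B^4$ carries a handle structure with two 1-handles (dual to the disks filling the two components of the unlink) and one 2-handle (dual to the band, with $0$-framing); appending the $-1$-framed 2-handle along the meridian of $J$ gives $C_J^+$. In this diagram, the curve $\gamma_J$ sits as an unknotted meridian of the band 2-handle with blackboard framing $0$.

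Next, I would apply a sequence of Kirby moves to transform this diagram into the corresponding handle diagram for $C_{J^*}^-$. The key step is a ``sign change'' performed on the meridian 2-handle: by introducing an auxiliary $+1$-framed unknot linking the $-1$-framed meridian of $J$, sliding the meridian handle over it to swap its framing from $-1$ to $+1$, and then blowing down the auxiliary curve, one transforms the $-1$-framed meridian 2-handle of $C_J^+$ into the $+1$-framed meridian 2-handle of $C_{J^*}^-$. The cost of this blow-up/blow-down exchange is that the strands of the band passing through the blown-up unknot acquire $-2$ twist boxes, which I claim match exactly the two $-2$ twist boxes appearing in Figure \ref{fig:refcalc}. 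Verifying this match, by chasing which strands of the band are encircled by the auxiliary unknot, is the technical heart of the argument.

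Finally, I would confirm that $\gamma_J$ is untouched by any handle slide in this process and hence appears in the new diagram as the meridian $\gamma_{J^*}$ of the new band, still with blackboard framing $0$. The diffeomorphism of 4-manifolds $C_J^+ \to C_{J^*}^-$ arising from the identification of handle diagrams restricts to the desired $\phi_J$ on the boundary. The main obstacle is the bookkeeping in the second step: ensuring that the strand tracking really produces twist boxes with the exact sign and multiplicity indicated in Figure \ref{fig:refcalc}, and that the fusion number 1 structure of $J$ is transformed into that of $J^*$ rather than some other band presentation of the same knot.
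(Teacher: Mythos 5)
Your overall framing (prove the statement at the level of $C^+_J$ by Kirby calculus on its two-1-handle, two-2-handle diagram, tracking $\gamma_J$ throughout) matches the paper, but the specific mechanism you propose does not work as stated. First, the framing arithmetic fails on parity grounds: sliding the $-1$-framed meridian handle over a $+1$-framed unknot changes its framing to $-1+1\pm 2\lk$, which is even, so no single slide over your auxiliary curve can ``swap'' the framing from $-1$ to $+1$; nor can a single blow-down through which the meridian passes $\ell$ times (that changes the framing by $\mp\ell^2$, never by $+2$). Second, the move itself is not justified at the 4-manifold level. Introducing the auxiliary $+1$-framed unknot \emph{already linking} the attaching circle of the meridian handle is not a blow-up: that curve is a meridian of a $-1$-surgered curve in the boundary and does not bound a disk there, so adding this 2-handle changes the 4-manifold in an uncontrolled way. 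If instead you introduce it split (a genuine blow-up) and later blow it down, the composite preserves the diffeomorphism type only if you blow down the same exceptional sphere; a blow-down of some other $\pm1$-framed unknot in the diagram only yields $C^+_J\conn\cptwo\cong X'\conn\cptwo$, which does not give $C^+_J\cong X'$. Since the proposition is precisely a 4-manifold statement ($C^+_J\cong C^-_{J^*}$), blow-up/blow-down cannot be used casually the way it can for surgery descriptions of 3-manifolds.

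There is also a structural mismatch with what actually has to happen, and it is exactly the part you defer as ``bookkeeping.'' In the paper's sequence (Figure \ref{fig:extends}) the diagram of $C^+_J$ is not converted to that of $C^-_{J^*}$ by modifying the 2-handles alone: one first slides and cancels a 1-2 pair, then introduces a \emph{new} 1-2 pair whose 2-handle carries the framing $+1$ (this, not the original $-1$-framed meridian handle with its sign flipped, becomes the $+1$-framed meridian handle of $C^-_{J^*}$), and finally twists a 1-handle (\cite[Figure 5.42]{GompfStipsicz}), which is where the $-2$ twist regions of $J^*$ in Figure \ref{fig:refcalc} come from. Your plan keeps the two original 1-handles fixed and expects the twists to appear from a single blow-down near the meridian, so even granting the moves, there is no reason the resulting diagram is the standard diagram of the ribbon disk exterior of $J^*$ with a $+1$-framed meridian handle; identifying the resulting band presentation with that of $J^*$ is the actual content of the proposition, not a verification left to the reader. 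To repair the argument you should replace the blow-up/blow-down step with diffeomorphism-preserving moves --- 2-handle slides, creation/cancellation of 1-2 pairs, and the 1-handle twist --- along the lines of the paper's Figure \ref{fig:extends}, checking at the end that $\gamma_J$ (which is disjoint from all the action) is carried to $\gamma_{J^*}$ with its blackboard framing.
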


\begin{figure}[htbp]{\scriptsize
\begin{overpic}[tics=20]{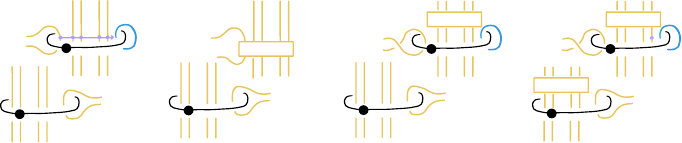}
\put(68,35){$\cong$}
\put(150,35){$\cong$}
\put(245,35){$\cong$}
\put(125,42){$\color{lyellow}-1$}
\put(214,57){$\color{lyellow}-2$}
\put(301,57){$\color{lyellow}-2$}
\put(266,25){$\color{lyellow}-2$}
\put(43,9){$\color{lyellow}0$}
\put(123,9){$\color{lyellow}-1$}
\put(205,9){$\color{lyellow}-2$}
\put(300,9){$\color{lyellow}0$}
\put(60,58){$\color{lblue}-1$}
\put(240,58){$\color{lblue}1$}
\put(328,58){$\color{lblue}1$}
  \end{overpic}}
  \caption{Kirby moves showing a diffeomorphism from $C^+_J$ to $C^-_{J*}$.}
  \label{fig:extends}
\end{figure}

\begin{proof}
On the 4-manifold level, we define $\phi_J \co C^+_J \to C^-_{J^*}$ by the sequence of Kirby moves in Figure \ref{fig:extends}. The first figure represents $C^+_J$. The first diffeomorphism is obtained by performing a sequence of 2-handle slides (as indicated by the purple arrow) and then cancel a 1-2 pair. The second diffeomorphism is obtained by introducing a new 1-2 pair (where the new 2-handle has framing coefficient 1) and then performing 2-handle slides. The third diffeomorphism is obtained by twisting the lower 1-handle (see \cite[Figure 5.42]{GompfStipsicz}). The final figure then represents $C^-_{J^*}$. Note that the curve $\gamma_J$, which is a meridian of the band, is preserved throughout this process.
\end{proof}

\begin{remark}
Because $\phi_J$ takes $\gamma_J$ to $\gamma_{J^*}$, it extends to a diffeomorphism from $S^3_1(J_n^K)$ to $S^3_{-1}(J^{* K}_n)$, for any knot $K \subset S^3$ and any $n \in \Z$. This is the same diffeomorphism as if we had applied Proposition \ref{prop:fusion-duals} to $J_n^K$ directly.


\end{remark}

\subsection{Elementary cobordisms}\label{subsec:cobs}

We now introduce two classes of cobordisms, which generalize the handle attachments used in Constructions \ref{constr:simplyconn-intro} and \ref{constr:pi1Z-intro}. The key property is that they both induce isomorphisms on $\HFred$.

\begin{definition} \label{def:twist-cob}
For a fusion number 1 ribbon knot $J \subset S^3$, define $B_J^{\pm}$ to be the manifold obtained from $S^3_{\pm 1}(J)\times [0,1]$ by attaching a $-1$-framed 2-handle to  $S^3_{\pm 1}(J)\times \{1\}$ along $\gamma$, as shown in the middle drawing in Figure \ref{fig:cobs}.  On the level of surgery diagrams, blowing down this $-1$-framed unknot adds a twist into the band and leaves the framing of $J$ unchanged, so the upper boundary is canonically identified with $S^3_{\pm1}(J_1)$. That is, $B_J^{\pm}$ is a cobordism from $S^3_{\pm1}(J)$ to $S^3_{\pm1}(J_1)$. We will sometimes omit the $\pm$ superscript on $B_J^{\pm}$ when clear from context.
\end{definition}

\begin{remark}\label{rem:BJ}\noindent
\begin{itemize}
\item Notice that $H_2(B_J) \cong \Z$, generated by a genus~$1$ surface with self-intersection~$-1$.

\item
For knots $J$ and $J^*$ as in Definition \ref{def:fusion-duals}, because $\phi_J$ takes $\gamma_J \subset S^3_1(J)$ to $\gamma_{J^*} \subset S^3_{-1}(J^*)$, it induces a diffeomorphism from $B^+_J$ to $B^-_{J^*}$.
\end{itemize}
\end{remark}

\begin{proposition}\label{prop:twist-iso}
Let $J$ be any fusion number 1 ribbon knot. Then the twist cobordism $B_J^\pm \co S^3_{\pm 1}(J) \to S^3_{\pm 1}(J_1)$ induces an isomorphism on $\HFred$. Moreover, if $\spinct_1$ and $\spinct_{-1}$ denote the spin$^c$ structures on $B_J^\pm$ whose first Chern classes evaluate to $\pm 1$ on a generator of $H_2(B_J^\pm)$, the map
\[
F_{B_J^\pm, \spinct_1} + F_{B_J^\pm, \spinct_{-1}} \co \HFred(S^3_{\pm 1}(J)) \to \HFred(S^3_{\pm 1}(J_1))
\]
is a grading-preserving, $\iota$-equivariant isomorphism. Moreover, the individual summands $F^-_{B_J^\pm, \spinct_1}$ and $F^-_{B_J^\pm, \spinct_1}$ are not isomorphisms on $\HFmr$.
\end{proposition}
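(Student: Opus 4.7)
The approach is to apply Lemma \ref{lemma:genus1-exact} directly to the pair $(Y, K) = (S^3_{\pm 1}(J), \gamma_J)$. By Definition \ref{def:twist-cob}, the cobordism $B_J^\pm$ is obtained by attaching a $-1$-framed $2$-handle along $\gamma_J$, which coincides with the cobordism $W$ of Lemma \ref{lemma:genus1-exact}. The task is thus to verify the three hypotheses of that lemma and to match the two spin$^c$ structures $\spinct_{\pm 1}$ in the statement with the classes $\mft_{\pm 1}$ appearing there.

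Hypotheses (2) and (3) of Lemma \ref{lemma:genus1-exact} are essentially already recorded in the remarks following Definition \ref{def:fusion1}: $0$-surgery on $\gamma_J$ in $S^3_{\pm 1}(J)$ returns $S^1 \times S^2$, and $g(\gamma_J) = 1$ in $S^3_{\pm 1}(J)$. Hypothesis (1) reduces to the standard fact that $\pm 1$-surgery on any slice knot has vanishing $d$-invariant. This handles $J$ directly. For $J_1$ I would note that inserting a full twist into the band of a fusion number $1$ presentation of $J$ produces another band sum of a two-component unlink, so $J_1$ is itself fusion number $1$, and hence ribbon and slice.

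To match spin$^c$ structures, recall from Remark \ref{rem:BJ} that the generator of $H_2(B_J^\pm) \cong \Z$ is represented by a genus-$1$ surface of self-intersection $-1$, namely the capped-off Seifert surface $\hat\Sigma$ for $\gamma_J$. The spin$^c$ structures $\spinct_{\pm 1}$ of the proposition, characterized by $\langle c_1(\spinct_{\pm 1}), [\hat\Sigma] \rangle = \pm 1$, therefore coincide with $\mft_{\pm 1}$ from Lemma \ref{lemma:genus1-exact}, and the three conclusions of the proposition (the total $\HFred$-map is an isomorphism; $F_{B_J^\pm, \spinct_1} + F_{B_J^\pm, \spinct_{-1}}$ is a grading-preserving $\iota$-equivariant isomorphism; the individual summands fail to be isomorphisms) follow immediately from the corresponding conclusions of that lemma.

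There is no serious obstacle here; once Lemma \ref{lemma:genus1-exact} is available, the proof is bookkeeping. The only point requiring a small amount of thought is checking that $J_1$ inherits a fusion number $1$ ribbon presentation from $J$, which is an immediate unpacking of Definition \ref{def:fusion1}. The real work was done in proving Lemma \ref{lemma:genus1-exact}, and the conceptual content of this proposition is the observation that every $B_J^\pm$ fits into the framework of that lemma.
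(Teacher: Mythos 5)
Your proposal is correct and follows the paper's own route exactly: the paper proves this proposition by citing Lemma \ref{lemma:genus1-exact} together with Remark \ref{rem:BJ}, and your write-up simply unpacks that citation, verifying the hypotheses via the remarks after Definition \ref{def:fusion1} (including the observation that $J_1$ is again fusion number 1, hence slice with vanishing $d$-invariants) and matching the spin$^c$ structures $\spinct_{\pm1}$ with the $\mft_{\pm1}$ of the lemma. Nothing is missing.
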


\begin{proof}
This is an immediate application of Lemma \ref{lemma:genus1-exact} in light of Remark~\ref{rem:BJ}.
\end{proof}

The second class of cobordisms makes sense for any knot, although we will still primarily be focused on the fusion number 1 case.

\begin{definition}\label{def:Tye-cob}
For any knot $J \subset S^3$, let $T_J^- \co S^3_{-1}(J) \to S^3_0(J)$ and $T_J^+ \co S^3_0(J) \to S^3_1(J)$ be the cobordisms obtained in each case by attaching a $-1$-framed $2$-handle meridian along the meridian of $J$.
Let $T_J \co S^3_{-1}(J) \to S^3_1(J)$ denote the composite of these two cobordisms; this is obtained by attaching two $-1$-framed 2-handles to  $S^3_{-1}(J)\times \{1\}$ along parallel copies of the meridian $\mu_J$, as shown in the right-hand image in Figure \ref{fig:cobs}.
\end{definition}

\begin{remark}\label{rem:TJ}
Because $T_J$ is obtained by attaching two $2$-handles to a homology sphere, we have $H_2(T_J) \cong \Z^2$. An explicit basis is as follows: Let $\Xi$ denote the class of a capped-off Seifert surface inside of $S^3_0(K)$ (which sits inside of $T_J$). Let $\Theta$ be the class obtained from an annulus connecting the two copies of $\mu_J$ and capping it off with the cores of the two 2-handles to produce a sphere. Then with suitable orientations, $\Xi^2=0$, $\Theta^2=-2$, and $\Xi \cdot \Theta = 1$. In particular, we deduce that the intersection form of $T_J$ is isomorphic to that of $S^2 \times S^2$, and $T_J$ is spin.

Additionally, because $\mu_J$ normally generates $\pi_1(S^3_{-1}(J))$, attaching a $2$-handle along $\mu_J$ kills the fundamental group. Thus, $T_J$ is simply connected.
\end{remark}

\begin{proposition} \label{prop:tye-iso}
Let $J \subset S^3$ be a slice knot, or more generally a knot such that $d(S^3_{-1}(J)) = d(S^3_1(J)) = 0$.   Then $T_J^-$ and $T_J^+$ each induce isomorphisms on $\HFred$, as does the composite cobordism $T_J$.
\end{proposition}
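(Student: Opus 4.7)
My plan is to apply the Ozsv\'ath--Szab\'o surgery exact triangle to $J \subset S^3$ twice---once for each of $T_J^-$ and $T_J^+$---and then invoke the composition law to handle $T_J$.

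For $T_J^-$, I would use the triangle \eqref{eq:HFm-exact} for $J$ with framings $(\infty, -1, 0)$:
\[
\ldots \to \HFmc(S^3) \to \HFmc(S^3_{-1}(J)) \xrightarrow{\mathbf F^-_{T_J^-}} \HFmc(S^3_0(J)) \to \HFmc(S^3) \to \ldots
\]
The middle map is induced by $T_J^-$, since attaching a $-1$-framed $2$-handle along a meridian converts $n$-framed surgery into $(n+1)$-framed surgery. The hypothesis $d(S^3_{-1}(J))=0$ together with the identity $d_\pm(S^3_0(K))=d(S^3_{\pm1}(K))\pm\tfrac12$ yields $d_\pm(S^3_0(J))=\pm\tfrac12$, so Lemma~\ref{lemma:HFred-exact} applies and produces an exact triangle on $\HFred$. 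Since $\HFred(S^3)=0$, this forces $F_{T_J^-}$ to be an isomorphism.

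For $T_J^+$, I would apply the analogous triangle for $J$ with framings $(\infty,0,1)$, whose middle map is induced by $T_J^+$ for the same reason. A proof essentially identical to that of Lemma~\ref{lemma:HFred-exact} gives exactness on $\HFred$: using $d(S^3_1(J))=0$ and $d_\pm(S^3_0(J))=\pm\tfrac12$, the maps on the quotients $\HFmc/\HFred$---namely $\F[[U]]_{(0)}$, $\F[[U]]_{(1/2)}\oplus\F[[U]]_{(-1/2)}$, and $\F[[U]]_{(0)}$---form a short exact sequence of graded $\F[[U]]$-modules. As before, $\HFred(S^3)=0$ forces $F_{T_J^+}$ to be an isomorphism. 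Finally, since $T_J = T_J^+\cup_{S^3_0(J)} T_J^-$, the composition law on $\HFred$ gives $F_{T_J}=F_{T_J^+}\circ F_{T_J^-}$, a composition of isomorphisms.

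The main obstacle is adapting Lemma~\ref{lemma:HFred-exact} to the $(\infty,0,1)$ triangle, which is not literally covered by its statement. The strategy carries over verbatim---tracking grading shifts and injectivity of the cobordism maps on the $\HFi$ quotients---but one must verify the analogous short exact sequence of $\F[[U]]$-modules in this slightly different setting. Alternatively, this case can be extracted from Lemma~\ref{lemma:HFred-exact} applied to the mirror $\bar J$, using the identifications $S^3_{-1}(\bar J) = -S^3_1(J)$ and $S^3_0(\bar J) = -S^3_0(J)$ along with $\iota$-equivariance and orientation reversal, though one must then track the direction of the cobordism carefully.
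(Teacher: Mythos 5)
Your argument is correct and is in substance the paper's: both cases reduce to the surgery exact triangle together with $\HFred(S^3)=0$, and your treatment of $T_J^-$ and of the composite $T_J$ via the composition law matches the paper exactly. The only difference is how the second case is set up. The paper does not re-prove an analogue of Lemma~\ref{lemma:HFred-exact} for the $(\infty,0,1)$ triangle of $J$; instead it notes that this triangle \emph{is} the $(\infty,-1,0)$ triangle of Lemma~\ref{lemma:HFred-exact} applied to the dual knot $K$ of $J$ in the homology sphere $Y=S^3_1(J)$, for which $Y_{-1}(K)=S^3$ and $Y_0(K)=S^3_0(J)$. The hypotheses of the lemma hold since $d(Y)=d(S^3_1(J))=0$, $d(Y_{-1}(K))=d(S^3)=0$, and $d_\pm(S^3_0(J))=d(S^3_{\pm1}(J))\pm\frac12=\pm\frac12$, and exactness then forces the map $h$ of \eqref{eq:HFred-exact}, which is precisely $F_{T_J^+}$, to be an isomorphism. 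This reindexing dispatches exactly the obstacle you flag: the short exact sequence of towers you would need to verify is the one already established in the proof of Lemma~\ref{lemma:HFred-exact}, just read for the dual knot, so neither a separate diagram chase nor the mirroring argument (with its bookkeeping about cobordism direction and duality) is required.
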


\begin{proof}
By \cite[Proposition 4.12]{OSzAbsolute}, we see that $d_\pm(S^3_0(J)) = \pm1/2$. The proposition then follows immediately from two applications of Lemma \ref{lemma:HFred-exact}. Namely, for the first cobordism, we take $Y = S^3$ and $K=J$. For the second, we take $Y = S^3_1(J)$ and $K$ to be the dual knot of $J$, so that $Y_{-1}(K)=S^3$ and $Y_0(K) = S^3_0(J)$. In each case, since $\HFred(S^3)=0$, the relevant map in the exact sequence \eqref{eq:HFred-exact} is an isomorphism.
\end{proof}

\begin{remark}
One can also easily prove analogues of Propositions \ref{prop:twist-iso} and \ref{prop:tye-iso}  in the setting of instanton Floer homology.  The instanton Floer homologies of $S^3$ and $S^2 \times S^1$ (with an admissible bundle) vanish, so the desired isomorphisms are immediate from the exact triangle.  In the case that $J$ is nontrivial, the instanton Floer homology groups of $S^3_{\pm1}(J)$ are always non-zero by Kronheimer--Mrowka's proof of Property P \cite{PropertyP}.%
\end{remark}

\begin{remark} \label{rem:boring-compose}
Since each boundary component of $C_J^{\pm}$, $B_J^\pm$, and $T_J$ has a canonical identification with the relevant surgery on $J$ or $J_1$, there is a canonical way to compose the cobordisms end-to-end where applicable. These Dehn surgery identifications also allow us to meaningfully discuss gluing together the two ends of a cobordism.  We will denote such a composition of cobordisms with the $\cup$ sign, without specifying the gluing map. (However, if a different gluing map $f$ is used, we will write $\cup_f$.)

For instance, $B^\pm_J \cup B^\pm_{J_1}$ makes sense as a cobordism from $S^3_{\pm1}(J)$ to $S^3_{\pm1}(J_2)$, given by attaching two $-1$-framed handles along parallel, unlinked copies of $\gamma_J$. Likewise, the composition $C_J^\pm \cup B_J^\pm$ gives a manifold with boundary $S^3_{\pm 1}(J_1)$. Because the $2$-handle of $B_J^\pm$ is attached along $\gamma_J$, which does not link the $1$-handles of $C_J^\pm$, we may blow it down to see that
\begin{equation} \label{eq:boring-twist}
C_J^\pm \cup  B_J^{\pm} \cong C^+_{J_1} \conn \cptwobar.
\end{equation}
In a similar fashion, $C_J^-$ can be glued to the cobordism $T_J^- \co S^3_{-1}(J) \to S^3_0(J)$, and we have
\begin{equation} \label{eq:boring-tye}
C_J^- \cup T_J^- \cong X_0(J),
\end{equation}
where $X_0(J)$ denotes the $0$-trace of $J$.
\end{remark}

\subsection{The \texorpdfstring{$P_n$}{P\_n} and \texorpdfstring{$Q_n$}{Q\_n} families} \label{subsec:PQ}

As in the introduction, we now introduce two explicit families of fusion number 1 ribbon knots that will feature prominently in the arguments below.  Let $Q_n$ be the $(3,-3,2n)$ pretzel knot,  and let $P_n$ be the $(3,-3,2n-1)$ pretzel knot, where in this notation the parameters correspond to half twists in the strands. Each of these is a fusion-number 1 ribbon knot, where the right-hand twist region is understood to be the band. As above, let $P_n^K$ and $Q_n^K$ denote the result of tying the ribbon  band
into a knot $K$. Note that
\begin{equation} \label{eq:PQ-symmetry}
Q_{-n} = \overline{Q_n} \quad \text{and} \quad  P_{-n} = \overline{P_{n+1}}.
\end{equation}

The families $P_n$ and $Q_n$ are ``dual'' in the following sense:
\begin{proposition}\label{prop:PQdual}
For any knot $K$ and integer $n$, there are homeomorphisms
\begin{align*}
\phi_n^K \co & S^3_1(P_n^K) \to S^3_{-1}(Q_{n-4}^K)  \\
\psi_n^K \co & S^3_1(Q_n^K) \to S^3_{-1}(P_{n-3}^K),
\end{align*}
preserving the $\gamma$ curves.
\end{proposition}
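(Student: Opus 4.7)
The plan is to reduce Proposition~\ref{prop:PQdual} to Proposition~\ref{prop:fusion-duals} together with a diagrammatic identification. There are two things to verify: that the operation $J \mapsto J^*$ of Definition~\ref{def:fusion-duals} sends $P_n$ to $Q_{n-4}$ and $Q_n$ to $P_{n-3}$, and that the resulting homeomorphisms extend across the satellite operation $J \mapsto J_n^K$.

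For the first step, I would begin by noting that each of $P_n = P(3,-3,2n-1)$ and $Q_n = P(3,-3,2n)$ admits a natural fusion number~$1$ presentation in which the band is the rightmost ($2n-1$ or $2n$) twist region; the curve $\gamma$ shown in Figure~\ref{fig:PQ} is precisely the meridian of this band, so it agrees with $\gamma_{P_n}$ and $\gamma_{Q_n}$ from Definition~\ref{def:fusion1}. Applying the operation of Figure~\ref{fig:refcalc} to either of these pretzel diagrams is then a concrete Reidemeister move sequence: the $\pm3$ pretzel tangles interact with the band in a controlled way, and after performing the moves one obtains another pretzel diagram. A direct diagrammatic check shows that the net effect on the band is to decrease the number of half-twists by $7$; this flips the parity of the third pretzel parameter and hence exchanges the $P$ and $Q$ families, producing $P_n^* \cong P(3,-3,2n-8) = Q_{n-4}$ and $Q_n^* \cong P(3,-3,2n-7) = P_{n-3}$. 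Proposition~\ref{prop:fusion-duals} then yields homeomorphisms $\phi_n \co S^3_1(P_n) \to S^3_{-1}(Q_{n-4})$ and $\psi_n \co S^3_1(Q_n) \to S^3_{-1}(P_{n-3})$ that carry $\gamma$ curves to $\gamma$ curves, preserving blackboard framings.

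For the satellite version, I recall that $P_n^K$ is obtained from $P_n$ by drilling a tubular neighborhood of $\gamma_{P_n}$ and gluing in the exterior of $K$ by a specific diffeomorphism of the boundary torus, and similarly for $Q_n$. Since $\phi_n$ takes $\gamma_{P_n}$ to $\gamma_{Q_{n-4}}$ preserving blackboard framings, it restricts to a homeomorphism of the complements of these two $\gamma$ curves, and therefore extends canonically across the drilling-and-regluing that defines the satellite operation; the extended map is the desired $\phi_n^K$. The construction of $\psi_n^K$ is identical.

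The main obstacle is the Reidemeister move identification of $P_n^*$ and $Q_n^*$ with the claimed pretzel knots. In principle this is a finite diagrammatic exercise, but it requires careful bookkeeping of how the $\pm3$ tangles transport through the sequence of moves defining $J \mapsto J^*$ in Figure~\ref{fig:refcalc}, and a confirmation that after all moves these tangles survive unchanged while the half-twist count in the band drops by exactly $7$. Everything else in the argument is formal.
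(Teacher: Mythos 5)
Your proposal is correct and follows essentially the same route as the paper: reduce to Proposition~\ref{prop:fusion-duals} via a diagrammatic identification of the duals of the pretzel knots, and use the fact that $\phi_J$ carries $\gamma_J$ to $\gamma_{J^*}$ with its framing to extend over the satellite operation, exactly as in the remark following Proposition~\ref{prop:fusion-duals}. The only cosmetic difference is that the paper performs the identification once, for $P_0$ and $Q_0$ (obtaining $P_0^* = Q_{-4}$ and $Q_0^* = P_{-3}$), and absorbs both the twisting parameter $n$ and the companion $K$ into the satellite extension, whereas you verify $P_n^* = Q_{n-4}$ and $Q_n^* = P_{n-3}$ for each $n$ before handling $K$.
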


\begin{proof}
Applying Proposition \ref{prop:fusion-duals} to $P_0$ and $Q_0$, we see that $P_0^*=Q_{n-4}$ and $Q_0^* = P_{n-3}$.
\end{proof}

The Heegaard Floer homologies of $\pm1$ surgeries on $P_n$ and $Q_n$ are as follows:
\begin{lemma} \label{lemma:HFm-Pn-Qn}
For any integer $n$, we have graded isomorphisms
\begin{align}
\label{eq:HFm-S31-Pn} \HFm(S^3_{1}(P_n)) \cong \HFm(S^3_{-1}(Q_n)) &\cong \F[U]_{(0)} \oplus \F_{(0)} \oplus \F_{(0)} \\
\label{eq:HFm-S31-Qn} \HFm(S^3_{1}(Q_n)) \cong \HFm(S^3_{-1}(P_n)) &\cong \F[U]_{(0)} \oplus \F_{(1)} \oplus \F_{(1)}
\end{align}
where $\iota$ acts by fixing $\F[U]$ and interchanging the two copies of $\F$ in $\HFred$.
\end{lemma}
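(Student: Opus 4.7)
The plan is to reduce the general statement to the two base cases $n=0$, compute those via the mapping cone formula, and handle the involution separately using the symmetry of the knots involved.

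First, I would use the twist cobordism $B_J^{\pm}$ from Definition \ref{def:twist-cob}. Since both families arise by adding full twists to the band, $P_{n+1} = (P_n)_1$ and $Q_{n+1} = (Q_n)_1$ in the notation of Definition \ref{def:fusion1}, so $B_{P_n}^{\pm}$ is a cobordism from $S^3_{\pm1}(P_n)$ to $S^3_{\pm1}(P_{n+1})$, and similarly for $Q$. By Proposition \ref{prop:twist-iso}, each such cobordism induces an $\iota$-equivariant, grading-preserving isomorphism on $\HFred$. Because $P_n$ and $Q_n$ are slice, the $d$-invariants of all four manifolds vanish, so $\HFm(Y) \cong \HFred(Y) \oplus \F[U]_{(0)}$ and the $\HFred$ isomorphism upgrades to one on $\HFm$ (with $\iota$ acting as the identity on the tower). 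Iterating over $n$, it suffices to compute the base cases $\HFm(S^3_{\pm1}(P_0))$ and $\HFm(S^3_{\pm1}(Q_0))$. Moreover, Proposition \ref{prop:PQdual} identifies $S^3_1(P_n) \cong S^3_{-1}(Q_{n-4})$ and $S^3_1(Q_n) \cong S^3_{-1}(P_{n-3})$, so combined with Step 1 the two manifolds in each line of the lemma share a common $\HFm$, reducing the computation further.

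Next, I would compute the base cases using the Ozsváth--Szabó mapping cone formula. The knots $P_0$ (the stevedore $6_1$) and $Q_0$ (the square knot $3_1 \conn \overline{3_1}$) have well-known knot Floer complexes. For $Q_0$, $\CFKi(Q_0) \cong \CFKi(3_1) \otimes \CFKi(\overline{3_1})$ by the connected sum formula; for $P_0$, the complex decomposes as the standard genus-$1$ staircase plus two acyclic box summands (both knots have $\tau = V_0 = 0$ and genus~$1$). Feeding these into the mapping cone formula yields $\HFm(S^3_{\pm1}(P_0)) \cong \F[U] \oplus \F \oplus \F$ and similarly for $Q_0$, where the two free $\F$ summands arise from the two boxes. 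Tracking absolute gradings (using $d = 0$ as anchor and the known shifts in the mapping cone formula) puts the $\F$-summands in grading $0$ for $S^3_{-1}(Q_0)$ and $S^3_1(P_0)$, and in grading $1$ for $S^3_{-1}(P_0)$ and $S^3_1(Q_0)$; the grading reversal between the $+1$ and $-1$ cases can be cross-checked via the amphichirality relations in \eqref{eq:PQ-symmetry} together with the identification $\HFm(-Y) \cong \HFm(Y)^\vee$.

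The main obstacle is to pin down the $\iota$-action on the two $\F$ summands, since everything else is essentially standard. Here I would use symmetry: $Q_0 = 3_1 \conn \overline{3_1}$ is a connected sum of a knot with its mirror, and by the involutive connected-sum formula (Hendricks--Manolescu--Zemke), the conjugation symmetry on $\CFKi(Q_0)$ swaps the two tensor factors, which exactly interchanges the two box summands. Pushing this through the involutive mapping cone formula shows that $\iota$ swaps the two $\F$ summands in $\HFred(S^3_{-1}(Q_0))$. For $P_0$, the same conclusion can be obtained either by a direct calculation on a genus-$1$ Heegaard diagram or, more cleanly, by invoking the $\iota$-equivariant isomorphism coming from iterated twist cobordisms $B_{Q_0}^- \cup B_{Q_1}^- \cup \cdots$ together with Proposition \ref{prop:PQdual}, which transfers the known $\iota$-action from $\HFred(S^3_{-1}(Q_0))$ to $\HFred(S^3_1(P_n)) \cong \HFred(S^3_{-1}(Q_{n-4}))$, and then by $\HFm(-Y)\cong \HFm(Y)^\vee$ to $\HFred(S^3_{-1}(P_0))$ and $\HFred(S^3_1(Q_0))$. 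The remaining $\iota$-equivariance statements then propagate through the twist cobordism reduction from Step~1.
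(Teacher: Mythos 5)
Your reduction scheme is the same as the paper's: the $d$-invariant of $\pm1$-surgery on a ribbon knot is $0$, giving the $\F[U]_{(0)}$ tower; the twist cobordisms of Proposition \ref{prop:twist-iso} give grading-preserving, $\iota$-equivariant isomorphisms on $\HFred$ and hence independence of $n$; Proposition \ref{prop:PQdual} merges the two families in each line; and orientation reversal passes between the two lines. Where you diverge is the base case, and this is where the gap lies. The paper does not run the mapping cone formula at all: it simply quotes the Dai--Hedden--Mallick computation \cite{DHMCorks} of $\HFm(S^3_1(P_0))$ together with its conjugation action ($\iota(v_1)=v_3$, $\iota(v_2)=v_2$, $\iota(v_3)=v_1$). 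Your plan to recover the graded module structure of $\HFm(S^3_{\pm1}(P_0))$ and $\HFm(S^3_{\pm1}(Q_0))$ from $\CFKi$ of the stevedore and square knots is routine and fine (modulo small slips: the square knot has genus $2$, not $1$, and the non-acyclic summand of a slice knot with $\tau=0$ is a single generator, not a genus-$1$ staircase).

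The genuine gap is in pinning down the $\iota$-action, which is the actual content of the lemma. First, the Hendricks--Manolescu--Zemke connected-sum formula does not say that conjugation ``swaps the two tensor factors'': $\CFKi(3_1)$ and $\CFKi(\overline{3_1})$ are dual, not isomorphic, and cannot be swapped; the formula expresses $\iota_{K_1 \conn K_2}$ as $\iota_{K_1}\otimes\iota_{K_2}$ composed with a correction term built from basepoint-moving maps, and showing that this interchanges the two box summands of the square-knot complex is a computation you have not done. Second, even granting that, transferring $\iota_K$ to the conjugation action on $\HFm(S^3_{-1}(Q_0))$ is not covered by the involutive large-surgery formula, since $-1$ is not a large surgery on a genus-$2$ knot; you would need the full involutive surgery formula, whose application here is only asserted. (If you want an involutive-package proof, the efficient route is $+1$-surgery on the genus-$1$ knot $P_0$, which \emph{is} large; that is essentially what Dai--Hedden--Mallick carry out and what the paper cites.) Third, the fallback ``direct calculation on a genus-$1$ Heegaard diagram'' for $P_0$ is not a realistic way to compute a conjugation action, and nothing formal forces $\iota$ to be nontrivial there: the two boxes of the stevedore complex sit symmetrically about Alexander grading $0$, so no structural argument distinguishes the swap from the trivial action --- which is exactly why the paper leans on the explicit \cite{DHMCorks} computation rather than on formal properties. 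Your transfer of the $\iota$-action between the various surgeries via Propositions \ref{prop:twist-iso} and \ref{prop:PQdual} and duality is fine, but it only relocates, and does not close, this base-case gap.
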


\begin{proof}
First, since each of these manifolds is $\pm1$ surgery on a ribbon knot, the $d$ invariant is $0$, so $\HFm$ contains a copy of $\F[U]_{(0)}$. By Proposition \ref{prop:tye-iso}, the graded isomorphism type of $\HFred$ (i.e. the remaining summand), along with the induced action of $\iota$, is independent of $n$. Proposition \ref{prop:PQdual} gives the first isomorphism in each line.

The group $\HFm(S^3_1(P_0))$ with its involutive structure was computed explicitly by Dai--Hedden--Mallick \cite[Example 4.7]{DHMCorks}: it is generated over $\F[U]$ by generators $v_1,v_2,v_3$ in grading $0$, with $Uv_1 = Uv_2 = Uv_3$, and $\iota$ given by $\iota(v_1) = v_3$, $\iota(v_2) = v_2$, $\iota(v_3) = v_1$. In particular, $\HFred(S^3_1(P_0))$ is generated by $v_1+v_2$ and $v_2+v_3$, which are interchanged under $\iota$. This completes the proof of \eqref{eq:HFm-S31-Pn}. Since $S^3_1(Q_n) = \overline{S^3_{-1}(Q_{-n})}$, one can then obtain \eqref{eq:HFm-S31-Qn} using orientation reversal and the exact sequence \eqref{eq: flavors-exact}.
\end{proof}

We now describe the cobordism maps on $\HFred$ associated to the $T_J$ and $B_J^{\pm}$ cobordisms when $J = P_n$ or $Q_n$ in more detail, paying attention to the spin$^c$ decomposition in each case. A \emph{good basis} for $\HFred(S^3_{\pm1}(P_n))$ or $\HFred(S^3_{\pm1}(Q_n))$ is an ordered basis $(a,b)$ such that $\iota(a)=b$ and $\iota(b)=a$. Such a basis is unique up to exchanging $a$ and $b$.

We start with the $B_J^{\pm}$ cobordisms. (The same reasoning will apply for $J = P_n$ or $Q_n$, and with either choice of sign.) As in Lemma \ref{lemma:genus1-exact} above, a choice of generator for $H_2(B^\pm_J)$ gives an indexing of the spin$^c$ structures on $B_J^\pm$ by $\spinct_m$ for odd integers $m$. Choosing the opposite generator negates this indexing.
\begin{lemma} \label{lemma:twist-spinc}
Let $J_n = P_n$ or $Q_n$. Given a good basis $(a,b)$ for $\HFred(S^3_{\pm1}(J_n))$, we may find a choice of generator for $H_2(B^\pm_{J_n})$ and a good basis $(a',b')$ for $\HFred(S^3_{\pm1}(J_{n+1}))$ such that the following holds:
\begin{align*}
F_{B_J^\pm, \spinct_1} (a)  &= a' &  F_{B_J^\pm, \spinct_{-1}} (a) &= 0 \\
F_{B_J^\pm, \spinct_1} (b) &= 0 &  F_{B_J^\pm, \spinct_{-1}} (b) &= b'.
\end{align*}
\end{lemma}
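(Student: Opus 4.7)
The plan is to exploit the $\F_2$-rigidity of a two-dimensional vector space with a fixed-point-free involution, combined with the spin$^c$ conjugation identity \eqref{eq:conjugation-cobordism}. Write $\Phi := F_{B^\pm_{J_n},\spinct_1}$ and $\bar\Phi := F_{B^\pm_{J_n},\spinct_{-1}}$; since $\bar\spinct_1 = \spinct_{-1}$, conjugation symmetry gives $\iota \circ \Phi = \bar\Phi \circ \iota$. By Lemma~\ref{lemma:HFm-Pn-Qn}, the source and target $\HFred$ are each two-dimensional over $\F$ with $\iota$ swapping a good basis, and by Proposition~\ref{prop:twist-iso} the sum $\Phi + \bar\Phi$ is an isomorphism while neither summand individually is. In particular, $\Phi$ is nonzero (else conjugation symmetry forces $\bar\Phi = 0$) and not injective, so $\dim \ker \Phi = 1$, and similarly $\dim \ker \bar\Phi = 1$.

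Next I would identify which one-dimensional subspaces the kernels are. In $\F\langle a,b\rangle$ the only lines are $\F a$, $\F b$, and $\F(a+b)$, the last being exactly the $\iota$-fixed line. The conjugation identity gives $\iota(\ker\Phi) = \ker\bar\Phi$, so if one kernel were the fixed line both would be, forcing $(\Phi+\bar\Phi)(a+b) = 0$ and contradicting injectivity of the sum. Thus $\{\ker\Phi, \ker\bar\Phi\} = \{\F a, \F b\}$, and by reversing the chosen generator of $H_2(B^\pm_{J_n})$ (which swaps the labels $\spinct_1 \leftrightarrow \spinct_{-1}$) if necessary, we may arrange $\ker \Phi = \F b$ and $\ker \bar\Phi = \F a$.

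Finally I would define $a' := \Phi(a)$ and $b' := \bar\Phi(b)$, both nonzero by the previous step. The computation $\iota(a') = \bar\Phi(\iota(a)) = \bar\Phi(b) = b'$ shows that $\iota$ swaps $a'$ and $b'$, and they are linearly independent because $a' = b'$ would give $(\Phi+\bar\Phi)(a+b) = 0$, again violating injectivity. Hence $(a',b')$ is a good basis of the target, and the four required identities hold by construction. There is no serious technical obstacle in this argument; the only delicate point is arranging the correct labeling of $\spinct_1$ versus $\spinct_{-1}$, which is precisely why the statement allows a choice of generator of $H_2(B^\pm_{J_n})$.
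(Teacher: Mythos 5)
Your argument is correct and is essentially the paper's own proof: both rest on the conjugation identity $F_{B_J^\pm,\spinct_{-1}} = \iota \circ F_{B_J^\pm,\spinct_1} \circ \iota$, the fact from Proposition~\ref{prop:twist-iso} that the sum of the two spin$^c$ summands is an isomorphism on the two-dimensional $\HFred$ while neither summand is, and a short $\F$-linear-algebra argument pinned down by the good-basis structure, with the choice of generator of $H_2(B^\pm_{J_n})$ used exactly as in the paper to fix the labeling of $\spinct_{\pm1}$. The only cosmetic difference is that you classify the possible kernel lines and exclude the $\iota$-fixed line, whereas the paper runs the equivalent case analysis on the value $F_{B_J^\pm,\spinct_1}(b)$ and excludes the $\iota$-invariant image; both contradictions come from the injectivity (equivalently non-equivariant image) of the summed map.
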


\begin{proof}
For convenience, we write $f_m = F_{B_J^\pm, \spinct_m}$. By Proposition \ref{prop:twist-iso}, $f_1+f_{-1}$ is an isomorphism on $\HFred$, while $f_1$ and $f_{-1}$ individually have rank $1$. For any good basis $(a,b)$ for $\HFred(S^3_{\pm1}(J_n))$, $(f_1+f_{-1})(a)$ and $(f_1+f_{-1})(b)$ are distinct, nonzero elements of $\HFred(S^3_{-1}(J_{n+1}))$ that are interchanged by $\iota$. Up to reversing the indexing of spin$^c$ structures, we may assume that $f_1(a) \ne 0$; we call this element $a'$. Since the rank of $f_1$ is $1$, either $f_1(b) = 0 $ or $f_1(b) = a'$. In the latter case, we compute:
\[
(f_1+f_{-1})(a) = (f_1+ \iota f_1 \iota)(a) =  f_1(a) + \iota f_1(b) = (1+\iota) a'.
\]
But this means that $(f_1+f_{-1})(a)$ is an $\iota$-equivariant element, which contradicts our earlier statement. Thus, $f_1(b)=0$. It follows that
\begin{align*}
f_{-1}(a) &= \iota f_1 \iota(a) = \iota f_1(b) = 0 \\
f_{-1}(b) &= \iota f_1 \iota(b) = \iota (a').
\end{align*}
Moreover, we have $a' = (f_1+f_{-1})(a)$, so $\iota(a') \ne a'$. Defining $b' = \iota(a')$ completes the proof.
\end{proof}

For the $T_J$ cobordisms, recall the basis $\Xi,\Theta$ for $H_2(T_J)$ from Remark \ref{rem:TJ}: $\Xi$ is represented by a capped-off Seifert surface in $S^3_0(J)$, and $\Theta$ is represented by a $-2$-sphere. For any knot $J$, we may index the spin$^c$ structures on $T_J$ by $\spinct_{i,j}$, where $\gen{c_1(\spinct_{i,j}), \Xi} = 2i$ and $\gen{c_1(\spinct_{i,j}), \Theta} = 2j$. Again, negating both $\Xi$ and $\Theta$ reverses this indexing. We can compute that
\begin{align*}
\spinct_{i,j} + PD(\Xi) &= \spinct_{i,j+1} \\
\spinct_{i,j} + PD(\Theta) &= \spinct_{i+1,j-2}.
\end{align*}

\begin{lemma} \label{lemma:tye-spinc}
\begin{enumerate}
\item For any $n$, the map $F_{T_{P_n}, \spinct_{i,j}} \co \HFred(S^3_{-1}(P_n)) \to \HFred(S^3_1(P_n))$ is nonzero only for $i=j=0$; this map is an $\iota$-equivariant isomorphism.

\item For any $n$, the map $F_{T_{Q_n}, \spinct_{i,j}} \co \HFred(S^3_{-1}(Q_n)) \to \HFred(S^3_1(Q_n))$ is nonzero only for $i=\pm 1, j=0$. Up to negating $\Xi$ and $\Theta$, we may find good bases $(a,b)$ and $(a,b')$ such that
\begin{align*}
F_{T_{Q_n}, \spinct_{1,0}} (a)  &= a' &  F_{T_{Q_n}, \spinct_{-1,0}} (a) &= 0 \\
F_{T_{Q_n}, \spinct_{1,0}} (b) &= 0 &  F_{T_{Q_n}, \spinct_{-1,0}} (b) &= b'.
\end{align*}
\end{enumerate}
\end{lemma}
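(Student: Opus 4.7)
The strategy parallels Lemma~\ref{lemma:twist-spinc}: first use the adjunction inequality and the grading shift formula to cut down the set of spin$^c$ structures that can contribute, and then use Proposition~\ref{prop:tye-iso} together with conjugation symmetry \eqref{eq:conjugation-cobordism} to identify the surviving maps. First I would apply the sphere adjunction inequality to the embedded $-2$-sphere $\Theta \subset T_J$: the standard bound for $\HFmc$ cobordism maps at a sphere of square $\leq -2$ forces $F^-_{T_J,\spinct_{i,j}} = 0$ unless $|\langle c_1(\spinct_{i,j}), \Theta \rangle| \le -\Theta \cdot \Theta - 2 = 0$, that is, unless $j = 0$. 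This step is identical in the $P_n$ and $Q_n$ cases.

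Next I would specialize the grading shift formula $\Delta(\spinct) = \tfrac14\bigl(c_1(\spinct)^2 - 2\chi(T_J) - 3\sigma(T_J)\bigr)$. The intersection form of $T_J$ in the basis $(\Xi,\Theta)$ is $\bigl(\begin{smallmatrix} 0 & 1 \\ 1 & -2 \end{smallmatrix}\bigr)$, which is indefinite with signature $0$, and $\chi(T_J) = 2$. Inverting the form together with $\langle c_1(\spinct_{i,j}),\Xi\rangle = 2i$, $\langle c_1(\spinct_{i,j}),\Theta\rangle = 2j$ yields $c_1(\spinct_{i,j})^2 = 8i^2 + 8ij$, so at $j = 0$ the shift becomes $\Delta = 2i^2 - 1$. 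By Lemma~\ref{lemma:HFm-Pn-Qn}, $\HFred(S^3_{-1}(P_n))$ sits in grading $1$ and $\HFred(S^3_{1}(P_n))$ in grading $0$, so for the $P_n$ case a nontrivial contribution needs $\Delta = -1$ and hence $i = 0$; the gradings are reversed for $Q_n$, forcing $\Delta = +1$ and $i = \pm 1$. Part (1) follows immediately, since $F_{T_{P_n},\spinct_{0,0}}|_{\HFred} = F_{T_{P_n}}|_{\HFred}$ is an isomorphism by Proposition~\ref{prop:tye-iso}, and $\iota$-equivariance comes from \eqref{eq:conjugation-cobordism} together with the self-conjugacy of $\spinct_{0,0}$.

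For part (2), set $f_\pm := F_{T_{Q_n},\spinct_{\pm 1,0}}|_{\HFred}$. Since $\bar\spinct_{1,0} = \spinct_{-1,0}$, \eqref{eq:conjugation-cobordism} gives $\iota f_+ = f_- \iota$ and $\iota f_- = f_+ \iota$, while Proposition~\ref{prop:tye-iso} says $f_+ + f_-$ is an isomorphism. Granted also that each of $f_\pm$ has rank exactly one, the combinatorial argument at the end of the proof of Lemma~\ref{lemma:twist-spinc} applies verbatim: given any good basis $(a,b)$ for the source with $f_+(a) =: a' \ne 0$, the assumption $f_+(b) \ne 0$ would force $f_+(b) = a'$ (since $f_+$ has $1$-dimensional image), and then $(f_+ + f_-)(a) = a' + \iota a' = (f_+ + f_-)(b)$, contradicting that $f_+ + f_-$ is an isomorphism; hence $f_+(b) = 0$, and setting $b' := \iota a'$ yields the claimed normalization with $(a',b')$ a good basis for the target.

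The main obstacle is this rank-one claim for $f_+$ and $f_-$ individually; it is not forced by the formal properties above, since \emph{a priori} both could be automorphisms of the two-dimensional space $\HFred$ whose $\F$-sum is still an automorphism. I would attempt to establish it by an analog of the argument in Lemma~\ref{lemma:genus1-exact}: decompose $T_{Q_n}$ as $T_{Q_n}^- \cup T_{Q_n}^+$ and combine the mapping-cone description of each individual $2$-handle map with the nonvanishing of the top-Alexander-grading knot Floer group $\HFKh(Q_n, g(Q_n))$ to exhibit a nontrivial kernel of each $f_\pm$. A direct alternative would be a bordered Heegaard Floer computation along the lines of Section~\ref{sec:bordered}.
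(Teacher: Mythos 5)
Your spin$^c$ enumeration is essentially right, and in part (1) it takes a mildly different route from the paper: the paper first kills $i \neq 0$ by adjunction along the square-zero genus-one surface $\Xi$ (using $g(P_n)=1$) and then kills $j \neq 0$ by applying the adjunction relation to a genus-$(j-1)$ stabilization of $\Theta$, whereas you kill $j \neq 0$ at the $-2$-sphere and then use the grading shift to force $i=0$. That works, but your ``standard bound'' at a sphere of square $-2$ is not a quotable statement for cobordism maps; you should derive it from the adjunction relation of \cite[Theorem 3.1]{OSzSymplectic} as the paper does (e.g.\ iterating $F_{T,\spinct_{i,j}} = U^{j-1}F_{T,\spinct_{i-1,j+2}}$ shows the map on $\HFred$ is divisible by arbitrarily high powers of $U$, hence zero, for $j\neq 0$). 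The grading computation for part (2), the identification of $F_{T_{P_n},\spinct_{0,0}}|_{\HFred}$ with the total map from Proposition \ref{prop:tye-iso}, and the combinatorial endgame imported from Lemma \ref{lemma:twist-spinc} all match the paper.

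The genuine gap is exactly the one you flag: the rank-one claim for $f_\pm = F_{T_{Q_n},\spinct_{\pm1,0}}|_{\HFred}$ is left as an unexecuted sketch, and the sketched mechanism would not go through as stated. The triangles invoked in Lemma \ref{lemma:genus1-exact} pertain to $(-1)$- versus $0$-surgery along a genus-one knot, whereas the two $2$-handles of $T_{Q_n}$ are attached along meridians of $Q_n$ (dual knots) and $Q_n$ has genus $2$, so ``nonvanishing of $\HFKh(Q_n,g)$'' is not the input that produces a kernel. The paper's argument is different and shorter: once the grading argument shows that on $\HFred$ only $(i,j)=(\pm1,0)$ contribute, $f_{\pm}$ agrees on $\HFred$ with the composite $F^-_{T^+_{Q_n}}\circ F^-_{T^-_{Q_n}}$ in the corresponding spin$^c$ structures (the other terms in the composition law vanish), hence factors through $\HFred(S^3_0(Q_n),\spincs_{\pm1})$; since $Q_n$ is fibered of genus $2$, this extremal group is \emph{one-dimensional}, so each $f_\pm$ has rank at most one, and rank exactly one because their sum is an isomorphism of a two-dimensional space. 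The key input is thus fiberedness (one-dimensionality of the extremal group), not mere nonvanishing of knot Floer homology; with that supplied, the rest of your argument is correct.
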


\begin{proof}
For the first statement, since $g(P_n)=1$, $\Xi$ is represented by a square-0 torus. By adjunction, this implies that $F_{T_{P_n}, \spinct_{i,j}}=0$ when $i \ne 0$. For $j > 0$, represent $\Theta$ by a surface $\Sigma$ of genus $j-1$. Then
\[
\gen{c_1(\spinct_{0,j}), -\Theta} - \Theta^2 = -2(j-1)
\]
so again by adjunction \cite[Theorem 3.1]{OSzSymplectic}
\[
F_{T_{P_n}, \spinct_{0,j}} = U^{j-1} F_{T_{P_n}, \spinct_{0,j}-PD(\Theta)} = U^{j-1} F_{T_{P_n}, \spinct_{-1,j+2}} = 0.
\]
The $j<0$ case follows symmetrically. Thus, the only nonzero map is $F_{T_{P_n}, \spinct_{0,0}}$, which must be an isomorphism on $\HFred$ by Proposition \ref{prop:tye-iso}, and is $\iota$-equivariant because $\spinct_{0,0}$ is self-conjugate.

For the second statement, note that
\[
c_1(\spinct_{i,j})^2 = \mat{2i & 2j} \mat{0 & 1 \\ 1 & -2}^{-1} \mat{2i \\ 2j}
=8i(i+j),
\]
and hence the grading shift of $F_{T_J, \spinct_{i,j}}$ is
\[
\frac{c_1(\spinct_{i,j})^2 - 2\chi(T_J) - 3\sigma(T_J)}{4} = 2i(i+j)-1.
\]
Since $\HFred(S^3_{-1}(Q_n))$ is supported in grading $0$ and $\HFred(S^3_1(Q_n))$ is supported in grading $1$, if $F_{T_{Q_n}, \spinct_{i,j}}$ is nonzero on $\HFred$, we must have $2i(i+j)-1 = 1$, hence $i(i+j)=1$, which implies that either $i=1, j=0$ or $i=-1, j=0$. This implies that $F_{T_{Q_n}, \spinct_{\pm 1,0}}$ factors through $\HFred(S^3_0(Q_n), \spincs_{\pm1})$ which is isomorphic to $\F$ since $Q_n$ is fibered. Thus, $F_{T_{Q_n}, \spinct_{1,0}}$ and $F_{T_{Q_n}, \spinct_{-1,0}}$ each have rank at most $1$. Since their sum is an isomorphism, each has rank exactly $1$. The rest of the argument proceeds exactly as in the proof of Lemma \ref{lemma:twist-spinc}.
\end{proof}

In order to prove Theorem~\ref{thm:knot-surgery-intro} about knot surgering certain four-manifolds, we will also need to understand the groups $\HFh(S^3_{-1}(Q_n^K))$. 

\begin{proposition} \label{prop:HF-Qn}
For any integer $n$ and any knot $K \subset S^3$, we have
\begin{equation} \label{eq:HF-QnK}
\dim \HFh(S^3_{\pm1}(P_n^K)) = \dim \HFh(S^3_{\mp1}(Q_n^K)) = 1 + 4 \dim \HFKh(K).
\end{equation}
This equals $5$ when $K$ is the unknot, and is at least $13$ when $K$ is a nontrivial knot.
\end{proposition}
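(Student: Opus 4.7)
The plan is to reduce the two equalities of the proposition to a single bordered Heegaard Floer computation and extract the dimension formula from it. First, I will invoke Proposition \ref{prop:PQdual}: the homeomorphisms $S^3_{+1}(P_n^K) \cong S^3_{-1}(Q_{n-4}^K)$ and $S^3_{+1}(Q_n^K) \cong S^3_{-1}(P_{n-3}^K)$ directly equate the $P$- and $Q$-columns of the asserted formula (up to a shift of the index $n$), while orientation-reversal invariance of $\widehat{\HF}$ combined with \eqref{eq:PQ-symmetry} reduces the two sign cases to one another. Thus it suffices to prove
\[
\dim \HFh(S^3_{-1}(Q_n^K)) = 1 + 4\dim \HFKh(K) \qquad \text{for every } n \in \Z.
\]

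Next I would decompose $S^3_{-1}(Q_n^K)$ along the essential torus bounding a neighborhood of the companion axis. Writing $V_n$ for the bordered $3$-manifold obtained by performing $-1$-surgery on $Q_n$ inside a solid torus neighborhood of the companion circle, one has $S^3_{-1}(Q_n^K) = V_n \cup_T (S^3 \setminus \nu K)$, and the bordered Heegaard Floer pairing theorem gives
\[
\widehat{\HF}\bigl(S^3_{-1}(Q_n^K)\bigr) \;\cong\; H_*\bigl(\widehat{\CFA}(V_n) \boxtimes \widehat{\CFD}(S^3 \setminus \nu K)\bigr).
\]
The right-hand factor $\widehat{\CFD}(S^3 \setminus \nu K)$ is standard: it is determined by $\CFK^-(K)$, and its total dimension is a linear function of $\dim \HFKh(K)$. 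The various $V_n$ are related to $V_0$ by boundary Dehn twists, which act in a controlled way on bordered invariants, so in principle a single framework can handle all $n$ uniformly.

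The hard part -- and the principal obstacle -- is the explicit determination of $\widehat{\CFA}(V_n)$ and the computation of its box tensor product with $\widehat{\CFD}(S^3 \setminus \nu K)$; this is the extensive bordered calculation deferred to Section \ref{sec:bordered}. Once this is carried out, one reads off that the resulting pairing has precisely $1 + 4\dim \HFKh(K)$ generators in homology, which establishes the main formula.

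Finally, the numerical consequences follow at once. For the unknot, $\dim \HFKh = 1$ gives $\dim \HFh = 5$; this is consistent with a direct check via Lemma \ref{lemma:HFm-Pn-Qn}, which yields $\HFred(S^3_{-1}(Q_0)) \cong \F^2$ with trivial $U$-action, so $\dim \HFh = 5$. For any nontrivial knot $K$, the Ozsv\'ath--Szab\'o knot Floer genus-detection theorem together with the parity constraint $\dim \HFKh(K) \equiv 1 \pmod 2$ (from the spectral sequence $\HFKh(K) \Rightarrow \widehat{\HF}(S^3) = \F$) forces $\dim \HFKh(K) \geq 3$, so $\dim \HFh(S^3_{-1}(Q_n^K)) \geq 13$.
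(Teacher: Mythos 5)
Your reductions at the start are sound: Proposition \ref{prop:PQdual} identifies the $P$- and $Q$-columns up to a shift in $n$, and mirroring together with $\dim \HFKh(\overline{K}) = \dim \HFKh(K)$ handles the sign, so it does suffice to compute $\dim \HFh(S^3_{-1}(Q_n^K))$ (the paper makes an equivalent reduction, using Proposition \ref{prop:twist-iso} to drop the dependence on $n$ and then working with $S^3_1(P_{-1}^K)$). Your satellite decomposition along $\partial \nbd(\gamma)$ and the bordered pairing theorem are likewise exactly the framework of Section \ref{sec:bordered}. The genuine gap is that the proposal stops precisely where the content of the proposition begins: you write that ``once this is carried out, one reads off'' the count $1 + 4\dim\HFKh(K)$, i.e.\ you assume the extensive bordered computation rather than perform it. What is missing is (i) the explicit determination of the bordered invariant of the pattern piece $M$ (the paper takes $M = S^3_1(P_{-1}) \minus \nbd(\gamma)$): this is done by identifying $M$ with the complement of a knot in $S^1 \times S^2$ (Lemma \ref{lemma: mazur}), tensoring the $\CFDD$ bimodule of a two-component link exterior from \cite{LevinePL} with $\CFA$ of a $0$-framed solid torus, and simplifying by a change of basis to a three-summand $\CFD(M)$, equivalently an immersed multicurve with one loose component and two figure-eight components near the puncture; and (ii) the pairing argument showing that the loose component contributes $1$ (it computes $\HFh$ of the $\infty$-filling, i.e.\ of $S^3$), that each figure-eight contributes exactly $2\dim\HFKh(K)$, and that nontrivial local systems or non-minimal intersections cannot change this count (the paper invokes \cite[Lemma 12.4]{HanselmanHFK} and \cite[Proposition 48]{HRWProperties} for this). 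Without (i) and (ii) the coefficient $4$ and the summand $+1$ are not established, so what you have is a correct plan, not a proof.

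Two smaller points. The claim that the total dimension of $\CFD(S^3 \minus \nbd(K))$ is ``a linear function of $\dim\HFKh(K)$'' is not accurate as stated: the number of generators depends on the framing and on the lengths of the horizontal and vertical arrows in a reduced basis for $\CFKm(K)$, not only on $\dim\HFKh(K)$; this is harmless here only because the final count comes from the pairing with the specific curve set $\HFh(M)$, not from counting generators of $\CFD(X_K)$. Your closing numerics are fine: the unknot case matches Lemma \ref{lemma:HFm-Pn-Qn}, and $\dim\HFKh(K) \ge 3$ for nontrivial $K$ (genus detection plus the mod $2$ parity from the spectral sequence to $\HFh(S^3)$) gives the bound $\ge 13$, which is the same justification the paper uses.
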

This will be proven in Section \ref{sec:bordered}, via an extensive bordered Floer homology computation.

\subsection{Symmetries} \label{subsec:symmetry}
We now focus on the specific example of $S^3_1(P_0)$, which will arise in several places throughout the paper. We denote this manifold by $Y_0$, and the contractible manifold $C^+_{P_0}$ by $C_0$.  Proposition \ref{prop:PQdual} provides a homeomorphism $\phi_0 \co Y_0 \to S^3_{-1}(Q_{-4})$. As noted in the proof of Lemma \ref{lemma:HFm-Pn-Qn}, $\HFm(Y_0)$ is generated over $\F[U]$ by generators $v_1,v_2,v_3$ in grading $0$, with $Uv_1 = Uv_2 = Uv_3$, and $\iota$ is given by $\iota(v_1) = v_3$, $\iota(v_2) = v_2$, $\iota(v_3) = v_1$. Such a basis is unique up to interchanging $v_1$ and $v_3$ and/or replacing $v_2$ with $v_1+v_2+v_3$.

\begin{figure}[htbp]{\scriptsize
\begin{overpic}[tics=20]{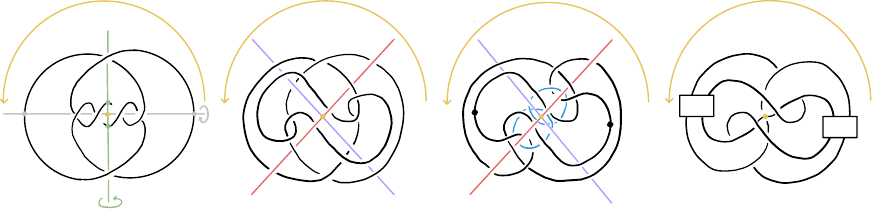}
\put(10,65){$1$}
\put(50,92){$\color{lyellow}\rho$}
\put(40,0){$\color{lgreen}\sigma$}
\put(98,35){$\color{gray}\tau$}
\put(122,65){$1$}
\put(150,93){$\color{lyellow}\rho$}
\put(110,85){$\color{lpurple}\gamma$}
\put(193,85){$\color{lred}\sigma(\gamma)$}
\put(236,50){$\color{lblue}-1$}
\put(273,34){$\color{lblue}-1$}
\put(260,92){$\color{lyellow}\rho$}
\put(221,85){$\color{lpurple}\gamma$}
\put(300,85){$\color{lred}\sigma(\gamma)$}
\put(325,65){$-1$}
\put(330,47){$-2$}
\put(398,37){$-2$}
\put(351,85){$\color{lyellow}\phi_0\circ\rho\circ\phi_0^{-1}$}
  \end{overpic}}
  \caption{In the left frame we exhibit three involutions on $S^3_1(P_0)$.  The final three frames exhibit how $\rho$ interacts with the topology of our building blocks.}
  \label{fig:DHMsym}
\end{figure}

A special feature of $Y_0$ is that the action of its mapping class group on its Heegaard Floer homology is completely understood, thanks to the work of Dai, Hedden, and Mallick \cite{DHMCorks}.
We can isotope $P_0$ in $\R^3$ to be symmetric with respect to $180^\circ$ rotations around each of the three coordinate axes, which we denote by $\rho$, $\sigma$, and $\tau$ as shown in Figure \ref{fig:DHMsym}. Each of these gives rise to an involution of $Y_0$, which we also denote by the same symbols. Note that $\rho$, $\sigma$, and $\tau$ all commute, forming a Klein four group. (Using SnapPy \cite{SnapPy}, one can prove that $\{\operatorname{id},\sigma,\tau,\rho\}$ is in fact the whole mapping class group of $Y_0$, although we will not need this here.)

\begin{proposition} \label{prop:P0-symmetry}
With respect to any basis $(v_1,v_2,v_3)$ as above, the actions of $\sigma$ and $\tau$ on $\HFm(S^3_1(P_0))$ are given by
\begin{align*}
\rho_*(v_1) &= v_3  & \rho_*(v_2) &= v_2 & \rho_*(v_3) &= v_1 \\
\sigma_*(v_1) &= v_1  & \sigma_*(v_2) &= v_1+v_2+v_3 & \sigma_*(v_3) &= v_3 \\
\tau_*(v_1) &= v_3 & \tau_*(v_2) &= v_1 + v_2 + v_3 &  \tau_*(v_3) &= v_1.
\end{align*}
\end{proposition}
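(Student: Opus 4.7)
The plan rests on an algebraic classification of candidate involutions on $\HFm(Y_0)$, followed by two identification steps. Since $\rho$, $\sigma$, $\tau$ commute and satisfy $\rho\sigma = \tau$, I have $\tau_* = \sigma_*\rho_*$, so only $\rho_*$ and $\sigma_*$ need be computed. Each induced map is orientation-preserving, grading-preserving, $U$-equivariant, and commutes with $\iota$. Using the description of $\HFm(Y_0)$ from Lemma \ref{lemma:HFm-Pn-Qn} (i.e.\ $Uv_1 = Uv_2 = Uv_3$ with $\iota(v_1) = v_3$, $\iota(v_2) = v_2$), a short linear-algebra exercise classifies all involutions of $\HFm(Y_0)$ with these properties: there are exactly four, namely the identity, $\iota$, and two further involutions whose matrices in the basis $(v_1,v_2,v_3)$ are precisely the ones claimed for $\sigma_*$ and $\tau_*$. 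These four assemble into a Klein four-group $G$ inside $\operatorname{Aut}(\HFm(Y_0))$, and the basis ambiguity noted in the excerpt (swap $v_1 \leftrightarrow v_3$, or replace $v_2$ by $v_1 + v_2 + v_3$) preserves these formulas. The proposition therefore reduces to two claims: $\rho_* = \iota$, and $\sigma_*$ is the element $M \in G$ fixing $v_1$ and $v_3$.

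The identification $\rho_* = \iota$ is a direct appeal to Dai--Hedden--Mallick. The rotation $\rho$ is the strong inversion of the stevedore knot $P_0$ that realizes $C_0$ as the cork appearing in Construction \ref{constr:simplyconn-intro}, and Example~4.7 of \cite{DHMCorks} computes explicitly that it acts on $\HFm(Y_0)$ as the involutive involution.

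The main obstacle I anticipate is ruling out $\sigma_* \in \{I, \iota\}$; once this is done, the classification pins $\sigma_*$ down, and $\tau_* = \sigma_*\iota$ gives the formula for $\tau_*$ automatically. This is the hardest part of the argument, because the four elements of $G$ are algebraically indistinguishable by formal means: each is $\iota$-equivariant, has the same trace, and has a rank-$2$ fixed subspace, so any abstract cobordism-theoretic argument is blind to the difference, and concrete Floer input is required. The route I would pursue is to present $Y_0$ by a Heegaard diagram on which $\sigma$ acts visibly, namely the $\sigma$-symmetric analogue of the $\rho$-symmetric diagram used in \cite{DHMCorks}, and to read off the induced action on the generators. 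A plausible alternative would be a bordered Floer computation in the spirit of Section \ref{sec:bordered}, exploiting the identification of $Y_0$ with surgery on $P_0$ and the fact that $\sigma$ moves the standard ribbon disk of $P_0$ off itself, so that $\sigma$ cannot act on $\HFm(Y_0)$ in the same way as the cork involution $\rho$.
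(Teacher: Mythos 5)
Your algebraic classification is correct and is a nice way to organize the problem: the $U$-equivariant, grading-preserving involutions of $\HFm(S^3_1(P_0))$ commuting with $\iota$ do form exactly the Klein four-group $\{\mathrm{id},\iota,\sigma_*,\tau_*\}$ with the stated matrices, and these formulas are basis-independent. But the proposal has a genuine gap at exactly the point you flag: you never establish which element of this group $\sigma_*$ actually is. Ruling out $\sigma_*\in\{\mathrm{id},\iota\}$ is the entire content of the proposition (it is what makes $\sigma$ fail to extend over the contractible piece), and you only sketch two possible routes (a $\sigma$-equivariant Heegaard diagram, or a bordered computation) without carrying either out. In the paper this step is not reproved at all: the actions of both $\sigma$ and $\tau$ are cited directly from \cite[Example 4.7]{DHMCorks}, and $\rho_*$ is then obtained by composing, $\rho_*=\sigma_*\tau_*$. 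So where the paper quotes the hard Floer input, you propose to recompute it and leave the computation undone.

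Second, your appeal to Dai--Hedden--Mallick for $\rho_*=\iota$ is misdirected. In the paper's setup $\rho$ is the symmetry that \emph{does} extend smoothly over $C^+_{P_0}$ (Remark \ref{rem:rho}); the cork-type involutions of Construction \ref{constr:simplyconn-intro} are $\sigma$ and $\tau$, the ones that do not extend (Remark \ref{rem:cork}), and what \cite[Example 4.7]{DHMCorks} computes is precisely $\sigma_*$ and $\tau_*$, not $\rho_*$. Relatedly, the closing heuristic --- that $\sigma$ moves the standard ribbon disk off itself and therefore cannot act on $\HFm(Y_0)$ the way $\rho$ does --- is not a valid inference: a symmetry can move a disk while still acting trivially (or as $\iota$) on Floer homology, and in \cite{DHMCorks} the logic runs the other way, with the computation of $\sigma_*$ being what obstructs extending $\sigma$ over any homology ball. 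The quickest repair is to cite DHM for $\sigma_*$ and $\tau_*$ (as the paper does) and compose to get $\rho_*$; your classification then serves as a consistency check and as the proof of basis-independence rather than as the engine of the argument.
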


\begin{proof}
The computation for $\sigma$ and $\tau$ is given in \cite[Example 4.7]{DHMCorks}; the one for $\rho$ follows immediately by composing. Note that each of these statements is independent of the basis $(v_1,v_2,v_3)$ subject to the above conditions.
\end{proof}

\begin{remark} \label{rem:rho}
Note that the curve $\gamma = \gamma_{P_0}$ can be isotoped to be symmetric with respect to $\rho$, as shown in the second frame of Figure \ref{fig:DHMsym}. Indeed, the entire presentation of $P_0$ as a fusion number 1 knot can be made symmetric. As a consequence, the symmetry $\rho$ extends smoothly over the contractible manifold $C^+_{P_0}$, as seen in the third frame. Using the Montesinos trick, we may recognize $Y_0$ as the branched double cover of a knot in $S^3$ and $C_0$ as the branched double cover of a slice disk for this knot. (See Section \ref{sec:surfaces} for further details.)

Furthermore, the homeomorphism $\phi_0 \co S^3_1(P_0) \to S^3_{-1}(Q_{-4})$ can be constructed to be equivariant with respect to $\rho$ and a symmetry of $S^3_{-1}(Q_{-4})$ that comes from a strong inversion of $Q_{-4}$, as seen in the fourth frame of Figure \ref{fig:DHMsym}. Indeed, the $4$-dimensional extension $\phi_0 \co C^+_{P_0} \to C^-_{Q_{-4}}$ has the same property. In particular, note that $\phi_0^{-1}(\mu_{Q_{-4}})$ can be arranged to be symmetric with respect to $\rho$.
\end{remark}

\begin{remark} \label{rem:cork}
In contrast to the previous remark, $\gamma$ is not preserved under $\sigma$ or $\tau$. Furthermore, Dai, Hedden, Mallick proved using Proposition \ref{prop:P0-symmetry} that neither $\sigma$ nor $\tau$ can extend smoothly over the contractible manifold $C^+_{P_0}$, or more generally over any homology ball $Z$ with boundary $Y_0$; the key point is that $\sigma_*$ and $\tau_*$ each exchange $v_2$ with $v_1+v_2+v_3$. However, by work of Freedman \cite{Freedman}, each of these maps does extend to a continuous homeomorphism of $C^+_{P_0}$. In Section \ref{sec:simply-connected}, we will see that cutting out $C^+_{P_0}$ and regluing by either $\sigma$ or $\tau$ can produce an exotic smooth structure on $\cptwo \conn 9 \cptwobar$.

Additionally, note that $\sigma_*$ and $\rho_*$ restrict to nontrivial involutions on $\HFred(Y_0)$, while $\tau_*$ restricts to the identity on $\HFred$. We will see in Section \ref{subsec:cut-reglue} that $\sigma$ and $\rho$ (but not $\tau$) can be used to change the smooth structure of our exotic 4-manifolds with $\pi_1=\Z$.
\end{remark}

There is also an important symmetry on $S^3_0(Q_0)$ that will be crucial for constructing the exotic definite four-manifold with $\pi_1 = \Z/2$.
\begin{proposition}\label{prop:square-invol}
There is an orientation-reversing free involution $\zeta$ on $S^3_0(Q_0)$
\end{proposition}

\begin{proof}
Recall that $Q_0$ is the square knot, i.e. the connected sum of the right- and left-handed trefoils. Therefore, it is strongly negatively amphichiral, i.e., fixed under an orientation-reversing symmetry $\zeta\co S^3 \to S^3$ with two fixed points, both contained on $K$. Since $\zeta$ takes longitudes to longitudes, it induces a free involution on the surgery solid torus of $S^3_0(Q_0)$, which we also denote by $\zeta$.
\end{proof}

\section{A new four-manifold invariant from hypersurfaces }\label{sec:new-invts}

In this section, we define an elementary invariant $\adam$ of 4-manifolds with $b_1 > 0$, which we will use in subsequent sections to construct exotic 4-manifolds with $\pi_1 = \Z$, square zero spheres, and those related by knot surgery on Alexander polynomial 1 knots.  As described in the introduction, the $\adam$ invariant can be defined using a variety of Floer homology theories.  In this paper, we will restrict to Heegaard Floer homology, in particular $\HFred$ and $\HFh$, for its computability.  Ultimately, most applications will use $\HFh$, but the invariant is more easily analyzed from the perspective of $\HFred$, so we present invariants using both flavors in this section.

\subsection{The \texorpdfstring{$\adam$}{\textalpha} invariant and its applications}

\begin{definition} \label{def:adaminvt}
Let $X$ be a closed, oriented, connected smooth 4-manifold, and let $\eta$ be a primitive element of $H_3(X)$. We define $\adam(X, \eta)$ to be the minimal $\F$-dimension of $\HFred(Y)$, where $Y$ is a smoothly embedded, closed, connected, oriented $3$-manifold representing the homology class $\eta$.
\end{definition}

Because $\dim \HFred(Y) = \dim \HFred(-Y)$, it is immediate that $\adam(X, \eta) = \adam(X,-\eta)$. Moreover, note that $\adam(X,\eta)$ does not depend on the orientation of $X$; in this regard, it is very different from most other gauge-theoretic invariants of $4$-manifolds. In the case where $b_1(X) = 1$, we simply write $\adam(X)$ for $\adam(X,\eta)$, where $\eta$ is either generator of $H_3(X) \cong \Z$.

\begin{example}\label{ex:trivial-alpha}
Let $X = {S^1 \times S^3} \conn Z$, where $Z$ is any closed, oriented $4$-manifold with $b_1(Z)=0$. Then $H_3(X) \cong \Z$, with the generator represented by $S^3$, and hence $\adam(X) = 0$.
\end{example}

Invariants defined by taking a minimum over all embedded representatives of a homology class are typically difficult to compute (e.g. the minimal genus of a surface representing a fixed second homology class in a 4-manifold). However, the following proposition provides a straightforward criterion to show that some $Y$ representing $\eta$ in fact determine $\adam(X,\eta)$, in certain settings.

\begin{proposition}\label{prop:adam-iso}
Let $Y$ be a closed, connected, oriented $3$-manifold, and suppose $W$ is a cobordism from $Y$ to itself for which the map
\[
F_W \co \HFred(Y) \to \HFred(Y)
\]
is an isomorphism (summing over all spin$^c$ structures). Let $X$ be a closed, oriented $4$-manifold obtained by gluing the ends of $W$ by a diffeomorphism, and let $\eta$ denote the class of $Y$ in $H_3(X)$.  Then $\adam(X, \eta) = \dim \HFred(Y)$; that is, $Y$ minimizes the dimension of $\HFred$ in its homology class. Moreover, if $\HFred(Y) \ne 0$, then  $X$ does not contain any embedded $2$-spheres with self-intersection $1$ or $-1$.
\end{proposition}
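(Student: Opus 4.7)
The plan is to work in the infinite cyclic cover $\tilde X \to X$ associated to $\PD(\eta) \in H^1(X)$, where the preimage of $Y$ splits $\tilde X$ into a bi-infinite chain of regions $\bigsqcup_n R_n$, each diffeomorphic to $W$ and glued along copies $Y_n$ of $Y$. The inequality $\adam(X,\eta) \le \dim\HFred(Y)$ is immediate since $Y$ itself represents $\eta$.

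For the reverse inequality, let $Y' \subset X$ be any connected oriented representative of $\eta$ and choose a connected lift $\tilde Y' \subset \tilde X$. Compactness places $\tilde Y'$ in some finite union $C = R_0 \cup \cdots \cup R_N \cong W^{N+1}$, and because $Y'$ is non-separating in $X$ its lift is separating in $\tilde X$; for $N$ large enough that $Y_0$ and $Y_{N+1}$ lie on opposite sides of $\tilde Y'$ within $C$, we obtain cobordisms $U \co Y \to \tilde Y'$ and $V \co \tilde Y' \to Y$ with $V \circ U \cong W^{N+1}$. Applying the Heegaard Floer functor and invoking the hypothesis, $F_W^{N+1} = F_V \circ F_U$ is an isomorphism, so $F_U$ must be injective, giving $\dim\HFred(Y) \le \dim\HFred(Y')$.

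For the sphere statement, suppose for contradiction that $X$ contains a sphere $S$ with $S^2 = \pm 1$, and blow it down to write $X \cong X_0 \# \cptwo$ or $X \cong X_0 \# \cptwobar$. Because $H_3$ of these summands vanishes, after surgering away any intersection surfaces (null-homologous in the connect-sum $S^3$) I may pick a connected representative $Y_0 \subset X_0$ of $\eta$. Then $W_{Y_0}$ decomposes as an interior connected sum $W_{X_0} \# \cptwo$ or $W_{X_0} \# \cptwobar$, where $W_{X_0} := X_0 \setminus \nu(Y_0)$, and the Ozsv\'ath-Szab\'o blow-up formula gives $F^-_{W_{Y_0}, \spinct_k + \spinct'} = U^{k(k+1)/2} F^-_{W_{X_0}, \spinct'}$ for the spin$^c$ structure $\spinct_k$ with $\langle c_1(\spinct_k),[S]\rangle = 2k+1$. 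Since each triangular number $k(k+1)/2$ is achieved by both $k$ and $-k-1$, the sum $\sum_{k\in\Z} U^{k(k+1)/2}$ vanishes in $\F_2[[U]]$, and hence $F_{W_{Y_0}} = 0$ on $\HFmc(Y_0)$.

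Re-running the cover argument with $Y$ and $Y_0$ both in play and interleaving the two slicings of $\tilde X$, I extract cobordisms $U \co Y \to Y_0$ and $V \co Y_0 \to Y$ satisfying both $V \circ U \cong W^{K+1}$ and $U \circ V \cong W_{Y_0}^{L+1}$ for some $K, L \ge 0$. On $\HFred$ the first identity makes $F_V \circ F_U$ an isomorphism, so $F_U$ is injective and $F_V$ is surjective; the second gives $F_U \circ F_V = 0$, forcing $F_U$ to vanish on $\im(F_V) = \HFred(Y)$, whence $\HFred(Y)=0$, contradicting the hypothesis. \textbf{The main obstacle} I anticipate is purely topological: verifying that compact connected lifts actually separate the relevant finite sub-cobordisms, and interleaving the two slicings cleanly enough that $V\circ U$ and $U\circ V$ can be identified with genuine powers of $W$ and $W_{Y_0}$; once that setup is in place, the $\F_2$ blow-up cancellation and the algebraic conclusion slot in essentially formally.
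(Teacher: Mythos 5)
Your first part (the equality $\adam(X,\eta)=\dim\HFred(Y)$) is essentially the paper's proof: pass to the infinite cyclic cover dual to $\eta$, use compactness to place a lift of the competing hypersurface $Y'$ inside a finite stack $W^{N+1}$, and observe that the isomorphism $(F_W)^{N+1}$ factors through $\HFred$ of that lift. One caveat: the reason the lift exists is not that ``$Y'$ is non-separating in $X$'' but that $PD(\eta)$ restricts trivially to $Y'$ (any loop $\gamma\subset Y'$ has $\gen{PD[Y],[\gamma]}=[\gamma]\cdot[Y']=0$), which is exactly the computation the paper makes; one then still has to check that the lift separates the incoming from the outgoing boundary of the finite stack, which you correctly flag as a point needing care.

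Your proof of the sphere statement takes a genuinely different and more roundabout route, and it contains one real error: the identity $F^-_{W\conn\pm\cptwo,\spinct'\#\spinct_k}=U^{k(k+1)/2}F^-_{W,\spinct'}$ is the Ozsv\'ath--Szab\'o blow-up formula only for $\cptwobar$-summands (i.e.\ $-1$-spheres). For a $+1$-sphere no such formula holds --- the grading shift $((2k+1)^2-2\chi-3\sigma)/4=((2k+1)^2-5)/4$ is odd, so the map cannot be a nonzero $U$-power multiple --- and the mod-$2$ pairing of $k$ with $-k-1$ is not the mechanism there. What is true, and what the paper uses, is that for a $+1$-framed unknot cobordism every individual spin$^c$ summand vanishes, while in the $-1$ case the summands cancel in conjugate pairs over $\F$. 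With that substitution, the rest of your argument can be made to work: one can indeed choose a representative $Y_0$ of $\eta$ disjoint from the $\pm\cptwo$ summand (e.g.\ choose a map $X\to S^1$ dual to $\eta$ that is constant on that summand and take a regular fiber), get $F_{W_{Y_0}}=0$, and your interleaving of the two slicings of the cover (a lift of $Y_0$ sandwiched between two lifts of $Y$ gives $U$ and $V$ with $V\circ U\cong W^m$ and $U\circ V\cong W_{Y_0}^m$ up to harmless deck-transformation identifications) yields the contradiction. By contrast, the paper's argument is shorter: it lifts the sphere $S$ itself (it lifts because it is simply connected) into $W^n$, writes $W^n\cong N\conn\pm\cptwo$, and factors $F_{W^n}$ through the cobordism $(Y\times I)\conn\pm\cptwo$, whose induced map is zero --- no blow-down, no auxiliary hypersurface $Y_0$, and no interleaving are needed.
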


%


%

\begin{proof}
Suppose that $Z$ is a hypersurface in $X$ with $[Z] = [Y]$ in $H_3(X)$. In the special case where $Z$ is disjoint from $Y$, $Z$ then separates $W$ into a cobordism from $Y$ to $Z$ followed by a cobordism from $Z$ to $Y$. Then $F_W \co \HFred(Y) \to \HFred(Y)$ is an isomorphism which necessarily factors through $\HFred(Z)$.  It follows that $\dim \HFred(Z) \geq \dim \HFred(Y)$.

In the general case, $Z$ and $Y$ will intersect. Let $p \co \tilde X \to X$ be the infinite cyclic cover corresponding to $PD[Y] \in H^1(X)$, obtained by stacking infinitely many copies of $W$ end to end. For any loop $\gamma \subset Z$, we have \[
\gen{PD[Y],[\gamma]} = [\gamma] \cdot [Y] = [\gamma] \cdot [Z] = 0.
\]
Thus, $Z$ lifts to an embedded, separating hypersurface in $\tilde X$. Since $Z$ is compact, it must be contained in the composition of $n$ copies of $W$, for some $n$.  Write this as $W^n$.  The map $F_{W^n} \co \HFred(Y) \to \HFred(Y)$ is just $(F_W)^n$, hence still an isomorphism, and it factors through $\HFred(Z)$.  Once again, we see that $\dim \HFred(Z) \geq \dim \HFred(Y)$, as required.

For the final statement, suppose $S$ is a $\pm 1$-sphere in $X$. As before, a compactness argument shows that $S$ lifts to an embedded sphere in $W^n$ for some $n$, so write $W^n = N \conn \pm \cptwo$.  Then $F_{W^n}$ factors through the 2-handle cobordism map associated to a $+1$ or $-1$-framed unknot (i.e, the cobordism $(Y \times I) \conn \pm \cptwo$), which is necessarily zero. (Specifically, in the $+1$ case, every individual spin$^c$ summand vanishes, while in the $-1$ case, the blowup formula \cite[Theorem 3.7]{OSz4Manifold} implies that the summands come in cancelling pairs.) If $\HFred(Y) \ne 0$, this contradicts the fact that $F_{W^n}$ is an isomorphism on $\HFred(Y)$.
\end{proof}

For example, if $W = Y \times I$, then $F_W$ is the identity map. Therefore, if $X$ is any $Y$-bundle over $S^1$, then Proposition~\ref{prop:adam-iso} implies $\adam(X,[Y]) = \dim \HFred(Y)$.

However, for the purposes of constructing exotic $4$-manifolds, we will want to restrict our attention to manifolds which have been topologically classified, such as when $\pi_1(X) \cong \Z$.  To arrange this,  the following proposition will be useful:

\begin{proposition} \label{prop:pi1Z}
Let $Y$ be a homology sphere. Let $W$ be a simply-connected cobordism from $Y$ to itself, and assume that the intersection form of $W$ is odd. Let $X$ be the manifold obtained by gluing the ends of $W$ by some diffeomorphism. Then $X$ is homeomorphic to $S^1 \times S^3 \conn m \cptwo \conn n \cptwobar$, where $m = b_2^+(W)$ and $n= b_2^-(W)$. Thus, if $\HFred(Y) \ne 0$ and $W$ induces an isomorphism on $\HFred(Y)$, then $X$ is an exotic $S^1 \times S^3 \conn m \cptwo \conn n \cptwobar$.
\end{proposition}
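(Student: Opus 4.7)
The plan is first to verify that $X$ has the same algebraic-topological invariants as $S^1 \times S^3 \conn m \cptwo \conn n \cptwobar$, so that a topological classification theorem for $4$-manifolds with $\pi_1 = \Z$ applies; and then to use the $\adam$-invariant to obstruct a diffeomorphism to the connected sum.

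I would begin with a van Kampen computation. Writing $X = W \cup (Y \times [-1,1])$ with the two copies of $Y$ in $\partial W$ glued to $Y \times \{\pm 1\}$ via the chosen gluing, and using that $W$ is simply connected (so both inclusions $Y \hookrightarrow W$ factor through the trivial group), the HNN-type presentation collapses to $\pi_1(X) \cong \Z$, generated by the loop dual to the hypersurface $Y$. A Mayer--Vietoris computation on the same decomposition, using that $Y$ is a homology sphere, shows $H_*(X)$ matches $H_*(S^1\times S^3 \conn m\cptwo \conn n\cptwobar)$ degree by degree: in particular $H_2(X)$ is torsion-free of rank $m+n$, and its intersection form is identified with that of $W$ (the radical vanishes because $Y$ is a $\Z$-homology sphere). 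By hypothesis this form is odd, unimodular, of signature $m-n$ and rank $m+n$, so by Hasse--Minkowski (or just the classification of odd unimodular forms over $\Z$) it is equivalent to $m\langle 1\rangle \oplus n\langle -1\rangle$.

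Next, I would identify the remaining homeomorphism invariants. Since $X$ is smooth, the Kirby--Siebenmann invariant vanishes. The intersection form is odd, so $X$ is non-spin; and the universal (i.e. infinite cyclic) cover $\tilde X$ is an infinite end-to-end concatenation of copies of $W$, so its intersection form is an infinite orthogonal sum of copies of the odd form of $W$ and is thus also odd, giving that $\tilde X$ is non-spin as well. In particular $X$ has the same $w_2$-type as the model $S^1\times S^3 \conn m\cptwo \conn n\cptwobar$. Invoking the Freedman--Quinn topological classification for closed oriented $4$-manifolds with fundamental group $\Z$ (a good group), these invariants determine $X$ up to homeomorphism, yielding $X \cong_{\mathrm{TOP}} S^1 \times S^3 \conn m \cptwo \conn n \cptwobar$.

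For the exotic conclusion, the hypotheses that $\HFred(Y) \ne 0$ and that $F_W$ is an isomorphism on $\HFred(Y)$ place us in the situation of Proposition \ref{prop:adam-iso}, which gives $\adam(X, [Y]) = \dim \HFred(Y) > 0$. On the other hand, Example \ref{ex:trivial-alpha} shows that $\adam(S^1 \times S^3 \conn m \cptwo \conn n \cptwobar) = 0$ (the generator of $H_3$ is represented by $S^3$). Since $\adam$ is a diffeomorphism invariant, $X$ cannot be diffeomorphic to the standard connected sum, and is therefore exotic. The step I expect to require the most care is verifying the $w_2$-type hypothesis — specifically that $\tilde X$ is non-spin — and correctly citing the form of the topological classification appropriate to $\pi_1 = \Z$; by comparison the homological bookkeeping and the $\adam$-obstruction are direct applications of material already established in the paper.
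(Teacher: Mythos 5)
Your final step (the $\adam$-invariant argument via Proposition \ref{prop:adam-iso} and Example \ref{ex:trivial-alpha}) is fine and is exactly how the exotic conclusion is meant to follow. The gap is in the topological classification step. For closed oriented $4$-manifolds with $\pi_1 \cong \Z$, the Freedman--Quinn/Wang classification is stated in terms of the \emph{equivariant} intersection form on $H_2(X;\Z[\Z])$ (together with the Kirby--Siebenmann invariant), not in terms of the ordinary $\Z$-valued form plus the $w_2$-types of $X$ and $\tilde X$. The ordinary form genuinely loses information here: by work of Hambleton and Teichner there are closed oriented $4$-manifolds with infinite cyclic fundamental group whose equivariant forms are not extended from $\Z$, so they are not homeomorphic to any $S^1\times S^3 \conn (\text{simply connected})$, even though their $\Z$-intersection forms are ordinary unimodular forms. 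So the invariants you list do not determine the homeomorphism type, and your appeal to the classification is not a correct citation of an available theorem. What is missing is precisely the computation the paper carries out: since $Y$ is a homology sphere, the infinite cyclic cover of $X$ is the end-to-end concatenation of copies of $W$ and $H_2(X;\Z[\Z])$ is identified with $H_2(W)\otimes_\Z \Z[t,t^{-1}]$ with $\alpha\cdot t^k\beta = 0$ for $k\neq 0$; hence the equivariant form is extended from the integral form of $W$, and only then does the $\pi_1=\Z$ classification give $X \cong_{\mathrm{TOP}} S^1\times S^3 \conn m\cptwo \conn n\cptwobar$. You have the right ingredient in hand (you already describe $\tilde X$ as a concatenation of copies of $W$), but you use it only to discuss spin-ness rather than to compute the equivariant form.

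A second, smaller gap: your claim that the integral form of $W$ is diagonal ``by Hasse--Minkowski (or the classification of odd unimodular forms over $\Z$)'' is only valid when the form is indefinite. The proposition as stated allows $W$ to be definite (odd definite unimodular forms need not be diagonalizable once the rank is at least $9$, e.g.\ $E_8\oplus\langle 1\rangle$), and in that case one must invoke Donaldson's diagonalization theorem for closed $4$-manifolds with arbitrary fundamental group, applied to the closed-up manifold $X$, as the paper does. With those two repairs --- compute the $\Z[\Z]$-form and cite the equivariant classification, and handle the definite case via Donaldson --- your outline matches the intended proof.
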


\begin{proof}
Because $\pi_1(W)$ is trivial and $\pi_1(X)$ is an HNN extension of $\pi_1(W)$,  we see that $\pi_1(X) \cong \Z$.  We would like to show that $X$ is homeomorphic to $S^1 \times S^3 \conn m \cptwo \conn n \cptwobar$.  By work of \cite{FQ, Wang},  two smooth, closed, oriented 4-manifolds with $\pi_1 = \Z$ are homeomorphic if and only if the equivariant intersection forms on $H_2$ are isomorphic.  Since the intersection form on $H_2(S^1 \times S^3 \conn m \cptwo \conn n \cptwobar ;\Z[\Z])$ is given by $m\langle 1 \rangle \oplus n \langle -1 \rangle$, we just need to see the same for $H_2(X;\Z[\Z])$.

First, since $W$ is a self-cobordism of homology spheres, the intersection form of $W$ is diagonalizable.
(The indefinite case is well-known while the definite case follows from applying Donaldson's diagonalizability theorem to the closed-up 4-manifold \cite{DonaldsonArbitrarypi1}.)
Therefore, the (integer) intersection form of $W$ is equivalent to that on $m \cptwo \conn n \cptwobar$.  The universal cover of $X$ is obtained by gluing infinitely many copies of $W$ end to end.  Since $Y$ is a homology sphere, we see that $H_2(X;\Z[\Z])$ is naturally identified with $H_2(W) \otimes \Z[t,t^{-1}]$, and the intersection form respects this splitting, i.e. if $\alpha, \beta \in H_2(W)$, then $\alpha \cdot t^k \beta = 0$ unless $k = 0$.   Thus, the equivariant intersection form on $H_2(X;\Z[\Z])$ is $m \langle 1 \rangle \oplus n \langle -1 \rangle$ as a form over $\Z[\Z]$.  This completes the proof.
\end{proof}

\begin{remark}
Similar to gauge-theoretic invariants of 4-manifolds, the proof of Proposition \ref{prop:adam-iso} provides constraints on the self-intersections of surfaces based on their genera.  However, the statements are more technical than the usual adjunction inequality as one needs to understand the lift of the surface to the infinite cyclic cover.
\end{remark}

\subsection{A variant for \texorpdfstring{$\HFh$}{HF-hat}}
We will need an analogue of the $\adam$-invariant using $\HFh$ in order to detect the effect of Fintushel-Stern knot surgery on the manifolds $X_p$ as in Theorem~\ref{thm:knot-surgery-intro}.  Recall that $\HFh(Y)$ is a finite-dimensional $\F$-vector space that we associate to a three-manifold and it fits into a long exact sequence
\[
\ldots \to \HFh(Y) \to \HFm(Y) \xrightarrow{U} \HFm(Y) \to \ldots
\]
making $\HFh(Y)$ isomorphic to the direct sum of the kernel of $U$ and the cokernel of $U$.  In the case that $Y$ is an integer homology sphere, if $\HFm(Y) \cong \F[U] \oplus \bigoplus_{i=1}^n \F[U]/U^{b_i}$, then $\dim \HFh(Y) = 1 + 2n$.  Like $\HFm$, $\HFh$ is functorial under cobordisms and so is the long exact sequence above.  Consequently, if $W:Y \to Z$ induces an isomorphism on $\HFred$, then the rank of $\widehat{F}_W$ is at least $2n$.

We now consider the $\adam$-invariant defined using $\HFh$.
\begin{definition}
Let $X$ be a closed 4-manifold with $b_1(X) \geq 1$ and choose a primitive element $\eta \in H^1(X)$.  We define $\adamhat(X,\eta)$ to be the minimal dimension of $\HFh(Y)$ where $Y$ is a non-separating hypersurface satisfying $PD[Y] = \eta$.
\end{definition}

We have the following analogue of Proposition~\ref{prop:adam-iso}, which still relies on the cobordism map on $\HFred$.
\begin{proposition}\label{prop:adamhat-iso}
Let $Y$ be a homology sphere, and suppose $W$ is a cobordism from $Y$ to itself for which the map
\[
F_W \co \HFred(Y) \to \HFred(Y)
\]
is an isomorphism (summing over all spin$^c$ structures). Let $X$ be the closed $4$-manifold obtained by gluing the ends of $W$ by a diffeomorphism, and let $\eta$ denote the class of $Y$ in $H_3(X)$. Then $\adamhat(X, \eta) = \dim \HFh(Y)$.
\end{proposition}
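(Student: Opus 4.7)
My plan is to mirror the proof of Proposition~\ref{prop:adam-iso}, bootstrapping its $\HFred$ bound to one on $\HFh$ via the $U$-action long exact sequence. The upper bound $\adamhat(X,\eta) \le \dim \HFh(Y)$ is immediate by taking $Y$ itself as a hypersurface representing $\eta$.

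For the lower bound, I will show that any closed, connected, oriented hypersurface $Z \subset X$ representing $\eta$ satisfies $\dim \HFh(Z) \ge \dim \HFh(Y)$. Repeating the infinite cyclic cover/compactness argument from the proof of Proposition~\ref{prop:adam-iso}, $Z$ lifts to an embedded separating hypersurface in $\tilde X$ contained in some finite stack $W^n$. Writing $W^n = W_1 \cup_Z W_2$, the hypothesis on $W$ is preserved under iteration, so the factorization $F_{W^n}|_{\HFred(Y)} = F_{W_2}\circ F_{W_1}$ of an isomorphism forces $F_{W_1}$ to be an $\F[U]$-linear split injection, exhibiting $\HFred(Y)$ as a direct summand of $\HFred(Z)$.

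Let $n$ denote the number of cyclic $U$-torsion summands of $\HFred(Y)$, so $\dim \HFh(Y) = 1+2n$ by the discussion in the preamble. The summand inclusion implies $\HFred(Z)$ has at least $n$ cyclic $\F[U]$-summands as well. From the $U$-action long exact sequence relating $\HFh(Z)$ and $\HFm(Z)$, one has
\[
\dim \HFh(Z) = \dim \ker(U|_{\HFm(Z)}) + \dim \coker(U|_{\HFm(Z)}).
\]
Each cyclic summand of $\HFred(Z)$ contributes one dimension to each of the kernel and cokernel, while the $\F[U]$-tower part of $\HFm(Z)$ contributes at least one dimension to the cokernel, since any closed connected 3-manifold carries a nontrivial tower in some torsion spin$^c$ structure. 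Combining these, $\dim \HFh(Z) \ge n + (1+n) = 1 + 2n = \dim \HFh(Y)$.

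The delicate point is that one cannot simply work with the cobordism map $\widehat{F}_{W^n}$ in place of $F_{W^n}$ as in Proposition~\ref{prop:adam-iso}. When $b_2^+(W) > 0$, the $\F[[U]]$-tower part of $\mathbf{F}_W^-$ on $\HFmc(Y)$ can vanish, so $\widehat{F}_{W^n}$ might only have rank $2n$ rather than $1+2n$, as flagged in the remark preceding the proposition. The fix implemented above is to recover the missing ``$+1$'' from the structural fact that every connected 3-manifold carries a nontrivial $U$-tower, read off from the $U$-exact sequence of $Z$ itself, rather than from a cobordism map.
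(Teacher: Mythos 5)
Your proof is correct and takes essentially the same route as the paper's: lift $Z$ into a finite stack $W^k$ via the infinite cyclic cover, use the factorization of the $\HFred$-isomorphism through $\HFred(Z)$ to force at least $n$ cyclic $U$-torsion summands there (giving $2n$ toward $\dim\HFh(Z)$), and recover the extra $+1$ from the existence of a free $\F[U]$-summand (a $U$-tower) in $\HFm(Z)$ rather than from a cobordism map. The only difference is that the paper supports that last ``structural fact'' with a citation (Lidman's thesis, or the monopole Floer analogue together with $HF=HM$), which your write-up asserts without reference; it is a genuine theorem, not a formal consequence of the exact sequence, so it needs that citation.
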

\begin{proof}
The proof is similar to Proposition~\ref{prop:adam-iso}.  Suppose $Z$ is another hypersurface in $X$ homologous to $Y$, and so $Z$ embeds in the $k$-fold composition of $W$ as a separating hypersurface for some $k$.  Write $\HFred(Y) = \bigoplus_{i=1}^n \F[U]/U^{k_i}$, so that $\dim \HFh(Y) = 1+ 2n$.  By assumption, $F_{W^k}$ factors through $\HFred(Z)$, which implies that $\HFred(Z)$ has at least $n$ cyclic summands, and thus $\dim \HFh(Z) \geq 2n$.  Finally, $\HFm(Z)$ contains at least one free $\F[U]$-summand by \cite{LidmanThesis} (or, alternately, by the analogous result in monopole Floer homology \cite[Corollary 35.1.4]{KMBook} together with the $HF=HM$ isomorphism  \cite{CGH, KutluhanLeeTaubes, Taubes}).  Therefore, we see that in fact $\dim \HFh(Z) \geq 2n+1$.  This implies $\adamhat(X,\eta) = \dim \HFh(Y)$.
\end{proof}

\begin{remark}
We have defined $\adam$ using the total dimension of $\HFred$ or $\HFh$, but one could use other aspects of Floer homology, such as the minimal $U$-torsion in $\HFred$.  One can also define $\adam$ invariants for 4-manifolds with boundary, say using sutured Floer homology of properly embedded non-separating hypersurfaces.  It could also be interesting to study a spin$^c$ refinement of $\adam$.
\end{remark}

\section{Exotic 4-manifolds via the \texorpdfstring{$\adam$}{\textalpha} invariant }\label{sec:construct}

We now use the $\adam$ invariant and the building blocks from Section~\ref{sec:building-blocks} to produce a variety of exotic 4-manifolds with $b_1 > 0$, proving Theorems \ref{thm:pi1Z-intro}, \ref{thm:knot-surgery-intro}, and \ref{thm:zerospheresintro} from the introduction.

\subsection{Exotica with \texorpdfstring{$\pi_1=\Z$}{\textpi\_1=Z}}\label{subsec:pi1Z}
In this section we construct our first examples of what we will later show are exotic manifolds.  This was given loosely in the introduction as Construction \ref{constr:pi1Z-intro}; we revisit it now in the language of the building blocks in Section \ref{subsec:cobs}.

The main idea of the construction is to use the elementary cobordisms in Section \ref{sec:building-blocks} to build a simply-connected cobordism $\W$ from $Y_0$ to itself.
Our $\pi_1=\Z$ manifold will be formed from $\W$ by identifying the boundary components using an orientation preserving homeomorphism.
\vspace{12pt}
\begin{figure}[htbp]
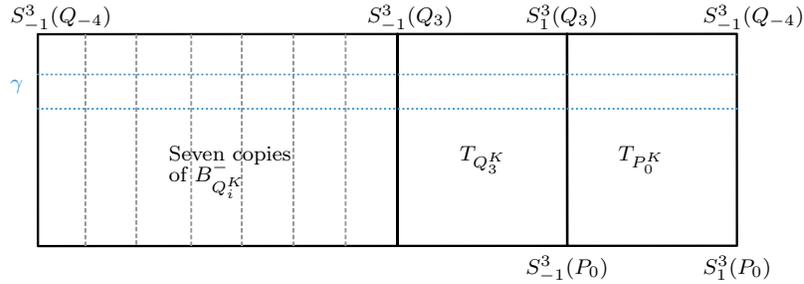
{\scriptsize
\begin{overpic}[tics=20]{Pi1Zintro}
    \put(-10, 85){$S^3_{-1}(Q_{-4})$}
        \put(125, 85){$S^3_{-1}(Q_{3})$}
         \put(185, 85){$S^3_{1}(Q_{3})$}
                \put(185, -10){$S^3_{-1}(P_{0})$}
                                \put(252, -10){$S^3_{1}(P_{0})$}
                                  \put(252, 85){$S^3_{-1}(Q_{-4})$}
        \put(-10, 60){\color{lblue}$\gamma$}
     \put(50, 33){Seven copies}
       \put(50, 25){of $B^-_{Q^K_{i}}$}
     \put(160, 33){$T_{Q^K_3}$}
                   \put(220, 33){$T_{P^K_0}$}
  \end{overpic}}
  \caption{The cobordism $\W$ from Construction \ref{constr:pi1Z}.}\label{fig:pi1Z}
  \end{figure}

\begin{construction}\label{constr:pi1Z}
For any knot $K$, let $\W^K \co S^3_{-1}(Q^K_{-4}) \to S^3_{-1}(Q^K_{-4})$ be the cobordism defined in Figure \ref{fig:pi1Z}.
In words, we first stack seven cobordisms $B^-_{Q^K_{i}}$ for $i=-4, \dots, 2$, followed by the cobordism $T_{Q^K_3}$, to obtain a cobordism from $S^3_{-1}(Q^K_{-4})$ to $S^3_1(Q^K_3)$. We then identify $S^3_1(Q^K_3)$ with $S^3_{-1}(P^K_0)$ via the homeomorphism $\psi^K_3$ from Proposition \ref{prop:PQdual}, and attach a copy of $T_{P^K_0}$; the resulting cobordism has top boundary $S^3_1(P^K_0)$. Identifying $S^3_1(P^K_0)$ with $S^3_{-1}(Q^K_{-4})$ using the homeomorphism $\phi^K_0$, we thus have a cobordism from $S^3_{-1}(Q^K_{-4})$ to itself.

Finally, define $\W^K_p$ to be $p$ stacked copies of $\W^K$, and define $\X^K_p$ to be formed from $\W^K_p$ by gluing $\partial^+$ to $\partial^-$ by the identity map. When $K$ is the unknot, we will just denote these as $\W_p$ and $\X_p$ respectively.  If $p = 1$, and $K$ is still possibly knotted, we denote these as $\W^K$ and $\X^K$.
\end{construction}

%

\begin{remark} \label{rem:pi1Z}\noindent
\begin{enumerate}
\item It follows readily from the construction that $\W^K$ is simply connected and that its intersection form is isomorphic to $2\gen{1} \oplus 9 \gen{-1}$. By Proposition \ref{prop:pi1Z}, we deduce that $\X^K_p$ is homeomorphic to $S^1 \times S^3 \conn 2p \cptwo \conn 9p \cptwobar$, independent of the knot $K$.

%
%
%
%
%
%
%
%
\item To prove Theorem \ref{thm:pi1Z-intro}, one can glue up the ends of $\W_p$ with any orientation-preserving homeomorphism.  However, for Proposition \ref{prop:fishtail} we require that $\X_p$ be defined using a gluing that preserves the meridian of the ribbon band to realize this as a knot surgery along a suitable torus.

\end{enumerate}

\end{remark}

\begin{proposition}\label{prop:fishtail}
There is an embedded torus $T \subset \X_p$ with the property that for each knot $K$, $\X_p^K$ is obtained by knot surgery on $T$ using $K$. Moreover, $T$ is contained in a fishtail neighborhood embedded in $\X_p$.
\end{proposition}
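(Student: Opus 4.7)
The approach is to exhibit $T$ as the closure, under the gluing defining $\X_p$, of an explicit product annulus $A \subset \W_p$ whose two boundary components are the copies of $\gamma$ in the top and bottom $S^3_{-1}(Q_{-4})$ factors, and then verify the fishtail and knot-surgery claims.

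My first step is to construct $A$ locally in each building block and then concatenate. In each twist cobordism $B^-_{Q_i}$, the $-1$-framed $2$-handle is attached along a copy of $\gamma$; pushing a parallel $0$-framed copy $\gamma_+$ off the attaching region yields an annulus in the product region from $\gamma \subset S^3_{-1}(Q_i)$ to $\gamma_+ \subset \partial_+ B^-_{Q_i}$, and $\gamma_+$ is isotopic within $\partial_+ B^-_{Q_i} = S^3_{-1}(Q_{i+1})$ to the band meridian $\gamma_{Q_{i+1}}$ (the key observation is that the meridian of the newly twisted band is, up to isotopy, a $0$-framed pushoff of the old one). Since the $2$-handles of $T_{Q_3}$ and $T_{P_0}$ are attached along meridians of the respective pretzels, they are disjoint from $\gamma$, so the annulus extends as a product; and since $\psi_3$ and $\phi_0$ preserve $\gamma$ by Proposition~\ref{prop:PQdual}, it also extends across the identifications. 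Concatenating $p$ copies and closing up using the gluing that defines $\X_p$ (which preserves $\gamma$ by Remark~\ref{rem:pi1Z}) produces the embedded torus $T \subset \X_p$. Because every local annulus is a $0$-framed pushoff of the unknotted curve $\gamma$, the annulus framings concatenate trivially, and $T$ has trivial normal bundle.

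Next, I would exhibit an explicit fishtail neighborhood. The first $2$-handle of the first $B^-_{Q_{-4}}$ in the stack is attached along a parallel copy of $\gamma$ with framing $-1$; its core disk $D$ can be isotoped to have boundary on $T$, realizing a generator of $\pi_1(T)$. The framing of $D$ is $-1$ while the framing induced on $\partial D$ from $T$ is $0$, so a regular neighborhood of $T \cup D$ admits the standard handle decomposition of a fishtail (a $0$-framed torus together with a vanishing cycle $2$-handle of relative framing $-1$) and embeds in $\X_p$ as required.

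Finally, I would identify the Fintushel--Stern knot surgery on $T$ using $K$ with $\X_p^K$. Inside the fishtail neighborhood, the vanishing cycle $\gamma$ sits as a nullhomotopic curve, and knot surgery on $T$ reduces to the satellite operation that ties $K$ into the strands passing through a spanning disk for $\gamma$. By construction, the only arcs of $\W_p$ meeting such a spanning disk are the two strands of the band in each pretzel $Q_i$ or $P_0$ appearing in the construction, so the satellite replaces each $Q_i$ by $Q_i^K$ and each $P_0$ by $P_0^K$, recovering exactly the cobordism $\W^K_p$ from Construction~\ref{constr:pi1Z} and hence $\X_p^K$. The main delicate point, and the principal obstacle to a fully detailed write-up, is checking that the framing data implicit in the Fintushel--Stern knot surgery (the identification of $\lambda_K$ with $\partial D^2$ in a product neighborhood $T \times D^2$) matches the $0$-framing of $\gamma$ used in the definition of the satellites $Q_i^K$ and $P_0^K$ from Definition~\ref{def:fusion1}; this reduces to the observation that the annulus framing of $\gamma$ agrees with the Seifert framing of $\gamma \subset S^3$, which in turn is immediate from $\mathrm{lk}(\gamma, \gamma_+) = 0$.
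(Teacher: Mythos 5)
Your proposal is correct and follows essentially the same route as the paper's proof: the torus is obtained by concatenating the natural annuli/concordances between the $\gamma$ curves in each elementary cobordism (using that $\psi_3$ and $\phi_0$ preserve $\gamma$ with its $0$-framing), the fishtail neighborhood comes from one of the $-1$-framed $2$-handles attached along a parallel copy of $\gamma$, and knot surgery in the $\gamma$ direction is identified level-by-level with the satellite operation replacing each $Q_i$ and $P_0$ by $Q_i^K$ and $P_0^K$, i.e.\ with $\W^K_p$. The framing point you flag at the end is handled in the paper exactly as you suggest, via $\lk(J,\gamma_J)=0$ and the fact that the homeomorphisms of Proposition~\ref{prop:PQdual} preserve the $0$-framing on $\gamma$.
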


\begin{proof}
We begin with identifying the torus $T$. Observe that in each of the elementary cobordisms $T_{J}$ and $B_J$, there is a concordance between the $\gamma$ curves in the two ends. Further, Proposition~\ref{prop:PQdual} tells us that the homeomorphisms $\phi^K_n$ and $\psi^K_n$ also preserve the $\gamma$ curves. Thus, there is a natural concordance in $\W$ between the $\gamma$ curves in the two boundary components. When we glue $p$ copies of $\W$ and close up with the given identification of the ends, following these identifications, the concordance closes up to an embedded torus $T$ (and not a Klein bottle). Moreover, because the homeomorphisms from Proposition \ref{prop:PQdual} preserve the 0-framing on $\gamma$, $T$ has self intersection 0.

Because the $-1$ framed $2$-handles of the elementary cobordisms $B_J$ are attached along a parallel of $\gamma$,  we can think of attaching one of these 2-handles to the tubular neighborhood $T^2\times D^2$ of $T$ to see $T$ as the fiber of a fishtail neighborhood smoothly embedded in $\X_p$.

Now, we perform knot surgery on $T$ with knot $K$ and direction $\gamma$. Precisely, we replace the neighborhood of $T$, which we can parametrize as $\gamma\times S^1_c\times D^2$, with $S^3\setminus\nu(K)\times S^1_d$ using the gluing map which takes $\gamma$ to $\mu_K$,  $S^1_c$ to $S^1_d$, and $\mu_T$ to $\lambda_K$.  This surgery can be thought of as replacing $\nu(\gamma)$ with $S^3\setminus\nu(K)$ at every level set of $\W$ (with respect to the height function we used to define $\W$).  At every level set, the gluing restricts to send $\mu_\gamma$ to $\lambda_K$ and $\lambda_\gamma$ to $\mu_K$.  This has exactly the effect of turning $\W$ into $\W^K$, and hence $\X_p$ into $\X_p^K$.
\end{proof}

The next result we prove immediately implies Theorem~\ref{thm:pi1Z-intro} and Theorem~\ref{thm:knot-surgery-intro}.

\begin{theorem}\label{thm:pi1Z}
For any knot $K$ and $p \ge 1$,
$\X_p^K$ is an exotic~$S^1 \times S^3 \conn {2p} \cptwo \conn {9p} \cptwobar$. Furthermore, for any two knots, $K, K'$, if $\dim \HFKh(K) \neq \dim \HFKh(K')$, then $\X_p^K$ and $\X_p^{K'}$ are not diffeomorphic.  In particular, $\X_p$ is not diffeomorphic to $\X_p^K$ for any nontrivial knot $K$.
\end{theorem}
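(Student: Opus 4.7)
The plan is to use the $\adamhat$-invariant to distinguish $\X_p^K$ from the standard smooth manifold and to distinguish $\X_p^K$ from $\X_p^{K'}$ whenever $\dim\HFKh(K)\ne\dim\HFKh(K')$. The homeomorphism type of $\X_p^K$ is already handled: by Remark~\ref{rem:pi1Z}(1), each $\W^K$ is simply connected with intersection form $2\langle 1\rangle\oplus 9\langle -1\rangle$ independent of $K$, so $p$-fold stacking followed by gluing the ends yields, via Proposition~\ref{prop:pi1Z}, a manifold homeomorphic to $S^1\times S^3\conn 2p\cptwo\conn 9p\cptwobar$.

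The central technical step is to show that $\W^K\co S^3_{-1}(Q_{-4}^K)\to S^3_{-1}(Q_{-4}^K)$ induces an isomorphism on $\HFred$. I would verify this one building block at a time. The twist cobordisms $B^-_{Q_i^K}$ fit the hypotheses of Proposition~\ref{prop:twist-iso} because $Q_i^K$ is still fusion number $1$ (tying $K$ into the band preserves the description of $Q_i$ as a band sum of a $2$-component unlink). The cobordisms $T_{Q_3^K}$ and $T_{P_0^K}$ fit Proposition~\ref{prop:tye-iso} because the same observation shows $Q_n^K$ and $P_n^K$ remain ribbon, hence smoothly slice, so $\pm 1$-surgery on them bounds a rational homology $4$-ball and the required $d$-invariants vanish. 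The identifications $\psi_3^K$ and $\phi_0^K$ from Proposition~\ref{prop:PQdual} are diffeomorphisms of the boundary and induce isomorphisms on Floer homology automatically. Composing, $\W^K$ and hence its $p$-fold stacking $\W_p^K$ induce isomorphisms on $\HFred(S^3_{-1}(Q_{-4}^K))$.

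Applying Proposition~\ref{prop:adamhat-iso} to $\X_p^K$, formed from $\W_p^K$ by identifying its two ends, then gives
\[
\adamhat(\X_p^K)=\dim\HFh\bigl(S^3_{-1}(Q_{-4}^K)\bigr)=1+4\dim\HFKh(K),
\]
where the second equality is Proposition~\ref{prop:HF-Qn}. On the other hand, in $S^1\times S^3\conn 2p\cptwo\conn 9p\cptwobar$ the $S^3$ factor provides a non-separating hypersurface generating $H_3$ with $\dim\HFh(S^3)=1$, so $\adamhat$ of the standard manifold equals $1$. Since $1+4\dim\HFKh(K)\ge 5>1$ for every $K$, this shows $\X_p^K$ is exotic, and since $\adamhat(\X_p^K)\ne\adamhat(\X_p^{K'})$ whenever $\dim\HFKh(K)\ne\dim\HFKh(K')$, the manifolds $\X_p^K$ and $\X_p^{K'}$ are not diffeomorphic in that case. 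Taking $K'$ the unknot yields the final statement.

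The hard part is the input of Proposition~\ref{prop:HF-Qn}, which requires the bordered Floer argument of Section~\ref{sec:bordered}. Everything else is essentially bookkeeping, but the use of fusion number~$1$ is crucial throughout: it is precisely what guarantees that every satellite pattern $Q_n^K$ and $P_n^K$ remains a ribbon knot, and hence that the elementary-cobordism isomorphisms of Section~\ref{sec:building-blocks} apply uniformly across all companions $K$.
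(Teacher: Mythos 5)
Your proposal is correct and follows essentially the same route as the paper: homeomorphism type via Proposition~\ref{prop:pi1Z}, the isomorphism on $\HFred$ for $\W^K_p$ from Propositions~\ref{prop:twist-iso} and~\ref{prop:tye-iso} (your explicit check that the satellites $Q_n^K$, $P_n^K$ remain fusion number~$1$ ribbon knots is exactly the implicit point in the paper), and then Propositions~\ref{prop:adamhat-iso} and~\ref{prop:HF-Qn} to get $\adamhat(\X_p^K)=1+4\dim\HFKh(K)$ and compare. Your value $\adamhat(S^1\times S^3\conn 2p\cptwo\conn 9p\cptwobar)=1$ is in fact the correct normalization (the paper writes $0$, but since $\HFh$ of any closed $3$-manifold is nonzero the minimum is $1$), and the comparison $1+4\dim\HFKh(K)\ge 5>1$ goes through unchanged.
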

\begin{proof}
First, because each of the building block cobordisms induces an isomorphism on $\HFred$ by Propositions \ref{prop:twist-iso} and ~\ref{prop:tye-iso}, so does the composite cobordism $\W^K$. Thus, by Propositions \ref{prop:HF-Qn} and \ref{prop:adamhat-iso}, we deduce that
\begin{equation} \label{eq:adam(Xpk)}
\adamhat(\X_p^K) = \dim \HFh(S^3_{-1}(Q_{-4}^K)) = 1 + 4 \dim \HFKh(K).
\end{equation}
In particular, $\adamhat(\X_p) = 5$, and for every nontrivial knot $K$, $\adamhat(\X_p^K) \ge 13$, since $\dim \HFKh(K) \geq 3$ for any non-trivial knot. Since $\adamhat(S^1 \times S^3 \conn {2p} \cptwo \conn {9p} \cptwobar)=0$, we deduce that $\X_p^K \not \cong S^1 \times S^3 \conn {2p} \cptwo \conn {9p} \cptwobar$ for any $K$ and that $\X_p^K \not \cong \X_p^{K'}$ if $ \dim \HFKh(K) \neq \dim \HFKh(K')$.
\end{proof}

\begin{remark}\label{rmk:easy-adam-argument}
If one is interested in only showing that each $\X^K_p$ is an exotic $S^1 \times S^3 \conn {2p} \cptwo \conn {9p} \cptwobar$, the argument does not require serious Heegaard Floer homology computations (eg. computing $\HFh(S^3_{-1}(Q^K_{-4}))$ using Proposition~\ref{prop:HF-Qn}).  It is well-known that $\HFred(S^3_{-1}(J)) \neq 0$ for any nontrivial slice knot $J$, and the exact triangle (specifically Propositions~\ref{prop:twist-iso} and \ref{prop:tye-iso}) implies that $\W^K_p$ induces an isomorphism on $\HFred$. Thus, Proposition~\ref{prop:adam-iso} implies that $\alpha(\X^K_p) \neq 0$ for all $K, p$.
\end{remark}

\subsection{The circle sum construction and exotica with square-0 spheres}

Choose a small $B^3$ in $S_{-1}^3(Q^K_{-4})$ away from the surgery solid torus, and observe that there is a natural vertical cobordism $B^3\times I$ in $\W_p^K$ gotten by avoiding all the handles of $\W_p^K$.  This gives rise to a natural framed circle embedding $\Gamma$ in $\X_p^K$. The framing gives a parametrization of the boundary of a regular neighborhood of $\Gamma$ given by $\partial B^3\times S^1$.

\begin{construction} \label{constr:circlesum}
For any 3-manifold $M$, define the circle sum $\X^K_p \conn_{S^1} (M\times S^1)$
to be the manifold obtained by gluing $\X^K_p\setminus \nu(\Gamma)$ and $(M \setminus B^3) \times S^1 $ along their boundaries using a gluing homeomorphism which acts as the identity map on natural parametrizations of $\partial\nu(C)$ and $\partial((M\setminus B^3)\times S^1))$. For brevity, we will write $\X_p^K(M)$ for $\X^K_p \conn_{S^1} (M\times S^1)$.

In particular, when $M=S^2\times S^1$,  define
\begin{equation} \label{eq:SKp}
\S^K_p =\X^K_p\#_{S^1} (S^2\times S^1 \times S^1)\cong \X^K_p\#_{S^1} (S^2\times T^2).
\end{equation}
Observe that $\S^K_p$ has the homology type of $S^2\times T^2 \conn 2p \cptwo \conn 9p\cptwobar$, and additionally, there is a non-trivial class in $H_2$ represented by an embedded 2-sphere with self intersection $0$ which lifts to all covers.
\end{construction}

The following theorem is a generalization of Theorem~\ref{thm:zerospheresintro} from the introduction.

\begin{theorem}\label{thm:zerospheres-adam}
Let $M$ be $S^2 \times S^1$, $T^3$, or a 3-manifold with $b_1(M) = 0$.  Then, for any knot $K$ and any $p \ge 1$, $\X^K_p(M)$ is an exotic $(M \times S^1) \conn 2p \cptwo \conn 9p \cptwobar$.
\end{theorem}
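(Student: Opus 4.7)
The plan is to identify the topological type of $\X^K_p(M)$ and then distinguish it smoothly from the standard $(M \times S^1) \conn 2p\cptwo \conn 9p\cptwobar$ via the $\adam$-invariant of Definition~\ref{def:adaminvt}, applied to the primitive class $\eta \in H_3(\X^K_p(M))$ dual to the shared $S^1$-factor of the circle sum.

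For the topological identification, the circle-sum construction depends only on the topological framed isotopy class of the base circle. By Theorem~\ref{thm:pi1Z}, $\X^K_p$ is homeomorphic to $S^1 \times S^3 \conn 2p\cptwo \conn 9p\cptwobar$, and the distinguished circle $\Gamma \subset \X^K_p$ is primitive in $\pi_1 \cong \Z$ with trivial normal bundle. By Freedman--Quinn topological surgery, $\Gamma$ (with its framing) is topologically ambient isotopic to the standard $S^1 \times \{\mathrm{pt}\}$, and the corresponding circle sum with $M \times S^1$ evidently yields $(M \times S^1) \conn 2p\cptwo \conn 9p\cptwobar$.

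For the smooth distinction, set $Y_0 = S^3_{-1}(Q^K_{-4})$ and observe that $Y_0 \# M$ embeds naturally as a non-separating hypersurface of $\X^K_p(M)$ in the class $\eta$.  Cutting along it yields a self-cobordism $\W^K_p(M) \co (Y_0 \# M) \to (Y_0 \# M)$ obtained from $\W^K_p$ by removing a tubular neighborhood of a vertical arc and gluing in $(M \setminus B^3) \times I$.  Equivalently, $\W^K_p(M)$ decomposes as the composition
\[
(Y_0 \# M) \xrightarrow{\text{3-handle}} Y_0 \sqcup M \xrightarrow{\W^K_p \sqcup (M \times I)} Y_0 \sqcup M \xrightarrow{\text{1-handle}} (Y_0 \# M).
\]
Via this decomposition, the K\"unneth isomorphism $\HFm(Y_0 \# M) \cong \HFm(Y_0) \otimes_{\F[U]} \HFm(M)$, and the identification of the 1- and 3-handle cobordism maps with the connect-sum isomorphisms, the induced map $F_{\W^K_p(M)}$ is identified with $F_{\W^K_p} \otimes \mathrm{Id}_M$.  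Since $F_{\W^K_p}$ induces an isomorphism on $\HFred(Y_0)$ by the proof of Theorem~\ref{thm:pi1Z} (via Propositions~\ref{prop:twist-iso} and \ref{prop:tye-iso}), the tensor product is an isomorphism on $\HFred(Y_0 \# M)$.  Applying Proposition~\ref{prop:adam-iso} gives $\adam(\X^K_p(M), \eta) = \dim \HFred(Y_0 \# M)$, and a direct K\"unneth computation yields $\dim \HFred(Y_0 \# M) > \dim \HFred(M)$ in all three cases, the essential inputs being $\HFred(Y_0) \ne 0$ and the existence of free $\F[U]$-summands in $\HFm(M)$ (which in the $b_1(M)>0$ cases contributes from the torsion spin$^c$ structure, and in the $b_1(M)=0$ case contributes one summand per spin$^c$ structure of $M$).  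Since $M$ is itself a non-separating hypersurface of $(M \times S^1) \conn 2p\cptwo \conn 9p\cptwobar$ representing $\eta$, we have $\adam((M \times S^1) \conn 2p\cptwo \conn 9p\cptwobar, \eta) \le \dim \HFred(M) < \adam(\X^K_p(M), \eta)$, so the two manifolds are not diffeomorphic.

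The main obstacle is the K\"unneth-type identification of $F_{\W^K_p(M)}$ with $F_{\W^K_p} \otimes \mathrm{Id}$.  This requires verifying the handle decomposition above and invoking the connected-sum formula for Heegaard Floer cobordism maps, with care taken over the grading shifts inherent in the $\CFm$ tensor product; a cleaner alternative may be to perform the circle-sum operation on each elementary building block ($T_J$ or $B_J$) in the composition defining $\W^K_p$ and verify the $\HFred$ isomorphism step by step using the exact triangle.
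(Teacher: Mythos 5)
Your overall strategy is the paper's (cut $\X^K_p(M)$ along $S^3_{-1}(Q^K_{-4})\conn M$, exploit that $\W^K_p$ induces an isomorphism on $\HFred$, and compare $\adam$-invariants), but two steps have genuine gaps. The first is the concluding comparison: $\adam$ is an invariant of a pair $(X,\eta)$, so a hypothetical diffeomorphism $f\co\X^K_p(M)\to(M\times S^1)\conn 2p\cptwo\conn 9p\cptwobar$ only gives $\adam(\X^K_p(M),\eta)=\adam\bigl((M\times S^1)\conn 2p\cptwo\conn 9p\cptwobar,\,f_*\eta\bigr)$, and you have no control over the primitive class $f_*\eta$. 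Your final inequality bounds the standard manifold's invariant only at the class represented by the obvious copy of $M$. When $b_1(M)=0$ this is harmless since $H_3\cong\Z$, but for $M=S^2\times S^1$ and $M=T^3$ the standard manifold has $b_3=2$, resp.\ $4$, and one must show $\adam\le\dim\HFred(M)$ at \emph{every} primitive class. The paper does exactly this: every primitive element of $H_3(S^2\times T^2)$ (resp.\ $H_3(T^4)$) is represented by an embedded $S^2\times S^1$ (resp.\ $T^3$), and this persists after the blowups. That step is precisely where the hypothesis on $M$ enters, and your argument never uses it. (Your topological identification is fine in outline, though Freedman--Quinn is not needed: homotopy implies isotopy for circles, and the framing ambiguity is absorbed by composing with the self-homeomorphism of $S^1\times S^3\conn 2p\cptwo\conn 9p\cptwobar$ that twists the $S^3$ factor as one traverses the circle, as in the paper.)

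The second gap is the claim that $F_{\W^K_p(M)}$ is an isomorphism on $\HFred(S^3_{-1}(Q^K_{-4})\conn M)$. First, the proposed decomposition is not the pathwise connected sum of $\W^K_p$ with $M\times I$: splitting with a $3$-handle and re-joining with a $1$-handle yields a different cobordism (already the Euler characteristics disagree), and $1$-/$3$-handle maps would then enter the composite; the paper instead cites the pathwise connected sum formula \cite[Proposition 4.4]{OSzAbsolute} directly. Second, even granting the identification with $F_{\W^K_p}\otimes\operatorname{id}$, this does not give an isomorphism on all of $\HFred$ of the connected sum: since $b_2^+(\W^K_p)>0$, the behavior of $F_{\W^K_p}$ on the $U$-tower of $\HFm(S^3_{-1}(Q^K_{-4}))$ is uncontrolled (its image is $U$-torsion and may vanish), while $\HFred$ of the sum contains pieces of the form $(\text{tower})\otimes_{\F[U]}\HFred(M)$ and $\operatorname{Tor}_{\F[U]}$ terms, on which $F_{\W^K_p}\otimes\operatorname{id}$ can fail to be injective. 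Hence Proposition~\ref{prop:adam-iso} cannot be invoked to get equality. The repair is what the paper does: show that $F_{\W^K_p(M)}$ and all of its powers are injective on the submodule $\HFred(S^3_{-1}(Q^K_{-4}))\otimes_{\F[U]}\HFm(M)$, and rerun the argument of Proposition~\ref{prop:adam-iso} as a lower bound, $\adam(\X^K_p(M),\eta)\ge\dim_\F\bigl(\HFred(S^3_{-1}(Q^K_{-4}))\otimes_{\F[U]}\HFm(M)\bigr)>\dim_\F\HFred(M)$, which suffices. Your fallback suggestion of circle-summing each elementary cobordism and re-running the surgery exact triangle hits the same wall: the third term of each triangle becomes a manifold containing $M$ as a connected summand, whose $\HFred$ need not vanish, so the triangle no longer forces isomorphisms.
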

\begin{proof}
First, we show that $\X^K_p(M)$ is homeomorphic to $(M \times S^1) \conn 2p \cptwo \conn 9p \cptwobar$.  Recall from Remark \ref{rem:pi1Z} that there is a homeomorphism $h$ from $\X^K_p$ to $S^3\times S^1 \conn 2p \cptwo \conn 9p \cptwobar$.  Since homotopy implies isotopy for curves in 4-manifolds, we can assume that the homeomorphism maps $\Gamma$  onto $\{\pt\}\times S^1$.  If $h$ respects the vertical framings on $\Gamma$ and $\{\pt \}\times S^1$, then $h$ extends to a homeomorphism from $\X^K_p(M)$ to $(M \times S^1) \conn 2p \cptwo \conn 9p \cptwobar$.  If $h$ does not respect the framings, define a self-homeomorphism $g$ of $S^3 \times S^1 \conn 2p \cptwo \conn 9p \cptwobar$ which Dehn twists the $S^3$ factor as one walks in the $S^1$ direction. Then $g\circ h$ respects the vertical framings on $\Gamma$ and $\{\pt\}\times S^1$, so it extends to a homeomorphism from $\X^K_p(M)$ to $M \times S^1  \conn 2p \cptwo \conn 9p \cptwobar$.

For the $M$ in the hypotheses of the theorem, every primitive element of $H_3(M \times S^1 \conn 2p \cptwo \conn 9p \cptwobar)$ is represented by $M$. This is immediate when $b_1(M)=0$; in the other two cases, every primitive element of $H_3(S^2 \times T^2)$ (resp.~$H_3(T^4)$) can be represented by an embedded $S^2 \times S^1$ (resp. $T^3$), and this remains true after blowing up. It therefore suffices to show that $\alpha(\X^K_p(M), \eta)  > \dim \HFred(M)$, where $\eta$ is the obvious class represented by $S^3_{-1}(Q_{-4}^K) \conn M$.

To prove this, first note that $\X^K_p(M)$ can alternatively be described as taking a pathwise connected sum of $\W_p^K$ and $M \times I$, and closing up the ends.  Call this pathwise connected sum cobordism $\W^K_p(M)$; it is a self-cobordism of $M \conn S^3_{-1}(Q_{-4}^K)$.  Since $F_{\W_p^K}$ is an isomorphism on $\HFred(S^3_{-1}(Q_{-4}^K))$, as seen in the proof of Theorem \ref{thm:pi1Z}, the behavior of the Heegaard Floer homology cobordism maps under pathwise connected sum \cite[Proposition 4.4]{OSzAbsolute} implies that $F_{\W^p_K(M)}$ is injective on the submodule
\[
\HFred(S^3_{-1}(Q_{-4}^K)) \otimes_{\F[U]} \HFm(M) \subset \HFred(S^3_{-1}(Q_{-4}^K) \conn M).
\]
Moreover, the same is true for all powers of $F_{\W^p_K(M)}$. The same argument as in the proof of Proposition~\ref{prop:adam-iso} shows that 
\[
\adam(\X_p^K(M), \eta) \ge \dim_\F \left( \HFred(S^3_{-1}(Q_{-4}^K)) \otimes_{\F[U]} \HFm(M) \right),
\]
which is strictly larger than $\dim_\F \HFred(M)$. 
\end{proof}

\begin{remark}
One can define a $\Z/2$-graded $\adam$-invariant.  Define $\adam_0$ to be the minimal dimension of the even-graded part of $\HFred$, and similarly for $\adam_1$.  We have that $\adam_0(\X_p) = \adam_1(\X_p) = 0$, using the cuts $S^3_{-1}(P_0)$ and $S^3_{-1}(Q_0)$ respectively, so this invariant does not distinguish $\X_p$ from $S^1 \times S^3 \conn 2p \cptwo \conn 9p \cptwobar$.  However, for a nontrivial knot, we claim that these invariants can distinguish $\X_p^K$ from
$S^1 \times S^3 \conn 2p \cptwo \conn 9p \cptwobar$.  The argument is as follows.  Recall that for a homology sphere, $\lambda(Y) = -d(Y)/2 - \chi(\HFred(Y))$ by \cite[Theorem 1.3]{OSzAbsolute}, where $\lambda$ is the Casson invariant.  (To get the signs correct, we use that the Euler characteristic of $\HFred(Y)$ changes sign when computing with $\HFm$ instead of $\HFp$.)  Because $d(S^3_{-1}(Q_0)) = 0$ and $\chi(\HFred(S^3_{-1}(Q_0))) = 2$, we thus have that $\lambda(S^3_{-1}(Q_0)) = -2$.  Since $d(S^3_{-1}(Q_0^K)) = 0$ and the Casson invariant is unchanged under splicing with knots in $S^3$ \cite{BoyerNicas, FukuharaMaruyama}, $\lambda(S^3_{-1}(Q_0^K)) = -2$ for all $K$.  Therefore, we see that $\chi(\HFred(S^3_{-1}(Q_0^K))) = 2$ for all $K$.  Using splittings along $S^3_{-1}(Q_0^K)$ and $S^3_{-1}(P_0^K)$, the previous work of this section shows that
\[
\adam_1(\X_p^K) = \adam_0(\X_p^K) = \frac{\adam(\X_p^K) - 2}{2} = \frac{\dim \HFred(S^3_{-1}(Q_0^K)) - 2}{2} \neq 0.
\]
This is non-zero because $\dim \HFh(S^3_{-1}(Q_0^K)) > 5$ implies $\dim \HFred(S^3_{-1}(Q_0^K)) > 2$. As a topological consequence, we see that the generator of $H_3(\X_p^K)$ cannot be represented by any Seifert fibered homology sphere, as $\HFred$ of such a manifold is supported in a single parity of gradings by \cite[Corollary 1.4]{OSzPlumbed}. 

It is likely one could build 4-manifolds analogous to $\X_p^K$ stemming from different fusion number one knots whose $\pm 1$-surgeries have larger Casson invariants and produce exotic 4-manifolds with the same $\adam$ but different $\adam_i$.  We do not pursue this here.
\end{remark}

\subsection{Cutting and regluing} \label{subsec:cut-reglue}

Next, we consider a variation of the above construction when $K$ is the unknot and $p=1$. Recall the commuting involutions $\{\rho,\sigma,\tau\}$ on $S^3_1(P_0)$, shown in Figure \ref{fig:DHMsym}. Identifying $S^3_1(P_0)$ with $S^3_{-1}(Q_{-4})$ by the homeomorphism $\phi_0$, these may also be viewed as involutions on $S^3_{-1}(Q_{-4})$. For each $f \in \{\rho, \sigma, \tau\}$, let $\X^f$ be the manifold obtained by gluing the ends of $\W$ using $f$ rather than the identity. The algebraic topology discussion in Remark \ref{rem:pi1Z} goes through identically in this case, so $\X^f$ is homeomorphic to $\X$.

\begin{proposition}\label{prop:pi1Z-trace}
The manifolds $\X$ and $\X^\sigma$ are homeomorphic but not diffeomorphic.
\end{proposition}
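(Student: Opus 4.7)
The plan is as follows. The homeomorphism follows immediately from the proof of Proposition \ref{prop:pi1Z}: since $\sigma$ is an orientation-preserving diffeomorphism of $Y_0 = S^3_1(P_0)$, the manifold $\X^\sigma$ is built from the same simply-connected cobordism $\W$ with the same equivariant intersection form over $\Z[\Z]$, so the classification of closed $\pi_1 = \Z$ 4-manifolds produces a homeomorphism with $\X$.

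For the non-diffeomorphism statement, the first thing to observe is that the $\adam$ invariant will \emph{not} suffice. Since $\sigma$ is a self-diffeomorphism of $Y_0$, the cobordism $\W \cup (\text{mapping cylinder of } \sigma)\co Y_0 \to Y_0$ still induces an isomorphism on $\HFred(Y_0)$, so Proposition \ref{prop:adam-iso} gives $\adam(\X) = \adam(\X^\sigma) = \dim \HFred(Y_0) = 2$.  Instead, the plan is to use the Ozsv\'ath--Szab\'o closed 4-manifold invariant $\Phi$, which unlike $\adam$ is sensitive to the gluing map used to close up $\W$.

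The strategy is to cut both $\X$ and $\X^\sigma$ along the non-separating hypersurface $Y_0$; both cuts yield $\W$, so the OS invariants can be expressed via the cobordism map $F^-_\W$ together with the gluing. Using the standard cut-and-paste philosophy for the closed 4-manifold invariant, the contributions of the various spin$^c$ structures organize into a ``Lefschetz-type'' expression, roughly of the form $\sum_{\mathfrak{t}} \tr(F^-_{\W,\mathfrak{t}})$ for $\X$ and $\sum_{\mathfrak{t}} \tr(\sigma_* \circ F^-_{\W,\mathfrak{t}})$ for $\X^\sigma$, evaluated on the appropriate $\F[U]$--$\HFmc$ module.  (Since $b_2^+(\W)=2$, we will need to further subdivide $\W$ along one of the intermediate surgery boundaries, e.g.\ $S^3_{-1}(P_0)$, so that each piece has $b_2^+ \geq 1$ and the mixed invariant formula applies; the $b_1>0$ subtleties will be handled using the appropriate completed/twisted coefficient setup.)  We then feed in two explicit inputs: Proposition \ref{prop:P0-symmetry}, which tells us that $\sigma_*$ acts on $\HFm(Y_0)$ as the nontrivial unipotent $v_1 \mapsto v_1$, $v_2 \mapsto v_1+v_2+v_3$, $v_3 \mapsto v_3$, and Lemmas \ref{lemma:twist-spinc}--\ref{lemma:tye-spinc}, which give the spin$^c$-refined form of $F^-_\W$ built from the $B_{J}^-$ and $T_J$ pieces. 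The goal is to show that these two sums disagree, yielding $\Phi_{\X} \neq \Phi_{\X^\sigma}$ and hence non-diffeomorphism.

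The main obstacle is organizational rather than conceptual: carefully setting up the cut-and-paste expression for the OS invariant in the $b_1=1$, $b_2^+=2$ setting so that the answer is genuinely a trace-type quantity involving $F^-_\W$ and the gluing diffeomorphism, and then checking that the algebraic interference between the nontrivial unipotent $\sigma_*$ and the spin$^c$-refined pieces of $F^-_\W$ does not accidentally cancel.  The expected point of divergence is the $v_2$ component, where $\sigma_*$ injects a $v_1+v_3$ term that sees the non-$\iota$-symmetric part of the spin$^c$ decomposition in the $B^-_{J}$ and $T_J$ cobordisms.
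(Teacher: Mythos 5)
Your overall strategy is the one the paper uses: homeomorphism from the $\pi_1=\Z$ classification as in Proposition~\ref{prop:pi1Z}, and non-diffeomorphism by comparing, spin$^c$ structure by spin$^c$ structure, a trace of $F_{\W,\spincs}$ against a trace of $\sigma_*\circ F_{\W,\spincs}$, with input from Lemmas~\ref{lemma:twist-spinc} and~\ref{lemma:tye-spinc} and Proposition~\ref{prop:P0-symmetry}. You are also right that $\adam$ cannot see the difference. But as written the argument has a genuine gap exactly at the step you call ``organizational'': the trace formula itself. Your plan to subdivide $\W$ at an intermediate level so that each piece has $b_2^+\ge 1$ and then invoke the mixed/admissible-cut formalism of \cite{OSz4Manifold,OSzSymplectic} does not apply here, because that formalism requires a \emph{separating} cut, whereas the hypersurface along which $\X$ and $\X^\sigma$ are glued (and every intermediate level of $\W$) is non-separating; cutting further inside $\W$ only yields relative invariants of pieces of $\W$ and gives no formula for the self-glued closed manifold. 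The missing ingredient is Zemke's duality theorem \cite[Theorem 1.1]{ZemkeDuality}, which states precisely that when a closed $4$-manifold is obtained by gluing the two ends of a self-cobordism by $f$, the Ozsv\'ath--Szab\'o invariant in the spin$^c$ structure extending $\spincs$ equals the Lefschetz number of $f_*\circ F_{\W,\spincs}$ (a trace, since we work over $\F$). The paper's proof is this theorem plus the spin$^c$-refined bookkeeping.

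Two further points would need repair when you run the computation. First, comparing the \emph{summed} quantities $\sum_{\spinct}\tr(F_{\W,\spinct})$ and $\sum_{\spinct}\tr(\sigma_*\circ F_{\W,\spinct})$ is dangerous over $\F$: the actual outcome is that exactly one of $\X$, $\X^\sigma$ has two spin$^c$ structures with invariant $1$ and the other has none, so the total sums are $1+1=0$ and $0$, which agree mod $2$. You must keep the spin$^c$ refinement and compare the number of basic classes. Relatedly, the formal inputs (Lemmas~\ref{lemma:twist-spinc}, \ref{lemma:tye-spinc}) only determine $F_{\W,\spincs_0}$ on a good basis $(a,b)$ of $\HFred(S^3_{-1}(Q_{-4}))$ up to the ambiguity $a\mapsto a$ versus $a\mapsto b$, so you cannot decide \emph{which} manifold carries the nonzero invariants; the argument still closes because in either case one manifold has two basic classes and the other has none. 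Second, the relevant action of $\sigma$ is not the unipotent $v_2\mapsto v_1+v_2+v_3$ on $\HFm$ hitting a ``$v_2$ component,'' but its restriction to $\HFred$, where Proposition~\ref{prop:P0-symmetry} shows $\sigma_*$ swaps the good basis elements $v_1+v_2$ and $v_2+v_3$; since each contributing spin$^c$ summand of $F_\W$ has rank one and is diagonal in good bases, post-composing with this swap turns diagonal matrices into off-diagonal ones, which is what changes the traces from $1$ to $0$ (or vice versa).
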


\begin{proof}
We will distinguish between $\X$ and $\X^\sigma$ using the Ozsv\'ath--Szab\'o 4-manifold invariant from \cite{OSz4Manifold}, which is defined for any closed $4$-manifold with $b^+ \ge 2$, and can be recast as a trace due to Zemke \cite{ZemkeDuality}.

First, we claim that there exists a conjugate pair of spin$^c$ structures $\spincs_0, \bar\spincs_0$ on $\W$, and a pair of good (ordered) bases $(a,b)$ and $(a',b')$ for $\HFred(S^3_{-1}(Q_{-4}))$ such that
\begin{align*}
F_{\W,\spincs_0}(a) &= a' & F_{\W, \spincs_0}(b) = 0 \\
F_{\W,\bar\spincs_0}(a) &= 0 & F_{\W, \bar \spincs_0}(b) = b',
\end{align*}
and that for every other spin$^c$ structure $\spincs$ on $W$, $F_{\W,\spincs}=0$ on $\HFred$. These two bases are necessarily equal as sets but not necessarily as ordered sets.

The claim follows from repeated use of Lemmas \ref{lemma:twist-spinc} and \ref{lemma:tye-spinc}. Specifically, for the first 8 cobordisms in Construction~\ref{constr:pi1Z} (namely $B^-_{Q_i}$ for $i=-4, \dots, 2$ and $T_{Q_3}$), there are exactly two spin$^c$ structures (which are conjugates) that induce nontrivial maps on $\HFred$. Moreover, in any pair of consecutive cobordisms, only two of the four possible pairs of such spin$^c$ structures compose nontrivially. Inductively, we see that there are only two spin$^c$ structures on the composition of these 8 cobordisms inducing nonzero maps, and that the maps behave as indicated on good bases. Furthermore, this behavior is unchanged after composition with the isomorphism $F_{T_{P_0}}$, which is supported in a single spin$^c$ summand, and under the isomorphisms $\phi_{0*} \co \HFred(S^3_1(P_0)) \to \HFred(S^3_{-1}(Q_{-4}))$ and $\psi_{3*} \co \HFred(S^3_1(Q_3)) \to \HFred(S^3_{-1}(P_0))$.

Since the sets $\{a,b\}$ and $\{a',b'\}$ are equal, one of the following is true:
\begin{enumerate}
\item $a'=a$ and $b'=b$; or
\item $a'=b$ and $b'=a$.
\end{enumerate}
The formal principles used up until here are not sufficient to determine which of these cases holds. Additionally, note that by Proposition \ref{prop:P0-symmetry}, the isomorphism $\sigma_* \co \HFred(S_{-1}(Q_{-4})) \to \HFred(S_{-1}(Q_{-4}))$ exchanges $a$ and $b$.

For each spin$^c$ structure $\spincs$ on $\W$, let $\spinct_\spincs$ (resp.~$\spinct^\sigma_\spincs$) denote the unique spin$^c$ structure on $\X$ (resp.~$\X^\sigma$) that restricts to $\spincs$. (A Mayer--Vietoris argument shows that this exists and is unique.) By a theorem of Zemke \cite[Theorem 1.1]{ZemkeDuality}, the Ozsv\'ath--Szab\'o 4-manifold invariants of $(\X,\spinct_\spincs)$ and $(\X^\sigma,\spinct_\spincs^\sigma)$ are determined by the Lefschetz number of $F_{\W,\spincs}$, as follows:
\begin{align*}
  \Phi_{\X,\spinct_\spincs} &= \operatorname{Lef}(F_{\W,\spincs}) \\
  \Phi_{\X^\sigma,\spinct_\spincs^\sigma} &= \operatorname{Lef}(\sigma_* \circ F_{\W,\spincs})
\end{align*}
Since we are working over $\F$, the Lefschetz number is equal to the trace.

For convenience, let us write $\spinct_0 = \spinct_{\spincs_0}$,  $\bar\spinct_0 = \spinct_{\bar\spincs_0}$, $\spinct_0^\sigma = \spinct_{\spincs_0^\sigma}$, and $\bar \spinct_0^\sigma = \spinct_{\bar \spincs_0^\sigma}$. In case (1), the various compositions are given with respect to the basis $(a,b)$ by:
\begin{align*}
F_{\W,\spincs_0} &= \mat{1 & 0 \\ 0 & 0}  &
\sigma_* \circ F_{\W,\spincs_0} &= \mat{0 & 0 \\ 1 & 0}  \\
F_{\W,\bar\spincs_0} &= \mat{0 & 0 \\ 0 & 1} &
\sigma_* \circ F_{\W,\bar\spincs_0} &= \mat{0 & 1 \\ 0 & 0}
\end{align*}
By taking traces, we see that $\Phi_{\X, \spinct_0} = \Phi_{\X,\bar\spinct_0} = 1$, while $\Phi_{\X^\sigma, \spinct_0^\sigma} = \Phi_{\X^\sigma, \bar \spinct_0^\sigma} = 0$. Moreover, for any spin$^c$ structure on $\X$ or $\X^\sigma$ other than these, the Ozsv\'ath--Szab\'o invariant necessarily vanishes. Thus, $\X$ has two basic classes while $\X^\sigma$ has none. In case (2), we similarly compute that $\Phi_{\X, \spinct_0} = \Phi_{\X,\bar\spinct_0} = 0$, while $\Phi_{\X^\sigma, \spinct_0^\sigma} = \Phi_{\X^\sigma, \bar \spinct_0^\sigma} = 1$, so $\X^\sigma$ has two basic classes and $\X$ has none. In either case, we see that $\X$ and $\X^\sigma$ cannot be diffeomorphic.
\end{proof}

\begin{remark} \label{rem:Xsigma-Xtau}
Unlike $\sigma$, the symmetry $\tau$ acts by the identity on $\HFred(S^3_{-1}(Q_{-4}))$. Thus, the above argument also shows that $\X \not \cong \X^\rho$, but it cannot distinguish $\X$ from $\X^\tau$, or $\X^\sigma$ from $\X^{\rho}$.
\end{remark}

\subsection{Cosmetic surgeries and exotica}

We conclude this section with a discussion of exotic 4-manifolds that would result from certain open Dehn surgery questions.  We illustrate this with a recipe to build an exotic $S^1 \times S^3 \conn S^2 \times S^2$ as mentioned in the introduction.

\begin{proof}[Proof of Proposition~\ref{prop:cosmetic-exotic}]
Let $K$ be a (hypothetical) non-trivial knot in $S^3$ such that $S^3_{-1}(K) \cong S^3_1(K)$ as oriented manifolds. Since $K$ is non-trivial, we have that $\HFred(S^3_{\pm 1}(K)) \neq 0$, as it is well-known that no non-trivial knot has both positive and negative L-space surgeries.  Further, by \cite[Theorem 1.12]{OSzAbsolute}, the $d$-invariant of $S^3_{-1}(K)$ is at least 0 and the $d$-invariant of $S^3_1(K)$ is at most 0, so we see the $d$-invariant must be 0. Consider the cobordism $T_K$ from Definition \ref{def:Tye-cob}. By Proposition \ref{prop:tye-iso}, $F_{T_K} \co \HFred(S^3_{-1}(K)) \to \HFred(S^3_1(K))$ is an isomorphism. Let $X$ be the result of gluing the ends of $T_K$ together by some diffeomorphism.  By Proposition~\ref{prop:adam-iso}, $\adam(X) \ne 0$.  However, it is easy to see that $X$ has the same equivariant intersection form as $S^1 \times S^3 \conn S^2 \times S^2$, so the two are homeomorphic as in the proof of Proposition~\ref{prop:pi1Z}.  Hence, we have produced an exotic $S^1 \times S^3 \conn S^2 \times S^2$.
\end{proof}

More generally, if $K_1, \dots, K_n$ are nontrivial knots with the property that $S^3_1(K_i) \cong S^3_{-1}(K_{i+1})$ for each $i$ (indices modulo $n$), then a similar procedure yields an exotic $S^1 \times S^3 \conn n S^2 \times S^2$. We are not aware of any examples of this phenomenon.

\section{Simply connected constructions} \label{sec:simply-connected}

\subsection{Invariants for 4-manifolds with \texorpdfstring{$b^+=1$}{b\^+=1}}  \label{sec:closed-invariants}

We begin this section by reviewing the construction of the Ozsv\'ath--Szab\'o invariant for closed 4-manifolds with $b^+=1$, defined in \cite[Section 2.4]{OSzSymplectic}.

For notation, suppose $(W,\mft)$ is a spin$^c$ 4-manifold with connected boundary $(Y,\mfs)$, and assume that $\mfs$ is non-torsion. Recall from Section \ref{sec:background} that there is a canonical projection map $\Pi \co \HFm(Y,\spincs) \to \HFmr(Y,\spincs)$. The {\em relative invariant} of $(W,\mft)$ is defined as
\[
\Psi_{W,\mft} = \Pi (F^-_{W^\circ, \mft^\circ} (1)),
\]
where $W^\circ = W - B^4$, viewed as a cobordism from $S^3$ to $Y$, and $\mft^\circ = \mft|_{W^\circ}$.

There is a non-degenerate bilinear pairing
\[
\gen{\cdot, \cdot }_Y \co  \HFred(Y,\mfs) \otimes \HFred(-Y,\mfs) \to \F,
\]
coming from the duality between the Floer homologies of $Y$ and $-Y$ \cite[p.376]{OSz4Manifold}. In particular, note that if $\HFred(Y,\mfs) \cong \F$, then this pairing can only be one thing: if $x_{\pm}$ is the unique nonzero element of $\HFred( \pm Y, \mfs)$, then $\gen{x_+,x_-}=1$.

With this, we are ready to recall the Ozsv\'ath--Szab\'o invariant for closed 4-manifolds with $b^+ = 1$.  Let $X$ be a closed 4-manifold with indefinite intersection form, and let $L$ be a 1-dimensional subspace of $H_2(X;\Q)$ on which the intersection form of $X$ vanishes. A spin$^c$ structure $\spinct$ on $X$ is called \emph{allowable} if $c_1(\spinct)|_L \ne 0$. We call an embedded, oriented, separating $3$-manifold $Y \subset X$ an \emph{admissible cut} if the image of $H_2(Y;\Q) \to H_2(X;\Q)$ is equal to $L$. Write $X = W_1 \cup_Y W_2$, where $Y = \partial W_1  = -\partial W_2$ as oriented manifolds. Let $\spinct_1, \spinct_2, \spincs$ denote the restrictions of  $\mft$ to $W_1, W_2, Y$ respectively. If $\spinct$ is allowable, then $\spincs$ is non-torsion. We then define
\[
\Phi_{X,\mft,L} = \gen{ \Psi_{W_1,\mft_1}, \Psi_{W_2, \mft_2} }_Y.
\]
It follows from the duality between cobordism maps on $\HFm$ and $\HFp$ \cite[Theorem 3.5]{OSz4Manifold} that this definition agrees with the definition given in \cite{OSzSymplectic}; this proceeds along the lines of Jabuka--Mark \cite[Theorem 8.17]{JabukaMarkProduct}. By \cite[Proposition 2.7]{OSzSymplectic}, $\Phi_{X,\mft,L}$ is independent of the admissible (with respect to $L$) cut $Y$.

\begin{remark} \label{rem:OSz-invt}
Note that the construction of $\Phi_{X,\spinct,L}$ makes sense even when $b_2^+(L)>1$. In this case, the standard Ozsv\'ath--Szab\'o invariant $\Phi_{X,\spinct}$ (from \cite{OSz4Manifold}) is also defined for each spin$^c$ structure $\spinct$ on $X$, without reference to $L$. By applying \cite{OSzSymplectic}, we deduce that
\[
\Phi_{X,\spinct,L} = \sum_{\substack{\{ \spincu \in \Spin^c(X) : \\ \spincu|_{W_1} = \spinct_1, \ \spincu|_{W_2}=\spinct_2\}}} \Phi_{X,\spincu}.
\]
The terms in this sum correspond to the kernel of the map $H^2(X) \to H^2(W_1) \oplus H_2(W_2)$ in the Mayer--Vietoris sequence, or in other words the image of the coboundary map $H^1(Y) \to H^2(X)$. In particular, observe that if $\Phi_{X,\spinct,L} \ne 0$, then at least one term in the sum on the right must be nonzero.
\end{remark}

As a warm-up with these invariants, we prove a well-known vanishing result.  (Similar results for the Donaldson and Bauer-Furuta/Seiberg-Witten invariants can be found in \cite{KMEmbedded} and \cite{FroyshovMonopoles} respectively.)

\begin{lemma} \label{lemma:nosphere}
Let $X$ be a closed 4-manifold with $b^+ = 1$.  Suppose there exists a square-zero homologically essential embedded sphere or torus in $X$, and let $L \subset H_2(X; \Q)$ be the associated line.  Then, $\Phi_{X,\mft,L} \equiv 0$ for all allowable $\mft$.
\end{lemma}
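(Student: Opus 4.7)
The plan is to exhibit an explicit admissible cut $Y$ adapted to $L$ and show that one side of the pairing defining $\Phi_{X,\mft,L}$ contributes trivially. Let $\Sigma \subset X$ be the given square-zero embedded sphere or torus whose class spans $L$. Because $[\Sigma]^2 = 0$, a closed tubular neighborhood is $W_1 = \nu(\Sigma) \cong \Sigma \times D^2$, with boundary $Y = \partial W_1 \cong \Sigma \times S^1$. Put $W_2 = X \smallsetminus \operatorname{int}(W_1)$, so $X = W_1 \cup_Y W_2$, and write $\mft_1, \mft_2, \mfs$ for the respective restrictions of $\mft$.

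The first step is to check that $Y$ is admissible, i.e., that the image of $H_2(Y;\Q) \to H_2(X;\Q)$ is exactly $L$. When $\Sigma = S^2$, $Y = S^2 \times S^1$ and $H_2(Y) = \Z\gen{[\Sigma \times \mathrm{pt}]}$, which maps to $[\Sigma]$ and so spans $L$. When $\Sigma = T^2$, $Y = T^3$ has $H_2(Y) = \Z^3$, but two of the standard generators are tori of the form $\alpha \times S^1 \subset T^2 \times S^1$ that bound solid tori $\alpha \times D^2$ inside $W_1$; only $[T^2 \times \mathrm{pt}] = [\Sigma]$ survives in $H_2(X)$, again spanning $L$.

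Next, I would observe that $\mfs$ is necessarily non-torsion. Allowability of $\mft$ means $\langle c_1(\mft), [\Sigma]\rangle \ne 0$, and since $[\Sigma]$ is represented by $\Sigma \times \mathrm{pt} \subset Y$ we get $\langle c_1(\mfs), [\Sigma \times \mathrm{pt}]\rangle \ne 0$. The key input is then that $\HFred(Y,\mfs) = 0$ for every non-torsion $\spinc$ structure in both cases $Y = S^2 \times S^1$ and $Y = T^3$. For $S^2 \times S^1$ this is a direct calculation (all flavors of $HF$ vanish); for $T^3$ it follows from the adjunction inequality applied to $T^2 \times \mathrm{pt}$. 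In fact, in both cases even $\HFm(Y,\mfs) = 0$, since if $\HFh$ vanishes then $U$ acts isomorphically on $\HFm$, forcing $\HFm = \HFi$, which itself vanishes for non-torsion $\mfs$ on these manifolds.

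Since $\Psi_{W_1,\mft_1} = \Pi\bigl(F^-_{W_1^\circ, \mft_1^\circ}(1)\bigr)$ lies in $\HFred(Y,\mfs) = 0$, we conclude
\[
\Phi_{X,\mft,L} = \gen{\Psi_{W_1,\mft_1},\Psi_{W_2,\mft_2}}_Y = 0.
\]
No subtlety about the $b^+ = 1$ chamber structure needs to appear, since the cut was chosen so that its image in $H_2(X;\Q)$ is precisely $L$. The only mildly nontrivial input is the vanishing of $\HFred$ on non-torsion $\spinc$ structures for $T^3$; this is the sole place where a Floer-theoretic fact beyond the basic formalism of Section~\ref{sec:background} enters, and it is a standard computation.
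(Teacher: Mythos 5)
Your argument is correct and is essentially the paper's proof: the paper also cuts along $\partial\nu(\Sigma) \cong S^2 \times S^1$ (resp.\ $T^3$), which is an admissible cut for $L$, and concludes from the vanishing of $\HFred$ in the relevant non-torsion spin$^c$ structures that the pairing defining $\Phi_{X,\mft,L}$ is zero. Your write-up merely spells out the admissibility check and the adjunction/vanishing input that the paper leaves implicit.
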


\begin{proof}
The hypotheses imply that there exists an admissible cut along $S^2 \times S^1$ (in the sphere case) or $T^3$ (in the torus case). Each of these manifolds has vanishing $\HFred$ and the result then follows immediately.
\end{proof}

One difficulty in constructing manifolds for which $\Phi_{X,\mft,L} \ne 0$ is that in addition to showing that $\Psi_{W_1,\spinct_1}$ and $\Psi_{W_2,\spinct_2}$ are both nonzero, one must also prove that the pairing between these two elements is nontrivial; in general the pairing depends on how $\partial W_1$ and $\partial W_2$ are identified. However, there is one case where we can avoid this latter subtlety, namely if $\HFred(Y,\spincs) \cong \F$, so that the pairing is forced. We give a very specific instance of this principle, which is all we will need for our purposes.  If $K$ is a fibered knot of genus $g \ge 2$, then for the spin$^c$ structures $\spincs$ on $S^3_0(K)$ for which $c_1(\spincs)$ evaluates to $\pm(2g-2)$ on a generator of $H_2(S^3_0(K))$, we have $\HFred(S^3_0(K), \spincs) \cong \F$ \cite[Lemma 5.5]{OSzSymplectic}. Additionally, if $K$ is an amphichiral knot, then there is an orientation reversing-diffeomorphism of $S^3_0(K)$.  Consequently, given two 4-manifolds with (oriented) boundary $S^3_0(K)$, we can glue them together using a choice of orientation reversing-diffeomorphism to obtain a closed, oriented manifold on which we can understand the pairing $\Phi_{X,\mft,L}$ as follows.

\begin{lemma} \label{lemma:genus2}
Let $K$ be a genus $g$, fibered, amphichiral knot. Let $W$ be a spin$^c$ 4-manifold with $b_2 > 0$ and connected boundary $S^3_0(K)$. Suppose that $\mfu$ is a spin$^c$ structure on $W$ for which $c_1(\mfu)$ evaulates to $2g-2$ on a generator of $H_2(S^3_0(K))$, and for which $\Psi_{W,\spincu} \ne 0$. Let $X$ be the result of gluing together two copies of $W$ for any choice of orientation-reversing diffeomorphism of $S^3_0(K)$, and let $L \subset H_2(X;\Q)$ be the line corresponding to the inclusion $H_2(S^3_0(K)) \to H_2(X)$. Then there is a spin$^c$ structure $\spinct$ on $X$ for which $\Phi_{X, \spinct,L} \ne 0$.
\end{lemma}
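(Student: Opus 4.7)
The strategy is to reduce the nonvanishing of $\Phi_{X,\mathfrak{t},L}$ to pairing two nonzero elements in one-dimensional dual reduced Floer homology groups, which is automatically nontrivial by nondegeneracy. By the fibered genus $g$ hypothesis and \cite[Lemma 5.5]{OSzSymplectic}, $\HFred(S^3_0(K),\mathfrak{s}_{g-1}) \cong \mathbb{F}$, where $\mathfrak{s}_k$ denotes the spin$^c$ structure with $\langle c_1(\mathfrak{s}_k),[\hat{\Sigma}]\rangle = 2k$. By duality of Heegaard Floer homology under orientation reversal, $\HFred(-S^3_0(K),\mathfrak{s}_{g-1}) \cong \mathbb{F}$ as well, and the duality pairing between these two groups is nondegenerate.

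I would then choose spin$^c$ structures on the two copies of $W$. On $W_1 = W$, take $\mathfrak{t}_1 = \mathfrak{u}$; then $\Psi_{W_1,\mathfrak{t}_1} = \Psi_{W,\mathfrak{u}} \ne 0$ by hypothesis, so it is the unique generator of $\HFred(S^3_0(K),\mathfrak{s}_{g-1})$. To determine $\mathfrak{t}_2$, note that the orientation-reversing gluing $\phi$ acts on $H_2(S^3_0(K)) \cong \mathbb{Z}$ by some sign $\epsilon \in \{\pm 1\}$, so $\phi^*(\mathfrak{s}_{g-1}) = \mathfrak{s}_{\epsilon(g-1)}$; accordingly take $\mathfrak{t}_2 = \mathfrak{u}$ if $\epsilon = 1$ and $\mathfrak{t}_2 = \bar{\mathfrak{u}}$ if $\epsilon = -1$, so that $\mathfrak{t}_1$ and $\mathfrak{t}_2$ glue to give a spin$^c$ structure $\mathfrak{t}$ on $X$. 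By the conjugation equivariance \eqref{eq:conjugation-cobordism}, $\Psi_{W,\bar{\mathfrak{u}}} = \iota_Y\Psi_{W,\mathfrak{u}} \ne 0$ as well, so the relative invariant of $W$ in spin$^c$ structure $\mathfrak{t}_2$ is nonzero in either case.

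The main technical step is to verify nonvanishing of the second relative invariant. Under the convention $\partial W_2 = -\partial W_1$, the copy $W_2$ carries the orientation opposite to $W$, so $\Psi_{W_2,\mathfrak{t}_2} = \Psi_{-W,\mathfrak{t}_2}$ lives in $\HFred(-S^3_0(K),\mathfrak{s}_{\epsilon(g-1)})$. I would establish its nonvanishing using the duality between cobordism maps on $\HFm$ and $\HFp$ of oppositely oriented 4-manifolds \cite[Theorem 3.5]{OSz4Manifold}, under which $\Psi_{-W,\mathfrak{t}_2}$ corresponds via the nondegenerate duality pairing to the nonzero relative invariant of $W$ in spin$^c$ structure $\mathfrak{t}_2$. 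Since the target reduced Floer homology is one-dimensional, nonvanishing is preserved under this correspondence.

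Combining, $\Phi_{X,\mathfrak{t},L} = \langle\Psi_{W_1,\mathfrak{t}_1}, \Psi_{W_2,\mathfrak{t}_2}\rangle$ is the pairing of two nonzero generators of one-dimensional dual reduced Floer homology groups, hence nonzero. The principal obstacle is carefully aligning orientation and spin$^c$ conventions in the duality step for $\Psi_{-W,\mathfrak{t}_2}$; once that bookkeeping is handled correctly, the conclusion is forced by the one-dimensionality of the relevant Floer homologies.
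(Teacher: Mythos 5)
Your overall strategy---put $\mfu$ or $\overline{\mfu}$ on the second copy so that the spin$^c$ structures glue, use conjugation invariance to see that both relative invariants are nonzero, and use the one-dimensionality of $\HFred(S^3_0(K),\mfs)$ in the extremal spin$^c$ structure to force the pairing to be nontrivial---is the same as the paper's. However, your ``main technical step'' rests on a false premise. When two copies of $W$ are glued along an orientation-reversing diffeomorphism of $S^3_0(K)$, \emph{both} copies sit inside $X$ with the orientation of $W$; the orientation reversal lives entirely in the gluing map, which is orientation-preserving when regarded as a map $S^3_0(K) \to -S^3_0(K)$. So in the decomposition $X = W_1 \cup_Y W_2$ with $\partial W_2 = -Y$, the piece $W_2$ is a copy of $+W$, not of $-W$ (gluing a copy of $-W$ would require an orientation-preserving identification of the boundaries and would produce a twisted double, a different manifold). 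Consequently the second relative invariant is literally $\Psi_{W,\mfu}$ or $\Psi_{W,\overline{\mfu}}$, nonzero by hypothesis together with conjugation invariance, and it is carried into $\HFred(-Y,\mfs)$ by the isomorphism induced by the gluing diffeomorphism; no duality argument is needed.

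Moreover, the duality argument you propose would not do the job even if it were needed: \cite[Theorem 3.5]{OSz4Manifold} relates the $\HFm$-cobordism map of $W$ to the $\HFp$-map of the orientation-reversed, turned-around cobordism, and it does \emph{not} imply that $\Psi_{\overline{W},\mft} \ne 0$ whenever $\Psi_{W,\mft} \ne 0$; that implication is false in general, since relative invariants are highly sensitive to orientation. So if the second piece really were $-W$, the hypothesis $\Psi_{W,\mfu} \ne 0$ would be insufficient and the lemma as stated would be in trouble. Once you replace this step by the observation in the previous paragraph, your argument collapses to the paper's proof: both relative invariants are nonzero elements of the one-dimensional spaces $\HFred(\pm S^3_0(K),\mfs)$, between which the nondegenerate pairing is forced to be nontrivial, giving $\Phi_{X,\spinct,L} \ne 0$.
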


\begin{proof}
Depending on how the diffeomorphism acts on $H_2$, we can use either $\mfu$ on both copies of $W$ or $\mfu$ and $\overline{\mfu}$ to get a glued up spin$^c$ structure $\spinct$ on $X$.  Since $\Psi_{W,\mfu} \ne 0$ if and only if $\Psi_{W,\overline{\mfu}} \ne 0$ \cite[Theorem 3.6]{OSz4Manifold}, we see that in either case, the relative invariants for the two copies of $W$ are non-zero and land in a one-dimensional space.  Therefore, their pairing is non-zero, showing that $\Phi_{X,\spinct,L} \ne 0$ as required.
\end{proof}

\subsection{Exotic simply connected manifolds from elementary cobordisms}\label{subsec:simplyconnected}

We now revisit Construction \ref{constr:simplyconn-intro} in the language of the building blocks from Section \ref{sec:building-blocks}, and prove that we produce an exotic $\cptwo \conn 9 \cptwobar$.

\begin{figure}[htbp]
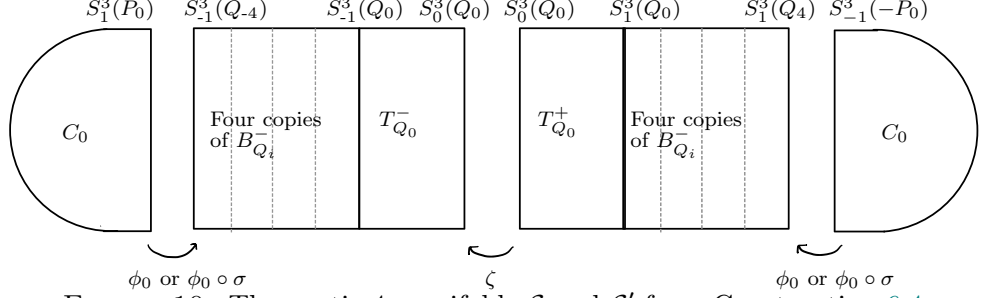
{\scriptsize
\begin{overpic}[tics=20]{Simplyconn}
    \put(29, 92){$S^3_{1}(P_{0})$}
        \put(66, 92){$S^3_{\text{-} 1}(Q_{\text{-} 4})$}
        \put(120, 92){$S^3_{\text{-} 1}(Q_{0})$}
                \put(155, 92){$S^3_{0}(Q_{0})$}
                                \put(187, 92){$S^3_{0}(Q_{0})$}
    \put(227, 92){$S^3_{1}(Q_{0})$}
        \put(278, 92){$S^3_{1}(Q_{4})$}
        \put(310, 92){$S^3_{-1}(-P_{0})$}
                \put(20, 45){$C_0$}
                        \put(330, 45){$C_0$}
\put(45, -10){$\phi_0$ or $\phi_0\circ\sigma$}
\put(290, -10){$\phi_0$ or $\phi_0\circ\sigma$}
\put(180, -10){$\zeta$}
     \put(76, 50){Four copies}
       \put(76, 42){of $B^{-}_{Q_{i}}$}
     \put(235, 50){Four copies}
       \put(235, 42){of $B^{-}_{Q_{i}}$}
     \put(140, 50){$T^-_{Q_0}$}
     \put(200, 50){$T^+_{Q_0}$}
  \end{overpic}}
  \caption{The exotic 4-manifolds $\E$ and $\E'$ from Construction \ref{constr:simplyconn} are built using the gluings $\phi_0\circ\sigma$ or $\phi_0$ respectively. The left half of this figure is $V$ (resp. $V'$). The right hand half is also $V$ (resp. $V'$), but it has been turned upside down.  }\label{fig:simplyconn}
  \end{figure}

\begin{construction}\label{constr:simplyconn}
Let $C_0 = C^+_{P_0}$ be the contractible $4$-manifold from Definition \ref{def:contract},  which has boundary $S^3_1(P_0)$. Let $\phi_0 \co S^3_1(P_0) \to S^3_{-1}(Q_{-4})$ be the homeomorphism from Proposition \ref{prop:PQdual}. Additionally, let $\sigma \co S^3_1(P_0) \to S^3_1(P_0)$ be the involution from Proposition \ref{prop:P0-symmetry}.

Define the manifold
\begin{equation}\label{eq:V}
V'=  C_0\cup_{\phi_0} B^{-}_{Q_{-4}} \cup B^{-}_{Q_{-3}} \cup B^{-}_{Q_{-3}} \cup B^{-}_{Q_{-1}} \cup T^-_{Q_0}
\end{equation}
built from the pieces from Section \ref{sec:building-blocks}, which is exhibited in the left half of Figure \ref{fig:simplyconn} (with the $\phi_0$ gluing).

Define $V$ to be the manifold obtained from $V'$ by cutting out $C_0$ and regluing it by $\phi_0\circ\sigma$. That is,
\begin{equation} \label{eq:V'}
V =  C_0\cup_{\phi_0 \circ \sigma} B^{-}_{Q_{-4}} \cup B^{-}_{Q_{-3}} \cup B^{-}_{Q_{-3}} \cup B^{-}_{Q_{-1}} \cup T^-_{Q_0}
\end{equation}
The manifold $V$ is also exhibited in the left half of Figure \ref{fig:simplyconn} (with the $\phi_0\circ\sigma$ gluing).  Because $\sigma$ extends topologically over $C_0$ (see Remark \ref{rem:cork}), $V$ and $V'$ are homeomorphic.

As seen in Proposition \ref{prop:square-invol}, $S^3_0(Q_0)$ admits an orientation-reversing, fixed-point-free involution $\zeta$.  Let $\E = V \cup_\zeta V$ and $\E' = V' \cup_\zeta V'$, see Figure \ref{fig:simplyconn}.  By construction, $\E$ and $\E'$ are homeomorphic.
\end{construction}

\begin{remark} \label{rem:V-standard}
Note that there are several equivalent formulations of $V$ and $V'$. For instance, since $\phi_0$ extends to a diffeomorphism $C^+_{P_0} \to C^-_{Q_{-4}}$, we may write them as
\begin{align*}
V' &= C^-_{Q_{-4}} \cup B^{-}_{Q_{-4}} \cup B^{-}_{Q_{-3}} \cup B^{-}_{Q_{-3}} \cup B^{-}_{Q_{-1}} \cup T^-_{Q_0} \\
V &= C^-_{Q_{-4}} \cup_{\phi_0 \sigma \phi_0^{-1}} B^{-}_{Q_{-4}} \cup B^{-}_{Q_{-3}} \cup B^{-}_{Q_{-3}} \cup B^{-}_{Q_{-1}} \cup T^-_{Q_0}.
\end{align*}
This agrees with the description in Construction \ref{constr:simplyconn-intro}. As an immediate consequence of the diffeomorphisms from \eqref{eq:boring-twist} and \eqref{eq:boring-tye}, we deduce that
\begin{equation} \label{eq:V-standard}
V' \cong X_0(Q_0) \conn 4 \cptwobar.
\end{equation}
\end{remark}

\begin{remark}
We may also turn $V'$ upside-down and view it as a cobordism from $-S^3_0(Q_0)$ to $\emptyset$, given by a series of 2-handle attachments followed by a copy of $C_0$ (also upside down). Specifically, we have
\begin{equation}\label{eq:V-upside-down}
V' =  T^+_{Q_0} \cup B^{+}_{Q_{0}} \cup B^{+}_{Q_{1}} \cup B^{+}_{Q_{2}} \cup B^{+}_{Q_{3}} \cup C.
\end{equation}
When we glue two copies of $V'$ to form $\E'$ using $\zeta$, the fact that $\zeta$ preserves the meridian in $S^3_0(Q_0)$ implies that the last $2$-handle of the first copy and the first $2$-handle of the second copy form the cobordism $T_{Q_0}$ (Definition \ref{def:Tye-cob}). Thus, we have decompositions $\E = V_a \cup T_{Q_0} \cup V_b$ and $\E' = V'_a \cup T_{Q_0} \cup V'_b$, where $V_a$ and $V_b$ (resp.~$V'_a$ and $V'_b$) are just copies of $V$ (resp $V'$) without the $T^-_{Q_0}$ component.
\end{remark}

We now focus on proving the first item of Theorem~\ref{thm:main}, which we restate here.
\begin{theorem} \label{thm:simplyconn}
\begin{enumerate}
\item The manifold $\E'$ is diffeomorphic to $\cptwo \conn 9 \cptwobar$.

\item The manifold $\E$ is homeomorphic but not diffeomorphic to $\cptwo \conn 9 \cptwobar$.
\end{enumerate}
\end{theorem}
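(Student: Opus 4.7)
I would prove part (1) by explicit Kirby calculus, and part (2) by computing the Ozsv\'ath--Szab\'o closed 4-manifold invariant $\Phi_{X,\mft,L}$ from Section~\ref{sec:closed-invariants}. Since $\sigma$ extends topologically over $C_0$ by Remark~\ref{rem:cork}, the identity on the rest of $V'$ gives a homeomorphism $V \to V'$, and hence $\E \to \E'$; part (2) will thus really amount to a smooth distinction between $\E$ and $\E'$.

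\textbf{Part (1).} By Remark~\ref{rem:V-standard}, $V' \cong X_0(Q_0) \conn 4\cptwobar$, so pulling the exceptional summands outside the gluing gives
\[
\E' \cong \bigl(X_0(Q_0) \cup_\zeta X_0(Q_0)\bigr) \conn 8\cptwobar.
\]
A Mayer--Vietoris argument shows that the double $D := X_0(Q_0) \cup_\zeta X_0(Q_0)$ is simply connected with $b_2 = 2$ and signature $0$, and it has a small handle decomposition: one $0$-handle, two $2$-handles (one from each copy), and dualized $3$- and $4$-handles. I would draw the explicit Kirby diagram for $D$ and simplify it via handle slides and cancellations, using the amphichiral symmetry of $Q_0$ to make the $\zeta$-gluing manifest. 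After simplification the diagram should collapse either to a pair of $\pm 1$-framed unknots or to the standard diagram of $S^2 \times S^2$; in either case $D \conn 8\cptwobar \cong \cptwo \conn 9\cptwobar$, since $S^2 \times S^2 \conn \cptwobar \cong \cptwo \conn 2\cptwobar$.

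\textbf{Part (2).} Choose the admissible cut $Y := S^3_0(Q_0)$ separating $\E$ into two copies of $V$, and let $L \subset H_2(\E;\Q)$ be the line spanned by the capped-off genus-$2$ Seifert surface $[\hat\Sigma]$. Focus on the spin$^c$ structure $\spincs_+$ on $Y$ with $\gen{c_1(\spincs_+),[\hat\Sigma]} = 2$. Since $Q_0$ is fibered of genus $2$, Lemma~\ref{lemma:genus2} yields $\HFred(Y,\spincs_+) \cong \F$, so the pairing $\gen{\cdot,\cdot}_Y$ simply records whether the two relative invariants are simultaneously nonzero. Starting from the $\iota$-invariant element $F^-_{C_0^\circ}(1) \in HF^-(Y_0)$, which in the notation of Lemma~\ref{lemma:HFm-Pn-Qn} is an $\F$-linear combination of $v_2$ and $v_1+v_3$, I would propagate through the composition of $\phi_0$ (respectively $\phi_0 \circ \sigma$ for $V'$ versus $V$), four applications of Lemma~\ref{lemma:twist-spinc} for the $B^-_{Q_i}$ cobordisms, and an analogue of Lemma~\ref{lemma:tye-spinc} for $T^-_{Q_0}$, finally projecting into $\HFred(Y,\spincs_+)$. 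The only change between $V$ and $V'$ is the precomposition with $\sigma_*$, which by Proposition~\ref{prop:P0-symmetry} is the nontrivial involution swapping a good basis of $\HFred(Y_0)$. I expect the two outputs in $\F$ to differ, producing $\Phi_{\E,\spinct,L} \ne \Phi_{\E',\spinct,L}$ and hence precluding a diffeomorphism.

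\textbf{Main obstacle.} The critical step is verifying that the non-triviality of $\sigma_*$ on $\HFred(Y_0)$ survives the composition of the five elementary cobordism maps into the one-dimensional target $\HFred(Y,\spincs_+)$. Because Lemmas~\ref{lemma:twist-spinc} and~\ref{lemma:tye-spinc} pin down each factor only up to a choice of good basis, I would need to propagate consistent bases through all the gluings and confirm that $F^-_{C_0^\circ}(1)$ is not $\sigma_*$-invariant modulo the kernel of the total map. A secondary challenge in part (1) is cleanly executing the Kirby moves for $D$; the smallness of its handle diagram and the amphichiral symmetry should make this tractable.
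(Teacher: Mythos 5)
Your part (1) has a genuine gap. Computing $\pi_1$, $b_2$, and the signature of $D = X_0(Q_0)\cup_\zeta X_0(Q_0)$ does not determine its diffeomorphism type, and ``after simplification the diagram should collapse'' is precisely the nontrivial claim: whether a $\zeta$-twisted union of two copies of a knot trace is standard is exactly the kind of question the rest of the paper exploits to produce exotica, so it cannot be left as an expectation. The paper's argument is one line and avoids Kirby calculus altogether: $\zeta$ comes from the strong negative amphichirality of $Q_0$, hence extends to an orientation-reversing diffeomorphism of $X_0(Q_0)$ itself; therefore $X_0(Q_0)\cup_\zeta X_0(Q_0)$ is diffeomorphic to the double of $X_0(Q_0)$, which is $S^2\times S^2$, giving $\E'\cong S^2\times S^2\conn 8\cptwobar\cong \cptwo\conn 9\cptwobar$. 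If you insist on a diagrammatic proof you must first encode the gluing map $\zeta$ explicitly, which in practice amounts to this extension observation anyway.

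In part (2) the obstacle you flag is real, and your plan does not get past it, for two reasons. First, $F^-_{C_0 - B^4}(1)$ is pinned down only up to the basis ambiguity $v_2\leftrightarrow v_1+v_2+v_3$, so propagating through Lemmas \ref{lemma:twist-spinc}, \ref{lemma:tye-spinc} and Proposition \ref{prop:P0-symmetry} can at best show that the relative invariants of $V$ and $V'$ \emph{differ}; it cannot tell you which one is nonzero. The paper breaks this symmetry geometrically: since $V'\cong X_0(Q_0)\conn 4\cptwobar$ contains an essential $0$-framed sphere (Remark \ref{rem:V-standard}), $\Psi_{V',\spinct'}=0$ because the cobordism map factors through $\HFred(S^2\times S^1)=0$, and then $\Psi_{V,\spinct}=\Pi F^-_{Z,\spincu}(v_1+v_3)=x_+\neq 0$ (Proposition \ref{prop:Q0-exotic-filling}). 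Second, even granting $\Phi_{\E,\spinct,L}\neq\Phi_{\E',\spinct,L}$, the conclusion ``hence precluding a diffeomorphism'' does not follow: the $b^+=1$ invariant depends on the chosen pair $(\spinct,L)$, and a hypothetical diffeomorphism need not carry your chosen data to the chosen data on the other side, so you would need vanishing of $\Phi$ on $\cptwo\conn 9\cptwobar$ for \emph{every} allowable pair, which your computation does not address. The paper instead converts $\Phi_{\E,\spinct,L}\neq 0$ into the geometric statement (Lemma \ref{lemma:nosphere}) that $\Xi$ admits no embedded sphere or torus representative in $\E$, and then derives a contradiction with a diffeomorphism to $\cptwo\conn 9\cptwobar$ by invoking Wall's realization theorem to move the class corresponding to $\Xi$ to $e_0-e_9$, which is visibly spherical. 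Your write-up needs either this step or some substitute for it.
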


\begin{remark}
In fact,  $V\cup_\zeta V'$ is diffeomorphic to $\cptwo \conn 9 \cptwobar$; since we do not require this we omit a proof.
\end{remark}

To prove Theorem \ref{thm:simplyconn}, we start with a more technical statement:

\begin{proposition} \label{prop:Q0-exotic-filling}
The manifold $V$ admits a spin$^c$ structure $\spinct$ whose restriction to $S^3_0(Q_0)$ is non-torsion, and for which $\Psi_{V, \spinct} \ne 0$. Thus, $V$ is homeomorphic but not diffeomorphic to $X_0(Q_0) \conn 4 \cptwobar$.
\end{proposition}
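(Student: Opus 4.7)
The plan is to compute $\Psi_{V,\spinct}$ directly by factoring through the cobordism decomposition and exploiting the Dai--Hedden--Mallick calculation of $F^-_{C_0^\circ}(1)$ together with Proposition~\ref{prop:P0-symmetry}, then to compare with the analogous computation for $V'=X_0(Q_0)\#4\cptwobar$, which is fully governed by the blowup formula. Concretely, factor
\[
\Psi_{V,\spinct} = \Pi \circ F^-_{T^-_{Q_0}} \circ F^-_{B^-_{Q_{-1}}} \circ \cdots \circ F^-_{B^-_{Q_{-4}}} \circ \phi_{0*}\circ \sigma_* \circ F^-_{C_0^\circ}(1),
\]
with each cobordism map taken at the appropriate restriction of $\spinct$. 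By~\cite{DHMCorks}, $F^-_{C_0^\circ}(1) = v_2\in \HFm(S^3_1(P_0))$, and by Proposition~\ref{prop:P0-symmetry}, $\sigma_*(v_2) = v_1+v_2+v_3$. Over $\F=\F_2$ this decomposes as a tower component $v_2$ plus an $\HFred$ component $(v_1+v_2)+(v_2+v_3)$ written in a good basis; for $V'$, with identity gluing, the analogous starting element is simply $v_2$, so the cork twist precisely introduces a new $\HFred$ summand.

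I would then trace both components through the remaining cobordism maps. The tower contribution to $\Psi_{V,\spinct}$ agrees with that of $V'$ spin$^c$-by-spin$^c$, since the same maps are applied to the same element; by the blowup formula applied to $V'=X_0(Q_0)\#4\cptwobar$, this tower contribution equals the generator $g\in\HFred(S^3_0(Q_0),\spincs_{\pm 2})\cong\F$ for exactly the $32$ spin$^c$ structures with $c_1|_{[B_i]}=\pm 1$ and $c_1|_{[\Xi]}=\pm 2$, and vanishes otherwise by a grading and adjunction argument drawing on Lemmas~\ref{lemma:twist-spinc} and~\ref{lemma:tye-spinc}. For the extra $\HFred$ component, Lemma~\ref{lemma:twist-spinc} shows that $F^-_{B^-,\spinct_{\pm 1}}$ preserves one good-basis vector and annihilates the other, so $(v_1+v_2)+(v_2+v_3)$ propagates nontrivially only under ``monochrome'' sign choices on the four $B^-_{Q_i}$ cobordisms, which must additionally be matched with $\spinct_{\pm 1,0}$ on $T^-_{Q_0}$ via Lemma~\ref{lemma:tye-spinc}(2); this selects exactly two spin$^c$ structures on $V$ (one in each of $\spincs_{\pm 2}$), each of which receives the nonzero $\HFred$ contribution $g$.

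Combining, for these two ``cancelling'' spin$^c$ structures the sum of the tower and $\HFred$ contributions is $g+g=0$ in $\F_2$, while each of the remaining $30$ candidates receives only the tower contribution $g$. Thus $V$ has exactly $30$ spin$^c$ structures with non-torsion boundary restriction satisfying $\Psi_{V,\spinct}\ne 0$, whereas $V'$ has $32$ such classes. Since a diffeomorphism $V\to V'$ would permute spin$^c$ structures while preserving the relative invariants and the set of non-torsion boundary restrictions, this count is a diffeomorphism invariant, so $V\not\cong V'$ smoothly. The topological homeomorphism is provided by Remark~\ref{rem:cork}, since $\sigma$ extends continuously but not smoothly over $C_0$.

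The main obstacle is the bookkeeping of the $\HFred$ component through the five-cobordism composition, especially verifying that non-monochrome sign combinations annihilate $(v_1+v_2)+(v_2+v_3)$ via the good-basis orthogonality of Lemma~\ref{lemma:twist-spinc} and that the two matched monochrome choices land in the correct $\spincs_{\pm 2}$ summand via Lemma~\ref{lemma:tye-spinc}(2). A secondary subtlety is ruling out contributions from spin$^c$ structures with higher $c_1$-values on individual building blocks via a grading and adjunction argument across the full composition, so as to pin down the exact count of $30$ versus $32$ basic classes.
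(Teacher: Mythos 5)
Your overall architecture is essentially the paper's: factor through $Z = V\setminus C_0$, start from $F^-_{C_0^\circ}(1)=v_2$, apply $\sigma_*(v_2)=v_1+v_2+v_3$, and push the $\HFred$-part $(v_1+v_2)+(v_2+v_3)$ through Lemmas \ref{lemma:twist-spinc} and \ref{lemma:tye-spinc}. However, your evaluation of the ``tower contribution'' is wrong, and it is the crux of the argument. That contribution is exactly $\Psi_{V',\spinct'}$ (the same maps applied to $v_2$; since the intermediate manifolds are rational homology spheres, the composition law produces a single spin$^c$ term), and you assert via the blowup formula that it equals the generator of $\HFred(S^3_0(Q_0),\spincs_{\pm1})$ for $32$ spin$^c$ structures. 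In fact it vanishes for \emph{every} spin$^c$ structure on $V'$ whose restriction to $S^3_0(Q_0)$ is non-torsion: by Remark \ref{rem:V-standard}, $V'\cong X_0(Q_0)\conn 4\cptwobar$, the blowup formula reduces the question to $\Psi_{X_0(Q_0),\spinct_s}$ with $s\neq 0$, and this is zero because $Q_0$ is slice, so $V_s(Q_0)=0$ for all $s\ge 0$ (cf.\ Lemma \ref{lemma:0trace-relative}); equivalently, the slice disk capped off with the core of the $2$-handle is an essential square-zero sphere, so the relative invariant factors through $\HFm(S^2\times S^1)$ in a non-torsion spin$^c$ structure, which vanishes. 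Note also that you cannot legitimately trace the tower class ``spin$^c$-by-spin$^c$'' using Lemmas \ref{lemma:twist-spinc} and \ref{lemma:tye-spinc} alone, since those lemmas only control the induced maps on $\HFred$, not the image of the tower under $\Pi\circ F^-$; identifying the tower contribution with $\Psi_{V',\spinct'}$ and killing it by the sphere argument is precisely how one gets around this.

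Once this is corrected, your bookkeeping collapses to the intended proof: $\Psi_{V,\spinct}=\Pi F^-_{Z,\spincu}(v_1+v_2+v_3)=\Pi F^-_{Z,\spincu}\bigl((v_1+v_2)+(v_2+v_3)\bigr)$, which by your (correct) analysis of the $B^-_{Q_i}$ and $T^-_{Q_0}$ pieces is a generator of $\HFred(S^3_0(Q_0),\spincs_{\pm1})$ for the two matched conjugate spin$^c$ structures, hence nonzero. Consequently your final comparison, ``$30$ versus $32$ basic classes,'' is not the right statement and, as written, is false; the correct dichotomy is that $V$ admits a spin$^c$ structure with non-torsion boundary restriction and nonvanishing relative invariant, while \emph{all} such relative invariants of $V'\cong X_0(Q_0)\conn 4\cptwobar$ vanish. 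Combined with the homeomorphism $V\cong V'$ from Remark \ref{rem:cork}, this yields the proposition.
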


\begin{proof}
Choose a basis $(v_1, v_2, v_3)$ for $\HFm(S^3_1(P_0))$ as in Section \ref{subsec:PQ}, with respect to which $F^-_{C_0 - B^4}(1) = v_2$. Indeed, since $C_0$ is acyclic, $F^-_{C_0 - B^4}(1)$ is necessarily in degree 0, non-$U$-torsion (by \cite[p.247]{OSzAbsolute}), and $\iota$-invariant (by \eqref{eq:conjugation-cobordism}).  The only possible options for this element are therefore $v_2$ and $v_1 + v_2 + v_3$, which are related by an $\iota$-equivariant change of basis. By Proposition \ref{prop:P0-symmetry}, we then have $\sigma_* F^-_{C_0 - B^4}(1) = v_1 + v_2 + v_3$. Note also that $(v_1+v_2,v_2+v_3)$ is a good basis for $\HFred(S^3_1(P_0))$. Identify $S^3_1(P_0)$ with $S^3_{-1}(Q_{-4})$ via $\phi_0$, and (by abuse of notation) use $v_1,v_2,v_3$ to denote the corresponding elements of $\HFm(S^3_{-1}(Q_{-4}))$ under this identification.

Let $Z = V \setminus C_0$, viewed as a cobordism from $S^3_{-1}(Q_{-4})$ to $S^3_0(Q_0)$. By Lemma \ref{lemma:twist-spinc} (four times) and Lemma \ref{lemma:tye-spinc}(2), we may find generators $x_{\pm} \in \HFred(S^3_0(Q_0), \spincs_{\pm1})$, and a pair of conjugate spin$^c$ structures $\spincu$ and $\bar\spincu$ on $Z$ such that
\begin{align*}
F^-_{Z,\spincu}  (v_1+v_2) &= x_+ & F^-_{Z,\spincu}  (v_2+v_3) &= 0 \\
F^-_{Z,\bar\spincu} (v_1+v_2) &= 0 & F^-_{Z,\bar\spincu} (v_2+v_3) &= x_- .
\end{align*}
Let $\spinct$ (resp.~$\spinct'$) denote the unique extension of $\spincu$ to $V$ (resp.~$V'$). Then
\begin{align*}
  \Psi_{V',\spinct'} &= \Pi F^-_{Z,\spincu}(v_2) \\
  \Psi_{V,\spinct} &= \Pi F^-_{Z,\spincu}(v_1+v_2+v_3).
\end{align*}
Remark \ref{rem:V-standard} shows that $V'$ contains an essential $0$-framed sphere,  so we have $\Psi_{V',\spinct'} = 0$ by using the composition law and the fact that $\HFred(S^2 \times S^1) = 0$. Thus,
\[
  \Psi_{V,\spinct} = \Pi F^-_{Z,\spincu}(v_1+v_3)  = \Pi F^-_{Z,\spincu}((v_1+v_2)+(v_2+v_3)) = \Pi x_+ = x_+,
\]
as required.
\end{proof}

\begin{proof}[Proof of Theorem \ref{thm:simplyconn}]
By Remark \ref{rem:V-standard}, $\E'$ is diffeomorphic to $A \conn 8 \cptwobar$, where $A = X_0(Q_0) \cup_ \zeta X_0(Q_0)$. Since $\zeta$ comes from an orientation-reversing symmetry of $(S^3, Q_0)$, it extends to an orientation-reversing involution of $X_0(Q_0)$. Thus, $A$ is diffeomorphic to the double of $X_0(Q_0)$, which in turn is diffeomorphic to $S^2 \times S^2$.
It follows that
\[
\E' \cong S^2 \times S^2 \conn 8 \cptwobar \cong \cptwo \conn 9 \cptwobar,
\]
proving statement (1).

As noted above, $\E$ and $\E'$ are homeomorphic; we now show they are not diffeomorphic.  From the decomposition $X = V_a \cup T_{Q_0} \cup V_b$ from above, we see that $H_2(\E)$ has a basis
\[
(E_1, \dots, E_8, \Xi, \Theta),
\]
where:
\begin{itemize}
  \item $(E_1, \dots, E_4)$ is a diagonal basis for $H_2(V_a)$;
  \item $(E_5, \dots, E_8)$ is a diagonal basis for $H_2(V_b)$;
  \item $(\Xi,\Theta)$ is the basis for $H_2(T_{Q_0})$ from Remark \ref{rem:TJ}. Specifically, $\Xi$ is represented by a capped-off Seifert surface in $S^3_0(Q_0)$, while $\Theta$ is represented by a sphere of self-intersection $-2$, and $\Xi \cdot \Theta = 1$.
\end{itemize}

By Lemma \ref{lemma:genus2} and Proposition \ref{prop:Q0-exotic-filling}, we deduce that there is a spin$^c$ structure $\spinct$ on $\E$ for which $\Phi_{\E,T,L} \ne 0$, where $L = \Span(\Xi) \subset H_2(\E;\Q)$. By Lemma \ref{lemma:nosphere}, we deduce that $\Xi$ cannot be represented by any embedded surface of genus less than $2$.

Suppose, toward a contradiction, that there is a diffeomorphism from $\E$ to $\cptwo \conn 9 \cptwobar$. By abuse of notation, we will identify $E_i, \Xi, \Theta$ with the corresponding classes in $H_2(\cptwo \conn 9 \cptwobar)$. Let $e_0, \dots, e_9$ denote the standard diagonal basis for $H_2(\cptwo \conn 9 \cptwobar)$, represented by disjoint embedded spheres, where $e_0^2 = 1$ and $e_i^2 = -1$ for $i=1, \dots, 9$. There is an isomorphism $\rho$ of $H_2(\cptwo \conn 9 \cptwobar)$, preserving the intersection form, given by:
\begin{align*}
\rho(\Xi) &= e_0 - e_9  & \rho(E_i) &= e_i \quad (\text{for } i=1, \dots, 7) \\
\rho(\Theta) &= e_8 + e_9  & \rho(E_8) &= e_0 + e_8 - e_9
\end{align*}
Wall \cite[p.~137]{WallDiffeomorphisms} showed that every isomorphism of the intersection form of $\cptwo \conn 9\cptwobar$ can be realized by a diffeomorphism; let $r$ be a diffeomorphism realizing $\rho$. However, $e_0-e_9$ can be represented by an embedded sphere $S$, and hence $r^{-1}(S)$ represents $\Xi$, which contradicts our result above. Thus, $\E$ cannot be diffeomorphic to $\cptwo \conn 9 \cptwobar$.
\end{proof}

\begin{proof}[Proof of Theorem \ref{thm:unlabeledthmintro}]
Let $\W \co S^3_{-1}(Q_{-4}) \to S^3_{-1}(Q_{-4})$ be the cobordism from Construction \ref{constr:pi1Z}. For each $n \ge 1$, let $\E_n$ (resp.~$\E_n'$) be obtained by inserting $n$ copies of $\W$ in between $C$ and $Z$ in the first copy of $V$ (resp.~$V'$). Each of these is homeomorphic to $\conn (2n+1)\cptwo \conn 9(n+1) \cptwobar$. Just as in the proof of Theorem \ref{thm:simplyconn}, we deduce that there is a square-0 line $L \subset H_2(\E_n)$ and a spin$^c$ structure $\spinct$ on $\E_n$ with $\Phi_{\E_n, \spinct, L} \ne 0$. As noted in Remark \ref{rem:OSz-invt}, this implies that the ordinary Ozsv\'ath--Szab\'o invariant of $\E_n$ is nonvanishing. Since the invariant of $(2n+1)\cptwo \conn 9(n+1) \cptwobar$ vanishes by \cite[Theorem 1.3]{OSz4Manifold}, we see that $\E_n$ is an exotic $(2n+1)\cptwo \conn 9(n+1) \cptwobar$, as required.
\end{proof}

\subsection{Exotic closed definite manifolds with \texorpdfstring{$\pi_1\cong \Z_2$}{{\textpi}\_1=Z/2}}\label{subsec:definite}

We begin by building our candidate exotic pair. Note that the manifolds $\E$ and $\E'$ constructed in the previous section each admit an orientation-preserving involution that interchanges the two copies of $V$ (resp.~$V'$), while acting on the common boundary by the homeomorphism $\zeta$ from Proposition \ref{prop:square-invol}.  Because $\zeta$ has no fixed points, the quotients $\RR$ and $\RR'$ are each oriented manifolds with fundamental group $\Z/2$. An equivalent construction is that $\RR$ is obtained as the quotient $V/{\sim}$, where each point of $S^3_0(Q_0)$ is identified with its translate under $\zeta$.

We will also want a `standard' rational homology sphere $Z_0$ with $\pi_1=\Z/2$ to (blow up and) compare with our $\RR$ and $\RR'$.  Towards this, let us regard $S^2 \times D^2$ as the 0-trace of the unknot, and observe that one can build $S^2\times S^2$ by gluing two copies of $S^2 \times D^2$ using a free, orientation-reversing involution $\kappa$ of $S^2\times \partial D^2$.  In a similar spirit to the definition of $\RR$ and $\RR'$, define $Z_0$ to be the quotient of $S^2\times S^2$ by the involution which exchanges the copies.  Then $\pi_1(Z_0) \cong \Z/2$ by construction, and one readily checks that $Z_0$ is a rational homology 4-sphere. An equivalent construction is that $Z_0$ is obtained as the quotient $S^2 \times D^2/{\sim}$, where each point of $S^2 \times \partial D^2$ is identified with its image under $\kappa$.
\begin{theorem}
\begin{enumerate}
\item The manifold $\RR'$ is diffeomorphic to $Z_0 \conn 4 \cptwobar$.

\item The manifold $\RR$ is homeomorphic but not diffeomorphic to $\RR'$. Thus, $\RR$ is an exotic $4$-manifold with $\pi_1=\Z/2$ and negative-definite intersection form.
\end{enumerate}
\end{theorem}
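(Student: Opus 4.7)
My plan for part (1) is to pass through the connected-sum decomposition of $V'$ from \eqref{eq:V-standard}. Writing $V' \cong X_0(Q_0) \conn 4\cptwobar$, I may isotope the four $\cptwobar$ summands into the interior of $V'$, disjoint from a collar of $\partial V' = S^3_0(Q_0)$. Recall that $\RR'$ is obtained from $V'$ by identifying each $x \in \partial V'$ with $\zeta(x)$; since this modification is purely on the boundary, it yields
\[
\RR' \;\cong\; \bigl( X_0(Q_0)/(x \sim \zeta(x)) \bigr) \conn 4\cptwobar.
\]
Denoting the quotient $X_0(Q_0)/(x \sim \zeta(x))$ by $\RR_0$, it remains to identify $\RR_0$ with $Z_0 = S^2 \times D^2/(x \sim \kappa(x))$. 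The proof of Theorem \ref{thm:simplyconn}(1) already established that the double of $X_0(Q_0)$ along $\zeta$ is $S^2 \times S^2$, and the double of $S^2 \times D^2$ along $\kappa$ is $S^2 \times S^2$ by construction. Both doubles carry a free orientation-preserving smooth $\Z/2$-action (swap of the two halves), and $\RR_0$ and $Z_0$ are the respective quotients. I would then show these two free $\Z/2$-actions on $S^2 \times S^2$ are smoothly equivalent, either by invoking a classification of free orientation-preserving involutions on $S^2 \times S^2$ with rational homology sphere quotient, or by exhibiting an explicit equivariant diffeomorphism built from the amphichiral symmetry of $Q_0$ and standard Kirby moves.

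For part (2), the plan is to leverage the universal covering relationships $\E \to \RR$ and $\E' \to \RR'$. Since $\pi_1(\RR) = \pi_1(\RR') = \Z/2$ and both $\E$ and $\E'$ are simply connected, these maps are the (canonical) universal covers. For the homeomorphism $\RR \cong \RR'$: by Remark \ref{rem:cork}, Freedman's theorem implies that $\sigma$ extends to a self-homeomorphism $\widetilde{\sigma}$ of $C_0$. Using $\widetilde\sigma$ in the $C_0$ summand of $V$ yields a topological homeomorphism $V \cong V'$ that is the identity on a collar of the boundary $S^3_0(Q_0)$. As this modification is internal to $V$, the homeomorphism automatically descends to a homeomorphism $\RR \to \RR'$. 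For the non-diffeomorphism: if $f \co \RR \to \RR'$ were a smooth diffeomorphism, lifting to universal covers would produce a smooth diffeomorphism $\widetilde f \co \E \to \E'$, contradicting Theorem \ref{thm:simplyconn}(2) together with $\E' \cong \cptwo \conn 9\cptwobar$ from Theorem \ref{thm:simplyconn}(1).

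To confirm that $\RR$ has negative-definite intersection form, observe that $Z_0$ is a rational homology 4-sphere, so $Z_0 \conn 4\cptwobar$ has negative-definite intersection form of rank 4. Since $\RR$ is homeomorphic to $\RR' \cong Z_0 \conn 4\cptwobar$, it inherits this intersection form, so $\RR$ is an exotic closed oriented 4-manifold with $\pi_1 = \Z/2$ and negative-definite intersection form.

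The principal technical obstacle is the identification $\RR_0 \cong Z_0$ in part (1), which requires comparing two free orientation-preserving smooth $\Z/2$-actions on $S^2 \times S^2$. The remaining steps are formal consequences of Theorem \ref{thm:simplyconn}, the constructions of $\RR$ and $\RR'$, and standard covering-space theory.
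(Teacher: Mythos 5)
Your overall architecture matches the paper's: part (2) of your plan (homeomorphism $\RR \cong \RR'$ via the Freedman extension of $\sigma$ over $C_0$ as in Remark \ref{rem:cork}, non-diffeomorphism by lifting a hypothetical diffeomorphism to the universal covers $\E$ and $\E'$ and invoking Theorem \ref{thm:simplyconn}) is exactly the paper's argument and is correct, as is your reduction of part (1), via \eqref{eq:V-standard}, to showing that $\RR_0 = X_0(Q_0)/{\sim}$ is diffeomorphic to $Z_0$.

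The genuine gap is precisely the step you flag as the ``principal technical obstacle,'' and neither of your proposed ways through it works as stated. There is no smooth classification of free orientation-preserving involutions on $S^2 \times S^2$ available to invoke; what exists (Hambleton--Kreck-type results) classifies such quotients only up to homeomorphism, which would give $\RR_0$ homeomorphic to $Z_0$ but not diffeomorphic --- and a homeomorphism is useless here, since the theorem is about distinguishing smooth structures on manifolds of exactly this homeomorphism type. In particular, knowing that the doubles of $X_0(Q_0)$ and of $S^2 \times D^2$ are both $S^2 \times S^2$ does not imply the two free involutions are smoothly conjugate: a priori $X_0(Q_0)/{\sim}$ could have been an exotic $Z_0$, and ruling that out is the real content. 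Your second route, an ``explicit equivariant diffeomorphism built from the amphichiral symmetry of $Q_0$ and standard Kirby moves,'' is not a routine exercise but a theorem in its own right; the paper handles this step by citing \cite{LevineSNACK}, which proves that for every strongly negatively amphichiral knot $K$ the quotient $Z_K = X_0(K)/{\sim}$ is diffeomorphic to $Z_0$, independent of $K$. Applying that result to $K = Q_0$ yields $\RR_0 \cong Z_0$ and hence $\RR' \cong Z_0 \conn 4\cptwobar$; without this input (or a self-contained proof of it, at least for $Q_0$), part (1) of your argument is incomplete.
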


Note that this proves Theorem \ref{thm:main} item $(2)$.

\begin{proof}
For any strongly negatively amphichiral knot $K$, let $Z_K$ denote the quotient $X_0(K)/{\sim}$, where we quotient out by the fixed-point free involution on $S^3_0(K)$ coming from the strong negative amphichiral structure. It is shown in \cite{LevineSNACK} that the diffeomorphism type of $Z_K$ is independent of $K$; in particular $Z_{Q_0}$ is diffeomorphic to $Z_0$.

Since $V'$ is diffeomorphic to $X_0(Q_0) \conn 4 \cptwobar$, and the equivalence relation used in constructing $\RR'$ from $V$ is the same as that used in constructing $Z_{Q_0}$ from $X_0(Q_0)$, it follows that $\RR'$ is diffeomorphic to $Z_0 \conn 4 \cptwobar$. Since $\RR$ is obtained from $\RR'$ by cutting out and regluing $C^-_{Q_{-4}}$, $\RR$ is homeomorphic to $\RR'$ by Remark \ref{rem:cork}. On the other hand, since the universal covers of $\RR$ and $\RR'$ are $\E$ and $\E'$, respectively, which are not diffeomorphic, it follows that $\RR$ and $\RR'$ are not diffeomorphic.
\end{proof}

\subsection{Knot traces and spineless 4-manifolds} \label{subsec:spineless}

In this section, we prove the existence of the spineless 4-manifolds claimed in Theorem~\ref{thm:spineless}.  

For a knot $K \subset S^3$, let $X_0(K)$ denote the $0$-trace of $K$, let $\spinct_s$ denote the spin$^c$ structure on $X_0(K)$ whose first Chern class evaluates to $2s$ on a capped-off Seifert surface for $K$, and let $\spincs_s = \spinct_s|_{S^3_0(K)}$. Also recall the concordance invariants  $(V_s(K))_{s \in \Z}$ (originally denoted $h_s$ in \cite{RasmussenThesis}), which are a sequence of natural  numbers satisfying
\begin{equation}\label{eq:V-decrease}
V_s(K) \ge V_{s+1}(K) \ge V_s(K) -1.
\end{equation}
By \cite[Lemma 2.5]{HomLidmanZufelt}, for $s>0$, we have
\[
V_{-s}(K) = V_s(K)+s.
\]

The following lemma appears in a retracted preprint of Rasmussen from 2010 \cite{RasmussenMorse}, but we provide a proof here using modern terminology.

\begin{lemma} \label{lemma:0trace-relative}
For any knot $K$ in $S^3$ and any $s \ne 0$, $\Psi_{X_0(K), \spinct_s} \ne 0$ if and only if $V_{\abs{s}}(K) \ne 0$.
\end{lemma}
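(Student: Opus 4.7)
For $s \neq 0$, the spin$^c$ structure $\spincs_s$ on $S^3_0(K)$ is non-torsion, so $\HFi(S^3_0(K), \spincs_s) = 0$; the exact sequence \eqref{eq: flavors-exact} then forces $\HFm(S^3_0(K), \spincs_s) = \HFred(S^3_0(K), \spincs_s)$, and the projection $\Pi$ is the identity. Hence $\Psi_{X_0(K), \spinct_s} = F^-_{W_0, \spinct_s^\circ}(1)$ where $W_0 = X_0(K) \setminus B^4$. By the conjugation equivariance \eqref{eq:conjugation-cobordism} applied to $W_0$ (noting that $\overline{\spinct_s} = \spinct_{-s}$), $\Psi_{X_0(K), \spinct_s} \neq 0$ iff $\Psi_{X_0(K), \spinct_{-s}} \neq 0$. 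Since $V_{|s|}$ is symmetric under $s \mapsto -s$, we may assume $s > 0$ throughout.

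The main step is to apply the Ozsv\'ath--Szab\'o integer surgery (mapping cone) formula to $0$-surgery on $K$ in the non-torsion spin$^c$ structure $\spincs_s$. This identifies $\HFm(S^3_0(K), \spincs_s)$ with the homology of a cone complex assembled from subcomplexes $A_s^-$ and $B_s^-$ of $\CFKi(K)$ via the vertical and horizontal projection maps $v_s^-$ and $h_s^-$. Under this identification, the cobordism map $F^-_{W_0, \spinct_s^\circ}\co \HFm(S^3) \to \HFm(S^3_0(K), \spincs_s)$ sends the generator $1 \in \HFm(S^3)$ to the class represented by $1 \in B_s^- \cong \CFm(S^3)$ included as a $B$-summand of the cone. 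Combining this with the standard algebraic characterization of the $V$-invariant -- that for $s \geq 0$, $V_s(K) = 0$ iff $1 \in H_*(B_s^-)$ lies in the image of $(v_s^-)_*\co H_*(A_s^-) \to H_*(B_s^-)$ -- and inspecting the cone differential shows that this is precisely the condition for the cycle representing $F^-_{W_0, \spinct_s^\circ}(1)$ to be a boundary in the cone, i.e.\ for $\Psi_{X_0(K), \spinct_s}$ to vanish. Together with the reduction above, this yields the lemma.

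The main technical obstacle is extracting the precise form of the integer surgery formula for $0$-surgery in the relevant non-torsion spin$^c$ summand, and the exact description of the cobordism map $F^-_{W_0, \spinct_s^\circ}$ under the cone identification; the $n = 0$ case is subtle compared to larger $|n|$ because of the $b_1$-summand and the way conjugate spin$^c$ structures interact. These are essentially formal consequences of the Ozsv\'ath--Szab\'o integer surgery formalism and its compatibility with the large surgery identifications. Once they are in hand, the comparison with $V_s$ reduces to a straightforward diagram chase.
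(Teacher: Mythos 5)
Your overall route is the same as the paper's: reduce to $s>0$ by conjugation, apply the mapping cone formula for $0$-surgery in the summand $\spincs_s$, and read off $\Psi_{X_0(K),\spinct_s}$ as the image of $1$ under the inclusion of the $B$-summand into the cone. The gap is in the step you dismiss as a straightforward diagram chase. For $0$-surgery, \emph{both} $v_s^-$ and $h_s^-$ map $A_s^-$ into the same summand $B_s^-$, so the condition for the class of $1\in H_*(B_s^-)$ to die in the cone is $1\in\im\,(v_s^-+h_s^-)_*$, not $1\in\im\,(v_s^-)_*$ as you assert. Both maps kill $U$-torsion and act on the free part by $U^{V_s(K)}$ and $U^{V_{-s}(K)}=U^{V_s(K)+s}$, so the relevant map is multiplication by $U^{V_s(K)}(1+U^s)$; you never mention $h_s^-$ or the relation $V_{-s}(K)=V_s(K)+s$. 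Moreover, over $\F[U]$ the element $1$ is \emph{never} in the image of $U^{V_s(K)}(1+U^s)$, even when $V_s(K)=0$, so your equivalence cannot be established in the uncompleted theory: one must pass to the $U$-completion, where $1+U^s$ is a unit in $\F[[U]]$ and the image becomes $U^{V_s(K)}\F[[U]]$. This is exactly the computation the paper makes explicit, and it is where the content of the lemma lives.

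The completion is also not optional for the surgery formula itself: the minus-flavored mapping cone only computes $0$-surgery in a non-torsion spin$^c$ structure after $U$-completion (hence the paper's use of $\HFmc$, $\mathbf{A}_s^-$, and the citation of Manolescu--Ozsv\'ath). A quick sanity check: for the unknot the uncompleted cone in the summand $\spincs_s$ is $\F[U]\xrightarrow{\,1+U^s\,}\F[U]$, whose homology $\F[U]/(1+U^s)$ is nonzero, whereas $\HFm(S^2\times S^1,\spincs_s)=0$; completing kills this discrepancy since $1+U^s$ becomes invertible. So as written, your criterion ``$\Psi=0$ iff $1\in\im\,(v_s^-)_*$'' is not what the cone differential gives, and the bridge between the actual criterion and $V_s(K)=0$ requires both the identity $V_{-s}=V_s+s$ and the completed coefficients; with those supplied, your argument becomes the paper's proof.
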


\begin{proof}
By conjugation symmetry, we may assume that $s > 0$. Let
\[
f_s = \mathbf F^-_{X_0(K)^\circ, \spinct_s} \co \HFmc(S^3) \to \HFmc(S^3_0(K), \spincs_s).
\]
Identifying $\HFmc(S^3) = \F[[U]]$, we have $\Psi_{X_0(K), \spinct_s} = f_s(1)$. By the mapping cone formula \cite{OSzSurgery, ManolescuOzsvathLink}, $f_s$ fits into an exact triangle
\[
H_*(\mathbf A_s^-) \xrightarrow{v_s^- + h_s^-}  \HFmc(S^3) \xrightarrow{f_s} \HFmc(S^3_0(K)) \to H_*(\mathbf A_s^-).
\]
Here $A_s^-$ is the $U$-completion of a certain subcomplex of $\CFKm(S^3,K)$; its homology computes $\HFmc$ of large surgery on $K$ in a particular spin$^c$ structure. In particular, $H_*(\mathbf A_s^-)/(U\text{-torsion}) \cong \F[[U]]$. The maps $v_s^-$ and $h_s^-$ necessarily vanish on the $U$-torsion subgroup,  and the induced maps on $\F[[U]]$ are multiplication by $U^{V_s(K)}$ and $U^{V_{-s}(K)}$. Thus, the sequence
\[
\F[[U]] \xrightarrow{U^{V_s(K)}(1+U^s)}  \F[[U]] \xrightarrow{f_s} \HFmc(S^3_0(K)),
\]
is exact, and we deduce that $f_s(1) = 0$ if and only if $V_s(K)=0$, as required.
\end{proof}

\begin{lemma}\label{lemma:spineless-map}
Suppose that $N$ is a 4-manifold homotopy equivalent to $S^2$ with a spine and whose boundary is homology cobordant to $S^2 \times S^1$.  Then for every spin$^c$ structure $\mft$ on $N$ with $c_1(\mft) \neq 0$, we have $\Psi_{N,\spinct}=0$.
\end{lemma}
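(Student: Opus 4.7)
The plan is to use the PL spine to exhibit $N$ as the union of a zero-trace with a homology cobordism, and then apply Lemma~\ref{lemma:0trace-relative} together with the homology-cobordism hypothesis on $\partial N$ to force vanishing.

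Let $S \subset N$ be the PL spine, which is smoothly embedded except at finitely many cone singularities $p_1, \ldots, p_n$ with local knots $K_i \subset S^3$. Because $\partial N$ is a $\Z$-homology $S^2 \times S^1$, the long exact sequence of $(N, \partial N)$ forces the intersection form on $H_2(N) = \Z$ to be trivial, so $[S]^2 = 0$. Enclosing all the $p_i$ in a single $4$-ball $B \subset N$ and tubing arcs along $S$, I obtain a smoothly embedded slice disk $D \subset N \setminus B$ bounding the connect sum $J := K_1 \# \cdots \# K_n \subset \partial B = S^3$. The regular neighborhood $B \cup \nu(D)$ is smoothly diffeomorphic to the $0$-trace $X_0(J)$, with core sphere representing $[S]$, so we may write $N = X_0(J) \cup_{S^3_0(J)} W$ for $W$ the smooth complement. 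A direct Mayer--Vietoris calculation --- using that $H_2(S^3_0(J)) \to H_2(X_0(J)) \to H_2(N)$ are both isomorphisms --- shows that $W$ is a smooth integer homology cobordism from $S^3_0(J)$ to $\partial N$. Stacking with the hypothesized homology cobordism from $\partial N$ to $S^2 \times S^1$, we deduce that $S^3_0(J)$ is smoothly homology cobordant to $S^2 \times S^1$; equivalently, $J$ is smoothly slice in a smooth integer homology $B^4$. By the concordance invariance of $V_s$ in $\Z$-homology cylinders, together with the fact that $V_s(\text{unknot}) = 0$ for $s \geq 0$, we conclude $V_s(J) = 0$ for every $s \geq 0$.

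Now fix $\spinct$ with $c_1(\spinct) \neq 0$. Since $[S]$ generates $H_2(N) = \Z$, we have $\gen{c_1(\spinct), [S]} = 2s$ for some $s \neq 0$, and the restriction $\spinct_s := \spinct|_{X_0(J)}$ restricts to a non-torsion spin$^c$ structure $\spincs_s$ on $S^3_0(J)$. Lemma~\ref{lemma:0trace-relative}, combined with $V_{|s|}(J) = 0$, therefore yields $F^-_{X_0(J)^\circ, \spinct_s}(1) = 0$. A further Mayer--Vietoris check (using $H^1(W) \cong H^1(S^3_0(J))$) shows that the coboundary $H^1(S^3_0(J)) \to H^2(N^\circ)$ vanishes, so the composition law for $(X_0(J))^\circ \cup W$ has no spin$^c$-extension ambiguity. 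Therefore
\[
\Psi_{N, \spinct} \;=\; \Pi \bigl( F^-_{W, \spinct|_W} \circ F^-_{X_0(J)^\circ, \spinct_s}(1) \bigr) \;=\; 0.
\]

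The conceptual heart of the argument is the observation that a PL spine of self-intersection zero forces an embedding $X_0(J) \hookrightarrow N$ with $J$ smoothly slice in the complement; the homology-cobordism hypothesis on $\partial N$ then promotes this to slicedness in a smooth integer homology $B^4$, reducing the desired vanishing to the slice case handled by Lemma~\ref{lemma:0trace-relative}. The main technical points to verify are the Mayer--Vietoris claim that $W$ is a smooth integer homology cobordism (both boundary inclusions inducing $H_*$-isomorphisms) and the $V_s$-vanishing statement for knots slice in integer homology balls; neither presents real difficulty.
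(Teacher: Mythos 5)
Your proof is correct and follows the same overall skeleton as the paper's: decompose $N$ as the trace $X_0(J)$ of the connected sum of the local knots of the PL spine glued to a homology cobordism $W$ to $\partial N$, show $V_s(J)=0$ for $s\ge 0$, apply Lemma~\ref{lemma:0trace-relative}, and conclude by the composition law (your Mayer--Vietoris check that the spin$^c$ extension is unique, and your extra detail on why the spine neighborhood is a $0$-trace, are both fine and more explicit than what the paper records). The one place you genuinely diverge is how $V_s(J)=0$ is obtained. The paper stays entirely at the level of $3$-manifold invariants: since $d_\pm$ are homology cobordism invariants and $S^3_0(J)$ is homology cobordant to $S^2\times S^1$, one gets $d_\pm(S^3_0(J))=\pm\tfrac12$, and then $V_0(J)=V_0(\overline J)=0$ follows from \cite[Proposition 4.12]{OSzAbsolute} and \cite[Proposition 1.6]{NiWu}, with \eqref{eq:V-decrease} finishing it. You instead upgrade the homology cobordism to the geometric statement that $J$ is slice in an integer homology $4$-ball and then invoke homology-concordance invariance of $V_s$. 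Both of your inputs are true, but the word ``equivalently'' hides a real (if standard) argument: cap the $S^2\times S^1$ end with $S^1\times D^3$ to get a homology $S^1\times D^3$ bounded by $S^3_0(J)$, note that $H_1(\partial)\to H_1$ is an isomorphism so the surgery dual circle generates $H_1$, and attach a $2$-handle along it so that the cocore becomes a slice disk for $J$ in a homology ball; the invariance of $V_s$ under homology concordance likewise needs a citation or a $d$-invariant argument (e.g.\ $d(S^3_{\pm1}(J))=0$ since these bound homology balls). So your route is valid but carries two extra standard facts that the paper's more direct $d_\pm$ argument avoids.
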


\begin{proof}
Since $N$ has a spine, there exists a knot $K$ such that the 0-trace, $X_0(K)$, embeds in $N$.  Furthermore, $N - X_0(K)$ provides a homology cobordism from $S^3_0(K)$ to the boundary of $N$, which in turn, is homology cobordant to $S^2 \times S^1$.  Note that the homology cobordism to $S^2 \times S^1$ implies that $d_{\pm}(S^3_0(K)) = \pm 1/2$.  Therefore, the knot concordance invariants $V_0(K)$ and $V_0(\overline K)$ vanish by \cite[Proposition 4.12]{OSzAbsolute} and \cite[Proposition 1.6]{NiWu}.  By \eqref{eq:V-decrease}, $V_s(K) = 0$ for all $s \ge 0$. By Lemma \ref{lemma:0trace-relative}, $\Psi_{X_0(K), \spincu} = 0$ for every non-torsion $\spincu \in \Spin^c(X_0(K))$. It then follows that for each nontorsion $\spinct \in \Spin^c(N)$, we have
\[
\Psi_{N,\spinct} = F_{N -X_0(K), \spinct|_{N-X_0(K)}} \left( \Psi_{X_0(K), \spinct|_{X_0(K)}} \right) = 0,
\]
as required.
\end{proof}

With this we can easily construct the promised spineless 4-manifolds.
\begin{proof}[Proof of Theorem~\ref{thm:spineless}]
Let $N$ denote the four-manifold with boundary $S^3_0(Q_{-4})$ obtained by attaching a 2-handle to $C_0$ along $\sigma(\mu_{Q_{-4}})$. Cutting $C_0$ out of $N$ and regluing using $\sigma$ does not change the homeomorphism type and results in $X_0(Q_{-4})$ by \eqref{eq:boring-tye}. Note that $N$ is a codimension 0 submanifold of $V$ which carries the singular part of the intersection form; this is easily seen from in the description in Construction \ref{constr:simplyconn-intro}. It follows from Proposition~\ref{prop:Q0-exotic-filling} that there is a non-torsion$^c$ structure $\mft$ on $N$ with non-trivial $c_1$ satisfying $\Psi_{N, \mft} \neq 0$. Thus, by Lemma \ref{lemma:spineless-map}, $N$ is spineless.
\end{proof}

\begin{remark} \label{rem:S2xS2}
We conclude this section with a speculative strategy for constructing smaller exotic closed 4-manifolds. Suppose $K$ and $J$ are knots for which there is an orientation-preserving homeomorphism $\xi \co S^3_0(K) \to S^3_0(J)$. Then the manifold $Z = X_0(K) \cup_\xi \overline{X_0(J)}$ is homeomorphic to either $S^2 \times S^2$ or $\cptwo \conn \cptwobar$, depending on how $\xi$ acts on spin structures. Let $L$ be the image of the inclusion $H_2(S^3_0(K)) \to H_2(Z)$. If one can show that there is a non-torsion spin$^c$ structure $\spincu$ for which $\Phi_{Z,\spincu,L} \ne 0$, this would immediately show that $Z$ is exotic.

Suppose $\spincu$ is such that $\Phi_{Z,\spincu,L} \ne 0$ and $\spincu|_{S^3_0(K)} = \spincs_s$. By conjugation invariance, we may assume $s>0$. Then $\Psi_{X_0(K), \spinct_s} \ne 0$ and $\Psi_{\overline{X_0(J)}, \spinct_s'} \ne 0$ (where we use $\spinct_s'$ to denote the appropriate spin$^c$ structure on $\overline{X_0(J)}$). By Lemma \ref{lemma:0trace-relative}, we must have
\begin{equation} \label{eq:Vs-KJ}
V_s(K) >0 \quad \text{and} \quad V_s(\overline J) > 0.
\end{equation}

Thus, one possible strategy for finding an exotic $S^2 \times S^2$ or $\cptwo \conn \cptwobar$ is to find pairs of knots $K,J$ for which $S^3_0(K) \cong S^3_0(J)$ and for which \eqref{eq:Vs-KJ} holds for some $s>0$. Of course, \eqref{eq:Vs-KJ} is not sufficient; one must also show that
\begin{equation} \label{eq:pairing}
\gen{\Psi_{X_0(K), \spinct_s}, \ \xi_*^{-1} \Psi_{\overline{X_0(J)}, \spinct_s'} } \ne 0.
\end{equation}
The optimal situation would be that $\dim \HFred(S^3_0(K), \spincs_s) = 1$, in which case \eqref{eq:pairing} would be automatic. Alternately, if the mapping class group of $S^3_0(K)$ acts sufficiently nontrivially on $\HFred(S^3_0(K), \spincs_s)$ --- for instance, if the span of the translates of $\Psi_{X_0(K),\spinct_s}$ is equal to all of $\HFred(S^3_0(K), \spincs_s)$ --- then there will be some gluing $\xi$ for which \eqref{eq:pairing} holds. Indeed, if $K$ is a knot for which $V_s(K) \ne 0$ and $V_s (\overline K) \ne 0$, then one may take $J=K$; however, in this case, the gluing map $\xi$ would necessarily have to be nontrivial, as the identity gluing (or any gluing that extends over $X_0(K)$) would produce the double of $X_0(K)$, which is diffeomorphic to $S^2 \times S^2$.

In \cite{RasmussenMorse}, it was claimed that this strategy cannot succeed: namely, that there cannot be knots $K,J$ satisfying the above conditions. However, this argument relied on the incorrect assertion that $V_0(K)\ne0$ implies $\tau(K)>0$. (For a counterexample to this implication, see Bodn\'ar--Celoria--Golla \cite[Proposition 2.4]{BodnarCeloriaGolla}.) However, the converse is true; indeed, by \cite[Proposition 2.3]{HomWu}, for any knot $K$ with $\tau(K) > 0$, we have $V_{\tau(K)-1} \ne 0$. Thus, if $K$ and $J$ are knots with $S^3_0(K) \cong S^3_0(J)$, $\tau(K) \ge 2$, and $\tau(J) \le -2$, then \eqref{eq:Vs-KJ} is automatically satisfied for $s=1$. Although there are many examples of pairs of knots with homeomorphic $0$-surgeries and different values of $\tau$ (see, e.g., \cite{YasuiConcordance, HMP}), we are not aware of any pairs in which the values of $\tau$ have opposite signs.
\end{remark}

\section{Exotic surfaces from symmetries} \label{sec:surfaces}

In this section, we show that the manifolds $\E$ and $\E'$ from Construction \ref{constr:simplyconn} can be realized as the branched double covers of embedded, non-orientable surfaces $\Sigma$ and $\Sigma'$ in $S^4$, as stated in Theorem \ref{thm:surfaces}. Each of these surfaces is homeomorphic to $\conn 10 \R P^2$, with normal Euler number 16. Using recent work of Conway--Orson--Powell \cite{ConwayOrsonPowell}, we will show that $\Sigma$ and $\Sigma'$ are topologically isotopic. (An earlier theorem of Kreck \cite{Kreck} also applies in this specific case.) The key technical observation is that the complements of $\Sigma$ and $\Sigma'$ each have fundamental group isomorphic to $\Z/2$. On the other hand, we see immediately that $\Sigma$ and $\Sigma'$ cannot be smoothly equivalent, because their branched double covers are not diffeomorphic.

\hspace{-20pt}\begin{figure}[htbp]{\scriptsize
\begin{overpic}[tics=20]{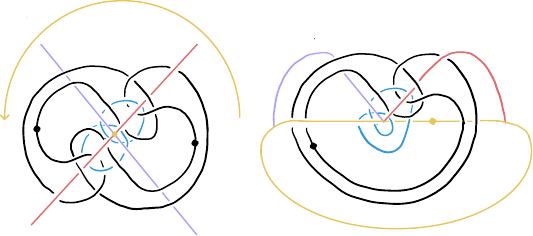}
\put(25,55){$\color{lblue}-1$}
\put(67,35){$\color{lblue}-1$}
\put(55,105){$\color{lyellow}\rho$}
\put(10,95){$\color{lpurple}\gamma$}
\put(100,96){$\color{lred}\sigma(\gamma)$}
\put(200,40){$\color{lblue}-2$}
\put(140,5){$\color{lyellow}D$}
  \end{overpic}}
  \caption{The symmetry $\rho$ extends over $C^+_{P_0}$; taking the quotient we obtain the branch disk $D\hookrightarrow B^4$ marked in yellow. }
  \label{fig:branch}
\end{figure}

\begin{proof}[Proof of Theorem \ref{thm:surfaces}]
Recall the involution $\rho \co S^3_1(P_0) \to S^3_1(P_0)$ from Section \ref{subsec:symmetry}, which extends to an involution of $C^+_{P_0}$. By Remark \ref{rem:rho}, the attaching curves for the $2$-handles of $\E$ can be simultaneously isotoped to be symmetric with respect to $\rho$. It follows immediately that $\rho$ extends over $\E$. Likewise, because $\rho$ commutes with $\sigma$, the attaching curves for the $2$-handles of $\E'$ can likewise be made symmetric, and $\rho$ extends over $\E'$ as well. By abuse of notation, we denote these extensions by $\rho$ and $\rho'$, respectively.

Define $\Sigma$ (resp.~$\Sigma'$) to be the branch surface downstairs after taking the quotient of the involution $\rho$ (resp.~$\rho'$) on $\E$ (resp.~$\E'$).  In the right-hand frame of Figure \ref{fig:branch} we have demonstrated the branch disk $D$ in $B^4$ of the involution restricted to $C_0$.  Using the Montesinos trick, we know that $\Sigma$ and $\Sigma'$ can be built on top of $D$ by attaching bands according to the 2-handles used to construct $\E$ and $\E'$ (and eventually capping off with a second copy of $D$). This certifies that the quotient manifolds $\E/\rho$ and $\E'/\rho'$ are each build from a pair of $4$-balls, hence diffeomorphic to $S^4$.  The ten 2-handles yield 10 band attachments in the construction of these surfaces, so $\Sigma$ and $\Sigma'$ each have Euler characteristic $-8$.  Since the $B$-cobordisms go between homology spheres, the corresponding bands downstairs go between knots (and not links), and hence make $\Sigma$ and $\Sigma'$ non-orientable.  It follows that each is abstractly homeomorphic to $\# {10}\R P^2$.  Since both surfaces have double branched covers with signature $-8$,  we know that the normal Euler numbers of $\Sigma$ and $\Sigma'$ are both 16 (see, e.g. the final page of \cite{Rokhlin}).

In order to compare the topological isotopy type of $\Sigma$ and $\Sigma'$, we need to compute $\pi_1$ of the complements.  We begin with $\Sigma'$. Consider the branch surface $\Gamma'$ in $B^4$ of the involution $\rho'$ restricted to $C_0 \cup B^-_{Q_{-4}}$. This surface is obtained from $D$ by attaching a (nonorientable) band along the curve marked in purple in the right frame of Figure \ref{fig:branch}. (The band has $-1$ half twist, but since we won't need to know the twisting parameter we omit the proof.)  This is a subsurface of $\Sigma'$, and we can check (using the rising water principle) that $\Gamma'$ has fundamental group $\Z/2$. Since the rest of $\Sigma'$ can be built on top of $\Gamma'$ with no new 0-handles, we know $\pi_1(S^4\setminus \Sigma')$ is either $1$ or $\Z/2$.  Since $\Sigma$ is a nonorientable surface embedded in $S^4$, it cannot have simply connected complement.

For $\Sigma$ we argue similarly; define $\Gamma$ in $B^4$ to be the branch surface of the quotient of $\rho$ restricted to $C_0 \cup_\sigma B^-_{Q_{-4}}$; this surface is obtained from $D$ by attaching a band with a negative half twist along the curve marked in red in the right frame of Figure \ref{fig:branch}.  Again we compute that $\pi_1(B^4\setminus \Gamma)\cong \Z/2$, and as before conclude $\pi_1(S^4\setminus \Sigma')$ is $\Z/2$.

We can then conclude that $\Sigma$ and $\Sigma'$ are topologically isotopic by appealing to \cite{Kreck} (see also \cite{ConwayOrsonPowell}). However, $\Sigma$ and $\Sigma'$ cannot be smoothly equivalent, as this would induce a diffeomorphism between their double branched covers, $\E$ and $\E'$.
\end{proof}

\begin{remark}
In fact, $\E$ and $\E'$ admit other $\Z/2$ symmetries as well; in brief, these are obtained by permuting the $2$-handles of $V$ or $V'$. Thus, $\E$ and $\E'$ can be obtained as branched double covers of smaller surfaces in larger $4$-manifolds, such as tori in $\conn 4 \cptwobar$. However, because the fundamental groups of these surface complements may be complicated, we cannot use currently available methods to deduce that these pairs of surfaces are topologically isotopic.
\end{remark}

\section{Handlebody presentations} \label{sec:handles}

\begin{figure}[htbp]{\scriptsize
\begin{overpic}[tics=20]{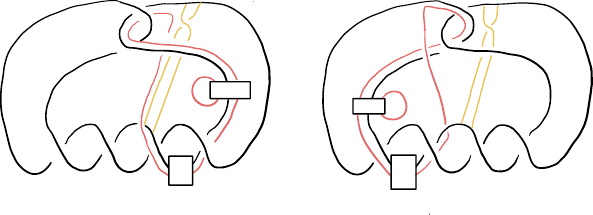}
\put(-6,95){$\langle -1 \rangle$}
\put(153,95){$\langle -1 \rangle$}
\put(105,58){$-1$}
\put(171,50){$-1$}
\put(81,18){$-1$}
\put(190,19){$-1$}
\put(87,46){$\color{lred}-1$}
\put(196,46){$\color{lred}-1$}
\put(62,70){$\color{lred}-1$}
\put(210,69){$\color{lred}-1$}
  \end{overpic}}
  \caption{Left: an (upside down) handle decomposition of the contractible $C^+_{P_0}$. The slice disk $D_{P_0}$ is indicated by the yellow band. Two 3-handles and a 4-handle are not pictured. Right: the 2-handles of $C^+_{D_{P_0}}$ have been exchanged using the homeomorphism~$\sigma$.}
  \label{fig:fullHD1}
\end{figure}
We now give an explicit handlebody presentation for $\E$, the exotic $\cptwo \conn 9 \cptwobar$ constructed above.  We will work through Construction \ref{constr:simplyconn} from the top down, drawing the handle decomposition as we go.

We begin with an upside down copy of $C^+_{P_0}$, which has bottom boundary $S^3_{-1}(\overline{P_0})$.  This is exhibited in the left frame of Figure \ref{fig:fullHD1}. Note that this decomposition of $C^+_{P_0}$ has two 3-handles and a 4-handle which are not pictured; we will suppress the 3- and 4-handles from the diagrams throughout the section.  We then attach $C^+_{P_0}$ to $S^3_{-1}(\overline{P_0})\times I$ using the involution $\sigma$ from Proposition \ref{prop:P0-symmetry}; this is exhibited in the second frame of Figure \ref{fig:fullHD1}.

\begin{figure}[htbp]{\scriptsize
\begin{overpic}[abs, tics=20]{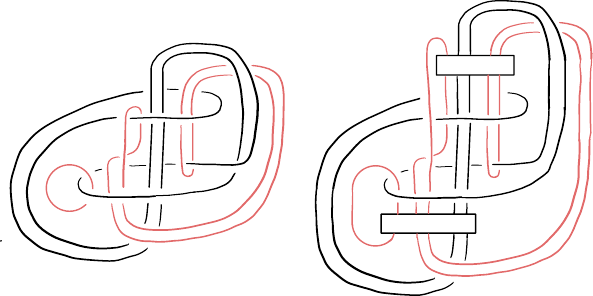}
\put(4,90){$\langle -1 \rangle$}
\put(160,90){$\langle 1 \rangle$}
\put(120,29){$\color{lred}0$}
\put(278,20){$\color{lred}4$}
\put(37,34){$\color{lred}-1$}
\put(187,19){$\color{lred}1$}
\put(226,109){$2$}
\put(203,32){$2$}
  \end{overpic}}
  \caption{}
  \label{fig:fullHD2}
\end{figure}

We would then like to use the homeomorphism $(\psi_4)^{-1}$ from Proposition \ref{prop:PQdual} (which was a special case of Proposition \ref{prop:fusion-duals}) to glue this to the top of the following stack of elementary cobordisms
\begin{equation}\label{eq:stack}
 B^{-}_{Q_{-4}} \cup B^{-}_{Q_{-3}} \cup B^{-}_{Q_{-3}} \cup B^{-}_{Q_{-1}} \cup T_{Q_0} \cup B^{+}_{Q_{0}} \cup B^{+}_{Q_{1}} \cup B^{+}_{Q_{2}} \cup B^{+}_{Q_{3}}
\end{equation}
which has top boundary the three manifold $S^3_{1}(Q_4)$.  In order to visualize how $(\psi_4)^{-1}$ will act on the 2-handles in the right frame of Figure \ref{fig:fullHD1}, it is convenient to isotope the diagram in the right frame of Figure \ref{fig:fullHD1} until the black curve looks ribbon (with respect to the ribbon band marked in yellow); this results in the left frame of Figure \ref{fig:fullHD2}.  The right frame of Figure \ref{fig:fullHD2} is then obtained from the left frame by applying the homeomorphism $(\psi_{4})^{-1}$ to the red 2-handles. (See Figure \ref{fig:refcalc}.)

\begin{figure}[htbp]{\scriptsize
\begin{overpic}[abs, tics=20]{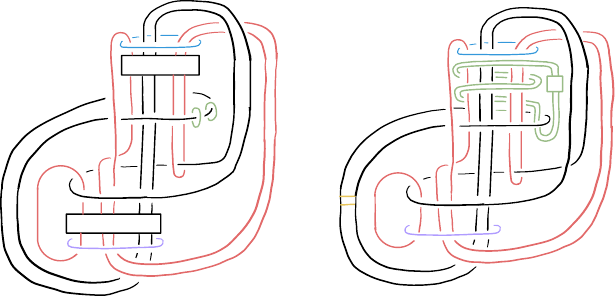}
\put(71,109){$-2$}
\put(50,33){$-2$}
\put(17,91){$\langle -1 \rangle$}
\put(90,76){$\color{lgreen}-1$}
\put(100,78){$\color{lgreen}-1$}
\put(20,44){$\color{lred}-3$}
\put(110,10){$\color{lred}-4$}
\put(40,130){\color{lblue}$-1 (\times 4)$}
\put(80,27){\color{lpurple}$-1 (\times 4)$}
\put(182,90){$\langle 1 \rangle$}
\put(270,111){$\color{lgreen}1$}
\put(265,114){$\color{lgreen}1$}
\put(265,100){$\color{lgreen}2$}
\put(185,40){$\color{lred}-1$}
\put(280,18){$\color{lred}0$}
\put(200,130){\color{lblue}$-1 (\times 4)$}
\put(240,35){\color{lpurple}$-1 (\times 4)$}
  \end{overpic}}
  \caption{Here the notation $-1(\times 4)$ means that there are four $-1$ framed 2-handles attached along the $(0,4)$ cable of the corresponding curve.}
  \label{fig:fullHD3}
\end{figure}

We are then ready to stack this on top of the stack of elementary cobordisms in \eqref{eq:stack}; this is done in the left frame of Figure \ref{fig:fullHD3}.

The construction finishes by attaching this to $C^+_{P_0}$ using the gluing homeomorphism $\sigma \circ \phi_0^{-1}$.  We first attach the manifold in the left frame of Figure \ref{fig:fullHD3} to $S^3_1(P_0)\times I$ using $\phi_0$, as exhibited in the right frame of Figure \ref{fig:fullHD3}.  In order to see how $\sigma$ acts on the 2-handles, we need to isotope the black curve to look like the standard picture of the stevedore's knot; this has been done (with a few simplifying handleslides) to obtain the left frame of Figure \ref{fig:fullHD4}. We then complete the construction, and the handle diagram, by gluing this to the top boundary of $C^+_{P_0}$ using $\sigma$, which is shown in the right frame of Figure \ref{fig:fullHD4}.

\begin{figure}[htbp]{\scriptsize
\begin{overpic}[abs, tics=20]{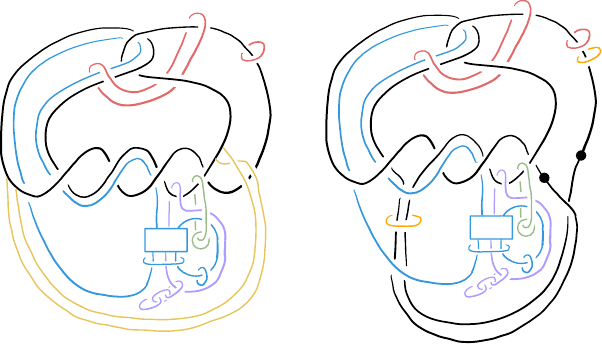}
\put(5,142){$\langle 1 \rangle$}
\put(102,64){$\color{lgreen}-1$}
\put(90,38){$\color{lgreen}-2$}
\put(102,155){$\color{lred}0$}
\put(130,140){$\color{lred}-1$}
\put(75,48){\color{lblue}$-1$}
\put(55,38){\color{lblue}$-2$}
\put(108,35){\color{lblue}$-2$}
\put(109,43){\color{lblue}$-2$}
\put(19,57){\color{lblue}$-1$}
\put(110,50){\color{lpurple}$-1$}
\put(78,16){\color{lpurple}$-2\times 3$}
\put(255,70){$\color{lgreen}-1$}
\put(248,44){$\color{lgreen}-2$}
\put(250,138){$\color{lred}0$}
\put(275,154){$\color{lred}-1$}
\put(170,70){\color{lblue}$-1$}
\put(230,53){\color{lblue}$-1$}
\put(212,42){\color{lblue}$-2$}
\put(278,32){\color{lblue}$-2$}
\put(279,40){\color{lblue}$-2$}
\put(250,25){\color{lpurple}$1$}
\put(215,15){\color{lpurple}$-2\times 3$}
\put(286,143){\color{lorange}$-1$}
\put(206,57){\color{lorange}$0$}
  \end{overpic}}
  \caption{Here the notation $-2\times 3$ means that the three adjacent purple 2-handles are all $-2$ framed. The right frame is an exotic $\cptwo\conn 9\cptwobar$.}
  \label{fig:fullHD4}
\end{figure}

\begin{remark}
It is not difficult to see that the decomposition in the right frame of Figure \ref{fig:fullHD4} can be further simplified to (simultaneously) cancel both 1-handles and both 3-handles. It seems somewhat difficult to actually perform this simplification.
\end{remark}

\section{Bordered computations }\label{sec:bordered}

In this section, we complete the proof of Proposition \ref{prop:HF-Qn}, which we restate as follows:

\begin{theorem} \label{thm:knot-surgery}
For any knot $K \subset S^3$ and any $n \in \Z$, we have
\[
\dim \HFh(S^3_{\pm1} (P_n^K)) = \dim \HFh(S^3_{\pm1}(Q_n^K))  = 4 \dim \HFKh(K) + 1.
\]
\end{theorem}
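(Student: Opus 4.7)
By Proposition~\ref{prop:PQdual}, the homeomorphisms $S^3_{1}(P_n^K) \cong S^3_{-1}(Q_{n-4}^K)$ and $S^3_{1}(Q_n^K) \cong S^3_{-1}(P_{n-3}^K)$ reduce the problem to computing $\dim \HFh(S^3_{-1}(J_n^K))$ for $J = Q$ or $P$, all $n \in \Z$, and all knots $K \subset S^3$.  For each such $J$, the plan is to decompose the surgery along a torus,
\[
S^3_{-1}(J_n^K) \;=\; M_n^J \cup_\phi E_K,
\]
where $M_n^J := S^3_{-1}(J_n) \setminus \nu(\gamma_{J_n})$ is a bordered three-manifold with torus boundary, $E_K = S^3 \setminus \nu(K)$ is the companion knot exterior, and $\phi$ is the standard satellite gluing identifying $\mu_\gamma \leftrightarrow \lambda_K$ and $\lambda_\gamma \leftrightarrow \mu_K$.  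By the Lipshitz--Ozsv\'ath--Thurston pairing theorem,
\[
\CFh(S^3_{-1}(J_n^K)) \;\simeq\; \CFA(M_n^J) \boxtimes \CFD(E_K),
\]
reducing the problem to determining the two bordered factors on the right.

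Since $J_n$ is obtained from $J_0$ by $1/n$-surgery on the unknot $\gamma$ in $S^3$, the three-manifold $M_n^J$ is diffeomorphic to $M_0^J$ as an abstract manifold for every $n$; only the parametrization of $\partial M_n^J$ varies, by the Dehn twist $\mu_\gamma \mapsto \mu_\gamma + n\lambda_\gamma$.  Consequently $\CFA(M_n^J)$ is obtained from $\CFA(M_0^J)$ by tensoring on the right with the $n$-fold Dehn twist bimodule of the torus algebra, and it suffices to compute the two bordered modules $\CFA(M_0^Q)$ and $\CFA(M_0^P)$.  I would draw explicit bordered Heegaard diagrams for $M_0^Q$ and $M_0^P$ directly from the fusion-number-one presentations of $Q_0$ and $P_0$, enumerate the generators and holomorphic disks, and simplify via algebraic cancellation.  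A useful sanity check at this stage is that pairing with the $\CFD$ of the solid torus, i.e.~the unknot exterior, must recover $\HFh(S^3_{-1}(J_0)) \cong \F^5$, in agreement with Lemma~\ref{lemma:HFm-Pn-Qn}.

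Having determined the bordered modules, the next step is to compute the pairing $\CFA(M_n^J) \boxtimes \CFD(E_K)$ for general $K$.  Here I would pass to the immersed-curves description of Hanselman--Rasmussen--Watson, encoding $\CFA(M_n^J)$ as a multicurve $\vartheta_n^J$ on the once-punctured torus and $\CFD(E_K)$ as a multicurve $\vartheta(K)$ whose component structure is governed by $\dim \HFKh(K)$; the pairing then becomes a minimal geometric intersection count between $\vartheta_n^J$ and $\vartheta(K)$.  A careful accounting of intersections, calibrated against the known unknot case which gives $5$, is expected to yield $\dim \HFh(S^3_{-1}(J_n^K)) = 4 \dim \HFKh(K) + 1$.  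The principal obstacle is the explicit bordered Heegaard Floer calculation of $\CFA(M_0^Q)$ and $\CFA(M_0^P)$, together with verifying that the dimension count behaves additively over the components of $\vartheta(K)$; this is the content of Section~\ref{sec:bordered}.
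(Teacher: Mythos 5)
Your overall strategy coincides with the paper's: decompose $S^3_{\pm1}(J_n^K)$ along the torus $\partial\nu(\gamma)$, apply the bordered pairing theorem, pass to immersed curves, and turn the pairing into an intersection count calibrated against the unknot case. The differences are in execution, and that is where the gap lies. The entire content of the theorem is the determination of the bordered invariant of $M$ (Proposition \ref{prop: HF(M)}): it consists of one component that behaves like a solid torus, contributing $1$, plus two figure-eight components, each contributing $2\dim\HFKh(K)$. Your plan leaves this as ``draw bordered Heegaard diagrams, enumerate holomorphic disks, and cancel,'' which is not carried out and is the hard part; the paper avoids any new holomorphic-curve counting by a Kirby-calculus identification of $M$ with the complement of a knot in $S^1\times S^2$ (Lemma \ref{lemma: mazur}) and then reuses the already-computed bimodule $\CFDD(X_L)$ from \cite{LevinePL}, tensoring with $\CFA$ of a $0$-framed solid torus and simplifying algebraically. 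Without the component structure of $\HFh(M)$ in hand, the formula $4\dim\HFKh(K)+1$ cannot be predicted; your sanity check against the unknot only sees the total dimension $5$ and is consistent with many different curve configurations, so it does not pin the answer down.

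Two further points you gloss over are genuinely needed. First, a ``minimal geometric intersection count'' computes the Floer pairing only if local systems can be discounted: $\HFh(X_K)$ typically contains figure-eight components parallel to those of $\HFh(M)$ (one for each unit box in $\CFKm(K)$), so you must argue that these carry trivial local systems and that the count is additive over components; the paper does this via Hanselman's unit-box splitting lemma together with the standard position of \cite{HRWProperties}, which puts $\HFh(X_K)$ in a vertical strip meeting each figure-eight in exactly $2\dim\HFKh(K)$ points. Second, your treatment of the $n$-dependence via Dehn-twist bimodules does not by itself give $n$-independence of the answer: you would have to either redo the intersection count for every twisted parametrization or show that $M$ is a Heegaard Floer homology solid torus. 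The paper short-circuits this by using the surgery exact triangle (Proposition \ref{prop:twist-iso}) together with Proposition \ref{prop:PQdual} to show, before any bordered computation, that $\dim\HFh(S^3_{\pm1}(P_n^K))$ and $\dim\HFh(S^3_{\pm1}(Q_n^K))$ are independent of $n$, of the sign, and of the $P$/$Q$ choice, reducing everything to the single manifold $S^3_1(P_{-1}^K)$.
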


By Proposition \ref{prop:twist-iso}, the isomorphism types of $\HFm(S^3_{\pm1}(P_n^K))$ and $\HFm(S^3_{\pm1}(Q_n^K))$ are independent of $n$, and hence the same is true for $\HFh$. Furthermore, by the symmetries from Proposition \ref{prop:PQdual}, the isomorphism type is also independent of the choice of sign and of $P$ or $Q$. Thus, it suffices to compute $\HFh(S^3_1(P_{-1}^K))$ for all $K$. (The reason for this specific choice will become apparent.)

Let $Y = S^3_1(P_{-1})$ and $Y^K = S^3_1(P_{-1}^K)$. As in Definition \ref{def:fusion1}, let $\gamma \subset Y$ denote the meridian of the band of $P_{-1}$. Let $M = Y \minus \nbd(\gamma)$. Let $\lambda$ and $\mu$ be the Seifert longitude and meridian of $\gamma$, respectively, viewed as curves in $\partial M$. (Because $\lk(P_{-1},\gamma)=0$, $\lambda$ is the same longitude as the one induced from the Seifert longitude of $\gamma$ in $S^3$.) By construction, $Y^K$ is obtained by gluing $M$ to $S^3 \minus \nbd(K)$, where $\lambda$ is identified with $\mu_K$ and $\mu$ is identified with $\lambda_K$.

We begin by computing the bordered invariant of $M$, using Hanselman--Rasmussen--Watson's immersed curves reformulation of bordered Floer homology \cite{HRWImmersed}. Following the notation of \cite{HRWImmersed}, let $T_M$ be the complement of a point in $\partial M$. Let $\overline T_M$ be the covering space from \cite[Definition 1.6]{HRWImmersed}, which is an infinite cylinder punctured in countably many points, with deck group $\Z$. Note that $\lambda$ lifts to a closed multicurve in $\overline T_M$, while $\mu$ does not. It is shown in \cite{HRWImmersed} that the bordered Floer homology of $M$ can be described as an immersed multicurve $\HFh(M) \subset T_M$, with a lift to $\overline T_M$ that is well-defined up to an overall deck transformation.

Our main computational result is the following:
\begin{proposition} \label{prop: HF(M)}
The bordered invariant $\HFh(M)$, viewed as an immersed multicurve in $\overline T_M$, is as shown on the left side of Figure \ref{fig: HF(M)}.
\end{proposition}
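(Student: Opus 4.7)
My plan is to compute $\widehat{\mathit{CFD}}(M)$ directly and then translate to immersed curves using the Hanselman--Rasmussen--Watson correspondence \cite{HRWImmersed}. The first step is to produce a concrete surgery description of $(M, \lambda, \mu)$: starting from the standard pretzel presentation of $P_{-1} = P(3,-3,-3)$ with the curve $\gamma$ drawn as the meridian of the ribbon band (as in Figure \ref{fig:PQ}), I would identify $M$ with the exterior of the two-component link $P_{-1} \cup \gamma \subset S^3$ after performing $+1$-surgery on $P_{-1}$. Because $\gamma$ has linking number zero with $P_{-1}$ and the ribbon band for $P_{-1}$ provides a genus-one Seifert surface for $\gamma$ in the complement of $P_{-1}$, the Seifert framings of $\gamma$ in $S^3$ and in $Y$ coincide, so $(\lambda, \mu)$ in $\partial M$ is unambiguous.

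Next, I would produce a bordered Heegaard diagram for $M$ by isotoping the link so that a tubular neighborhood of $\gamma$ lies in the ``outside'' of a Heegaard splitting for the link complement, then performing a doubly-pointed stabilization at the surgery solid torus for $P_{-1}$ and deleting a disk along $\gamma$.  Equivalently, I would apply Hanselman's immersed-curve version of the mapping cone formula: since $\gamma$ is a knot in the integer homology sphere $Y$, the bordered invariant $\widehat{\mathit{HF}}(M)$ as a multicurve in $\overline T_M$ is determined by the knot Floer complex $\mathit{CFK}^-(Y,\gamma)$ together with the framing data.  Computing this complex reduces to understanding a link Floer complex for $P_{-1} \cup \gamma$ and then invoking the surgery formula to pass from $S^3$ to $Y = S^3_1(P_{-1})$; the two-bridge-like symmetry of $P_{-1}$ and the fact that $g(\gamma)=1$ in $Y$ both restrict the shape of the resulting complex significantly.

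After producing $\widehat{\mathit{CFD}}(M)$ (or equivalently $\mathit{CFK}^-(Y,\gamma)$), I would simplify it to a reduced form and apply the HRW algorithm to draw the resulting immersed multicurve in $\overline T_M$, producing the picture in Figure \ref{fig: HF(M)}.  To pin down the answer and exclude alternative curve configurations with the same total homology, I would verify three filling consistency checks: (i) the $\mu$-filling recovers $\widehat{\mathit{HF}}(Y)$, which has dimension $5$ by Lemma \ref{lemma:HFm-Pn-Qn}; (ii) the $\lambda$-filling recovers $\widehat{\mathit{HF}}(S^2 \times S^1)$ of dimension $2$, forcing exactly one horizontal (i.e., $\lambda$-parallel) component of the multicurve; and (iii) the graded Euler characteristic of the curve reproduces the Alexander polynomial of $\gamma \subset Y$, which can be computed independently from Turaev torsion of the link exterior.

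The main technical obstacle is the combinatorial bookkeeping in computing $\widehat{\mathit{CFD}}(M)$: the bordered diagram obtained from a link surgery description typically has many generators and a large number of provincial differentials that must be cancelled before the picture becomes transparent.  In particular, I expect the non-horizontal component(s) of the multicurve to arise from the nontrivial $U$-torsion in $\widehat{\mathit{HF}}(Y)$, and isolating these components cleanly — rather than merely verifying their existence via dimension counts — will likely require either exploiting the $\rho$-symmetry of Remark \ref{rem:rho} or a careful analysis of the Alexander grading filtration.
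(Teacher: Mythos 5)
There is a genuine gap: what you have written is a computational plan, not a proof, and the verification scheme you propose in place of the computation cannot pin down the multicurve. You name two possible routes to $\CFD(M)$ (a bordered Heegaard diagram, or $\CFK^-(Y,\gamma)$ via the link surgery formula for $P_{-1}\cup\gamma$) but carry out neither; computing the knot Floer complex of $\gamma$ inside the surgered manifold $Y=S^3_1(P_{-1})$ is itself a substantial and unreduced task (indeed the paper runs the logic in the opposite direction, deducing $\CFK_{UV=0}(Y,\gamma)$ from the bordered answer in Remark \ref{rem:CFK(gamma)}). Your consistency checks (i)--(iii) are then far too coarse to ``exclude alternative curve configurations'': the figure-eight components contribute zero to the graded Euler characteristic, so check (iii) does not see them at all; checks (i) and (ii) only record total dimensions of two fillings, which cannot distinguish the actual configuration from others with the same pairing dimensions, cannot rule out nontrivial local systems, and in particular say nothing about the relative heights of the two figure-eight components $\gamma_2,\gamma_3$ in the cover $\overline T_M$ --- a point that even the paper must address separately via spin$^c$ gradings, and which is exactly the data later needed when pairing with $\HFh(X_K)$. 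So the proposal, as it stands, does not establish the specific picture in Figure \ref{fig: HF(M)}.

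For comparison, the paper's proof avoids all of this by a topological reduction: Lemma \ref{lemma: mazur} gives a Kirby-calculus identification of $(M,\lambda,\mu)$ with the exterior of a knot $L_1^*\subset S^1\times S^2$, with meridian and longitude exchanged, where $L_1^*$ arises from a two-component link $L\subset S^3$ whose exterior's bordered bimodule $\CFDD(X_L)$ was already computed in \cite{LevinePL}. Then $\CFD(M,\lambda,\mu)\simeq \CFA(V_0)\boxtimes_{\AA_\sigma}\CFDD(X_L)$ is a finite, mechanical box-tensor product with the $0$-framed solid torus module; an explicit change of basis splits the result into three summands, which the HRW procedure converts into the loose component plus two figure eights of Figure \ref{fig: HF(M)}, with the lift to $\overline T_M$ fixed by a spin$^c$-grading check. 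If you wish to pursue your route, you must actually produce the complex (by counting disks in a bordered diagram or by executing the link surgery formula); the $\rho$-symmetry, filling dimensions, and Euler characteristic can serve only as sanity checks on such a computation, not as a substitute for it.
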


\begin{figure}
\subfigure{\includegraphics[width=1.5in]{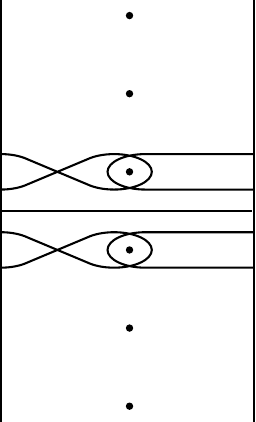}} \hspace{0.5in}
\subfigure{
\begin{tikzpicture}
	\begin{scope}[thin, black!20!white]
		\draw [<->] (-2, 0.5) -- (3, 0.5);
		\draw [<->] (0.5, -2) -- (0.5, 3);
	\end{scope}
	\draw[step=1, black!50!white, very thin] (-1.9, -1.9) grid (2.9, 2.9);
	\filldraw (0.3, 0.5) circle (2pt) node[] (A){};
	\filldraw (0.5, 0.5) circle (2pt) node[] (B){};
	\filldraw (0.7, 0.5) circle (2pt) node[] (C){};
	\filldraw (0.5, 0.3) circle (2pt) node[] (D){};
	\filldraw (0.5, 0.7) circle (2pt) node[] (E){};
	\filldraw (-0.5, 0.5) circle (2pt) node[] (F){};
	\filldraw (0.5, -0.5) circle (2pt) node[] (G){};
	\filldraw (1.5, 0.5) circle (2pt) node[] (H){};
	\filldraw (0.5, 1.5) circle (2pt) node[] (I){};

	\draw [very thick, ->] (A) -- (F);
	\draw [very thick, ->] (D) -- (G);
	\draw [very thick, ->] (H) -- (C);
	\draw [very thick, ->] (I) -- (E);
\end{tikzpicture}}
\caption{Left: The immersed multicurve $\HFh(M)$, viewed in the infinite cylinder $\overline T_M$ (viewed as a strip with sides identified). Right: The doubly filtered complex $\CFK_{UV=0}(Y,\gamma)$.}
\label{fig: HF(M)}
\end{figure}

To prove Proposition \ref{prop: HF(M)}, we will rely on a computation from \cite{LevinePL}.  Let $L = L_1 \cup L_2$ denote the two-component link shown in the right frame of Figure \ref{fig:Mazur}, where $L_2$ is the yellow component. Because $L_2$ is the unknot, $0$-surgery on $L_2$ yields $S^1 \times S^2$. Let $L_1^* \subset S^1 \times S^2$ denote the knot obtained from $L_1$. Let $\lambda_L$ and $\mu_L$ denote the longitude and meridian of $L_1^*$ induced from the Seifert longitude and meridian of $L_1$.

The following lemma is known to experts, eg. it is inherent in \cite{AkbulutKirbyMazur}; we include a proof for completeness.

\begin{lemma} \label{lemma: mazur}
There is a diffeomorphism from $M$ to $S^1 \times S^2 \minus \nbd(L_1^*)$ that takes $\lambda$ to $\mu_L$ and $\mu$ to $\lambda_L$.
\end{lemma}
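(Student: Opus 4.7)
\emph{Plan.} The lemma is a statement about two link-complement descriptions of the same 3-manifold with torus boundary, and I would prove it by explicit Kirby calculus. On the left-hand side, $M$ admits the surgery description built from the two-component link $P_{-1} \cup \gamma$ in $S^3$: perform $+1$ surgery on $P_{-1}$ and remove a tubular neighborhood of $\gamma$. Since $\lk(P_{-1},\gamma)=0$, the framing $(\mu,\lambda)$ on $\partial M$ is unambiguously the meridian–Seifert longitude pair of $\gamma$ in $S^3$. On the right-hand side, $S^1\times S^2\setminus \nu(L_1^*)$ admits the analogous description built from the Mazur link $L_1\cup L_2$: do $0$ surgery on $L_2$ and remove a neighborhood of $L_1$, whose boundary is parametrized by $(\mu_L,\lambda_L)$ inherited from $S^3$. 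The goal is therefore to produce a Kirby-move sequence converting one framed link (with a distinguished unframed component) into the other.

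Concretely, I would begin by isotoping $P_{-1}$ into its fusion number 1 presentation (Definition \ref{def:fusion1}) as a band sum of a 2-component unlink, with $\gamma$ drawn as the visible meridian of the band. From this position, a small sequence of handle slides of $P_{-1}$ over $\gamma$ unclasps the ribbon band; because $\lk(P_{-1},\gamma)=0$, these slides leave both the framing coefficient on $P_{-1}$ and the preferred pair $(\mu,\lambda)$ unchanged. After clearing the band in this manner, $P_{-1}$ becomes a simple unknot-like component whose $+1$ framing may be blown down. That final blow-down changes the framing of $\gamma$ in $S^3$ from $0$ to a new slope, and equivalently swaps the roles of meridian and longitude on the $\gamma$ boundary with respect to whatever unknot remains. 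Identifying the post-blow-down diagram with the standard picture of $L_1\cup L_2$ gives the desired diffeomorphism. The identification $\lambda\mapsto \mu_L$, $\mu\mapsto \lambda_L$ is then forced by comparing the slopes on the $\gamma$ boundary before and after the blow-down, using the fact that $\lk(L_1,L_2)=0$ so that the $0$-surgery on $L_2$ preserves the framing of $L_1$ in $S^3$.

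The main obstacle is purely bookkeeping: one must draw out the diagrams, perform the correct slides in the correct order, and verify that the resulting link is the Mazur link (rather than a mirror, reparametrization, or stabilization), and that the meridian–longitude swap emerges with the correct orientations. The geometric content is modest, and indeed, as noted in the excerpt, the equivalence is implicit in \cite{AkbulutKirbyMazur} and familiar to experts; a full proof amounts to writing down the picture sequence and tracking framings at each step.
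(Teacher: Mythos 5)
Your overall strategy (Kirby calculus on the surgery-with-drilled-component description of $M$) is the same as the paper's, but the specific move sequence you propose contains steps that are not valid, and the crucial meridian--longitude exchange is not accounted for. First, you propose to slide $P_{-1}$ over $\gamma$; this is not a legal move, because $\gamma$ carries no surgery coefficient --- it is the drilled component, and one may only slide over genuine surgery curves. (The remark in Section~\ref{sec:building-blocks} that slides of $J$ over $\gamma_J$ unclasp the band applies in the setting where $\gamma_J$ is \emph{filled} with coefficient $0$.) The paper gets around exactly this point by first introducing a cancelling pair of surgery curves (the $0$-framed unknot that ultimately becomes $L_2$, together with its cancelling partner), and then performing slides over those honest surgery curves before blowing down.

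Second, your mechanism for the swap $\lambda\mapsto\mu_L$, $\mu\mapsto\lambda_L$ does not work: a blow-down changes the longitude of a drilled curve only by adding multiples of its meridian (it twists strands passing through the disk), and it fixes the meridian; no sequence of blow-ups/downs can interchange $\mu$ and $\lambda$. In the paper the exchange arises from the final, genuinely different move: after twisting along the essential sphere of $S^1\times S^2$ to make the remaining surgery curve $0$-framed, the drilled curve is isotoped \emph{into} the surgery solid torus, and it is this isotopy that exchanges the roles of meridian and longitude. Relatedly, your proposed endpoint (blow down $P_{-1}$ and be left with only the drilled curve in $S^3$) cannot match the target diagram, which still contains the $0$-framed component $L_2$; it would instead exhibit $M$ as a knot complement in $S^3$, which is false. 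Finally, the parenthetical ``$\lk(L_1,L_2)=0$'' is incorrect: homological consistency ($H_1(M)\cong\Z\cong H_1(S^1\times S^2\setminus\nbd(L_1^*))$) forces $\lk(L_1,L_2)=\pm1$, as is the case for the Mazur-type link $L$.
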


\begin{figure}[htbp]{
\begin{overpic}[abs, tics=20]{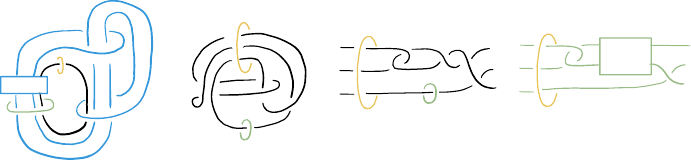}
\put(8,62){$\color{lblue}-1$}
\put(4,32){$\color{lblue}-1$}
\put(27,18){-$1$}
\put(30,33){$\color{lyellow}0$}
\put(0,15){$\color{lgreen}*$}
\put(127,65){\medmuskip=0mu
\thinmuskip=0mu
\thickmuskip=0mu $3$}
\put(100,63){$\color{lyellow}0$}
\put(112,4){$\color{lgreen}*$}
\put(200,57){\medmuskip=0mu
\thinmuskip=0mu
\thickmuskip=0mu $3$}
\put(163,57){$\color{lyellow}0$}
\put(208,22){$\color{lgreen}*$}
\put(296,46){\medmuskip=0mu
\thinmuskip=0mu
\thickmuskip=0mu $\color{lgreen}4$}
\put(270,60){$\color{lyellow}0$}
\put(300,23){$\color{lgreen}*$}
  \end{overpic}}
  \caption{A homeomorphism from $M$ to $S^1\times S^2\setminus \nu(L_1^*)$. Here we use an asterisk to denote a complement.}
  \label{fig:Mazur}
\end{figure}

\begin{proof}
The left frame of Figure \ref{fig:Mazur} is $M$ drawn with an additional cancelling pair, and with $\lambda$ and $\mu$ given by the (blackboard) longitude and meridian of the green curve, respectively.  Notice that the black and yellow surgeries are canceling.  To obtain the second frame of Figure \ref{fig:Mazur}, we perform two slides of blue over black, then blow down the resulting blue unknot. The third frame is than obtained by an isotopy. The final frame is obtained by twisting the essential sphere in the $S^1\times S^2$ from the yellow surgery curve \cite[Figure 5.42]{GompfStipsicz} until the black surgery curve is $0$-framed, then isotoping the green complement into the black surgery solid torus.  Notice that this final isotopy exchanges the roles of meridians and longitudes on the green curve.
\end{proof}

\begin{proof}[Proof of Proposition \ref{prop: HF(M)}]
Let $X_L$ denote the exterior of $L$. The bordered bimodule $\CFDD(X_L)$ was computed in \cite{LevinePL} (in which $L$ is denoted $L_Q$). Following the notation used there, we will write this as a left-left bimodule over two copies of the torus algebra, which we denote by $\AA_\rho$ and $\AA_\sigma$. The bimodule $\CFDD(X_L)$ is generated by $\{g_1, \dots, g_{34}\}$, with associated idempotents and differential as specified in \cite[Theorem 3.4]{LevinePL} (which, for brevity, we do not reproduce here).

By Lemma \ref{lemma: mazur}, we may obtain $\CFD(M)$ by taking the tensor product of $\CFDD(X_L)$ with $\CFA$ of a suitably-framed solid torus $V_0$ that realizes the $S^1 \times S^2$ surgery on $L_2$:
\[
\CFD(M, \mu_L, \lambda_L) \simeq \CFA(V_0) \boxtimes_{\AA_\sigma} \CFDD(X_L).
\]
Here we are following the notation for the boundary parametrization of a bordered $3$-manifold used in \cite{HRWImmersed}. Because of the interchange of meridian and longitude in Lemma \ref{lemma: mazur}, this will actually describe $\CFD(M, \lambda, \mu)$, whereas the standard description of $\CFD$ of the complement of a knot in a homology sphere (e.g. in \cite[Chapter 11]{LOTBordered}) describes $\CFD(M, \mu, \lambda)$. However, the immersed curves package makes it easy to pass back and forth between the two parametrizations.

The bordered invariant for a solid torus framed for $0$-surgery, is as follows:
\begin{lemma} \label{lemma: CFA(V0)}
The type $A$ module for $\CFA(V_0)$ has a single generator $b$  in idempotent $\iota_1$, with $\AA_\infty$ multiplications given by
\begin{equation}
\label{eq: CFA(V0)} m_{3+i}(b, \sigma_2, \underbrace{\sigma_{12}, \dots, \sigma_{12}}_{i}, \sigma_1) = b
\end{equation}
for all $i \ge 0$.
\end{lemma}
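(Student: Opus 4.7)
The plan is to verify this via a direct computation in the bordered Heegaard Floer package, using a standard genus-1 bordered Heegaard diagram $\mathcal{H}_0$ for the $0$-framed solid torus $V_0$. Such a diagram has a torus with one boundary component, two $\alpha$-arcs $\alpha_1^a, \alpha_2^a$ on the boundary meeting the boundary in four points (which carry the standard torus algebra parametrization used in \cite{LOTBordered}), and a single $\beta$-circle $\beta_1$ that is isotopic to the meridian of the solid torus. With the framing chosen so that $\beta_1$ is the $0$-framed meridian, $\beta_1$ meets $\alpha_2^a$ transversely in one point $b$ and is disjoint from $\alpha_1^a$. Thus $\CFA(V_0)$ has exactly one generator $b$, in the idempotent $\iota_1$ (corresponding to $\alpha_2^a$), which accounts for the first assertion of the lemma.

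Next I would enumerate the provincial and boundary-degenerations and the embedded holomorphic curves in $\mathcal{H}_0 \times [0,1] \times \mathbb{R}$ whose source has a positive puncture at $b$ and one negative puncture at $b$. The region structure of $\mathcal{H}_0$ admits exactly two elementary regions adjacent to the boundary near the chord $\rho_{12}$ of $\alpha_2^a$, together with one annular region that winds around $\beta_1$. A disk contributing to the $A_\infty$ operation $m_{3+i}(b,\sigma_2,\sigma_{12},\dots,\sigma_{12},\sigma_1)$ has domain obtained by attaching one copy of the $\sigma_2$ rectangle, $i$ copies of the annulus marked by the full boundary chord $\sigma_{12}$, and one copy of the $\sigma_1$ rectangle; each such domain admits a unique (embedded, rigid) holomorphic representative once the complex structure is generic, so the mod $2$ count is $1$. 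I would verify that the index formula $\operatorname{ind} = 1-i + \sum |\rho| = 1$ is satisfied for these domains, so they indeed contribute to $m_{3+i}$ as in \eqref{eq: CFA(V0)}.

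Finally, I would rule out any other nontrivial operations. Any prospective domain contributing to $m_{k}(b,a_1,\dots,a_{k-1})$ must have all local multiplicities nonnegative, connect $b$ to itself, and have boundary Reeb chords compatible with the idempotents; a short combinatorial argument (matching shadow multiplicities on the four boundary segments, and using that $\beta_1$ only intersects $\alpha_2^a$) shows the only admissible asymptotics are precisely the sequences $(\sigma_2,\sigma_{12}^{\,i},\sigma_1)$.

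The main obstacle will be bookkeeping the index/embeddedness conditions for the annular contributions so that exactly one holomorphic representative is counted for each $i$; this is essentially the computation carried out in \cite[Chapter 11]{LOTBordered} for surgery on the trivial knot, and an alternative verification is obtained by noting that in the immersed curves reformulation of \cite{HRWImmersed}, $\CFA(V_0)$ corresponds to a straight line of slope $0$ (the meridian of $V_0$) in $T_{V_0}$, and reading off the $A_\infty$ operations from that curve via the peg-board algorithm produces exactly the operations in \eqref{eq: CFA(V0)}. Thus the lemma may either be proved by direct disk-counting in $\mathcal{H}_0$ or deduced by translating the immersed-curve description into the $A_\infty$ language.
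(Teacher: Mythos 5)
The paper does not actually prove this lemma: it records the module as a known computation and merely compares it with \cite[Lemma 11.20]{LOTBordered}, which treats the meridian-filling solid torus; the computation you outline (a single generator on the unique intersection point of $\beta$ with an $\alpha$-arc in the standard genus-one bordered diagram, plus a count of the domains asymptotic to $(\sigma_2,\sigma_{12},\dots,\sigma_{12},\sigma_1)$) is precisely the argument carried out in Chapter 11 of \cite{LOTBordered}, so in effect you are reconstructing the proof the paper delegates to the literature. Two caveats. First, the assertion that each annular domain ``admits a unique (embedded, rigid) holomorphic representative once the complex structure is generic'' is not a genericity statement: the odd count for these domains comes from the standard cut-parameter/Riemann-mapping argument for annuli (exactly as in the solid-torus computations of \cite{LOTBordered} and the classical $S^1\times S^2$/handleslide counts); you correctly flag this as the delicate point and defer to the source, so this is a presentational rather than a logical gap. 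Second, your proposed alternative via immersed curves does not quite work as stated: the formalism of \cite{HRWImmersed} encodes \emph{type D} invariants as curves, so one cannot read the $\AA_\infty$ operations of $\CFA(V_0)$ directly off a slope-zero line. The cleaner algebraic shortcut is to start from $\CFD$ of the $0$-framed solid torus, which has a single generator $x$ with $\delta^1(x)=\sigma_{12}\otimes x$, and convert to the type A side by tensoring with the identity bimodule (or by the Hom-duality between type A and type D modules); unrolling the $\sigma_{12}$ self-arrow through that pairing reproduces exactly the operations in \eqref{eq: CFA(V0)}, and would give you a derivation independent of any curve counting.
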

(Compare \cite[Lemma 11.20]{LOTBordered}, which gives $\CFA$ for the framing of the solid torus that gives the meridional filling.)

Taking the box tensor product over $\AA_\sigma$, we find that $\CFD(M,\lambda,\mu)$ has a basis consisting of generators $x_i = g_i \otimes b$ for
\[
i \in \{1, 3, 6, 7, 8, 11, 12, 13, 14, 15, 17, 18, 19, 20, 22, 24, 26, 28, 30, 31, 33\},
\]
with associated idempotents inherited from the $\AA_\rho$ idempotents of the $g_i$ (see the table in \cite[Theorem 3.4]{LevinePL}). The differential can be obtained by the applying the box tensor product as follows. For clarity, terms shown in black come from the pure $\AA_\rho$ terms in the differential of $\CFDD(X_L)$; terms shown in red come from nontrivial sequences of the form $\sigma_2, \sigma_{12}, \dots, \sigma_{12}, \sigma_1$, using the higher $\AA_\infty$ multiplications from \eqref{eq: CFA(V0)}.

\begin{align*} \allowdisplaybreaks
d(x_1) &= \rho_1 x_{24} &
d(x_3) &= \rho_2 x_1 \\
d(x_6) &= \rho_2 x_{30} + {\color{red} \rho_{23} x_{14}} &
d(x_7) &= \rho_3 x_3 + \rho_1 x_{12} + \rho_{123} x_{24}  \\
d(x_8) &= 0 &
d(x_{11}) &= \rho_1 x_{17} + \rho_3 x_{28} \\
d(x_{12}) &= 0 &
d(x_{13}) &= \rho_3 x_{20} + {\color{red} \rho_{123} x_{12}}\\
d(x_{14}) &= 0 &
d(x_{15}) &= {\color{red} \rho_{123} x_{14}} \\
d(x_{17}) &= 0 &
d(x_{18}) &= \rho_3 x_{26}  + {\color{red} \rho_{123} x_{17}} \\
d(x_{19}) &= \rho_1 x_{14} &
d(x_{20}) &= \rho_{23} x_{6} + {\color{red} \rho_2 x_{30}} \\
d(x_{22}) &= {\color{red} x_8} &
d(x_{24}) &= 0 \\
d(x_{26}) &= \rho_2 x_{15} &
d(x_{28}) &= \rho_2 x_{19} \\
d(x_{30}) &= \rho_{123} x_{8} +  \rho_{123} x_{24} &
d(x_{31}) &= 0 \\
d(x_{33}) &= {\color{red} x_{31}} &&
\end{align*}

Now apply a change of basis, replacing $x_6$, $x_7$, and $x_{30}$ with
\begin{align*}
x_6' &= x_6 + x_{14} + x_{20} + \rho_2 x_{13} \\
x_7' &= x_7 + x_{30} + \rho_{123} x_{22} \\
x_{30}' &= x_{30} + \rho_3  x_6 + \rho_{123} x_{22}
\end{align*}
We can compute that the differentials of these elements are:
\begin{align*}
d(x_6') &= (\rho_2  x_{30} + \rho_{23}  x_{14}) + (\rho_{23} x_{6} + \rho_2  x_{30}) + \rho_2 (\rho_3 x_{20} +  \rho_{123} x_{12}) \\
&= \rho_{23} x_6' \\
d(x_7') &= (\rho_3 x_3 + \rho_1 x_{12} + \rho_{123} x_{24}) + (\rho_{123}  x_{8} +  \rho_{123} x_{24}) + \rho_{123} x_8 \\
&= \rho_3 x_3 + \rho_1 x_{12} \\
d(x_{30}') &= (\rho_{123} x_{8} +  \rho_{123} x_{24}) + \rho_3( \rho_2 x_{30} + \rho_{23} x_{14}) + \rho_{123} x_8 \\
&= \rho_{123} x_{24}.
\end{align*}
Additionally, using the new basis we have:
\begin{align*}
d(x_{20}) &= \rho_{23} x_6 + \rho_2 ( x_{30}' + \rho_3  x_6 + \rho_{123} x_{22}) = \rho_2 x_{30}'.
\end{align*}

We may cancel the trivial summands $x_{22} \to x_8$ and $x_{33} \to x_{31}$, and observe that the remaining generators may be arranged graphically as follows:
\begin{equation} \label{eq: CFD(M)}
\xymatrix{
&& x_{14} & x_{19} \ar[l]_{\rho_1} & x_{28} \ar[l]_{\rho_2} &&  x_{24} & x_1 \ar[l]_{\rho_1} & x_3 \ar[l]_{\rho_2} \\
x_6' \ar@(ul,ur)[]^{\rho_{23}} && x_{15} \ar[u]^{\rho_{123}} &        & x_{11} \ar[u]_{\rho_3} \ar[d]^{\rho_1}  &&  x_{30}' \ar[u]^{\rho_{123}} &  & x_7' \ar[u]_{\rho_3} \ar[d]^{\rho_1} \\
&& x_{26} \ar[u]^{\rho_2} & x_{18} \ar[r]_{\rho_{123}} \ar[l]^{\rho_3} & x_{17}  &&  x_{20} \ar[u]^{\rho_2} & x_{13} \ar[r]_{\rho_{123}} \ar[l]^{\rho_3} & x_{12}
}
\end{equation}
Let $A_1, A_2, A_3$ denote these three summands, and note that $A_2$ and $A_3$ are isomorphic to each other, while $A_1$ is isomorphic to $\CFD$ of a solid torus.

Next, we obtain the immersed multicurve $\HFh(M) \subset T_M$ following the procedure from \cite[Section 2.4]{HRWImmersed}, as shown in Figure \ref{fig: HF(M)-torus}. Let $\gamma_1, \gamma_2, \gamma_3$ denote the components corresponding to the three summands in \eqref{eq: CFD(M)}. (For clarity, we have omitted $\gamma_3$, which simply runs parallel to $\gamma_2$.)
Then $\gamma_1$ is a loose embedded curve (in the terminology of \cite[Section 7.1]{HRWImmersed}), while each of the other two summands in \eqref{eq: CFD(M)} produces a figure-eight-shaped curve.

\begin{figure}
\labellist
\pinlabel $x_{28}$ [b] at 40 -5
\pinlabel $x_{26}$ [b] at 69 -5
\pinlabel $x_6'$ [b] at 95 -5
\pinlabel $x_{14}$ [b] at 119 -5
\pinlabel $x_{17}$ [b] at 147 -5
\pinlabel $x_{15}$ [r] at 11 111
\pinlabel $x_{19}$ [r] at 11 83
\pinlabel $x_{11}$ [r] at 11 54
\pinlabel $x_{18}$ [r] at 11 140
\pinlabel $\bullet$ at 173 170
\pinlabel $z$ [bl] at 173 170
\endlabellist
\includegraphics[width=2.5in]{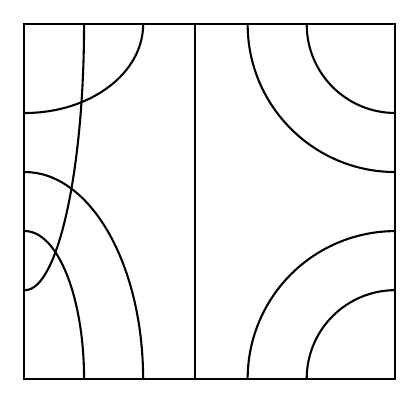}
\caption{Two components of the immersed curve $\HFh(M)$, drawn in the punctured torus $T_M$, which is viewed as a square with standard edge identifications.}
\label{fig: HF(M)-torus}
\end{figure}

We obtain the immersed curve for $\CFD(M,\mu,\lambda)$ by rotating the entire picture $90^\circ$ (in either direction). This corresponds to the conventions for $\HFh$ of knot complements used in \cite{HRWProperties}, where the homological longitude is drawn horizontally.

To lift $\HFh(M)$ to the cover $\overline T_M$, we must simply verify that the preferred lifts of $\gamma_2$ and $\gamma_3$ are at different heights. By \cite[Section 2]{HRWImmersed}, it suffices to show that for any pair of corresponding generators of $A_2$ and $A_3$ (e.g. $x_{14}$ and $x_{24}$), their spin$^c$ gradings differ by the Poincar\'e dual of $\mu$. We may recover the relative gradings between any two generators of $\CFD(M,\mu,\lambda)$ directly from the computations of the differential above (before the change of basis that produced \eqref{eq: CFD(M)}). As this is not directly required for the applications in this paper, we leave it as an exercise to the interested reader.
\end{proof}

\begin{figure}
\labellist
\pinlabel $h(\gamma_2)$ [l] at 76 114
\pinlabel $h(\gamma_1)$ [r] at 114 99
\pinlabel {{\color{red} $\HFh(X_K)$}} [l] at 20 30
\endlabellist
\includegraphics[width=2.5in]{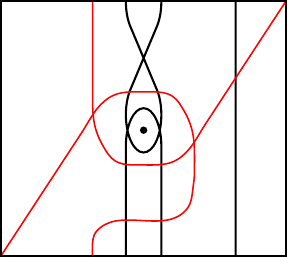}
\caption{Computation of $\HF(\HFh(X_K), h(\HFh(M)))$, where $K$ is the right-handed trefoil. The component $h(\gamma_3)$, which runs parallel to $h(\gamma_2)$, is not shown.} \label{fig:tensor}
\end{figure}

\begin{proof}[Proof of Theorem \ref{thm:knot-surgery}]
For any knot $K \subset S^3$, let $X_K$ denote the exterior of $K$. Let $h \co \partial M \to \partial X_K$ be the gluing homeomorphism that produces the manifold $Y_K$, as described above. As stated previously, $h$ takes the longitude of $M$ to the meridian of $K$, and vice versa. Let $\HFh(X_K)$ denote the immersed curve invariant of $X_K$, viewed as an immersed curve in $\partial X_K - \{z\}$. This curve can be obtained from $\CFKm(K)$ via the procedure in \cite[Section 4]{HRWProperties}.

Following the conventions of \cite{HRWImmersed}, we will draw $\partial X_K$ as a square with sides identified, such that $\lambda_K$ is a horizontal line and $\mu_K$ is a vertical line. See Figure \ref{fig:tensor} for an example, in which $K$ is the right-handed trefoil.

By \cite[Theorem 1.2]{HRWImmersed}, we have
\[
\HFh(Y_K) \cong \HF( \HFh(X_K), h(\HFh(M)) ),
\]
where $\HF(\cdot, \cdot)$ denotes the version of Lagrangian intersection Floer homology described in \cite{HRWImmersed}.
In particular, $h(\gamma_1)$ can be drawn as a vertical line segment away from the puncture, while and $h(\gamma_2)$ and $h(\gamma_3)$ are each figure-eight shaped curves contained within a small vertical strip $A$ containing the puncture.

Since $\gamma_1$ is the same as $\HF$ of a solid torus, $\HF(\HFh(X_K), h(\gamma_1))$ computes $\HFh$ of the $\infty$ filling of $K$, which is $S^3$. Thus,
\[
\dim \HF(\HFh(X_K), h(\gamma_1)) = 1.
\]
In other words, we may isotope $\HFh(X_K)$ to meet $h(\gamma_1)$ in a single point.

Note that $\HFh(X_K)$ may have components that are isotopic to $h(\gamma_2)$, which \emph{a priori} would necessitate consideration of local systems. Indeed, this is extremely common: any ``unit box'' in $\CFKm(K)$ gives rise to such a component, as in the example of the figure-eight knot \cite[Figure 10]{HRWImmersed}. Here, a unit box refers to a set of four elements $\{a,b,c,d\}$ in a reduced basis for $\CFKm(K)$ with the property that there are length-1 horizontal arrows from $a$ to $c$ and from $b$ to $d$, and length-1 vertical arrows from $a$ to $b$ and from $c$ to $d$. By \cite[Lemma 12.4]{HanselmanHFK}, any unit box actually splits off as a direct summand of $\CFKm(K)$, with no other horizontal or vertical arrows into or out of either generator after a change of basis. Thus, when we apply the $\CFKm$-to-$\CFD$-to-curves procedure of \cite[Section 4]{HRWProperties}, we obtain a figure eight curve with no crossover arrows to any component; in particular, its local system is trivial. (In Hanselman's interpretation of $\CFKm$ as immersed curves, this is the content of \cite[Lemma 12.5]{HanselmanHFK}.) Thus, local systems are not needed, and $\dim \HF(\HFh(X_K), h(\gamma_2))$ is equal to the minimal intersection number of $\HFh(X_K)$ with $h(\gamma_2)$.

By construction (specifically \cite[Proposition 48]{HRWProperties}), we may arrange that $\HFh(X_K)$ intersects the strip $A$ in $\dim \HFKh(K)$ transversal arcs (counted with multiplicity if nontrivial local systems are present), each of which meets $h(\gamma_2)$  twice. In this setup, $\HFh(X_K)$ intersects $h(\gamma_2)$ in minimal position. Thus,
\[
\dim \HF(\HFh(X_K), h(\gamma_2)) = \dim \HF(\HFh(X_K), h(\gamma_3)) = 2 \dim \HFKh(K).
\]
Adding up the contributions from the three components of $\HFh(M)$, we have
\[
\dim \HFh(Y_K) = 4 \dim \HFKh(K) + 1,
\]
as required.
\end{proof}

\begin{remark}
By \cite[Proposition 7.11]{HRWImmersed}, $M$ is a Heegaard Floer homology solid torus, which means that $\HFh(M)$ is unchanged under Dehn twists parallel to $\lambda$. That is, $\CFD(M, \mu, \lambda) \cong \CFD(M, \mu + n\lambda, \lambda)$ for all $n \in \Z$. By chasing through the proof of Lemma \ref{lemma: mazur}, one may verify that gluing $(X_K, \mu_K, \lambda_K)$ to $(M, \mu+n\lambda, \lambda)$ produces $S^3_1(P_{n-1}^K)$. This gives a confirmation that $\HFh(S^3_1(P_n^K))$ is independent of $n$, as stated above.

To the authors' knowledge, $M$ is the first known example of a Heegaard Floer homology solid torus that has no L-space fillings.
\end{remark}

\begin{remark} \label{rem:CFK(gamma)}
By recent work of Hanselman \cite{HanselmanHFK}, for a knot $J$ in a homology sphere $Y$, the knot Floer complex $\CFK_{UV=0}(Y,J)$ can also be encoded as an immersed multicurve in $\overline T_M$, denoted as $\HFm(Y,J)$, possibly with local systems. This is a two-way street; one can recover $\CFK_{UV=0}(Y,J)$ from $\HFm(Y,J)$ by taking the Floer homology of $\HFm(Y,J)$ with a vertical line passing through the punctures, taking a pair of basepoints on each side of each puncture. (See \cite[Section 5.2]{HanselmanHFK} for details). Furthermore, when $Y=S^3$, Hanselman shows that $\HFm(Y,J)$ is precisely $\HFh(M)$, where $M = Y - \nbd(J)$, thanks to the $\CFKm$-to-$\CFD$-to-curves procedure alluded to above. Forthcoming work of Hanselman and Levine will show that the same is true for an arbitrary 3-manifold $Y$, and not just for $Y = S^3$.

Applying this idea for our $(Y,\gamma)$ from above, we deduce that the multicurve on the left side of Figure \ref{fig: HF(M)} is precisely $\HFm(Y,\gamma)$. By the procedure just described, one can deduce that $\CFK_{UV=0}(Y,\gamma)$ has the form shown on the right side of Figure \ref{fig: HF(M)}.
\end{remark}

\bibliography{LLP-bibliography}
\bibliographystyle{amsalpha}

\end{document}